\DeclareMathOperator*{\argmin}{arg\,min} 
\newtheorem{theorem}{Theorem}[section]
\newtheorem{lemma}[theorem]{Lemma}
\newtheorem{definition}{Definition}
\newtheoremstyle{named}{}{}{\bfshape}{}{\bfseries}#1{\thmnote{#3}}
\theoremstyle{named}
\newtheoremstyle{named1}{}{}{\bfshape}{}{\bfseries}#1{\thmnote{#3}}
\theoremstyle{named1}
\def\ra{\rangle}
\def\la{\langle}
\def\bb{\mathbb}
\def\bm{\mathbf{m}}
\def\cal{\mathcal}
\def\frak{\mathfrak}
\def\Tr{\mathrm{Tr}}
\def\ijr{1\leq i\leq r,i<j\leq d}
\newcommand{\Vb}{{f_{\bf b}}}
\newcommand{\Vm}{{f_{\bf m}}}
\newcommand{\Vzero}{{f_{\bf 0}}}
\newcommand{\Vl}{{f_{\bf l}}}
\newcommand{\Va}{{f_{\bf a}}} 
\newcommand{\Vbp}{{f_{\bf b'}}}
\newcommand{\Vap}{{f_{\bf a'}}}
\newcommand{\lc}{{l(c)}}
\newcommand{\tcb}{{t_{\bf b}^c}}
\newcommand{\tca}{{t_{\bf a}^c}}
\newcommand{\ssc}{{s_c}}
\newcommand{\bz}{\mathbf{z}}
\newcommand{\sumtwo}[2]{\sum_{\substack{#1 \\ #2}}} % sum with 2 lines^M
\newcommand*{\addFileDependency}[1]{% argument=file name and extension
  \typeout{(#1)}
  \@addtofilelist{#1}
  \IfFileExists{#1}{}{\typeout{No file #1.}}
}
\newcommand*{\myexternaldocument}[1]{%
    \externaldocument{#1}%
    \addFileDependency{#1.tex}%
    \addFileDependency{#1.aux}%
}
\newcommand\blfootnote[1]{%
  \begingroup
  \renewcommand\thefootnote{}\footnote{#1}%
  \addtocounter{footnote}{-1}%
  \endgroup
}
\begin{document}

\title{\textbf{Minimax estimation of low-rank quantum states and their linear functionals}}
\date{}

\author[1]{Samriddha Lahiry \footnote{Corresponding author. Email: slahiry@fas.harvard.edu}}
\author[2]{Michael Nussbaum \footnote{ Supported in part by NSF Grant DMS-1915884. Email: nussbaum@math.cornell.edu}}
\affil[1]{Department of Statistics, Harvard University,}
\affil[2]{Department of Mathematics, Cornell University}

\renewcommand\Authands{ and }

\maketitle

\vspace{0.1in}

\begin{abstract}
~ In classical statistics, a well known paradigm consists in establishing asymptotic equivalence between an experiment of i.i.d. observations and a Gaussian shift experiment, with the aim of  obtaining optimal estimators in the former complicated model from the latter simpler model. In particular, a statistical experiment consisting of $n$ i.i.d observations from d-dimensional multinomial distributions can be well approximated by an experiment consisting of $d-1$ dimensional Gaussian distributions. In a quantum version of the result, it has been shown that a collection of $n$ qudits (d-dimensional quantum states) of full rank can be well approximated by a quantum system containing a classical part, which is a $d-1$ dimensional Gaussian distribution, and a quantum part containing an ensemble of $d(d-1)/2$ shifted thermal states. In this paper, we obtain a  generalization of this result when the qudits are not of full rank. We show that when the rank of the qudits is $r$, then the limiting experiment consists of an $r-1$ dimensional Gaussian distribution and an ensemble of both shifted pure and shifted thermal states. For estimation purposes, we establish an asymptotic minimax result in the limiting Gaussian model. Analogous results are then obtained for estimation of a low rank qudit from an ensemble of identically prepared, independent quantum
systems, using the local asymptotic equivalence result. We also consider the problem of estimation of a linear functional of the quantum state. We construct an estimator for the functional, analyze the risk and use quantum local asymptotic equivalence to show that our estimator is also optimal in the minimax sense.
\end{abstract}

\section{\textbf{Introduction}}
\blfootnote{\textit{Keywords and phrases}:  low rank states, quantum LAN,, quantum minimax estimation, functional estimation}
 Recent breakthroughs in quantum technology, such as quantum computing, communication, and metrology (cf. \cite{MR4071358}) have created renewed interest in the probabilistic and statistical problems that arise from quantum information theory. In particular, the age old tools of mathematical statistics have found their way into the toolbox of scientists working with quantum information.
 
 One of the fundamental problems in quantum statistics is quantum estimation, i.e. estimation of a quantum state or functionals of a state. In the case of state estimation one deals with a statistical inverse problem of inferring unknown state parameters from the measurement data obtained by probing a large number of individual quantum systems. On the other hand, in the case of functional estimation, one is interested in measuring some particular property of the state instead of the whole state. In analogy to the classical decision theoretic approach, one can develop a \textit{quantum decision theoretic framework} for such problems where the ``best estimate'' also involves optimizing over the measurements. In this direction, a \textit{quantum Cramer-Rao bound} has been obtained in \cite{Braunstein&Caves, Helstrom,Hol-2nd-ed-11} for the covariance matrix of an unbiased estimator, as a first step in uncovering this framework. Another closely related problem pertains to the discrimination between  quantum states, in which case one can devise optimal testing procedures. In the asymptotic setup,  the \textit{quantum Stein's lemma} (cf. \cite{MR1806811}) and the \textit{quantum Chernoff bound} have been established (cf. \cite{Audenaert,Audenaert&Nussbaum,Li,Nussbaum&Szkola}). For an overview of the evolving literature on
quantum statistics cf.  \cite{Gill&Guta,MR2017871,MR2761295,GutaJencova2007,BGN-QAE} and
references therein, and the monographs  \cite{MR3558531,MR2363070,Hol-2nd-ed-11}. Statistical aspects of quantum
algorithms and simulation are  discussed in \cite{MR2840170,MR3012432,MR3552740}.

In classical statistics, the first step for obtaining optimal decision theoretic procedures often consists in approximating complicated experiments (families of laws, or models) by simpler ones (cf.  \cite{LeCam} for details). An important example under this paradigm is called local asymptotic normality (LAN), where one establishes asymptotic equivalence between i.i.d. models indexed by a local parameter and a Gaussian shift model (with the shift given by the same local parameter). To construct an optimal estimator, one constructs a preliminary estimate of the parameter first and then uses LAN in the neighborhood of the estimated value. The optimal procedure in the Gaussian model then ensures an (asymptotically) optimal procedure in the i.i.d. model. Global equivalence has also been shown between several nonparametric estimation problems like nonparametric regression (cf. \cite{BrownLow}), density estimation (cf. \cite{Nussbaum96}) and the Gaussian white noise model. The theory of LAN can also be used to show optimality in the estimation of functionals by restating the functional estimation problem as a smooth parametric estimation problem (cf. \cite{MR620321}). In the quantum setup, \textit{quantum LAN theory}  for parametric models, established in  \cite{GutaJencova2007,Guta_Kahn_qubit,Kahn&Guta,YFG13}, shows that a model given by a large collection of identically prepared finite dimensional states can be approximated by a quantum Gaussian shift model in a local neighborhood. An extension of  quantum LAN theory towards a  quantum version of  local asymptotic equivalence in nonparametric models can be found in \cite{BGN-QAE} where it is shown  that an ensemble of pure states in infinite dimensional Hilbert space can be approximated by coherent states, which constitute a quantum counterpart of the Gaussian white noise model.

Our contribution in this paper is threefold; as a first result we establish LAN for low-rank quantum states. We restrict ourselves to finite dimensional states, in particular to a setup similar to \cite{Guta&Janssens&Kahn,Guta_Kahn_qubit,Kahn&Guta}. In \cite{Guta_Kahn_qubit} the authors show that a quantum statistical experiment consisting of a large number of qubits (two dimensional quantum states) with rotational shift can be approximated by a quantum Gaussian state, specifically a shifted thermal state, while in \cite{Guta&Janssens&Kahn} the authors improved the result to include diagonal perturbations of the qubits. They showed that the limiting experiment in this case contains a classical part which is Gaussian and a quantum part which is a shifted thermal state. Later in \cite{Kahn&Guta} the authors extended the results for qubits to qudits (d-dimensional quantum systems) and showed that a large number of full rank qudits can be well approximated by a quantum experiment consisting of a 
$d-1$ dimensional ``classical'' Gaussian part (corresponding to the diagonal elements of the original qudit model) and a quantum part which is a tensor product of $d(d-1)/2$ shifted thermal states (corresponding to the off-diagonal elements of the original qudit model). The latter paper used technical results on symmetric and general linear groups but was crucially dependent on the assumption that the original qudit was of full rank. We extend this result to the case of rank-r qudits and obtain an interesting limiting experiment. While the classical part contains an $r-1$ dimensional classical Gaussian as expected from the degeneracy of the eigenvalues, the quantum part contains a tensor product of both shifted thermal states ($r(r-1)/2$ many) and shifted pure states ($r(d-r)$ many). We observe that for the full rank case there are only shifted thermal states in the quantum part of the limit as in \cite{Guta&Janssens&Kahn} and \cite{Kahn&Guta}, while for the rank $r=1$ case we only have shifted pure states (without a classical part). On the other hand, one can easily modify the proof of local asymptotic equivalence in \cite{BGN-QAE} to show that the limiting experiment of an ensemble of finite dimensional pure states gives rise to a tensor product of $d-1$ shifted pure states (without a classical part) in the limit. Our result for rank $1$ qudits matches this result. Thus we obtain a unifying picture that suitably generalizes previous results on finite dimensional quantum LAN.

Secondly, we address the question of optimal estimation of low-rank qudits using the LAN result. Recently there has been considerable interest in the study of low-rank quantum models, although in most cases the authors study them in the high dimensional low-rank setup and obtain finite sample bounds in contrast to our asymptotic setup. In the framework of low-rank quantum state tomography, the problem is treated in \cite{Carpentier,Koltchinskii-von-Neumann,Kolt-Xia-MLearn} where the authors consider a trace regression model with the observables being random and then establish minimax bounds in the estimation of the low-rank state. A similar problem has been studied in \cite{Cai-Zhou-et-al} but with a sparsity assumption different from the low-rank structure. The noiseless case (also called quantum compressed sensing) has been studied in \cite{Flammia&Gross,Gross_compressed,Liu2011UniversalLM} and similar methods of quantum tomography (using a small number of randomized measurements to reconstruct a low rank state) have been considered in \cite{Acharya_2017,Acharya_2016}. A spectral thresholding and a rank penalized method are used for the low-rank quantum state tomography problem in \cite{ButuceaGutaKypraios} and \cite{Butucea_rank} respectively, while \cite{MR4093475} uses a projected least square approach. A comparative study of these approaches can be found in \cite{Acharya_2019}. However, most of these methods are only optimal in the rate sense and fail to obtain sharp constants. We employ the methods of \cite{ButuceaGutaKypraios} and \cite{MR4093475} to obtain a preliminary estimator which is both rank consistent and lies close to the original state and then use our LAN result to obtain the minimax risk up to sharp constants. The optimal minimax risk for the full rank case is obtained in \cite{GutaKahnproc}; our result for the rank $r$ case  matches with former for $r=d$.

Finally, in the spirit of \cite{MR620321,KosLev76}, we use LAN to address the question of optimal estimation of linear functionals of low-rank quantum states. While property estimation is a well-established concept in quantum information theory (for example see \cite{wagner_issa} for estimation of von-Neumann entropy), the optimal procedures for estimation of a linear functional of a quantum state are less well known. In this direction, the quantum analog of the concept of a least favorable sub-family is treated in \cite{tsang_semipara}. We construct an estimator of the linear functional and use the latter concept  and LAN to show optimality (in the sense of sharp constants) of our estimator. It should be noted that while \cite{tsang_semipara} gives a pointwise optimality result, we establish a minimax result; the classical analog of the latter can be found in \cite{KosLev76}.

\subsection{Outline of results} The paper is organized as follows. In Section 2, we review the basic quantum mechanical concepts of states, measurements, observables, and quantum channels. Section 3 reviews classical parametric LAN and quantum LAN for the full rank case as obtained in
\cite{Kahn&Guta}. It also contains our main theorem (Theorem 3.3) where we establish LAN for the low-rank model.

Section 4 describes the Bayes estimator in the Gaussian case from \cite{Holevo1973,Hol-78}, which will be useful in our construction of the optimal estimator and also in establishing the minimax lower bound.
Analogously to the classical case we also observe a shrinkage phenomenon for this Bayes estimator.

In Section 5 we construct the optimal estimator for rank $r$ qudits. First, in Theorem 5.1, using a part of the sample, we obtain an estimator which lies close to the original state and has rank $r$ with high probability. We then use the quantum channel on the remaining sample to transfer the estimation problem of qudits to the estimation of parameters in a limiting Gaussian model which contains a classical Gaussian law and a tensor product of both shifted and pure thermal states in its quantum part. An asymptotic minimax result for the limiting Gaussian model is given in Theorem 5.2. To prove this theorem we use the result from Section 4 to get a lower bound of the minimax risk by the Bayes risk and then use a covariant measurement to give a matching upper bound. Finally, we observe that the Hilbert-Schmidt norm between the two qudits is locally quadratic approximately and then transfer the risk from the Gaussian model to obtain the optimal risk in the low-rank qudit model in Theorem 5.4.

Estimation of a linear functional is treated in Section 6. We construct an estimate of the functional in question using the appropriate observable to give an upper bound. The lower bound is given by constructing a least parametric subfamily and then using LAN to obtain a lower bound in the limiting Gaussian model. The latter is indexed by a one-dimensional parameter and we use a Bayesian result from \cite{Hol-78} to give the lower bound.
 
 Some representation theoretic tools are needed for proving Theorem 3.3;  they are reviewed in Appendix A. The Bayesian result for a one dimensional parameter is discussed in Appendix B. Proofs of the main theorems are given in Appendix C while the proofs of the more technical lemmas are deferred to Appendix D. The appendices are included in \cite{low_LAN_supp}.
 \subsection{Notation}
 In physics, the vectors of a Hilbert
space $\mathcal{H}$ (assumed separable) are written as ``ket'' $|v\rangle$, $v^*$ (a vector in the dual space $\mathcal{H}^*$) as ``bra'' $\langle v|$ and the
inner product of two vectors as the ``bra-ket'' $\langle u|v\rangle\in
\mathbb{C}$ which is linear with respect to the right entry and anti-linear
with respect to the left entry. Similarly, $M:=|u\rangle\langle v| $ is the
rank one operator acting as $M  : |w\rangle \mapsto M|w \rangle = \langle
v |w\rangle |u\rangle$.  For an operator $A$ the expression $\langle u|Av\rangle$ will sometimes be denoted as $\langle u|A|v\rangle$. The space of
bounded linear operators on $\mathcal{H}$ is denoted by $\mathcal{L}(\mathcal{H})$. Of particular interest are the following two subspaces of $\mathcal{L}(\mathcal{H})$.
\begin{enumerate}
    \item $\mathcal{T}_1(\mathcal{H})\subset\mathcal{L}(\mathcal{H})$ - the trace class defined by $\mathcal{T}_1(\mathcal{H})=\{A:\mathcal{H}\rightarrow \mathcal{H}:\mathrm{Tr}(A^*A)^{1/2}<\infty\}$. Operators in $\mathcal{T}_1(\mathcal{H})$ are equipped with the norm $\mathrm{Tr}(A^*A)^{1/2}$.\\
    \item $\mathcal{T}_2(\mathcal{H})\subset\mathcal{L}(\mathcal{H})$ - the Hilbert Schmidt operators defined by $\mathcal{T}_2(\mathcal{H})=\{A:\mathcal{H}\rightarrow \mathcal{H}:\mathrm{Tr}(A^*A)<\infty\}$. Operators in $\mathcal{T}_2(\mathcal{H})$ are equipped with the norm $(\mathrm{Tr}(A^*A))^{1/2}$. The class $\mathcal{T}_2(\mathcal{H})$ is a Hilbert space with respect to the inner product $%
(A, B) := \mathrm{Tr}(A^* B)$.
\end{enumerate}

It is well known that $\mathcal{T}_1(\mathcal{H})\subset\mathcal{T}_2(\mathcal{H})$. For any Hilbert space, the usual norm will be denoted by $||.||$ and the identity operator on that space by $\mathbf{1}$ where the particular space will be understood from the context. We will denote by $||\mu-\nu||_{\mathrm{TV}}$ the total variation norm between two measures $\mu$ and $\nu$. By $a\vee b$ and $a\wedge b$ we will denote $\max(a,b)$ and $\min(a,b)$ respectively and $a_+$ will be used to denote $a \vee 0$. By $\lfloor a\rfloor$ and $\lceil a \rceil$, we will denote the largest integer less than or equal to $a$ and the smallest integer greater than or equal to $a$ respectively. We will use the notation $a_n\asymp b_n$ whenever $c<\liminf_n (a_n/b_n) \leq \limsup_n (a_n/b_n) <C$ for some constants $c,C>0$. Throughout the paper, $c$ and $C$ will denote arbitrary constants.

\section{Quantum mechanics preliminaries}
The outline for this section is as follows. In Subsection 2.1 we describe the concepts of quantum states, measurements and observables. We also discuss  quantum channels which are essential for exchanging information between two quantum systems. In Subsection 2.2 we consider qudits or d-dimensional quantum states;  the problem of finding optimal  measurements for these states and their linear functionals  is the main objective of this article. For optimal estimation of qudits (which will also be called an i.i.d. model) it is convenient  to analyze a quantum Gaussian model first and then relate it to the i.i.d. model. Following this path, we describe quantum Gaussian states in Subsection 2.3, in particular we describe a limiting Gaussian model. In Section 3 we state the theorem (Theorem 3.3) which guarantees that the i.i.d. model can be approximated by the Gaussian model described in Subsection 2.3. In subsection 2.4, we describe the general problem of quantum statistical inference and the concepts of Bayes and minimax risk in quantum models, which are used as benchmarks for optimality. We also describe the concept of quantum asymptotic equivalence and how risks can be transferred between two asymptotically equivalent models using quantum channels. In Section 5 we  compute the risk in the limiting Gaussian model (Theorem 5.2) and then use the concept of risk transfer together with Theorem 3.3 to obtain the optimal risk in the i.i.d. model. A similar method of risk transfer will also be employed to establish optimality of an estimator of a linear functional in Section 6.
\subsection{States, measurements, and observables}
A \textit{state of a quantum system} is described by a self-adjoint trace class operator
$\rho$ on a complex Hilbert space {$\mathcal{H}$, which is positive }%
$(\rho\geq0$) and normalized to $\mathrm{Tr}\left(  \rho\right)  =1$ (a
density operator). A state is called \textit{pure} if it is of the form $\rho=|\psi\rangle\langle\psi|$, otherwise it is called a \textit{mixed state}. We denote the set of states by $\mathcal{S}(\mathcal{H})$. 

Data on a quantum system are obtained from
\textit{observables} which are self-adjoint operators $S$ in the Hilbert space $\mathcal{H}$. If  $S$ has spectral decomposition $S=\sum_{j}\lambda_{j}\Pi_{j}$ where $\Pi_{j}$s
are projectors, then a measurement generates a \textit{discrete random variable} $X_{S}$ taking
values in the set of eigenvalues $\left\{  \lambda_{1},\lambda_{2}%
,\ldots\right\}  $ with probabilities $p_{j}=\mathrm{Tr}(\rho\cdot\Pi_{j})$. The expectation of
$X_{S}$ under the state $\rho$ is then given by the \textit{Born-von Neumann postulate:}%
\begin{equation*}
E_{\rho}X_{S}=\sum_{j}\lambda_{j}\mathrm{Tr}\left(  \rho\Pi_{j}\right)
=\mathrm{Tr}\left(  \rho S\right)  . \label{trace-rule}%
\end{equation*}
In quantum mechanics, one needs generalized versions of the above definitions of observables and measurements because the spectral decomposition of self-adjoint operators in the form of a weighted sum of projectors may fail to hold when the Hilbert space is infinite dimensional. If a measurement has outcomes in a measurable space $(\Omega,\mathfrak{B})$, it is determined by a positive operator-valued measure.
\begin{definition}\label{def.POVM}
A positive operator valued measure (POVM) is a map $M:\mathfrak{B}\to \mathcal{L}(%
\mathcal{H})$ having the following properties
\begin{itemize}
\item[1)] positivity: $M(B) \geq 0$ for all events $B\in\mathfrak{B}$ (hence M(B) is self-adjoint)
\item[2)] $\sigma$-additivity: $M(\cup_i B_i) = \sum_i {M}(B_i)$ for
any countable set of mutually disjoint events $B_i$ (here the convergence is in the weak operator topology of $\mathcal{L}(\mathcal{H})$)
\item[3)] normalization: $M(\Omega) = \mathbf{1}$.
\end{itemize}
\end{definition}

If the operators $M(B)$ are also orthogonal projections, i.e. $M(A)^2=M(A)$ and $M(B)M(A)=0$ when $A\cap B=\emptyset$, then it is called a \textit{simple measurement}. The collection of projectors $\{\Pi_j\}$ in the spectral decomposition $S=\sum_j \lambda_j\Pi_j$ is an example of a simple measurement. The outcome of the measurement has probability
distribution
\begin{equation}
P_\rho(B) = \mathrm{Tr}(\rho M(B)), \qquad B\in \mathfrak{B}.
\label{prob_measurement}
\end{equation}
 The spectral theorem shows that any self-adjoint operator $S:\mathcal{H}\rightarrow \mathcal{H}$ can be diagonalized as follows:
\begin{equation*}S=\int_{\sigma(S)}xM(dx),\label{general-spectral}\end{equation*}
where $\sigma(S)$ is the spectrum of $S$ and $M$ is a POVM, also called spectral measure associated with the operator $S$. When $S$ is an observable with a continuous spectrum, it generates a \textit{continuous random variable} $X_S$ with probabilities given by (\ref{prob_measurement}). Also, it easily follows that
$$E[X_S]=\mathrm{Tr}(S\rho).$$
The expected value of an observable $S$ is often denoted as $\langle S \rangle$, when the state dependence is not explicitly shown. There are POVMs (called \textit{generalized measurements}) where the orthogonality does not hold, but these can be extended to a POVM in a larger Hilbert space where the extended version is orthogonal.
Let $\mathit{POVM}(\Omega,\mathcal{H})$ be the set of POVMs with values in $\mathcal{L}(\mathcal{H})$ and outcome space $\Omega$ and let  $\mathcal{H}_0$ be another Hilbert space with a density operator $\rho_0$. Then any simple measurement $M'$ in $\mathit{POVM}(\Omega,\mathcal{H}\otimes \mathcal{H}_0)$ induces a measurement $M$ in $\mathit{POVM}(\Omega,\mathcal{H})$ which is determined by
$$\mathrm{Tr}(\rho M(B))=\mathrm{Tr}((\rho\otimes \rho_0)M'(B)),B\in \mathfrak{B},$$
for all states $\rho$ on $\mathcal{H}$. The pair $(\mathcal{H}_0,\rho_0)$ is called an \textit{ancilla} and it is known that (cf. \cite{Hol-2nd-ed-11}, Section 2.5) given any measurement $M$ in $\mathcal{H}$, there exists an ancilla $(\mathcal{H}_0,\rho_0)$ and a simple measurement $M'$ such that the above equation holds. The triple $(\mathcal{H}_0,\rho_0,M')$ is called a \textit{realization} of the measurement $M$  and the notion of adding an ancilla before taking simple measurements is called \textit{quantum randomization} in \cite{MR2017871}.

In many cases, it is convenient to perform a measurement after ``changing'' the state of the original system by interacting with other systems. The
maps describing such transformations are called quantum channels.
\begin{definition}
A quantum channel between systems with Hilbert spaces $\mathcal{H}_{1}$ and
$\mathcal{H}_{2}$ is a mapping $\ T$ which assigns to every state $\rho$ on
$\mathcal{H}_{1}$ the state $T(\rho)$ on $\mathcal{H}_{2}$ given by
\begin{equation*}
T(\rho)=\sum_{i=1}^{\infty}{K}_{i}\rho K_{i}^{\ast},%
    \label{eq.kraus}
\end{equation*}

where $\left\{  {K}_{i}\right\}  $ are bounded  operators $K_{i}%
:\mathcal{H}_{1}\rightarrow\mathcal{H}_{2}$ such that  $\sum_{i=1}^{\infty
}K_{i}^{\ast}K_{i}=\mathbf{1}$ (the series converging  in the strong
operator topology of $\mathcal{L}(\mathcal{H}))$. 
\end{definition}
It can be shown that the map $T$ is trace preserving and \emph{completely positive}, i.e. $\mathrm{Id}_m \otimes T$ is
positive for all $m\geq 1$, where $\mathrm{Id}_m$ is the identity map on the space of $m$ dimensional matrices. The simplest example of a quantum channel is a transformation $%
\rho\mapsto U\rho U^*$, where $U$ is a unitary operator on $\mathcal{H}$.
More generally, if $|\varphi \rangle\in \mathcal{K}$ is a pure state of an
ancillary system, and $V$ is a unitary on $\mathcal{H}\otimes \mathcal{K}$,
then
\begin{equation*}
\rho\mapsto T(\rho):= \mathrm{Tr}_\mathcal{K} ( V(\rho\otimes
|\varphi\rangle \langle\varphi |)V^* ) %= \sum_{i} {K}_i \rho K_i^*
\end{equation*}
is a quantum channel where $\mathrm{Tr}_\mathcal{K} $ is the partial trace over $%
\mathcal{K}$ (with respect to an orthonormal basis $\{|f_i\rangle \}_{i=1}^{%
\mathrm{dim} \mathcal{K}}$). If we define operators $K_i$ on $\mathcal{H}$ such that $\langle \psi |K_i |
\psi^\prime\rangle :=\langle \psi \otimes f_i | V| \psi^\prime\otimes
\varphi \rangle$, then it can be seen that  $T(\rho)$ can be written as in the form given in Definition 2.
We define a dual map $T^{\ast}$ of a quantum channel as follows
\begin{align*}
T^{\ast}   :\mathit{POVM}(\Omega,\mathcal{H}_{2})&\rightarrow \mathit{POVM}(\Omega
,\mathcal{H}_{1})\\
T^{\ast}(M)(B)  & =\sum_{i=1}^{\infty}K_{i}^{\ast}M(B)K_{i},
\end{align*}
where the $\sum_{i=1}^{\infty}K_{i}^{\ast}M(B)K_{i}$ is a strongly convergent sum. From the definition, it can be easily verified that $T^{\ast
}(M)$ is indeed an element of $\mathit{POVM}(\Omega,\mathcal{H}_{1})$
(i.e. a POVM satisfying properties 1,2 and 3 of Definition 1) and that it satisfies the following duality relation
 $$\mathrm{Tr}(\rho T^*(M)(B))=\mathrm{Tr}(T(\rho)M(B)), \quad\forall B \in \mathfrak{B}$$

and all states $\rho$ on $\mathcal{H}_1$ (cf. 29.9 of \cite{Partha-book}).

For estimation purposes, we will need to define the following distances between two quantum states. The \emph{trace-norm} distance between two states $\rho_{0}%
,\rho_{1}\in\mathcal{S(\mathcal{H})}$ is given by
\[
\Vert\rho_{0}-\rho_{1}\Vert_{1}:=\mathrm{Tr}(|\rho_{0}-\rho_{1}|),
\]
where $|\tau|:=\sqrt{\tau^{\ast}\tau}$ denotes the absolute value of $\tau$. An interpretation of this metric in terms of quantum testing can be found in \cite{MR3558531}.
In the special case of pure states, the trace-norm distance is given by
\begin{equation}
\Vert|\psi_{0}\rangle\langle\psi_{0}|-|\psi_{1}\rangle\langle\psi_{1}%
|\Vert_{1}=2\sqrt{1-|\langle\psi_{0}|\psi_{1}\rangle|^{2}}.
\label{eq.trace.norm.pure}%
\end{equation}

Similarly one can define the $L^2$ distance between two states induced by the Hilbert-Schmidt norm as 
\[
\Vert\rho_{0}-\rho_{1}\Vert_{2}:=[\mathrm{Tr}((\rho_{0}-\rho_{1})^*(\rho_{0}-\rho_{1}))]^{1/2}.
\]
\subsection{Qudits under local parametrization}
 Consider a qudit or a $d$-dimensional density matrix, i.e.
$\rho\in M_d(\bb{C})$ (the space of $d\times d$ complex matrices), with $\rho\geq 0$ and $\mathrm{Tr}(\rho)=1$. A natural way to parametrize qudits is to write it in the form $\mathscr{U}(\zeta)\rho_0 \mathscr{U}^*(\zeta)$ with $\rho_0=$diag$(\mu_1,\mu_2,\ldots,\mu_d)$ and $\sum_{i=1}^d\mu_i=1$, where $\mathscr{U}(\zeta)$ are unitary matrices, i.e. elements of the Lie group $SU(d)$ parametrized by $\zeta\in \mathbb{C}^{d(d-1)/2}$. We describe the unitaries in more detail.
Consider the generators of the Lie algebra $\mathfrak{su}(d)$ (cf. \cite{MR1153249}):
\begin{align*}
    H_j&=E_{jj}-E_{j+1,j+1} \text{ for }1\leq j\leq d-1\\
    T_{j,k}&=iE_{j,k}-iE_{k,j}\text{ for }1\leq j<k\leq d\\
    T_{k,j}&=E_{j,k}+E_{kj}\text{ for }1\leq j<k\leq d,
\end{align*}
where $E_{i,j}$ is the matrix with $(i,j)^{th}$ entry equal to $1$, and all other entries equal to $0$. 

Assume $\mu_1>\mu_2>\ldots>\mu_d>0$ and define $$\mathscr{U}(\zeta)=\exp\left[i\left(\sum_{1\leq j<k\leq d}\frac{Re(\zeta_{j,k})T_{j,k}+Im(\zeta_{j,k})T_{k,j}}{\mu_j-\mu_k}\right)\right].$$

Next, we consider local models by first perturbing the eigenvalues only:
$$\rho_{0,u}=diag(\mu_1+u_1,\mu_2+u_2,\ldots,\mu_d+u_d),$$
where $\sum u_i=0$; that gives a state in a local neighborhood of $\rho_0$. To describe all possible states in the local neighborhood of $\rho_0$, we should also consider rotations using unitaries, i.e.
\begin{equation}\rho_{\vartheta}=\mathscr{U}(\zeta)\rho_{0,u}\mathscr{U}^{*}(\zeta),\label{local_qudit_full}
\end{equation}
where $\vartheta=(u,\zeta)$. Similarly in the low rank case we assume $\mu_1>\mu_2>\ldots>\mu_r>\mu_{r+1}=\ldots=\mu_d=0$ and define 
$$\mathscr{U}^r(\zeta)=\exp\left[i\left(\sum_{\substack{1\leq j\leq r\\j<k\leq d}}\frac{Re(\zeta_{j,k})T_{j,k}+Im(\zeta_{j,k})T_{k,j}}{\mu_j-\mu_k}\right)\right].$$

Next we consider a low-rank state in the local neighborhood of $\rho_{0,r}=diag(\mu_1,\mu_2,\ldots,\mu_r,0,\ldots,0)$:
\begin{align}\rho_{0,u,r}&=diag(\mu_1+u_1,\mu_2+u_2,\ldots,\mu_r+u_r,0,\ldots,0)\label{local_unrotated_qudit}\\\rho_{\vartheta,r}&=\mathscr{U}^r(\zeta)\rho_{0,u,r}\mathscr{U}^{r*}(\zeta).\label{local_qudit_r}\end{align}

We note that it is enough to parametrize low rank qudits using $\mathscr{U}^r(\zeta)$ instead of $\mathscr{U}(\zeta)$ since the first order terms in the Taylor expansion  of $\mathscr{U}^r(\zeta)\rho_{0,u,r}\mathscr{U}^{r*}(\zeta)$ and $\mathscr{U}(\zeta)\rho_{0,u,r}\mathscr{U}^*(\zeta)$ are identical and it is this term that determines the limiting model (cf. \cite{Acharya_2016}). 

Let $\mathfrak{z}_{ij}=\frac{\zeta_{ij}}{\sqrt{\mu_i-\mu_j}}$. For notational convenience we will denote $\theta=(u,\mathfrak{z})$ and redefine the unitaries $\mathscr{U}(\zeta)$ and $\mathscr{U}^r(\zeta)$ as $U(\mathfrak{z})$ and $U^r(\mathfrak{z})$ respectively. We will denote the state by $\rho_{\vartheta}$ ($\rho_{\vartheta,r}$ respectively) or $\rho_{\theta}$ ($\rho_{\theta,r}$ respectively) depending on whether it is indexed by $\vartheta$ or $\theta$.

In classical LAN we are interested in the limit for $n$ i.i.d. copies of the experiment with the local parameter lying in an $n^{-1/2}$ neighborhood of $0$ (see Section 3 for more details). In the quantum setup, this amounts to studying the following operators 
$$\rho^{\theta,n}=\rho^{\otimes n}_{\theta/\sqrt{n}},\quad\rho^{\theta,r,n}=\rho^{\otimes n}_{\theta/\sqrt{n},r}.$$

\subsection{Gaussian states and Fock spaces}

To obtain Gaussian random variables, in the
space {$\mathcal{H}$}$=L^{2}\left(  \mathbb{R}\right)  $ one considers two
special observables $Q,P$ with continuous spectrum:
\[
\left(  Qf\right)  \left(  x\right)  =xf\left(  x\right)  \text{, }\left(
Pf\right)  \left(  x\right)  =-i\frac{df}{dx}\left(  x\right)  ,\;\;\;\;f\in D \subset
L^{2}\left(  \mathbb{R}\right)
\]
(defined on an appropriate domain D) often associated to position ($Q)$ and momentum ($P$) of a particle. These operators satisfy the Heisenberg commutation relations
$$[Q,P]=i\textbf{1}.$$
It can be shown that $Z_{u}:=u_{1}Q+u_{2}P$, $u\in\mathbb{R}^{2}$ are
observables (called the \textit{canonical observables}). In this context we define the \textit{quantum characteristic function} as $\tilde{W}_{\rho}(u_1,u_2)=\mathrm{Tr}(\rho\exp\left(  iZ_{u}\right))$.
% The inverse Fourier transform of $\widetilde{W}_\rho$ with respect to both
% variables is called \textit{Wigner function} $W_\rho$, or quasi-distribution
% associated to $\rho$:
% \begin{equation*}
% %\widetilde{W}_\rho(u,v) := {\rm Tr} (\exp(-iuQ - ivP)\rho), \qquad
% W_\rho(q,p) = \frac{1}{(2\pi)^2} \int\!\int \exp(-iuq- ivp) \widetilde{W}%
% _\rho(u,v) du dv.
% \end{equation*}
 If the following relation holds
\[
E_{\rho}\exp\left(  iZ_{u}\right)  =\mathrm{Tr}(\rho\exp\left(  iZ_{u}\right))=\exp\left(  iu^{T}\mu-\frac{1}%
{2}u^{T}\Sigma u\right)  \text{, }u\in\mathbb{R}^{2},
\]
 then $\rho$ is called a Gaussian state with mean $\mu$ and covariance matrix $\Sigma$.
%  It trivially follows that the Wigner function of $\rho$ is the bivariate Gaussian density with mean $\mu$ and covariance matrix $\Sigma$.
 For such quantum Gaussian states in $L^{2}\left(  \mathbb{R}\right)  $ we
adopt a compact notation, resembling the one for the
$2$-variate normal law:
\begin{equation}
\rho=\mathbb{N}_{2}\left(  \mu,\Sigma\right)  . \label{notation-q-Gauss}%
\end{equation}
Here $\Sigma$ is a $2\times 2$ real matrix such that
$$\Sigma\geq \pm \frac{i}{2} \left(\begin{array}{cc}
    0 &  -1\\
     1& 0
\end{array}\right).$$
To define the simplest
Gaussian state, let $\psi_{0}=\sqrt{\varphi_{1/2}}$ be the square root of the
density function of the normal $N\left(  0,1/2\right)  $ distribution and
consider the operator $\rho_0$ acting by $\rho_0 f=\psi_{0}\left\langle \psi
_{0},f\right\rangle $, $\;f\in L^{2}\left(  \mathbb{R}\right)  $. Since
$\psi_{0}$ is a unit vector in $L^{2}\left(  \mathbb{R}\right)  $, the
operator $\rho_0$ (henceforth called the vacuum state) is a projection (written $\rho_0=\left\vert \psi_{0}%
\right\rangle \left\langle \psi_{0}\right\vert $ in Dirac notation) and it can be
shown that $\rho_0=\mathbb{N}_{2}\left(  0,I_2/2\right)$ in the notation described above.

An important class
is the collection of \textit{coherent states} $\mathbb{N}_{2}\left(  \mu,I_2/2\right)  $;
these are pure states which can be interpreted as a vacuum shifted by $\mu
\in\mathbb{R}^{2}$ (similar to the Gaussian shift model in classical statistics).
Consider the operators $a^*=(Q-iP)/\sqrt{2}$ (the \textit{creation operator}), $a=(Q+iP)/\sqrt{2}$ (the \textit{annihilation operator}) and $N=a^*a$ (the number operator). It is well known that the \textit{Hermite basis} $\{|0\rangle,|1\rangle\,\ldots\}$ forms an eigenbasis of the number operator,  i.e. $N|k\rangle=k|k\rangle$. For any $z \in \mathbb{C}$ define the displacement operator as
$$D(z)=\exp(za^*-\bar{z}a)$$ and the \textit{coherent state} as
\begin{equation}
|G(z)\rangle =D\left(  z\right)  \left\vert 0\right\rangle=\exp(-|z|^2/2)\sum_{k=0}^\infty \frac{z^k}{\sqrt{k!}} |k\rangle.
\label{basic-coherent-state-def}%
\end{equation}
In the density operator notation this pure Gaussian state is $|G(z)\rangle \langle G(z)|$. The
expectations of the canonical observables $Q$ and $P$ under the state
$|G(z)\rangle \langle G(z)| $ are%
\[
\left\langle Q\right\rangle =\sqrt{2}\operatorname{Re}z\text{, }\left\langle
P\right\rangle =\sqrt{2}\operatorname{Im}z
\]
and the characteristic function of $|G(z)\rangle \langle G(z)|$ is
\begin{equation*}
\varphi\left(  t\right)  =\exp\left(  i\left(  t_{1}\sqrt{2}\operatorname{Re}%
z+t_{2}\sqrt{2}\operatorname{Im}z\right)  -\frac{1}{4}\left(  t_{1}^{2}%
+t_{2}^{2}\right)  \right)  \text{, }t\in\mathbb{R}^{2}. \label{char-func-0}%
\end{equation*}
The presence of the factor $\sqrt{2}$ motivates us to adopt a modified
notation for the coherent vector: setting $\mu=\left(  \sqrt{2}%
\operatorname{Re}z,\sqrt{2}\operatorname{Im}z\right)  $, we will write
$|G(z)\rangle =\left\vert \psi_{\mu}\right\rangle $ so that now
the expectations are $\left(  \left\langle Q\right\rangle ,\left\langle
P\right\rangle \right)  =\mu$. The characteristic function $\varphi\left(  t\right)$ is that of $\mathcal{N}_2(\mu,I/2)$ and hence in the notation of (\ref{notation-q-Gauss})
\begin{equation}|G(z)\rangle \langle G(z)|=|\psi_{\mu}\rangle\langle\psi_{\mu}|=\mathbb{N}_{2}\left(  \mu,I/2\right).
\label{coherent-gaussian}
\end{equation}
Other important classes of Gaussian states are thermal states   and shifted thermal states; these are mixed states unlike the vacuum and coherent states. We define below the thermal state with temperature $\beta$ as
\[\phi_{\beta}=(1-e^{-\beta})\sum_{k=0}^{\infty}e^{-k\beta}|k\ra\la k|.\]
Shifted thermal states are defined using the shift operator $D(z)$ as follows:
\[
\phi^{z}_{\beta}=D(z)\phi_{\beta}D^*(z).
\]
One can show that the quantum characteristic function $\Tr(\phi^{z}_{\beta}\exp(iu_1Q+iu_2P))$ of the shifted thermal state is given by
\[
\Tr(\phi^{z}_{\beta}\exp(iu_1Q+iu_2P))=\exp(i(u_1\sqrt{2}Re(z)+u_2\sqrt{2}Im(z))-\frac{\sigma^2_{\beta}}{2}(u_1^2+u_2^2)),
\]
where $\sigma^2_{\beta}=\frac{\coth(\beta/2)}{2}$. Defining $\mu=(\sqrt{2}Re(z),\sqrt{2}Im(z))$,
we write that
\begin{equation}\phi_{\beta}^{z}=\bb{N}_2(\mu,\sigma^2_{\beta}I_2).\label{two_notations_thermal}
\end{equation}
To define a k-mode Gaussian state one considers the space $\bigotimes_{i=1}^kL^2(\mathbb{R})$ and identifies the number basis as follows
\begin{equation}|\mathbf{m}\ra=\otimes_{1\leq i\leq k}|m_{i}\ra,\quad \mathbf{m}=\{m_i\in \mathbb{N}:1\leq i\leq k\}.\label{number_basis}
\end{equation}
We consider the collection of operators $\{Q_1,P_1,\ldots,Q_k,P_k\}$ where each $Q_i$ and $P_i$ are position and momentum operators of a particular mode (i.e. acting on a particular $L^2(\mathbb{R})$). These operators satisfy joint commutation relations as follows:
\begin{equation}
    \label{joint_commutation}
    [Q_i,P_j]=i\delta_{i,j}\mathbf{1},\quad[Q_i,Q_j]=0,\quad[P_i,P_j]=0.
\end{equation}
One can then define the creation and annihilation operators $a_i^*$ and $a_i$ for each mode and proceed to define the displacement operator as 
$$D(\mathbf{z})=\exp(\mathbf{z}.\mathbf{a}^*-\mathbf{\bar{z}}.\mathbf{a}),$$
where $\mathbf{z}.\mathbf{a}^*=\sum_{i=1}^kz_ia^*_i$ and similarly for $\mathbf{\bar{z}}.\mathbf{a}$. Then we have 
\begin{equation}|G(\mathbf{z})\rangle =D\left(  \mathbf{z}\right)  \left\vert \mathbf{0}\right\rangle=\exp(-|\mathbf{z}|^2/2)\sum_{m_1,\ldots,m_k=0}^\infty \prod_{i=1}^k\frac{z_i^{m_i}}{\sqrt{m_i!}} |\mathbf{m}\rangle.\label{multimode coherent}\end{equation}
To describe our limiting model we need the following multimode Fock spaces:
\begin{align}
    \mathcal{F}&:= \bigotimes_{1\leq i<j \leq d}L^2(\mathbb{R})\label{full_fock}\\
    \mathcal{F}^r&:= \bigotimes_{\ijr}L^2(\mathbb{R})\label{low_fock}.
\end{align}
Recall the shifted thermal state $\bb{N}_2(\mu,\sigma^2_{\beta}I_2)$;  in the complex notations it is denoted by $\phi^{z}_{\beta}$ (see equation (\ref{two_notations_thermal})). Similarly the  shifted pure state in (\ref{coherent-gaussian}) can be written as $\phi^{z}_{\infty}$ (noting that the case $\beta=\infty$ corresponds to the pure case). The limiting model considered in \cite{Kahn&Guta} is as follows
\begin{equation}\Phi^{\theta}=\cal{N}_{d-1}(u,V_{\mu})\otimes \bigotimes_{1\leq i<j \leq d}\phi_{\beta_{ij}}^{\mathfrak{z}_{ij}},\label{Gaussian_model_full}
\end{equation}
where $\beta_{ij}=\ln(\mu_i/\mu_j)$ and $\mathfrak{z}_{ij}=\frac{\zeta_{ij}}{\sqrt{\mu_i-\mu_j}}$ i.e. the diagonal perturbation only appears in the classical part which is a $d-1$ multivariate normal experiment with mean $u$ and covariance matrix $V_{\mu}$. The latter is the covariance matrix of a multinomial random variable with probabilities $\mu_i$, while the rotation perturbations determine the $d(d-1)/2$ shifted thermal states. Since $\beta_{ij}$ are constants we will use the abbreviated notation
\begin{equation}\phi^{\mathbf{\mathfrak{z}}}=\bigotimes_{1\leq i<j \leq d}\phi_{\beta_{ij}}^{\mathfrak{z}_{ij}}\quad\in \mathcal{T}_1(\mathcal{F}),\label{tensor_thermal}\end{equation}
where $\mathbf{\mathfrak{z}}=(\mathfrak{z}_{ij})_{1\leq i<j\leq d}$ is a vector in $\bb{C}^{d(d-1)/2}$. 

We also consider the following model which is a tensor product of both thermal and pure quantum states along with a classical part which is given by a multivariate normal random variable.
\begin{equation}
\Phi^{\theta,r}=\cal{N}_{r-1}(u,V_{\mu})\otimes \bigotimes_{1\leq i<j \leq r}\phi_{\beta_{ij}}^{\mathfrak{z}_{ij}}\otimes\bigotimes_{\substack{1\leq i\leq r\\ r+1\leq j \leq d}}\phi_{\infty}^{\mathfrak{z}_{ij}}=\cal{N}_{r-1}(u,V_{\mu})\otimes\phi^{\mathfrak{z},r},\label{Gaussian_model_low}
\end{equation}
where 
\begin{equation}
    \phi^{\mathbf{\mathfrak{z},r}}= \bigotimes_{1\leq i<j \leq r}\phi_{\beta_{ij}}^{\mathfrak{z}_{ij}}\otimes\bigotimes_{\substack{1\leq i\leq r\\ r+1\leq j \leq d}}\phi_{\infty}^{\mathfrak{z}_{ij}}\quad \in \mathcal{T}_1(\mathcal{F}^r)\nonumber.\label{tensor_mixed}
\end{equation}

In Section 3 we show that the states $\Phi^{\theta,r}$ arise as limiting models for low-rank qudits.

The classical-quantum limiting state also motivates us to adopt a notation (similar to \cite{Gill&Guta}) that allows us to describe the commutation relations in the hybrid system in a compact fashion. Note that for the $k$-mode system we have defined operators $\{Q_i,P_i\}_{i=1}^k$ given by (\ref{joint_commutation}). In addition to this, we allow $l$ classical random variables $C_1,\ldots,C_l$ that commute with each other and with all $(Q_i,P_i)$. We can denote the $m=2k+l$ variables as
\begin{equation}
    \label{hybrid_notation}
    (X_1,\ldots,X_m)\equiv(C_1,\ldots,C_l,Q_1,P_1,\ldots,Q_k,P_k)
\end{equation}

and the commutation relations as
$$[X_i,X_j]=iS_{ij}\mathbf{1},$$
where $S=0_{l\times l}\oplus{\bigoplus_{i=1}^{2k}}\Omega$ and
$$\Omega=\left(\begin{array}{cc}
     0&  1\\
     -1& 0
\end{array}\right).$$
We can define a state $\varrho=f\otimes \rho$ in the space $L^1(\bb{R}^l)\otimes \mathcal{T}_1(\mathcal{F}^k)$, with $\rho$ a quantum state and $f$ a probability density. Now define the hybrid characteristic function of the state as
\begin{equation*}
    \label{hybrid_cf}
    E_{\varrho}(e^{i\sum_{j=1}^m u_jX_j}):=\int\mathrm{Tr}(\rho e^{i\sum_{j=l+1}^{2k+l}u_iX_i})f(y)e^{i\sum_{j=1}^lu_jy_j}dy_1\ldots dy_l.
\end{equation*}

\begin{definition}
A hybrid state $\varrho$ is called classical-quantum Gaussian if the characteristic function has the following form:
$$E_{\varrho}(e^{i\sum_{i=1}^m u_jX_j})=e^{iu^T\tau-u^T\Sigma u/2},$$

where $\tau\in\bb{R}^m$ and the covariance matrix $\Sigma$ is a $m\times m$ real matrix such that
$\Sigma\geq \pm \frac{i}{2}S$.
\end{definition}
Note that each shifted thermal state can be denoted by $\phi_{\beta_{ij}}^{\mathfrak{z}_{ij}}=\bb{N}_2(\nu_{ij},\sigma^2_{\beta_{ij}}I_2)$ where $$\nu_{ij}=(\sqrt{2}Re(\mathfrak{z}_{ij}),\sqrt{2}Im(\mathfrak{z}_{ij})).$$  Since the classical part in the limiting model is also a Gaussian (given by $\cal{N}_{r-1}(u,V_{\mu})$), we adopt the following alternate notation for the classical-quantum Gaussian:
\begin{equation}
\Phi^{\theta,r}=\mathfrak{N}(\tau,\mathscr{S}),
    \label{classical_quantum}
\end{equation}
where
\begin{align*}
    \Sigma&= V_{\mu}\oplus{\bigoplus_{\ijr}}
\sigma^2_{\beta_{ij}}I_{2}\\
S&=0_{r-1\times r-1}\oplus{\bigoplus_{\ijr}}\Omega\\
\mathscr{S}&=\Sigma+ \frac{i}{2}S,\quad \tau=u\oplus{\bigoplus_{\ijr}}
\nu_{ij}.\\
\end{align*}
We use $\mathscr{S}$ instead of $\Sigma$ in $\mathfrak{N}(\tau,\mathscr{S})$ that captures the underlying non-commutative structure via $S$. We note that a similar notation incorporating the commutation relations into a  complex covariance matrix has been used in \cite{YFG13}. This notation will be useful in computing a Bayes risk for a one dimensional parameter (see Appendix B) which will subsequently be used in establishing a minimax lower bound for the estimation of a linear functional of the state.

\subsection{Quantum statistical inference}

In this section we formalize the quantum counterparts of the basic notions of optimality in classical statistical inference. 
In classical statistics, an experiment is defined to be a family of probability measures on a sample space and denoted by $\mathcal{E}=\{P_{\theta},\theta\in \Theta\}$ where $\Theta$ is the parameter space.
\begin{definition}
A quantum statistical model over a parameter space $\Theta$ consists of a
family of quantum states $\mathcal{Q} = \{\rho_\theta :\, \theta\in \Theta \}
$ on a Hilbert space $\mathcal{H}$, indexed by an unknown parameter $%
\theta\in \Theta$.
\end{definition}
Inference in quantum models generally involves two steps.
In the first step one performs a measurement on the state $\rho_\theta$ and generates data, while in the second step, one uses standard statistical tools to solve the specific decision problem using data from the first step.
If one performs a measurement $M$
on the system in state $\rho_\theta$, a random outcome is obtained with distribution $P^M_\theta(E) := \mathrm{Tr}(\rho_\theta
M(E))$ (cf. Subsection 2.1). The measurement data is therefore
described by the classical model $\mathcal{P}^M := \{ P^M_\theta
:\, \theta\in \Theta\}$ and the estimation problem can be treated using
``classical'' statistical methods. However, in many scenarios, the optimal estimators for individual components of a parameter are incompatible with each other and the optimal joint estimator for the two components can be entirely different from the optimal estimators of the individual components.

In the classical setup, a randomized decision function is given by a Markov kernel $\nu$. If $L(\theta,u)$ is the loss function then the risk is given by
\begin{equation}
 R(\theta,\nu)=\int\int L(\theta,u)\nu_x(du)\mu_{\theta}(dx)=\int L(\theta,u)\int\nu_x(du)\mu_{\theta}(dx)=\int L(\theta,u)\tilde{\nu}_{\theta}(du), 
 \label{class-risk}
\end{equation}
where $\tilde{\nu}_{\theta}(A)=\int\nu_x(A)\mu_{\theta}(dx)$.

Section 2.2.4 of  \cite{Hol-01} discusses the quantum counterpart of this classical formulation. Let $\rho_{\theta}$ be the quantum state and $\mu^M_{\theta}(B)=Tr(\rho_{\theta}M(B))$ be the probability measure generated by the POVM $M$. Then the risk is given by
\begin{equation*}
R(\theta,M)=\int L(\theta,u)\mu^M_{\theta}(du).
\label{operator-risk}
\end{equation*}
By using the fact that every affine map $\rho_{\theta}\rightarrow \mu_\theta()$ can be associated with a POVM, we see that $M$ is an analog of the classical randomized decision function $\nu$ given in (\ref{class-risk}).
We can easily define the Bayes and minimax problems for quantum estimation.

\textbf{Minimax problem} 
$$\inf_M\sup_{\theta\in \Theta}R(\theta,M)=\inf_M\sup_{\theta\in \Theta}\int L(\theta,u)\mu^M_{\theta}(du)=\inf_{\hat{m}}\sup_{\theta \in \Theta}E_{\theta}[L(\theta,\hat{m})]$$

\textbf{ Bayes problem} 
$$\inf_M\int_{\Theta}R(\theta,M)\pi(d\theta)=\inf_M\int_{\Theta}\int L(\theta,u)\mu^M_{\theta}(du)\pi(d\theta)=\inf_{\hat{m}}\int E_{\theta}[L(\theta,\hat{m})]\pi(d\theta).$$
The notations $\inf_{\hat{m}}\sup_{\theta \in \Theta}E_{\theta}[L(\theta,\hat{m})]$ and $\inf_{\hat{m}}\int E_{\theta}[L(\theta,\hat{m})]\pi(d\theta)$ will be called \textit{condensed notations} and will be used henceforth. Note that the infimum is over all POVM and the notation $\hat{m}$ should not be confused with a deterministic estimator seen in the classical setup. We will also denote the Bayes risk as $\inf_{\hat{m}} E[L(\theta,\hat{m})]$ where the expectation is also taken over the parameter $\theta$.

In classical statistics, a well-known paradigm is using asymptotic equivalence of experiments to transfer risk bounds from one experiment to another. Suppose we have two experiments $\mathcal{E}=\{P_{\theta},\theta\in \Theta\}$
on a  sample space $(\Omega_1,\mathcal{A}_1)$ and   $\mathcal{F}=\{Q_{\theta},\theta\in \Theta\}$ on a  sample space $(\Omega_2,\mathcal{A}_2)$. Also let the loss function satisfy the condition $0 \leq L(\theta,u) \leq 1$. If there exists a Markov kernel $K$ such that
$$\sup_{\theta \in {\Theta}}||KP_{\theta}-Q_{\theta}||_{TV}\leq \epsilon,$$
then for any randomized decision function $\mu$ of $\theta$ in the model $Q_{\theta}$, the randomized decision function $\nu=\mu \circ K$ (composition of two Markov kernels) satisfies
$$R^1({\theta,\nu})\leq R^2(\theta,\mu)+\epsilon,$$
where $R^1({\theta,\nu})$ and $R^2(\theta,\mu)$ are the risks in the models $\mathcal{E}$ and $\mathcal{F}$ respectively. We discuss the generalization of this paradigm to the quantum setup and also generalize it to the case of unbounded loss. 

The quantum equivalent of a Markov kernel is the transformation by quantum channels. The quantum model $\mathcal{Q}$ can be transformed into
another quantum model $\mathcal{Q}^\prime := \{ \rho_\theta^\prime :\theta
\in \Theta \}$ on a Hilbert space $\mathcal{H}^\prime$ by applying a quantum channel
\begin{eqnarray*}
T&:& \mathcal{T}_1(\mathcal{H}) \to \mathcal{T}_1(\mathcal{H}^\prime) \\
T&:& \rho_\theta \mapsto \rho_\theta^\prime.
\end{eqnarray*}
In this context, we define the \textit{quantum Le Cam distance} between two models from \cite{BGN-QAE}.
\begin{definition}
Let $\mathcal{Q}$ and $\mathcal{Q}^\prime$ be two quantum models over $\Theta
$. The deficiency of $\mathcal{Q}$ with respect to $\mathcal{Q}^\prime$ is defined
by
\begin{equation*}
\delta \left(\mathcal{Q},\mathcal{Q}^\prime\right) := \inf_T
\sup_{\theta\in\Theta} \| T(\rho_\theta) - \rho_\theta^\prime\|_1,
\end{equation*}
where the infimum is taken over all channels $T$. The \emph{Le Cam} distance
between $\mathcal{Q}$ and $\mathcal{Q}^\prime$ is defined as
\begin{equation*}  \label{eq.q.LeCam}
\Delta \left(\mathcal{Q},\mathcal{Q}^\prime\right):= \mathrm{max}\left(
\delta \left(\mathcal{Q},\mathcal{Q}^\prime\right)\,, \, \delta \left(%
\mathcal{Q}^\prime,\mathcal{Q}\right)\right).
\end{equation*}
\end{definition}
Its interpretation is that models which are ``close'' in the Le Cam distance
have similar risk bounds. Suppose we have two sequences of quantum models (or experiments) $\mathcal{E}^{(n)}=\{\rho^{(1,n)}_{\theta}:\theta\in \Theta\}$  and $\mathcal{F}^{(n)}=\{\rho^{(2,n)}_{\theta}:\theta\in \Theta\}$ with associated sequences of Hilbert spaces $\mathcal{H}^{1,n}$ and $\mathcal{H}^{2,n}$. Assume that $\Delta \left(\mathcal{E}^{(n)},\mathcal{F}^{(n)}\right)\rightarrow 0$; this implies $\delta \left(\mathcal{E}^{(n)},\mathcal{F}^{(n)}\right)\rightarrow 0$ and in particular there exists a sequence of quantum channels $T_n$, such that
$$||T_n(\rho^{(1,n)}_{\theta})-\rho^{(2,n)}_{\theta}||_1= o(1).$$
Let the loss function also change with $n$ and satisfy the relation $0\leq L_n(\theta,u) \leq c_n$. Also, assume that the sequence of quantum channels $T_n$ is such that
\begin{equation}c_n\sup_{\theta \in \Theta}||T_n(\rho^{(1,n)}_{\theta})-\rho^{(2,n)}_{\theta}||_1= o(1).
\label{equivalence-rate}
\end{equation}

Recall the dual map $T^*$ of a quantum channel $T$. It follows that for any $M\in \mathit{POVM}(\Omega,\mathcal{H}^{2,n})$
\begin{align}
    R^1_n(\theta,T_n^*(M))= &\int L_n(\theta,u)\mathrm{Tr}(\rho_{\theta}^{(1,n)}T_n^*(M(du)))\nonumber\\
     = &\int L_n(\theta,u)\mathrm{Tr}(\rho_{\theta}^{(2,n)}M(du))\nonumber\\
    &+\int L_n(\theta,u)[\mathrm{Tr}(\rho_{\theta}^{(1,n)}T_n^*(M(du)))-\mathrm{Tr}(\rho_{\theta}^{(2,n)}M(du))]\nonumber\\
     = & R^2_n(\theta,M)+\int L_n(\theta,u)[\mathrm{Tr}(T_n(\rho_{\theta}^{(1,n)})(M(du)))-\mathrm{Tr}(\rho_{\theta}^{(2,n)}M(du))]\nonumber\\
    \leq & R^2_n(\theta,M)+c_n||T_n(\rho^{(1,n)}_{\theta})-\rho^{(2,n)}_{\theta}||_1\nonumber\\
    \leq & R^2_n(\theta,M)+o(1),\label{risk-transfer}
\end{align}
the term $o(1)$ tending to $0$ uniformly over all $\theta$. Thus we can compare the risks of the two models $\mathcal{E}^{(n)}$ and $\mathcal{F}^{(n)}$ if (\ref{equivalence-rate}) holds. Note that we have similar relations for minimax risks and Bayes risks, by taking a supremum over $\Theta$ or integrating with respect to a prior, respectively, and then taking an infimum over all estimators:
\begin{align}
    \inf_M\sup_{\theta\in \Theta}R^1_n(\theta,M) & \leq\inf_M\sup_{\theta\in \Theta}R^2_n(\theta,M)+o(1).\label{minimax-risk-transfer}\\
    \inf_M\int_{\Theta}R^1_n(\theta,M)\pi(d\theta) & \leq \inf_M\int_{\Theta}R^2_n(\theta,M)\pi(d\theta)+o(1).\nonumber \label{Bayes-risk-https://www.overleaf.com/project/5ea3798bc4096c0001f3b837transfer}
\end{align}

\section{Local asymptotic normality in low-rank systems}
\subsection{Classical LAN}
Consider a collection of i.i.d random variables $\{X_{1},\ldots,X_{n}\}$
taking values in a measurable space $(\mathcal{X},\Sigma_{\mathcal{X}})$ with
$X_{i}\sim P_{\theta}$ where $\theta$ belongs to $\Theta$, which is an open
subset of $\mathbb{R}^{d}$. We can consider a local perturbation around a
fixed point $\theta_{0}$ and if we denote $\theta=\theta_{0}+u/\sqrt{n}$ (with
$u$ bounded), then we can represent the aforementioned collection of random
variables by a statistical experiment $\mathcal{E}_{n}=\{P_{\theta_{0}%
+u/\sqrt{n}}^{n},||u||\leq C\}$ on a sample space $(\mathcal{X}^{n}%
,\Sigma_{\mathcal{X}}^{n})$ where $P_{\theta_{0}+u/\sqrt{n}}^{n}$ is an
$n$-fold product of $P_{\theta_{0}+u/\sqrt{n}}$. Under some regularity
assumptions (see theorem below) $\mathcal{E}_{n}$ can be approximated by a
Gaussian shift experiment $\mathcal{F}=\{N(u,I_{\theta_{0}}^{-1}),||u||\leq
C\}$ where $I_{\theta_{0}}$ is the Fisher information matrix at $\theta_{0}$.
The following result is well known (cf. \cite{MR1791434}, Theorem
2.9 along with \cite{Strasser}, 79.3).

\begin{theorem}
{\label{classical_LAN}} Assume $\left(  \mathcal{X},\Sigma_{\mathcal{X}%
}\right)  $ is a Polish (complete separable metric) space with its Borel
$\sigma$-algebra. Assume further that\medskip\newline(i) the experiment
$\mathcal{E}_{n}$ is dominated: $P_{\theta}\ll\mu$, $\theta\in\Theta$ where
$\mu$ is a $\sigma$-finite measure on $\Sigma_{\mathcal{X}}$, \newline(ii) the
densities $p_{\theta}\left(  x\right)  =\left(  dP_{\theta}/d\mu\right)  (x)$
are jointly measurable in $\left(  x,\theta\right)  $ and differentiable in
quadratic mean at $\theta=\theta_{0}$, i.e. for some measurable function
$\ell_{\theta}:\mathcal{X}\rightarrow\mathbb{R}^{d}$
\[
\int\left[  p_{\theta+u}^{1/2}-p_{\theta}^{1/2}-u^{T}%
\ell_{\theta}p_{\theta}^{1/2}\right]  ^{2}d\mu=o\left(  \left\Vert
u\right\Vert ^{2}\right)  \text{ as }u\rightarrow0,
\]
(iii) the Fisher information matrix $I_{\theta}=4E_{\theta}[\ell_{\theta}%
\ell_{\theta}^{T}]$ is nonsingular at $\theta=\theta_{0}$. \medskip\newline
Then the experiments $\mathcal{E}_{n}$ and $\mathcal{F}$ are asymptotically equivalent.
\end{theorem}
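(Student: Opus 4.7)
The plan is to follow the standard Le Cam program for proving asymptotic equivalence between a smoothly parametrized i.i.d.\ experiment and its local Gaussian approximation, broken into a likelihood-ratio expansion step, a weak-convergence-of-experiments step, and finally an upgrade to equivalence in Le Cam distance.

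First, I would exploit the quadratic mean differentiability assumption (ii) to derive the LAN expansion of the log-likelihood ratio. A standard computation (essentially squaring out the assumed $L^2(\mu)$ expansion and summing over $i$) yields, under $P_{\theta_0}^n$,
\begin{equation*}
\log \prod_{i=1}^n \frac{p_{\theta_0 + u/\sqrt{n}}(X_i)}{p_{\theta_0}(X_i)} = u^T \Delta_n - \tfrac{1}{2} u^T I_{\theta_0} u + r_n(u),
\end{equation*}
where $\Delta_n := \frac{2}{\sqrt{n}} \sum_{i=1}^n \ell_{\theta_0}(X_i)/p_{\theta_0}^{1/2}(X_i) \cdot p_{\theta_0}^{1/2}(X_i)$ (the score), $r_n(u) = o_{P_{\theta_0}^n}(1)$ uniformly for $\|u\|\leq C$, and by the classical CLT $\Delta_n \Rightarrow N(0, I_{\theta_0})$. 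By Le Cam's third lemma, contiguity ensures the same expansion holds with $o(1)$ remainder under $P_{\theta_0 + u/\sqrt{n}}^n$ as well.

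Second, this LAN expansion gives convergence of the finite-dimensional likelihood ratio processes $\{dP_{\theta_0 + u/\sqrt{n}}^n / dP_{\theta_0}^n : \|u\|\leq C\}$ to those of the Gaussian shift experiment $\mathcal{F} = \{N(u, I_{\theta_0}^{-1}) : \|u\|\leq C\}$, whose likelihood ratios are $\exp(u^T I_{\theta_0} Z - \tfrac12 u^T I_{\theta_0} u)$ with $Z \sim N(u, I_{\theta_0}^{-1})$. Combined with uniform integrability of the likelihood ratios (which follows from their being probability densities together with the quadratic mean differentiability), this yields weak convergence of the experiment sequence $\mathcal{E}_n$ to $\mathcal{F}$ in the sense of Le Cam.

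Third, the upgrade from weak convergence to asymptotic equivalence $\Delta(\mathcal{E}_n, \mathcal{F}) \to 0$ is the crux and the main obstacle. This is precisely the content of Strasser's Theorem 79.3 cited in the statement: one uses the Polish sample space to pass to a compact parameter window (here $\|u\|\leq C$), and by constructing an explicit Markov kernel from $\mathcal{E}_n$ to $\mathcal{F}$ --- based on an asymptotically efficient preliminary estimator $\hat u_n$ (say, a discretized one-step estimator from the score process) followed by Gaussian randomization around $\hat u_n$ with covariance matched to $I_{\theta_0}^{-1}$ --- one realizes the target Gaussian shift up to a total variation error that vanishes uniformly in $u$. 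A symmetric construction in the reverse direction, obtained by sampling from $P_{\theta_0}^n$ and applying a deterministic score-based transformation, supplies the matching kernel from $\mathcal{F}$ to $\mathcal{E}_n$. The two bounds together give $\Delta(\mathcal{E}_n,\mathcal{F}) \to 0$.

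The main difficulty lies in the kernel construction in step three, since weak convergence of likelihood-ratio processes alone does not produce Markov kernels realizing the equivalence; one needs both the dominatedness hypothesis (i) and nonsingularity of $I_{\theta_0}$ from (iii) to invert the score and construct the preliminary estimator. Assumption (iii) in particular ensures that the limit Gaussian model is non-degenerate so that $I_{\theta_0}^{-1}$ exists, which is what makes the randomization step well-defined.
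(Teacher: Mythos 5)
The paper does not actually prove Theorem~3.1: it is quoted as a known result, with the proof delegated to the cited references (Shiryaev--Spokoiny, Thm.~2.9, and Strasser, 79.3). Your outline follows exactly the route those references take --- LAN expansion of the log-likelihood ratio from differentiability in quadratic mean, weak convergence of the local experiments $\mathcal{E}_n$ to the Gaussian shift $\mathcal{F}$, and then an upgrade to convergence in Le Cam distance on the bounded local parameter set $\{\|u\|\leq C\}$ --- and, like the paper, you ultimately invoke Strasser~79.3 for the decisive step. In that sense the proposal is consistent with the paper and structurally sound.

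Two points in your step three should not be taken literally, however. First, the explicit kernel you sketch is off: an asymptotically efficient estimator $\hat u_n$ already has asymptotic covariance $I_{\theta_0}^{-1}$, so adding independent Gaussian randomization with covariance $I_{\theta_0}^{-1}$ around it targets $N(u,2I_{\theta_0}^{-1})$ rather than $N(u,I_{\theta_0}^{-1})$; the randomization, if used at all, must have asymptotically vanishing variance (its role is smoothing, not matching the covariance). Second, and more substantively, asymptotic normality of $\hat u_n$ or of the central sequence is convergence in distribution, while the deficiency bound requires $\sup_{\|u\|\leq C}\|K_nP^n_{\theta_0+u/\sqrt n}-N(u,I_{\theta_0}^{-1})\|_{\mathrm{TV}}\to 0$; passing from weak convergence to uniform total-variation closeness is precisely the nontrivial content of the cited equivalence theorem (via a smoothing/coupling argument or Le Cam's weak-to-strong machinery on compact local parameter sets), and it is not delivered by the heuristic ``randomize around an efficient estimator.'' Since you explicitly defer this crux to Strasser, the outline stands, but the sketch itself is not a self-contained proof of that step. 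A minor slip: transferring the $o_P(1)$ remainder from $P^n_{\theta_0}$ to the local alternatives uses contiguity, i.e.\ Le Cam's first lemma, not the third.
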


In other words, there exist sequences of Markov kernels $T_n$ and $S_n$, such that:
\begin{align*}
    &\lim_{n\rightarrow \infty}\sup_{||u||\leq C}||T_n(P^n_{\theta_0+u/\sqrt{n}})-N(u,I^{-1}_{\theta_0})||_{TV}=0\\
    &\lim_{n\rightarrow \infty}\sup_{||u||\leq C}||P^n_{\theta_0+u/\sqrt{n}}-S_n(N(u,I^{-1}_{\theta_0}))||_{TV}=0.\end{align*}

\subsection{Quantum LAN}
Consider the following domain of the local parameters
$$\Theta_{n,\beta,\gamma}=\{(u,\mathfrak{z}):|u_k|\leq n^{\gamma},|\mathfrak{z}_{ij}|\leq n^{\beta}, \ \forall 1\leq k\leq d-1,\ 1\leq i<j\leq d\},$$

for some $\beta>0,\gamma>0$.

Recall the $d$-dimensional state $\rho_{\theta}$ given in equation (\ref{local_qudit_full}) and the corresponding Gaussian state given in (\ref{Gaussian_model_full}) indexed by the same local parameter $\theta$. Note that we have used the alternate notation using $\theta$ instead of $\vartheta$ (see the discussion after (\ref{local_qudit_r})). We consider the following two experiments:
$$\mathcal{Q}_n=\{\rho^{\theta,n}:\theta\in \Theta_{n,\beta,\gamma}\},\quad \mathcal{R}_n=\{\Phi^{\theta}:\theta\in \Theta_{n,\beta,\gamma}\}.$$
We state the LAN theorem for full rank states (proved in \cite{Kahn&Guta}) which shows that these local models are asymptotically equivalent.
\begin{theorem}{\label{full_rank_LAN}}
Recall the Fock space described in (\ref{full_fock}). Then for $0<\gamma<1/4$ and $0<\beta<1/9$ there exist quantum channels $T_n$ and $S_n$
\begin{align*}
    &T_n:M(\mathbb{C}^d)^{\otimes n}\rightarrow L^1(\mathbb{R}^{d-1})\otimes \mathcal{T}_1(\mathcal{F})\\
    &S_n:L^1(\mathbb{R}^{d-1})\otimes \mathcal{T}_1(\mathcal{F})\rightarrow M(\mathbb{C}^d)^{\otimes n} \\
\end{align*}
such that 
\begin{align*}
    \sup_{\theta\in \Theta_{n,\beta,\gamma}}||\Phi^{\theta}-T_n(\rho^{\theta,n})||_1=O(n^{-\kappa})\\
   \sup_{\theta\in \Theta_{n,\beta,\gamma}}||S_n(\Phi^{\theta})-\rho^{\theta,n}||_1=O(n^{-\kappa}),\\
\end{align*}   
  for some $\kappa>0$ which depends on $\beta$ and $\gamma$.

\end{theorem}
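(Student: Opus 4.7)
The plan is to follow the representation-theoretic strategy behind Kahn and Guta's original proof, exploiting the $S_n \times SU(d)$ symmetry of $\rho^{\otimes n}_{\theta/\sqrt{n}}$ via the Schur--Weyl decomposition
\[
(\mathbb{C}^d)^{\otimes n} \;\cong\; \bigoplus_{\lambda\vdash n,\,\ell(\lambda)\leq d} V_\lambda \otimes S_\lambda,
\]
where $V_\lambda$ is the irreducible $SU(d)$-representation and $S_\lambda$ the irreducible $S_n$-representation indexed by a Young diagram $\lambda$. Since $\rho_{\theta/\sqrt{n}}^{\otimes n} = U(\mathfrak{z}/\sqrt{n})^{\otimes n}\,\rho_{0,u/\sqrt{n}}^{\otimes n}\,U(\mathfrak{z}/\sqrt{n})^{*\otimes n}$, the diagonal factor $\rho_{0,u/\sqrt{n}}^{\otimes n}$ is block-diagonal in this decomposition and is proportional, on each block, to an explicit diagonal operator whose weights on $V_\lambda$ depend only on the shape $\lambda$ and the eigenvalues $\mu + u/\sqrt{n}$. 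First I would write the state as a mixture over $\lambda$ of the form $\sum_\lambda p^n_\lambda(\theta)\,\rho^{V_\lambda}_\theta \otimes \tfrac{1}{\dim S_\lambda}\mathbf{1}_{S_\lambda}$, where $p^n_\lambda(\theta)$ is a probability on Young diagrams (identifiable with the classical information in the eigenvalues) and $\rho^{V_\lambda}_\theta$ is the residual quantum part on the irreducible representation.

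Next I would construct the channel $T_n$ in two stages. The first stage performs the (non-destructive) measurement of the Young diagram $\lambda$: by the classical LAN theorem (Theorem 3.1) applied to the multinomial-like law $p^n_\lambda$, the normalized shape $\sqrt{n}(\lambda/n - \mu)$ is asymptotically $\mathcal{N}_{d-1}(u, V_\mu)$, giving the classical part of $\Phi^\theta$ in total variation at rate $n^{-\kappa}$. Conditional on $\lambda$, the second stage applies an isometry $V_\lambda^n$ embedding $V_\lambda$ into the Fock space $\mathcal{F}$ that identifies the highest-weight vector of $V_\lambda$ with $|\mathbf{0}\rangle$ and sends the generators of the $\mathfrak{su}(d)$-action, rescaled by $1/\sqrt{n}$, to linear combinations of the creation/annihilation operators $a_{ij}^*, a_{ij}$ on the modes indexed by $1\le i<j\le d$. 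The inverse channel $S_n$ is built from the partial isometry of the truncation/compression $\mathcal{F}\to V_\lambda$ composed with the measurement of the multinomial-type sample on the classical side.

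The core quantitative step is showing that, uniformly over $\lambda$ in a high-probability ``typical'' set where $\lambda/n$ is within $n^{-1/2+\gamma'}$ of $\mu$, the rotated state $U(\mathfrak{z}/\sqrt{n})^{\otimes n}$ restricted to $V_\lambda$ is close, in trace norm, to $D(\mathfrak{z})\,\phi^{V_\lambda}_\beta\,D(\mathfrak{z})^*$ after transport by $V_\lambda^n$; and in the limit $V_\lambda^n \phi^{V_\lambda}_\beta (V_\lambda^n)^*$ is close to $\phi^{\mathfrak{z}}$ on $\mathcal{F}$. This reduces to expanding the exponential $U(\mathfrak{z}/\sqrt{n})^{\otimes n}$ in terms of the $\mathfrak{su}(d)$-generators acting on $V_\lambda$, using the Baker--Campbell--Hausdorff formula to control commutator residues of size $n^{-1/2+2\beta}$, and identifying the leading order with Bogoliubov-type displacement operators on Fock. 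Weyl's character/dimension formulas and standard hook-length bounds control normalizations.

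The main obstacle is the uniformity of the error bound over the range $|\mathfrak{z}_{ij}|\le n^\beta$: the representation acting on $V_\lambda$ is genuinely non-commutative, so higher-order BCH terms produce contributions scaling like $n^{-1/2+3\beta}$ and worse, which force $\beta<1/9$ for the trace-norm error to vanish; an equally delicate point is controlling the tails where $\lambda/n$ leaves the typical set, since there the Fock approximation breaks down and one must use that the total probability of such $\lambda$ under $p^n_\lambda(\theta)$ is $O(n^{-\kappa})$ by large-deviation bounds for the multinomial, which are allowed to depend polynomially on $n^\gamma$, giving the constraint $\gamma<1/4$.
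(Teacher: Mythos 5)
Your overall strategy --- Schur--Weyl blocking of $\rho^{\theta,n}$, a classical kernel sending the Young-diagram law $p^n_\lambda$ to $\mathcal{N}_{d-1}(u,V_\mu)$, an isometry from each irreducible block into the Fock space $\mathcal{F}$ identifying the highest-weight vector with the vacuum, and a restriction to a typical set of $\lambda$ --- is exactly the route behind this theorem (which the paper itself only cites from Kahn and Guta, carrying out the program in detail for the low-rank analogue in Appendix C). The gap is in your core quantitative step, which as written would not go through. First, the natural spanning vectors $|\mathbf{m}_\lambda\rangle$ coming from semistandard tableaux are \emph{not orthogonal}, so an isometry ``sending the rescaled $\mathfrak{su}(d)$ generators to creation/annihilation operators'' cannot simply be declared: one must first prove the quasi-orthogonality estimates (the bound $\sum_{\mathbf{l}\neq\mathbf{m}}|\langle\mathbf{m}_\lambda|\mathbf{l}_\lambda\rangle|=O(n^{(9\eta-2)/12})$ in Appendix D) and then complete the resulting contraction to an isometry. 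Second, a BCH expansion controls matrix elements on (near-)coherent vectors, but the block states are mixed, thermal-like states; trace-norm closeness for mixed states is not implied by such matrix-element estimates. The paper handles this by writing the thermal states as Gaussian mixtures of coherent states (the integral representation in Appendix C.1), reducing everything to pure-state overlaps, which are then computed combinatorially via the column-determinant formula for $\langle f_{\mathbf{a}}|q_\lambda U^{\otimes n}f_{\mathbf{b}}\rangle$ rather than by BCH.

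Two further inaccuracies are worth flagging. Your attribution of the exponent constraints is off: terms of size $n^{-1/2+3\beta}$ would only require $\beta<1/6$; the threshold $\beta<1/9$ actually comes from the quasi-orthogonality error $n^{(9\eta-2)/12}$ combined with the requirement $2\beta<\eta<2/9$, while $\gamma<1/4$ comes from the $O(n^{-1/4+\gamma})$ error in matching the (smoothed) Young-diagram distribution to the Gaussian in the reverse direction --- the atypical-$\lambda$ mass is exponentially small and is not the source of that constraint. Relatedly, classical LAN (Theorem 3.1) gives asymptotic equivalence of experiments, not a quantitative total-variation rate for the discrete law of $\lambda$; to get the stated $O(n^{-\kappa})$ bound on the classical part one needs the multinomial approximation of $p^n_\lambda$ followed by an explicit smoothing kernel (a local-CLT-type construction, the kernel $\tau^n_\lambda$ in the paper), and this is also where the reverse channel's $n^{-1/4+\gamma}$ term arises.
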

Now we are ready to state the low-rank version of the above theorem. Recall that a qudit of rank $r$ can be parametrized as $\rho_{\theta,r}$ (see equation (\ref{local_qudit_r}) and use the alternate notation, i.e. $\theta$ instead of $\vartheta$) and consider the corresponding Gaussian state given in (\ref{Gaussian_model_low}) indexed by a local parameter $\theta$. Define 
$$\Theta_{n,r,\beta,\gamma}=\{(u,\mathfrak{z}):|u_{k}|\leq n^{\gamma},|\mathfrak{z}_{ij}|\leq n^{\beta} \ \forall \ 1\leq k\leq r-1,\ijr\},$$

for some $\beta>0,\gamma>0$.
We now have the low-rank versions of the earlier models:
$$\mathcal{Q}^r_n=\{\rho^{\theta,r,n}:\theta\in \Theta_{n,r,\beta,\gamma}\}\quad \mathcal{R}^r_n=\{\Phi^{\theta,r}:\theta\in \Theta_{n,r,\beta,\gamma}\}.$$
The following theorem shows that the above models are asymptotically equivalent.

\begin{theorem}{\label{channels_for_low_rank}}
Recall the Fock space described in (\ref{low_fock}). Then for $0<\gamma<1/4$ and $0<\beta<1/9$ there exist quantum channels $T^r_n$ and $S^r_n$
\begin{align*}
    &T^r_n:M(\mathbb{C}^d)^{\otimes n}\rightarrow L^1(\mathbb{R}^{r-1})\otimes \mathcal{T}_1(\mathcal{F}^r)\\
    &S^r_n:L^1(\mathbb{R}^{r-1})\otimes \mathcal{T}_1(\mathcal{F}^r)\rightarrow M(\mathbb{C}^d)^{\otimes n} \\
\end{align*}
such that 
\begin{equation}
    \sup_{\theta\in \Theta_{n,r,\beta,\gamma}}||\Phi^{\theta,r}-T^r_n(\rho^{\theta,r,n})||_1=O(n^{-\kappa})\label{forward_channel}
    \end{equation}
    \begin{equation}
   \sup_{\theta\in \Theta_{n,r,\beta,\gamma}}||S^r_n(\Phi^{\theta,r})-\rho^{\theta,r,n}||_1=O(n^{-\kappa}),\label{reverse_channel}
   \end{equation}
   
  for some $\kappa>0$ which depends on $\beta$ and $\gamma$.
\end{theorem}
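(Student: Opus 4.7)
The plan is to adapt the proof strategy of Theorem \ref{full_rank_LAN} from \cite{Kahn&Guta} to the rank-$r$ setting, with the key modifications arising from the $d-r$ vanishing eigenvalues of $\rho_{0,r}$. The starting point is the Schur-Weyl decomposition
\[
(\mathbb{C}^d)^{\otimes n} \;=\; \bigoplus_{\lambda \vdash n,\, \ell(\lambda) \leq d} \mathcal{H}_\lambda \otimes \mathcal{K}_\lambda,
\]
under which the $SU(d)\times S_n$ action is isotypical. Since $\rho_{0,r}$ has rank $r$, the state $\rho_{0,r}^{\otimes n}$ is supported only on the blocks with $\ell(\lambda)\leq r$ and is of the form $\rho_\lambda \otimes \mathbf{1}_{\mathcal{K}_\lambda}/\dim\mathcal{K}_\lambda$ on each such block, weighted by the Schur-Weyl probability $p_\lambda^n$. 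The forward channel $T^r_n$ is constructed as the composition of (a) the Schur-Weyl isometric projection, (b) a partial trace over the multiplicity space $\mathcal{K}_\lambda$ (which is maximally mixed and carries no information about $\theta$), and (c) an isometric embedding of the Gelfand-Tsetlin basis of $\bigoplus_{\ell(\lambda)\leq r}\mathcal{H}_\lambda$ into $L^2(\mathbb{R}^{r-1})\otimes \mathcal{F}^r$ indexed by (classical position, Fock number) labels, in the spirit of the construction in \cite{Kahn&Guta}.

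Step 2 matches three families of contributions with the three factors of $\Phi^{\theta,r}$ in (\ref{Gaussian_model_low}). First, the marginal distribution of the rescaled row lengths $((\lambda_1 - n\mu_1)/\sqrt{n},\ldots,(\lambda_{r-1} - n\mu_{r-1})/\sqrt{n})$ under the perturbed Schur-Weyl measure $p_\lambda^n(\theta)$ converges to $\mathcal{N}_{r-1}(u,V_\mu)$ by an $(r-1)$-dimensional central limit theorem for restricted Schur-Weyl distributions, which supplies the classical factor. Second, the off-diagonal perturbations indexed by pairs $(i,j)$ with $1\leq i<j\leq r$ are handled verbatim as in the full-rank proof, since both $\mu_i,\mu_j>0$; the rescaled generators $T_{j,k}/\sqrt{n(\mu_j-\mu_k)}$ act asymptotically as bosonic creation/annihilation operators on a harmonic oscillator at inverse temperature $\beta_{ij}=\ln(\mu_i/\mu_j)$, giving the thermal factors $\phi^{\mathfrak{z}_{ij}}_{\beta_{ij}}$. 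Third, the new pairs $(i,j)$ with $1\leq i\leq r<j\leq d$ correspond formally to $\beta_{ij}=\infty$; after rescaling by $\sqrt{\mu_i-\mu_j}=\sqrt{\mu_i}$, the perturbed state must concentrate on a shifted pure coherent state $\phi^{\mathfrak{z}_{ij}}_\infty$ rather than a non-trivial thermal state.

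The main obstacle lies in establishing this last family of convergences. Because $\lambda_j=0$ holds with overwhelming probability under $p_\lambda^n(\theta)$ for $j>r$ (concentration of the Schur-Weyl measure around the spectrum of $\rho_{0,r}$), the corresponding Fock-space modes are effectively pinned at the vacuum. A careful asymptotic analysis of the matrix elements of $T_{i,j}$ and $T_{j,i}$ between Gelfand-Tsetlin vectors of $\mathcal{H}_\lambda$ with $\lambda_j=0$ must then show that the rescaled generators act, on this vacuum sector, as $\sqrt{\mu_i}\,(a+a^*)$ and $i\sqrt{\mu_i}\,(a-a^*)$ up to a vanishing remainder, so that conjugation by $U^r(\mathfrak{z}/\sqrt{n})$ reproduces the displacement operator $D(\mathfrak{z}_{ij})$ and produces $\phi^{\mathfrak{z}_{ij}}_\infty$. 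The needed tail bound $\mathbb{P}(\lambda_j\geq C\log n \text{ for some } j>r)=O(n^{-\kappa})$ follows from standard large-deviation estimates for Schur-Weyl sampling with a low-rank target distribution. Construction of the reverse channel $S^r_n$ then proceeds via a Stinespring dilation of $T^r_n$ (embedding the Fock-space output into an ancilla and preparing a suitable pure state on the complement), and the uniformity over $\Theta_{n,r,\beta,\gamma}$ with rate $n^{-\kappa}$ is obtained by the same polynomial bookkeeping as in the full-rank proof, where the restrictions $\gamma<1/4$ and $\beta<1/9$ arise from requiring all resulting error terms in $n^\beta$ and $n^\gamma$ to be $o(1)$.
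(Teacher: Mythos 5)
Your overall strategy is the same as the paper's: restrict the Schur--Weyl decomposition to diagrams with at most $r$ rows, send the block-label distribution to the classical Gaussian, map the weight (Gelfand--Tsetlin type) basis of each $\mathcal{H}_\lambda$ into the Fock number basis, and identify the pairs $1\leq i<j\leq r$ with thermal modes and the pairs $j>r$ with vacuum modes that the rotations merely displace, yielding the coherent factors in (\ref{Gaussian_model_low}). Two remarks on accuracy: the weight on diagrams with $\lambda_{r+1}>0$ is exactly zero (the rank-$r$ state annihilates every vector whose tableau has more than $r$ rows), so no tail bound of the form $\mathbb{P}(\lambda_j\geq C\log n)$ for $j>r$ is needed; the concentration that is actually required is for $\lambda_i$, $i\leq r$, around $n\mu_i$ (the analogue of Lemma \ref{extreme_rep}). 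Also, for the classical factor a plain CLT is not enough: the theorem requires an $L^1$ (total variation) approximation with a polynomial rate, uniformly over $\Theta_{n,r,\beta,\gamma}$, which in the paper goes through a comparison with a multinomial and a smoothing Markov kernel $\tau^{r,n}_\lambda$ (Lemma \ref{approx_classical}), not through weak convergence.

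The genuine gaps are twofold. First, your reverse channel does not work as described: a Stinespring dilation of $T^r_n$ represents $T^r_n$, it does not produce a channel from $L^1(\mathbb{R}^{r-1})\otimes\mathcal{T}_1(\mathcal{F}^r)$ back to $M(\mathbb{C}^d)^{\otimes n}$ approximating the inverse, and it says nothing about how the continuous classical Gaussian is to be mapped back to the discrete set of Young diagrams. The paper instead builds $S^r_n$ explicitly as a pseudo-inverse: a discretizing kernel $\sigma^{r,n}$ sending $x\in\mathbb{R}^{r-1}$ to a diagram $\lambda_x$, and on the quantum side the map $\phi\mapsto T^{r*}_\lambda\phi+(1-\mathrm{Tr}(T^{r*}_\lambda\phi))\,|\mathbf{0}_\lambda\rangle\langle\mathbf{0}_\lambda|$ tensored with the normalized identity on $\mathcal{K}_\lambda$, and then reuses the forward estimates to get the same $O(n^{-\kappa})$ rate. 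Second, the analytic core of the theorem is asserted rather than proved: the vectors $|\mathbf{m}_\lambda\rangle$ one naturally works with are not orthogonal, so one must prove quantitative quasi-orthogonality and build the isometry $V^r_\lambda$ from it (Lemmas \ref{non-orth}--\ref{V_approx}), and one must establish, uniformly over $\lambda\in\Lambda_1\cap\Lambda_2$, $\theta\in\Theta_{n,r,\beta,\gamma}$ and the coherent mixing variable, trace-norm bounds comparing the unshifted block states with $\phi^{0,r}$, the finite-dimensional rotations with displacement operators, and the composition of finite-dimensional shifts with the Weyl relations (Lemmas \ref{comparison_unshifted}--\ref{group_structure}), all with explicit polynomial rates that are then balanced to produce $\kappa>0$ under $\gamma<1/4$, $\beta<1/9$. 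Your "careful asymptotic analysis of matrix elements on the vacuum sector" is precisely this content; without it (or an equivalent quantitative argument) the proposal is a roadmap rather than a proof.
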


We observe that the classical part corresponds to a low dimensional ($r-1$- variate) normal experiment while the quantum part contains two subparts. When $\mu_j$ and $\mu_k$ are both positive (i.e. $1\leq j<k \leq r$), we get shifted thermal states with temperatures given as before ($\beta_{jk}=\ln(\mu_j/\mu_k)$). When $\mu_j>0$ and $\mu_k=0$, we get shifted pure states. We can compare our result with other available results in quantum LAN. 

\textit{Comparison with other LAN results}
\begin{enumerate}
    \item The diagonal case: In the absence of the rotation by unitaries the diagonal state represents a local multinomial model and the limiting model contains only the classical Gaussian part, i.e. $\mathcal{N}_{r-1}(u,V_{\mu})$, and we recover the classical LAN result of Theorem \ref{classical_LAN}. A global version of this approximation (with possibly increasing dimension of the multinomial) was obtained in \cite{MR1922539}.\\ 
    \item $d=2,r=1$: In this case, we observe that the limiting model consists of a single shifted pure Gaussian state and no classical component. This case was discussed in  \cite{Guta_Kahn_qubit} using a heuristic argument.\\
    \item $d=r>2$: The limiting model consists of a $d-1$ dimensional normal distribution in the classical part and only shifted thermal states in the quantum part. We recover the full rank case of \cite{Kahn&Guta}, i.e. Theorem \ref{full_rank_LAN}.\\
    \item $d>r=1$: The limiting model consists of $d-1$ shifted pure states and no classical component. The case when $d=\infty$ and $r=1$,  i.e. the case of infinite dimensional pure states was treated in \cite{BGN-QAE} under a slightly different parametrization. The limiting model in \cite{BGN-QAE} is quantum white noise, or equivalently a tensor product of infinitely many shifted pure states. A slight modification of the proof of Theorem 4.1 in \cite{BGN-QAE-SUPP} (proving it for finite dimensional pure states) shows that the limiting model is indeed a tensor product of $d-1$ shifted pure states and agrees with our current result.\\
\end{enumerate} 
\section{Measurement of the shift parameter for a Gaussian state}

In this section, we consider measurement of the shift parameter $\mu$ of the model $\rho=\mathbb{N}_2(\mu,\sigma^2I_2)$. We describe the particular generalized measurement (the \textit{covariant measurement}) that is used to measure the shift parameter. It can be shown that a Bayes estimator can be constructed by appropriately ``shrinking'' the covariant measurement.

An essential fact is that the coherent vectors $\left\{  \left\vert \psi
_{m}\right\rangle ,m\in\mathbb{R}^{2}\right\}  $ form an ``overcomplete system''
(if multiplied by a factor $1/\sqrt{2\pi}$), i.e. fulfill
\begin{equation}
\frac{1}{2\pi}\int_{\mathbb{R}^{2}}\left\vert \psi_{m}\right\rangle
\left\langle \psi_{m}\right\vert dm=\mathbf{1}, \label{completeness-relation}%
\end{equation}
where $\mathbf{1}$ is the identity operator in the Hilbert space $\mathcal{H}%
=L^{2}\left(  \mathbb{R}\right)  $ (see equation 3.5.45, p. 101 of  \cite{Hol-2nd-ed-11}, 
with proof after Proposition 3.5.1). A complete orthonormal system $\left\{  \left\vert
\psi_{m}\right\rangle \right\}  $ is an example of an overcomplete system (with integration replaced by summation);
however, in the general case the vectors $\psi_{m}$ can be non-orthogonal and
linearly dependent. The system of coherent vectors $\left\{  \left\vert
\psi_{m}\right\rangle ,m\in\mathbb{R}^{2}\right\}  $ generates a
\textit{resolution of the identity}, i.e. a normalized POVM\ $M$ on the
measurable space $\left(  \mathbb{R}^{2},\mathfrak{B}\right)  $ (with
$\mathfrak{B}$ being the Borel sigma-algebra) according to
\begin{equation*}
M\left(  B\right)  =\frac{1}{2\pi}\int_{B}\left\vert \psi_{m}\right\rangle
\left\langle \psi_{m}\right\vert dm\text{, }B\in\mathfrak{B}\text{.}
\label{basic-povm}%
\end{equation*}
The POVM\ $M$ then generates a (generalized) observable $X_{M}$ with values in
$\mathbb{R}^{2}$ which under the state $\rho$ has probability distribution
\begin{equation*}
P\left(  X_{M}\in B\right)  =\mathrm{Tr\;}\rho M\left(  B\right)  \text{,
}B\in\mathfrak{B}\text{.} \label{X_M-def}%
\end{equation*}
This is called the \textit{canonical covariant measurement} in Section 3.6 of
\cite{Hol-2nd-ed-11}, covariance referring to the action of the Weyl unitaries
(or the displacement operators). There also optimality properties are proved,
as well as equivalence to simple measurements on an extended system
(corresponding to an \textit{orthogonal} resolution of the identity there). When the covariant measurement is clear from the context we will often write $X_M$ as $X$. It can be shown that when $\rho$ is a shifted pure state, i.e. $\rho=\mathbb{N}_2\left(\mu,I_2/2\right)$ then $X\sim \mathcal{N}_2(\mu,I_2)$ and if $\rho$ is a shifted thermal state, i.e. $\rho=\mathbb{N}_2\left(\mu,\sigma^2_{\beta}I_2\right)$ then $X\sim \mathcal{N}_2\left(\mu,\frac{(2\sigma_{\beta}^2
+1)}{2}I_2\right)$.

An equivalent description can be given  as follows; cf.  Proposition 3.6.1 of \cite{Hol-2nd-ed-11} and also relation (3.18) in \cite{Hol-78}.  Let $\mathcal{H}%
=L^{2}\left(  \mathbb{R}\right)  $ and let $\mathcal{H}_{0}$ be an
identical Hilbert space with canonical observables $Q_{0}$ and $P_{0}$. In the
tensor product $\mathcal{H\otimes H}_{0}$, consider the operators
\begin{equation*}
\tilde{Q}=Q\otimes \mathbf{1}_{0}+\mathbf{1}\otimes Q_{0},\;\tilde{P}=P\otimes \mathbf{1}_{0}-\mathbf{1}\otimes
P_{0}, \label{operators-extended-system}%
\end{equation*}
where $\mathbf{1}_0$ is the identity operator in $\mathcal{H}_{0}$. Let $\rho$ be the
state in $\mathcal{H}$ to be measured and $\rho_{0}$ be an auxiliary
state in $\mathcal{H}_{0}$ to be chosen; then a simple measurement of
$\rho\otimes\rho_{0}$ can be understood as a ``randomized'' measurement of
$\rho$. These randomized measurements correspond to nonorthogonal resolutions
like (\ref{completeness-relation}).

It can be shown that $\tilde{Q}$ and $\tilde{P}$ commute, which means that the observables $\tilde{Q}$, $\tilde{P}$ are jointly
measurable in the system given by $\mathcal{H\otimes H}_{0}$. The operators
$\tilde{Q}$, $\tilde{P}$ are self-adjoint and thus generate jointly
distributed real valued random variables $X_{\tilde{Q}}$,
$X_{\tilde{P}}$. We define $\tilde{X}$ as follows 
\begin{equation}
\tilde{X}=\left(  X_{\tilde{Q}},X_{\tilde{P}}\right)  \label{X-tilde-def}%
\end{equation}
and it can be checked that if $\rho=\left\vert \psi_{\mu}\right\rangle
\left\langle \psi_{\mu}\right\vert $ and the auxiliary state 
$\rho_{0}$ is the vacuum $\rho_{0}=\left\vert \psi_{0}\right\rangle
\left\langle \psi_{0}\right\vert $ then the distribution of $\tilde{X}$ coincides with the distribution of $X$ (obtained with the covariant measurement), i.e. with $\mathcal{N}_2\left(  \mu,I_{2}\right)$. Similarly when  $\rho=\bb{N}_2(\mu,\sigma^2_{\beta}I_2)$ then the distribution of $\tilde{X}$ coincides with $\mathcal{N}_2\left(\mu,\frac{(2\sigma_{\beta}^2
+1)}{2}I_2\right)$.
 
Next, for the state $\bb{N}_2(\mu,\sigma^2I_2)$, we consider the problem of Bayes estimation with quadratic loss of the parameter
$\mu\in\mathbb{R}^{2}$ under a normal prior $\mu\sim \mathcal{N}_2\left(  0,\sigma
_{0}^{2}I_{2}\right)  $.The solution for a quadratic risk is  given in \cite{Hol-01}, p. 55, with
details and proofs in \cite{Hol-78}. Consider the
loss function
\[
L\left(  \hat{\mu},\mu\right)  =\left(  \hat{\mu}_{1}-\mu_{1}\right)
^{2}+\left(  \hat{\mu}_{2}-\mu_{2}\right)  ^{2}.
\]
The observable which is optimal in the Bayes sense (along with the equivalent POVM) is discussed in \cite{Holevo1973}, Section 3. By Proposition 3 there, the optimal POVM is given by (denoting $m=(x,y)$)
\[
M_{c}\left(  B\right)  =\frac{1}{2\pi c^{2}}\int_{B}\left\vert
\psi_{m/c}\right\rangle \left\langle \psi_{m/c}\right\vert
dm\text{, }B\in\mathfrak{B}\text{.}%
\]
From (\ref{completeness-relation}) and a change of variables, it can be easily verified that
\[
\frac{1}{2\pi c^{2}}\int_{\mathbb{R}^2}\left\vert
\psi_{m/c}\right\rangle \left\langle \psi_{m/c}\right\vert
dm=\mathbf{1}
\]
and hence $M_c$ is a resolution of identity (the other properties of a POVM are also easy to check). Here $c=2\sigma_0^2/(2\sigma_0^2+2\sigma^2+1)$ and the Bayes risk is given by (writing $\sigma^2$ as $\sigma^2_{\beta}$)
\begin{equation}
\inf_{M} R(M,\pi)=\frac{2\sigma_0^2(2\sigma^2_{\beta}+1)}{2(\sigma_0^2+\sigma^2_{\beta})+1}.
\label{Bayes-risk_thermal}
\end{equation}
When $\sigma^2=1/2$ (i.e. the state is pure), we have  $c=\frac{\sigma_0^2}{\sigma_0^2+1}$ and the Bayes risk according to (17) in \cite{Holevo1973} thus becomes
\begin{equation}
\inf_{M} R(M,\pi)=\frac{2\sigma_0^2}{\sigma_0^2+1}.
\label{Bayes-risk-pure}
\end{equation}
The randomized measurements are then given by
\begin{equation}
\tilde{Q}_c=c(Q\otimes \mathbf{1}_0+\mathbf{1}\otimes Q_0), \quad \tilde{P}_c=c(P\otimes \mathbf{1}_0-\mathbf{1}\otimes P_0),
\label{generalized-shrinkage}
\end{equation}
cf. equation (12) of \cite{Holevo1973}; it can easily be shown that these randomized measurements and $M_c$ generate the same random variables.

From (\ref{generalized-shrinkage}) it is clear that
$\tilde{X}_c=(X_{\tilde{Q}_c},X_{\tilde{P}_c})$ satisfies $\tilde{X}_c=c\tilde{X}$ where $\tilde{X}$ is given in (\ref{X-tilde-def}). As $c<1$, we note that Bayes estimation in the quantum case exhibits the same shrinkage phenomenon as witnessed in the classical counterpart. 

However we note that the risk in (\ref{Bayes-risk-pure}) is greater than the Bayes risk in the classical model $\mathcal{N}_2(\mu,I_2/2)$ with the same prior, i.e. we obtain $2\sigma_0^2/(\sigma^2_0+1)$ in the quantum case compared to $2\sigma_0^2/(2\sigma^2_0+1)$ in the classical case. A similar statement holds for the thermal state as well. This inflation of risk is essentially due to the quantum nature of the data, in particular to the non-commutativity of the observables $Q$ and $P$ (as a consequence of which the random variables generated by $Q$ and $P$ separately do not admit a joint distribution).

\section{Optimal estimation of the low-rank state}

In this section, we outline a two-stage procedure to construct an estimator which is asymptotically minimax optimal. To construct the estimator, we split the sample into two parts. In Subsection 5.1 we use the first part (which has sample size $\lfloor n^{\delta}\rfloor$) to obtain a preliminary estimator of $\rho$ which lies within a ball of shrinking radius, centered at $\rho$, with high probability. Then we treat the initial estimator $\tilde{\rho}_n$  as the central state and parametrize the original state $\rho$ as  $\rho_{\theta}$, so that it suffices to estimate $\theta$. Now we use Theorem 3.3 to approximate the i.i.d. model by a limiting Gaussian model using the remaining part of the sample of size $n-\lfloor n^{\delta}\rfloor$. The risk in the limiting Gaussian model is computed in Section 5.2. In Subsection 5.3, we use the channel $T_n^r$ and the risk transfer mechanism (\ref{risk-transfer}) to give an upper bound to the  risk in the original i.i.d model. To show that the constructed estimator is asymptotically minimax optimal we choose an arbitrary state $\sigma$ (which lies in ball of shrinking radius around $\rho$) as the central state and parametrize the original state $\rho$ with a local parameter $\theta$. Again, the i.i.d model is approximated by the limiting Gaussian model and using the channel $S_n^r$ and equation (\ref{risk-transfer}), we obtain a lower bound for the minimax risk in the i.i.d model. We show that the upper and lower bounds match, ensuring that the constructed estimator is asymptotically minimax optimal.

For notational convenience, we work with $n$ instead of $\lfloor n^{\delta}\rfloor$ while showing the properties of the preliminary estimator and replace $n$ with $\lfloor n^{\delta}\rfloor$ in the final computations.

\subsection{Preliminary estimator}

Rank penalized and ``physical'' estimators have been constructed in \cite{ButuceaGutaKypraios} in the context of estimation of the joint quantum state of k two-dimensional systems (qubits), i.e. in a special case of our setting where $d=2^k$. The authors obtain a least square estimator constructed using Pauli observables and then use rank penalization and spectral thresholding to obtain rank penalized and physical estimators, respectively. Not only do these estimators lie within a ball of radius $n^{-1/2+\epsilon}$ (for $\epsilon<1/2$) centered at the original state with high probability; they are also rank consistent in the sense that the ranks of estimated states match the rank of the original state with high probability. 

The general $d$ dimensional case has been considered in \cite{MR4093475}, where the authors construct a least squares estimator using a so-called uniform POVM and project it into the space of density matrices to obtain an estimator which lies within a ball of radius $n^{-1/2+\epsilon}$ (for arbitrary small $\epsilon$) centered at the original state with high probability. Since the authors are interested in the error of estimation in the trace norm, the rank consistency is not discussed in the paper. We show the least squares estimator of \cite{MR4093475}, when processed suitably like the physical estimator in  \cite{ButuceaGutaKypraios}, enjoys the rank consistency property. We note that in order to use LAN we need a local parametrization around a preliminary estimate of the state; it is essential that its rank matches that of the original state  and hence rank consistency is crucial. The following theorem (proved in Appendix C) summarizes the above discussion.

\begin{theorem}{\label{conc_prelim_est}}
Let $\rho$ be a rank $r$ state in $\bb{C}^d$ with the minimum eigenvalue $\lambda_r>6\epsilon$. Then there exists an estimator $\tilde{\rho}_{n}$ of $\rho$ such that the following concentration inequality holds:
\[P[||\rho-\tilde{\rho}_{n}||^2_2\geq 25r\epsilon^2]\leq de^{-3n\epsilon^2/16d}.\]
Further, if $\hat{r}$ is the rank of the state $\tilde{\rho}_{n}$, then 
\[P[r=\hat{r}]\geq 1- de^{-3n\epsilon^2/16d}.\]
\end{theorem}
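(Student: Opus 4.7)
The plan is to construct $\tilde\rho^n$ in two stages in the spirit of the physical estimator of \cite{ButuceaGutaKypraios}: first form the least-squares estimator $\hat\rho_{LS}$ from \cite{guta2018fast} by measuring each of the $n$ copies of $\rho$ with a uniform POVM and linearly inverting the empirical frequencies; then diagonalize $\hat\rho_{LS}=\sum_i\hat\lambda_i|\hat v_i\rangle\langle\hat v_i|$, keep only those eigenvalues exceeding a threshold $\tau=3\epsilon$, and renormalize the truncated matrix to trace one. Both claims of the theorem are then established on the high-probability event that $\hat\rho_{LS}$ is operator-norm close to $\rho$.

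The probabilistic input is a matrix Chernoff/Bernstein estimate: since the rescaled POVM outcomes are bounded self-adjoint random matrices with mean $\rho$, a standard operator-valued Hoeffding bound yields
\[
P\bigl[\|\hat\rho_{LS}-\rho\|_\infty \geq \epsilon\bigr] \leq d\exp\!\bigl(-3n\epsilon^2/(16d)\bigr),
\]
matching the tail in the theorem. Call the complementary event $E$. On $E$, Weyl's inequality gives $|\hat\lambda_i-\lambda_i|\leq\epsilon$ for every $i$, so the hypothesis $\lambda_r>6\epsilon$ together with $\lambda_{r+1}=\dots=\lambda_d=0$ implies $\hat\lambda_i>5\epsilon>\tau$ for $i\leq r$ and $|\hat\lambda_i|\leq\epsilon<\tau$ for $i>r$. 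Thresholding at $\tau$ therefore retains exactly the top $r$ eigenvalues and $\hat r=r$, which proves rank consistency.

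For the Hilbert-Schmidt bound, the key observation is that on $E$ both $\tilde\rho^n$ and $\rho$ have rank $r$, so their difference has rank at most $2r$ and $\|\tilde\rho^n-\rho\|_2\leq\sqrt{2r}\,\|\tilde\rho^n-\rho\|_\infty$; it therefore suffices to control $\|\tilde\rho^n-\rho\|_\infty$ by a small multiple of $\epsilon$. I would do this by triangle inequality: writing $\tilde\rho^n=\hat t^{-1}\tilde\rho^n_{\mathrm{thresh}}$, bound $\|\tilde\rho^n_{\mathrm{thresh}}-\hat\rho_{LS}\|_\infty\leq\epsilon$ (only eigenvalues of modulus at most $\epsilon$ are zeroed), $\|\hat\rho_{LS}-\rho\|_\infty\leq\epsilon$ directly on $E$, and $|\hat t-1|$ via the fact that $\hat t$ is the sum of the $r$ retained eigenvalues, each within $\epsilon$ of the corresponding $\lambda_i$ whose total is one. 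Assembling these contributions produces a bound of the form $\|\tilde\rho^n-\rho\|_\infty\leq C\epsilon$, which on squaring and multiplying by $2r$ yields $\|\tilde\rho^n-\rho\|_2^2\leq 25 r\epsilon^2$ provided the bookkeeping is tight enough.

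The main obstacle will be precisely this constant bookkeeping: arriving at the numerical constant $25$ (rather than a larger one) demands an efficient triangle-inequality chain and, critically, that the trace-normalization step does not introduce a spurious $r$- or $d$-dependent factor. This is handled by estimating the trace defect of $\hat t$ only through the retained, rank-$r$ block rather than by summing eigenvalue errors across the full spectrum. The remaining ingredients---the exact form of the operator-norm concentration with prefactor $3/(16d)$, the uniform POVM construction, and the physical-estimator thresholding---are standard and imported with minor adaptation from \cite{guta2018fast,ButuceaGutaKypraios}.
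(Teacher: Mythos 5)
Your first two steps coincide with the paper's: the uniform-POVM least-squares estimator of \cite{guta2018fast} with the operator-norm tail $d\,e^{-3n\epsilon^2/16d}$, and rank consistency via Weyl's inequality together with $\lambda_r>6\epsilon$ (your fixed threshold at $3\epsilon$ even makes that part cleaner than the paper's two-sided contradiction argument for the iterative $2\epsilon$ thresholding). The genuine gap is in the post-processing you choose for the Hilbert--Schmidt bound. The paper (following the ``physical'' estimator of \cite{ButuceaGutaKypraios}) restores trace one \emph{additively}: each retained eigenvalue becomes $\tilde{\lambda}_j=\hat{\lambda}_j+\hat{r}^{-1}\sum_{k>\hat r}\hat{\lambda}_k$, so the trace defect, which is of order $r\epsilon$, is spread over $r$ eigenvalues and each eigenvalue error stays $\leq 2\epsilon$; this is exactly what allows $\|\tilde D_n-D\|_2\leq 2\sqrt{r}\,\epsilon$ and, combined with $\|\hat U_nD\hat U_n^*-UDU^*\|_2\leq 2\sqrt{2r}\,\epsilon$, yields $(2+2\sqrt2)^2 r\epsilon^2\leq 25r\epsilon^2$. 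You instead divide the truncated matrix by its trace $\hat t$. On the event $\|\hat\rho_{LS}-\rho\|_\infty\leq\epsilon$ the defect $|1-\hat t|=\bigl|\sum_{i\leq r}(\lambda_i-\hat\lambda_i)\bigr|$ can genuinely be of order $r\epsilon$ (take $\hat\rho_{LS}=\rho+\epsilon P-\tfrac{r\epsilon}{d-r}(\mathbf{1}-P)$ with $P$ the support projection of $\rho$ and $d\geq 2r$), and multiplicative renormalization allocates that defect proportionally to eigenvalue size: if $\lambda_1$ is close to $1$, the renormalized top eigenvalue moves by roughly $(r-1)\epsilon$, so $\|\tilde\rho^n-\rho\|_2^2\gtrsim (r-1)^2\epsilon^2$, which already exceeds $25r\epsilon^2$ for $r\geq 27$. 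Your proposed remedy---computing the defect ``through the retained rank-$r$ block''---does not help, because that block is precisely where the $r\epsilon$ defect lives; the problem is intrinsic to dividing by $\hat t$, not to how $\hat t$ is estimated.

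Two further bookkeeping points. First, your route ``bound $\|\tilde\rho^n-\rho\|_\infty\leq C\epsilon$ and multiply by $\sqrt{2r}$'' requires $C\leq\sqrt{12.5}\approx 3.5$; even with the correct additive redistribution the natural operator-norm chain gives $C=4$ (eigenvalue part $2\epsilon$ plus conjugation part $2\epsilon$), i.e. $32r\epsilon^2$. The paper avoids this loss by splitting in Hilbert--Schmidt norm directly, $\|\tilde\rho^n-\rho\|_2\leq\|\tilde D_n-D\|_2+\|\hat U_nD\hat U_n^*-UDU^*\|_2$, bounding the diagonal term by $2\sqrt r\,\epsilon$ (not $2\sqrt{2r}\,\epsilon$) and only the conjugation term through rank $\leq 2r$. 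Second, note that the discarded eigenvalues need not be nonnegative, so statements like ``$\hat t$ is the sum of the retained eigenvalues, each within $\epsilon$ of $\lambda_i$'' should be phrased to allow signed errors; this does not change the conclusion above but matters if you try to sharpen $|1-\hat t|$. With the additive redistribution in place of the division by $\hat t$, the rest of your outline goes through and reproduces the paper's proof.
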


\subsection{Optimal risk in the Gaussian model}
Here we consider minimax estimation in the limiting Gaussian model given in (\ref{Gaussian_model_low}).
Recall that the local parameter $\theta$ has two components: $u$  corresponds to the classical part and $\mathfrak{z}$ corresponds to the quantum part.
First, we define the loss function with respect to which the risk will be computed:
\begin{equation}
    \mathcal{L}(\theta,\hat{\theta})=\sum_{i=1}^{r-1}(u_i-\hat{u}_i)^2+(\sum_{i=1}^{r-1}(u_i-\hat{u}_i))^2+2\sum_{\substack{{1\leq i\leq r}\\{i<j\leq d}}}(\mu_i-\mu_j)|\mathfrak{z}_{ij}-\hat{\mathfrak{z}}_{ij}|^2.\label{loss_theta}
\end{equation}
 Although the particular form of loss may seem arbitrary, we will see that the $L^2$ distance between two density matrices is approximately locally quadratic and then given by this loss. The following theorem gives the asymptotic minimax bound for estimation of $\theta$:
\begin{theorem}{\label{optimal_risk_Gaussian}}
Let $\Phi^{\theta,r}$ be the limiting Gaussian model given in (\ref{Gaussian_model_low}), then 
\begin{align*}\lim_{n\rightarrow\infty}\inf_{\hat{\theta}}\sup_{\theta\in\Theta_{n,r,\beta,\gamma}}E_{\theta}[\mathcal{L}(\theta,\hat{\theta})]&=\lim_{n\rightarrow\infty}\inf_{M}\sup_{\theta\in\Theta_{n,r,\beta,\gamma}}\int\mathcal{L}(\theta,\hat{\theta})\mathrm{Tr}(\Phi^{\theta,r}M(d\hat{\theta}))\\&=\sum_{i=1}^r \mu_i(1-\mu_i)+\sum_{\substack{{1\leq i\leq r}\\{i<j\leq d}}}2\mu_i.\end{align*}
\end{theorem}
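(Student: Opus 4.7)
The approach is to establish matching upper and lower bounds on the minimax risk by exploiting the factorization $\Phi^{\theta,r} = \mathcal{N}_{r-1}(u, V_\mu) \otimes \phi^{\mathfrak{z},r}$ together with the additivity of the loss $\mathcal{L}$. For the upper bound I would use a product measurement: the MLE $\hat u = X$ on the classical sample $X \sim \mathcal{N}_{r-1}(u, V_\mu)$, combined with the canonical covariant measurement of Section 4 on each of the $r(r-1)/2$ thermal and $r(d-r)$ pure modes; since the modes live in distinct tensor factors these POVMs commute and compose to a joint measurement. A direct variance computation gives the classical contribution $\mathrm{Tr}(V_\mu) + \mathbf{1}^T V_\mu \mathbf{1} = \sum_{i=1}^r \mu_i(1-\mu_i)$ (using $\sum_{i=1}^r \mu_i = 1$ with $\mu_j=0$ for $j>r$). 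For each thermal mode, rewriting $2(\mu_i-\mu_j)|\mathfrak{z}_{ij}-\hat\mathfrak{z}_{ij}|^2 = (\mu_i-\mu_j)|\nu_{ij}-\hat\nu_{ij}|^2$ and using the identity $2\sigma_{\beta_{ij}}^2+1 = 2\mu_i/(\mu_i-\mu_j)$, the covariant measurement achieves risk $(\mu_i-\mu_j)(2\sigma_{\beta_{ij}}^2+1) = 2\mu_i$; each pure mode (where $\sigma^2=1/2$) contributes $\mu_i\cdot 2 = 2\mu_i$ analogously. Summing yields the asserted value uniformly over $\theta \in \Theta_{n,r,\beta,\gamma}$.

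For the lower bound I would use a Bayesian argument with a product Gaussian prior $\pi_\tau$ of variance $\tau^2$ in each real coordinate, truncated to $\Theta_{n,r,\beta,\gamma}$, giving the standard bound $\inf_M\int R(\theta,M)\pi_\tau(d\theta) \leq \inf_M\sup_\theta R(\theta,M)$. The key step is a modewise decomposition of the Bayes risk: for any joint POVM $M$ with outcomes in the product parameter space, the coordinate marginal $M_k$ on the outcome of mode $k$ still acts on the whole Hilbert space, but partially tracing against the prior-averaged off-mode state $\bar\rho_{\neq k}$ yields a legitimate single-mode POVM $\tilde M_k(B) := \mathrm{Tr}_{\neq k}\bigl[(\mathbf{1}_k\otimes\bar\rho_{\neq k})M_k(B)\bigr]$ on $\mathcal{H}_k$ that reproduces the $k$-th term of the prior-averaged expected loss. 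Additivity of $\mathcal{L}$ then gives that the Bayes risk of any $M$ is at least $\sum_k$ of the one-mode Bayes risks. These are evaluated using the classical Gaussian posterior (for the normal experiment) and the formulas \eqref{Bayes-risk_thermal}--\eqref{Bayes-risk-pure} (for the quantum modes): each thermal and pure mode, after weighting by $(\mu_i-\mu_j)$, has Bayes risk converging to $2\mu_i$ as $\tau\to\infty$, while the classical Bayes risk converges to $\mathrm{Tr}((I+\mathbf{1}\mathbf{1}^T)V_\mu) = \sum_{i=1}^r\mu_i(1-\mu_i)$. Choosing $\tau=\tau(n)\to\infty$ slowly enough that the prior mass outside $\Theta_{n,r,\beta,\gamma}$ vanishes makes the truncation effect negligible via standard tail bounds.

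The principal obstacle is the modewise decomposition of the Bayes lower bound, because a joint POVM can in general entangle across modes and need not be a tensor product. The partial-trace device $\tilde M_k$ above resolves this: positivity of partial trace and normalization of $\bar\rho_{\neq k}$ make $\tilde M_k$ a valid POVM on $\mathcal{H}_k$, and the Bayes risk it incurs under the marginal prior $\pi_k$ is bounded below by the corresponding one-mode infimum. The remaining technicalities --- transferring between truncated and untruncated Gaussian Bayes risks, and the classical shrinkage limit $(\tau^{-2}I+V_\mu^{-1})^{-1}\to V_\mu$ --- are routine given the explicit formulas recalled in Section 4.
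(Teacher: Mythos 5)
Your proposal is correct and takes essentially the same route as the paper: the upper bound via the classical estimator $Z\sim\mathcal{N}_{r-1}(u,V_{\mu})$ tensored with covariant measurements on each thermal and pure mode (yielding $2\mu_i$ per mode), and the lower bound via a Gaussian-prior Bayes risk evaluated with the formulas (\ref{Bayes-risk_thermal})--(\ref{Bayes-risk-pure}), with prior variance growing in $n$ and the truncation/tail terms handled as in the paper's Lemma on the boundary error. Your explicit partial-trace construction $\tilde{M}_k(B)=\mathrm{Tr}_{\neq k}[(\mathbf{1}_k\otimes\bar{\rho}_{\neq k})M_k(B)]$ simply makes rigorous the modewise Bayes-risk decomposition that the paper invokes implicitly, so it is a welcome clarification rather than a different method.
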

Note that the limiting model is a tensor product of classical and quantum shifted Gaussian states. To show the upper bound we use a classical estimator and a POVM for the 
classical and quantum parts respectively. For the classical part, we use $Z\sim\cal{N}_{r-1}(u,V_{\mu})$, as the estimator $\hat{u}$. For the quantum part, we note that each component of the tensored quantum Gaussian state is either a shifted thermal state or a shifted pure state indexed by $(i,j)$. We will use the covariant measurement $M$ (see Section 4) on each component and hence the joint POVM will be given by $\bar{M}=\bigotimes_{\substack{{1\leq i\leq r}\\{i<j\leq d}}}M$.

We use a Bayes risk to give a lower bound for the minimax risk, by setting up Gaussian priors for the parameters in both the classical and quantum parts. The Bayes estimator for the classical part is well known and is given by a shrinkage of the Gaussian random variable. For the quantum part, we directly compute the Bayes risk using equations (\ref{Bayes-risk_thermal}) or (\ref{Bayes-risk-pure}) depending on whether the component is a thermal or pure Gaussian state. We again note that the optimal POVMs for each component are obtained by appropriately shrinking covariant measurements. We defer the proof to the Appendix.
\subsection{Localization and the optimal risk}
Finally, we are ready to compute the exact asymptotics of the minimax risk in the low-rank qudit model. First, we state a lemma (proved in Appendix D) that shows that the $L^2$ distance between two states is approximately a quadratic loss in the local parameters.
\begin{lemma}{\label{local_quadratic_risk}}
Let $\rho_{\theta_1,n}$ and $\rho_{\theta_2,n}$ be two states indexed by the local parameters $\theta_1$ and $\theta_2$, where $\theta_1,\theta_2 \in \Theta$, then
\begin{align*}||\rho_{\theta^{(1)}/\sqrt{n},r}-\rho_{\theta^{(2)}/\sqrt{n},r}||^2_2=&\frac{1}{n}\sum_{i=1}^r(u^{(1)}_i-u^{(2)}_i)^2+\frac{2}{n}\sum_{\substack{{1\leq i\leq r}\\{i<j\leq d}}}(\mu_i-\mu_j)|\mathfrak{z}^{(1)}_{ij}-\mathfrak{z}^{(2)}_{ij}|^2\\&+O\left(\frac{||\theta^{(1)}||^3,||\theta^{(2)}||^3}{n^{3/2}}\right),\end{align*}
where $u^{(i)}_r=1-\sum_{k=1}^{r-1}u^{(i)}_k$ for $i=1,2$.
\end{lemma}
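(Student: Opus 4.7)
The strategy is to Taylor expand the smooth map $\theta\mapsto\rho_{\theta/\sqrt{n},r}$ around $\theta=0$, retain the first-order term, and compute the Hilbert--Schmidt norm of the difference of the two expansions. The quadratic form that arises will be shown to match the right-hand side exactly, while the expansion remainders contribute only $O(\|\theta\|^3/n^{3/2})$.

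First, writing $\rho_{\theta/\sqrt{n},r}=U^{r}(\mathfrak{z}/\sqrt{n})\bigl(\rho_{0,r}+D_{u}/\sqrt{n}\bigr)U^{r\ast}(\mathfrak{z}/\sqrt{n})$ with $D_u:=\mathrm{diag}(u_1,\ldots,u_r,0,\ldots,0)$, and expanding $U^{r}(\mathfrak{z}/\sqrt{n})=\mathbf{1}+iA(\mathfrak{z})/\sqrt{n}-A(\mathfrak{z})^{2}/(2n)+O(\|\mathfrak{z}\|^{3}/n^{3/2})$, where $A(\mathfrak{z})$ is the Hermitian generator appearing in the definition of $U^r$, one obtains
\begin{equation*}
\rho_{\theta/\sqrt{n},r}-\rho_{0,r}=\frac{1}{\sqrt{n}}L(\theta)+\frac{1}{n}C_{2}(\theta)+R_n(\theta),
\end{equation*}
where $L(\theta):=D_{u}+i[A(\mathfrak{z}),\rho_{0,r}]$ is Hermitian and linear in $\theta$, $C_2(\theta)$ is Hermitian and quadratic in $\theta$, and $\|R_n(\theta)\|_2=O(\|\theta\|^3/n^{3/2})$ uniformly. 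Subtracting the expansions for $\theta^{(1)}$ and $\theta^{(2)}$ and squaring in Hilbert--Schmidt norm gives a principal term $\frac{1}{n}\|L(\Delta\theta)\|_2^2$, a cross term of order $\frac{1}{n^{3/2}}\langle L(\Delta\theta),C_2(\theta^{(1)})-C_2(\theta^{(2)})\rangle = O\bigl((\|\theta^{(1)}\|^3+\|\theta^{(2)}\|^3)/n^{3/2}\bigr)$, and smaller remainders of order $\|\theta\|^4/n^2$ that are absorbed into the stated error.

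Next, I would expand the principal term:
\begin{equation*}
\|L(\Delta\theta)\|_{2}^{2}=\mathrm{Tr}(D_{\Delta u}^{2})+2i\,\mathrm{Tr}\bigl(D_{\Delta u}[\Delta A,\rho_{0,r}]\bigr)-\mathrm{Tr}\bigl([\Delta A,\rho_{0,r}]^{2}\bigr).
\end{equation*}
The first trace is $\sum_{i=1}^{r}(\Delta u_i)^{2}$. The cross trace vanishes: by cyclicity it equals $\mathrm{Tr}(\Delta A\,[\rho_{0,r},D_{\Delta u}])$, and the commutator vanishes because both $\rho_{0,r}$ and $D_{\Delta u}$ are diagonal. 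For the third trace, I use that $A(\mathfrak{z})$ is supported only on entries $(i,j)$ with $i\leq r$ or $j\leq r$, with $|A(\mathfrak{z})_{ij}|^{2}=|\zeta_{ij}|^{2}/(\mu_i-\mu_j)^{2}$, and that $[\Delta A,\rho_{0,r}]_{ij}=(\mu_{j}^{(r)}-\mu_{i}^{(r)})\,(\Delta A)_{ij}$, where $\mu_{k}^{(r)}:=\mu_{k}\mathbf{1}(k\leq r)$. A pair-by-pair sum then yields
\begin{equation*}
-\mathrm{Tr}\bigl([\Delta A,\rho_{0,r}]^{2}\bigr)=2\!\!\sum_{\substack{1\leq i\leq r\\ i<j\leq d}}\!\!(\mu_i-\mu_j)^{2}|(\Delta A)_{ij}|^{2}=2\!\!\sum_{\substack{1\leq i\leq r\\ i<j\leq d}}\!\!(\mu_i-\mu_j)\,|\Delta\mathfrak{z}_{ij}|^{2},
\end{equation*}
after substituting $\mathfrak{z}_{ij}=\zeta_{ij}/\sqrt{\mu_i-\mu_j}$; the cases $j\leq r$ (eigenvalue gap $\mu_i-\mu_j$) and $j>r$ ($\mu_j=0$, giving gap $\mu_i$) combine uniformly into this single sum. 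Dividing by $n$ gives the announced identity.

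The main obstacle is the bookkeeping in the final trace computation of Step 3: verifying that the two structurally different blocks of $A(\mathfrak{z})$—the skew-Hermitian-off-diagonal part between two positive eigenspaces and the part coupling the $r$-dimensional support of $\rho_{0,r}$ to the $(d-r)$-dimensional kernel—yield, after the substitution $\mathfrak{z}_{ij}=\zeta_{ij}/\sqrt{\mu_i-\mu_j}$, a single quadratic form with matching weights $(\mu_i-\mu_j)$. This is essentially the algebraic content mirroring why the limiting Gaussian experiment $\Phi^{\theta,r}$ contains both shifted thermal states (for $i<j\leq r$) and shifted pure states (for $i\leq r<j$), and the rest of the argument is routine Taylor-expansion control.
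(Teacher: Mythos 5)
Your argument is correct and is essentially the paper's own proof: both rest on the first-order Taylor expansion of $\rho_{\theta/\sqrt{n},r}$ in the local parameters, a direct Hilbert--Schmidt computation of the linear term, the substitution $\zeta_{ij}=\sqrt{\mu_i-\mu_j}\,\mathfrak{z}_{ij}$, and absorbing all higher-order contributions into the cubic remainder. The only cosmetic difference is that you verify the linear term via the generator/commutator identity $L(\theta)=D_u+i[A(\mathfrak{z}),\rho_{0,r}]$ and trace algebra, whereas the paper reads the same entries off the explicit linearized matrix in (\ref{rho.theta.tilde}).
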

In Subsection 5.1 we have used the first part of the sample to construct a preliminary estimator $\tilde{\rho}_n$ which lies within an  $n^{-1/2+\epsilon}$ ball of the original state $\rho$.

Note that  $\tilde{\rho}_n$ is of standard form $\tilde{\mathscr{V}} \tilde{\rho}_{0,n} \tilde{\mathscr{V}}^*$ with $\tilde{\rho}_{0,n} $ diagonal and $ \tilde{\mathscr{V}}$ unitary. Thus it can be brought into diagonal form by the channel $\tilde{L}: \rho \mapsto \tilde{\mathscr{V}}^* \rho \tilde{\mathscr{V}}$. All states parametrized by $\theta$ around the central state $\tilde{\rho}_{0,n}$ can be reparametrized around the central state $\tilde{\rho}_n$
by using the inverse transformation $\tilde{L}^*: \rho_{\theta} \mapsto \tilde{\mathscr{V}} \rho_{\theta} \tilde{\mathscr{V}}^*$. Since the norm $||.||_2$ is invariant under unitary transformation of states we will have
$$||\tilde{\rho}_{0,n}-\rho_{\theta}||_2=||\tilde{\rho}_n-L^*(\rho_{\theta})||_2.$$
Henceforth we will assume that $\tilde{\rho}_n$ is diagonal i.e. we set it to be  $\rho_{0,0,r}$ (see (\ref{local_unrotated_qudit})). Thus  the transformations $L$ and $L^*$ are understood to be part of the two stage estimation process, but in the notation, following \cite{Guta_Kahn_qubit,Kahn&Guta}, we will suppress them for the sake of brevity. 

Now $\rho$, which lies within a $n^{-1/2+\epsilon}$ ball around $\tilde{\rho}_n$, can be parametrized with the local parameter $\theta$, whereupon it is sufficient to estimate $\theta$ using $\hat{\theta}_n$. Thus we let $\rho=\rho_{\theta}$ and denote its estimator $\hat{\rho}_n$ by $\rho_{\hat{\theta}}$.  Let $\mu_1>\mu_2>\ldots>\mu_r>0$ be the eigenvalues of a density matrix $\sigma$ of rank $r$ and $||\rho-\sigma||_2<n^{-1/2+\varepsilon}$. Since (\ref{forward_channel}) holds one can easily transfer the risk between the i.i.d and Gaussian models invoking (\ref{equivalence-rate}) and (\ref{risk-transfer}), i.e we have
\begin{equation*}\limsup_{n\rightarrow\infty}\sup_{||\rho-\sigma||_2<n^{-1/2+\varepsilon}}E_{\rho}n||\rho-\hat{\rho}_n||_2^2\leq\sum_{i=1}^r \mu_i(1-\mu_i)+\sum_{\substack{{1\leq i\leq r}\\{i<j\leq d}}}2\mu_i \label{qudit_upper risk}.
\end{equation*}
The last part uses the upper bound of Theorem \ref{optimal_risk_Gaussian} and the continuity of eigenvalues, i.e. the eigenvalues of $\sigma$ and the central state $\tilde{\rho}_n$ are asymptotically the same (see Appendix C for details). For the lower bound we choose an arbitrary $\sigma$ and again parametrize the state and its estimate by local parameters $\theta$ and $\hat{\theta}$, i.e we let $\rho=\rho_{\theta}$ and $\hat{\rho}_n=\rho_{\hat{\theta}}$. Since (\ref{reverse_channel}) holds, we can  transfer the risk between the i.i.d and Gaussian models invoking (\ref{equivalence-rate}) and (\ref{risk-transfer}) and then using the lower bound of Theorem \ref{optimal_risk_Gaussian} we have
\begin{equation*}\liminf_{n\rightarrow\infty}\inf_{\hat{\rho}_n}\sup_{||\rho-\sigma||_2<n^{-1/2+\varepsilon}}E_{\rho}n||\rho-\hat{\rho}_n||_2^2\geq\sum_{i=1}^r \mu_i(1-\mu_i)+\sum_{\substack{{1\leq i\leq r}\\{i<j\leq d}}}2\mu_i \label{qudit_lower risk}.
\end{equation*}
We note that since the dimension $d$ is kept constant, the $||.||_1$ and $||.||_2$ norms are equivalent and hence we can easily replace the $L^2$ neighborhood with a trace norm neighborhood. We define the trace norm neighborhood as follows:
\begin{equation}\label{trace-norm-neighbor}
\Sigma_{n,\varepsilon}\left(  \sigma\right)  :=\left\{  \rho \text{ is a rank } r \text{  state in } \cal{H}:\left\Vert
\rho-\sigma\right\Vert _{1}\leq n^{-1/2+\varepsilon}\right\}.
\end{equation}
The upper  and  lower bounds  can now be summarized in the following theorem.
\begin{theorem}{\label{optimal_risk_iid}}
Let $\mu_1>\mu_2>\ldots>\mu_r>0$ be the eigenvalues of a density matrix $\sigma$ of rank $r$. Then there exists an $\varepsilon<1/2$ such that
\begin{equation*}\lim_{n\rightarrow\infty}\inf_{\hat{\rho}_n}\sup_{\rho \in \Sigma_{n,\varepsilon}\left(  \sigma\right)}E_{\rho}n||\rho-\hat{\rho}_n||_2^2=\sum_{i=1}^r \mu_i(1-\mu_i)+\sum_{\substack{{1\leq i\leq r}\\{i<j\leq d}}}2\mu_i.
\end{equation*}
\end{theorem}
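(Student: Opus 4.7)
The statement combines upper and lower bounds that have essentially been sketched already in (\ref{qudit_upper risk}) and (\ref{qudit_lower risk}). My plan is to organize the two-stage procedure into one clean argument that carefully uses Theorem \ref{conc_prelim_est}, Lemma \ref{local_quadratic_risk}, the LAN channels of Theorem \ref{channels_for_low_rank}, and the Gaussian minimax result of Theorem \ref{optimal_risk_Gaussian}.

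First I would set up the two-stage procedure. I would split the $n$ samples into a first block of size $\lfloor n^\delta\rfloor$ for some small $\delta>0$ (to be fixed so that the subsequent LAN window $n^{-\gamma},n^{-\beta}$ covers the concentration radius $n^{-1/2+\varepsilon}$), apply Theorem \ref{conc_prelim_est} to obtain a preliminary estimator $\tilde{\rho}^n$ of rank exactly $r$ with eigenvalues $\tilde{\mu}_1>\dots>\tilde{\mu}_r$, all within $n^{-1/2+\varepsilon'}$ of the eigenvalues of $\sigma$ with probability at least $1-de^{-cn^{2\varepsilon'}}$. On this high-probability event the local parametrization around $\tilde{\rho}^n$ is well defined, and the true state $\rho\in \Sigma_{n,\varepsilon}(\sigma)$ lies in the local window $\Theta_{n,r,\beta,\gamma}$ provided $\varepsilon<\beta\wedge\gamma$. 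On the complementary low-probability event, the trivial estimator $\hat\rho_n=\tilde{\rho}^n$ contributes at most $O(d\cdot n\cdot e^{-cn^{2\varepsilon'}})=o(1)$ to the risk since $\|\rho-\hat\rho_n\|_2^2$ is bounded by a constant.

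Next I would reduce the qudit problem to the Gaussian problem. Using Lemma \ref{local_quadratic_risk} applied with $\tilde{\rho}^n$ as the central state, I write $n\|\rho-\hat\rho_n\|_2^2=\mathcal{L}(\theta,\hat\theta)+R_n$, where $R_n$ is $O(n^{-1/2}(\|\theta\|^3+\|\hat\theta\|^3))$ uniformly over $\Theta_{n,r,\beta,\gamma}$. For the upper bound, I take the Gaussian-model estimator $\hat\theta_{\mathrm{Gauss}}$ from the proof of Theorem \ref{optimal_risk_Gaussian} and pull it back via $T^r_n$: set $\hat\theta_n=\hat\theta_{\mathrm{Gauss}}\circ T^r_n$. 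By (\ref{forward_channel}) combined with the transfer-of-risk computation in (\ref{risk-transfer}) (applied with $c_n$ a slowly growing sequence that dominates the loss on the event where the channel produces parameters outside $\Theta_{n,r,\beta,\gamma}$), the qudit risk is bounded by the Gaussian risk plus $o(1)$. For the lower bound, I start with any qudit estimator $\hat\rho_n$, push it forward via $S^r_n$ to get a measurement in the Gaussian model, and use (\ref{reverse_channel}) to conclude its Gaussian risk is within $o(1)$ of the original. Theorem \ref{optimal_risk_Gaussian} then supplies the matching bound $\sum_{i=1}^r\mu_i(1-\mu_i)+\sum_{1\le i\le r,i<j\le d}2\mu_i$.

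Finally I would handle the two technical points. First, since $\tilde{\rho}^n$ converges to $\sigma$ but our target state $\rho$ (and hence its eigenvalues) can be anywhere in $\Sigma_{n,\varepsilon}(\sigma)$, the parameters $\mu_i$ and $\beta_{ij}=\ln(\mu_i/\mu_j)$ of the limiting Gaussian depend on $\tilde{\rho}^n$ rather than on $\sigma$. Continuity of the eigenvalues and the loss (\ref{loss_theta}) in $\mu$ ensure that substituting $\tilde{\mu}_i$ for the true $\mu_i$ introduces only $o(1)$ error in the asymptotic risk. Second, the supremum in (\ref{trace-norm-neighbor}) is over a trace-norm ball, but the risk we study is Hilbert-Schmidt; in finite dimension $d$ the norms are equivalent, so restricting $\rho$ to the $L^2$ ball $\{\|\rho-\sigma\|_2\le n^{-1/2+\varepsilon''}\}$ with a slightly smaller $\varepsilon''$ is harmless for the bounds in (\ref{qudit_upper risk})-(\ref{qudit_lower risk}).

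The main obstacle I expect is controlling the remainder $R_n$ in Lemma \ref{local_quadratic_risk} and the error $c_n\|T^r_n(\rho^{\theta,r,n})-\Phi^{\theta,r}\|_1$ from (\ref{equivalence-rate}) uniformly in the supremum. Because $\mathcal{L}(\theta,\hat\theta)$ is unbounded on $\Theta_{n,r,\beta,\gamma}$ (it grows polynomially in $n^\beta,n^\gamma$), the constant $c_n$ in the risk-transfer inequality cannot be taken to be $O(1)$; one must either truncate the estimator $\hat\theta$ to a smaller compact set (losing only negligible mass under the priors/states used in Theorem \ref{optimal_risk_Gaussian}) or exploit the exponential rate $O(n^{-\kappa})$ in Theorem \ref{channels_for_low_rank} to absorb a polynomial factor $c_n=n^{O(\beta\vee\gamma)}$. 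The choice $0<\gamma<1/4$, $0<\beta<1/9$, and $\delta$ small enough must be balanced so that all three error terms (preliminary-estimator failure probability, LAN channel error times loss truncation, and local-quadratic remainder) are simultaneously $o(1)$.
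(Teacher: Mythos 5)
Your proposal follows essentially the same route as the paper's own proof: sample splitting with the rank-consistent preliminary estimator of Theorem \ref{conc_prelim_est}, localization via Lemma \ref{local_quadratic_risk} (centered at $\tilde{\rho}^n$ for the upper bound and at $\sigma$ for the lower bound), risk transfer through the channels of Theorem \ref{channels_for_low_rank} with the estimator truncated so that the polynomially bounded loss is absorbed by the $n^{-\kappa}$ equivalence rate, eigenvalue continuity, and the Gaussian minimax bound of Theorem \ref{optimal_risk_Gaussian} in both directions. The one quantitative correction is that the first block cannot have a ``small'' exponent $\delta$: since the concentration radius $n^{-1/2+\epsilon_0}$ with $\epsilon_0<\gamma<1/4$ forces $\delta>1-2\epsilon_0>1/2$ for the failure probability of Theorem \ref{conc_prelim_est} to vanish, $\delta$ must be taken larger than $1/2$ (while still $<1$ so that $n_2=n(1+o(1))$) — exactly the balancing you flagged but with the constraint coming from the concentration probability rather than only from the LAN window.
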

Note that in the above display we have used the first of the following equivalent expressions:
$$E_{\rho}n||\rho-\hat{\rho}_n||_2^2=\int n||\rho-\hat{\rho}_n||_2^2\Tr(\rho^{\otimes n}M(d\hat{\rho}_n))$$
and the infimum over $\hat{\rho}_n$ is an abbreviated notation for  an  infimum over all  POVMs $M$.
\section{Optimal estimation of linear functional of a low-rank state}

We first discuss the classical problem of estimation of a linear functional of a density. Consider i.i.d observations $X_{1},\ldots,X_{n}$ having density $f$ on
$[0,1]$, and consider a linear functional
\begin{equation*}
\Psi\left(  f\right)  =\int\varphi\left(  x\right)  f(x)dx,
\label{functional-1}%
\end{equation*}
where $\varphi$ is a bounded function on [0,1]. The estimator
\[
\hat{\Psi}_{n}=n^{-1}\sum_{i=1}^{n}\varphi\left(  X_{i}\right)
\]
satisfies the CLT
\[
n^{1/2}\left(  \hat{\Psi}_{n}-\Psi\left(  f\right)  \right)  \leadsto N\left(
0,V_{f}^{2}\right),
\]
where
\begin{equation*}
V_{f}^{2}=\mathrm{Var}_{f}\left(  \varphi\left(  X\right)  \right)  .
\label{Var-def}%
\end{equation*}
One can show by LAN type results that $\hat{\Psi}_{n}$ is asymptotically
optimal (see \cite{KosLev76}), i.e. $V_{f}^{2}$ is the best possible variance in a local asymptotic minimax sense. In this section, we discuss an analogous result for the quantum case.

Suppose $\rho$ is a state in a d-dimensional Hilbert
space $\mathcal{H}$ of rank $r$ and consider the functional
\begin{equation*}
\Psi\left(  \rho\right)  =\mathrm{Tr}\;(A\rho),\label{func-of-mixed-state}%
\end{equation*}
 where $A$ is a self-adjoint matrix. We will denote the operator norm of $A$ by $||A||$. %Recall the spectral theorem for
% possibly unbounded self-adjoint operators $A$: there is a POVM $M$ on $\left(
% \mathbb{R},\mathcal{B}_{\mathbb{R}}\right)  $ ($\mathcal{B}_{\mathbb{R}}$
% being the Borel $\sigma$-algebra) such that
% \[
% Ax=\int_{\mathbb{R}}tdM\left(  t\right)  x\text{, }%
% \]
% for all $x$ in the domain of $A$, i.e. all $x\in\mathcal{H}$ satisfying $\int t^{2}d\left\langle x,M\left(  t\right)
% x\right\rangle <\infty$, with $\left\langle x,y\right\rangle $ being the inner
% product of $\mathcal{\mathcal{H}}$ (\cite{MR1892228}, 32.1). The operator $A$
% is bounded if and only if $M$ is concentrated on a bounded set in $\mathbb{R}%
% $. Let $S\in$ $\mathcal{B}_{\mathbb{R}}$ be such a bounded set (containing the
% spectrum of $A$) and define
% \[
% C_{S}:=\sup\left\{  \left\vert t\right\vert :t\in S\right\}  ;
% \]
% then
% \[
% \int t^{2}d\left\langle x,M\left(  t\right)  x\right\rangle \leq C_{S}%
% ^{2}\;\left\langle x,M\left(  S\right)  x\right\rangle \leq C_{S}%
% ^{2}\left\Vert x\right\Vert ^{2}<\infty
% \]
% so the domain of $A$ coincides with $\mathcal{H}$.
Let $X_{A}$ be the r.v.
generated by $A$; then recall that the probability distribution of $X_A$ is given by (\ref{prob_measurement}) and we denote the expectation by 
% \begin{align*}
% P_{\rho}\left(  X_{A}\in B\right)   &  =\mathrm{Tr}\;\rho M\left(  B\right)
% \text{, }B\in\mathcal{B}_{\mathbb{R}}\\
% &  =\mathrm{Tr}\;\rho\int_{B}dM\left(  t\right)  ,
% \end{align*}%
$E_{\rho}X_{A}=\left\langle A\right\rangle _{\rho}$.
Also the variance of $X_{A}$ is given by
\begin{align}
\mathrm{Var}_{\rho}\left(  X_{A}\right)  &=\left\langle A^{2}\right\rangle
_{\rho}-\left\langle A\right\rangle _{\rho}^{2}=\left\langle \tilde{A}%
^{2}\right\rangle _{\rho}=:V_{\rho}^{2},\text{ where}\label{V-rho-def} \nonumber\\
\tilde{A}  &  :=A-\left\langle A\right\rangle _{\rho}\mathbf{1}.\nonumber%
\end{align}
Let $\Sigma_{n,\varepsilon}\left(  \rho_0\right)$ be a shrinking neighborhood in trace norm around a certain rank $r$ state $\rho_0$ defined as in (\ref{trace-norm-neighbor}).
 We aim to show that with a sample $X_{1},\ldots,X_{n}$, the estimator
$\bar{X}_{n}$ attains the asymptotic variance $V_{\rho_0}^{2}$ uniformly over
$\rho\in\Sigma_{n,\varepsilon}\left(  \rho_0\right)  $. For the purpose of proving an
asymptotic minimax theorem, 
consider the expression%
\[
\sup_{\rho\in\Sigma_{n,\varepsilon}\left(  \rho_0\right)  }E_{\rho}\left(
n^{1/2}\left(  \bar{X}_{n}-\Psi\left(  \rho\right)  \right) \right)^2  .
\]
We will show that this expression converges to $V_{\rho_0}^2$.

To give a lower bound to the risk, we will have to find, for a $d$-dimensional center state
$\rho_{0}$ and a parametrization of all $d$-dimensional nearby states, a ``least favorable parametric subfamily'' so that the
information bound for estimating $\Psi\left( \rho\right)  $ along this family
coincides with $V_{\rho_0}^{2}$. We consider the following local model indexed by the single parameter $t$:
\begin{equation*}
\rho_{t}=\rho_{0}+n^{-1/2}t H\text{, \ \ }t\in\left(  -n^{\varepsilon}
,n^{\varepsilon}\right),  
\end{equation*}
where the matrix $H$ is suitably chosen. The problem of estimation of the functional $\Psi$ can be shown to be equivalent to the estimation of the parameter $t$ in the limiting model of the form $\mathfrak{N}(t\tau,\mathscr{S})$ (see (\ref{classical_quantum})) for appropriately chosen $\tau$ and $\mathscr{S}$. We then use a Bayesian result from \cite{Hol-78} to establish a minimax bound for the estimation of $t$ in the limiting model.
We have the following theorem, the proof of which is deferred to Appendix C.
\begin{theorem}{\label{semiparametric}}
Let $\Psi$ be the functional given in (\ref{func-of-mixed-state}); then there exists an $\varepsilon<1/2$ such that
\begin{equation*}
\lim_{n\rightarrow \infty}\inf_{\hat{\Psi}}\sup_{\rho\in\Sigma_{n,\varepsilon}\left(  \rho_0\right)
}E_{\rho}\left( n^{1/2}\left(  \hat{\Psi}-\Psi\left(  \rho\right)
\right) \right)^2  = V_{\rho_0}^2.  \label{functional-risk-bound}%
\end{equation*}
\end{theorem}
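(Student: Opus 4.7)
The natural estimator is the sample mean $\bar X_n$ of the outcomes of $n$ independent measurements of the observable $A$. These outcomes are i.i.d.\ with mean $\Psi(\rho)$ and variance $V_\rho^2$, so $E_\rho\bigl(n^{1/2}(\bar X_n-\Psi(\rho))\bigr)^2 = V_\rho^2$ exactly. Continuity of $\rho\mapsto V_\rho^2$ in the trace norm, together with the shrinkage of $\Sigma_{n,\varepsilon}(\rho_0)$ at rate $n^{-1/2+\varepsilon}$, yields $\sup_{\rho\in\Sigma_{n,\varepsilon}(\rho_0)} V_\rho^2 \to V_{\rho_0}^2$, producing the upper bound matching \eqref{functional-risk-bound}.

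\textbf{Lower bound via a one-parameter subfamily.} For the matching lower bound, the plan is to restrict the supremum to a least-favorable one-parameter subfamily and exploit quantum LAN. For a self-adjoint traceless $H$ to be chosen, consider
\[
\rho_t=\rho_0+(t/\sqrt n)\,H,\qquad |t|\leq n^\varepsilon,
\]
with $\varepsilon$ small enough that $\rho_t$ has the correct rank and lies in $\Sigma_{n,\varepsilon}(\rho_0)$. Identifying $\rho_t$ with $\rho_{t\theta_H/\sqrt n,r}$ for a fixed direction $\theta_H$ in the local parametrization of Section 3.2, Theorem \ref{channels_for_low_rank} combined with the risk-transfer inequality \eqref{risk-transfer} applied to the reverse channel $S_n^r$ shows that the subfamily $\rho_t^{\otimes n}$ is asymptotically equivalent to a scalar classical-quantum Gaussian shift $\mathfrak N(t\tau_H,\mathscr S)$. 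Since $\Psi(\rho_t)-\Psi(\rho_0)=(t/\sqrt n)\,\mathrm{Tr}(AH)$, estimating $\Psi$ along this submodel is equivalent to estimating the scalar $t\,\mathrm{Tr}(AH)$ in the limiting Gaussian model; placing a centered normal prior on $t$ turns this into a one-dimensional Bayes problem whose Bayes risk is computed explicitly by the Holevo formula of Appendix B, and sending the prior variance to infinity yields the minimax lower bound for $t$, hence for $\Psi$.

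\textbf{Choice of $H$ and main obstacle.} It remains to select $H$ so that this Gaussian Bayes-minimax lower bound equals exactly $V_{\rho_0}^2 = \sum_{i=1}^r\mu_i\sum_{k=1}^d|\tilde A_{ik}|^2$. Decomposing $\tilde A := A - \langle A\rangle_{\rho_0}\mathbf 1$ in the eigenbasis of $\rho_0$ sorts its entries into three groups: diagonal entries $\tilde A_{ii}$ ($i\leq r$) that feed the classical factor $\mathcal N_{r-1}(u,V_\mu)$; off-diagonal entries $\tilde A_{ij}$ with $1\leq i<j\leq r$ that feed shifted thermal modes of temperature $\beta_{ij}=\ln(\mu_i/\mu_j)$; and entries $\tilde A_{ij}$ with $1\leq i\leq r<j\leq d$ that feed shifted pure modes. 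Choosing the components of $H$ in each group proportional to the corresponding entries of $\tilde A$ and aggregating the per-mode Bayes variances from Section 4, reweighted by the $(\mu_i-\mu_j)^{-1/2}$ factor built into $\mathfrak z_{ij}$, will reconstruct $V_{\rho_0}^2$. The main obstacle will be precisely this algebraic bookkeeping: verifying that the weights $\mu_i-\mu_j$ in the parametrization cancel with the thermal variances to produce the symmetric factor $\mu_i$ on each index pair, and that the classical and null-block pure-mode contributions combine to give the remaining diagonal and off-diagonal terms of $V_{\rho_0}^2$.
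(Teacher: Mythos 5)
Your outline follows the paper's route: the sample mean $\bar X_n$ with a trace-norm continuity argument for the upper bound, and for the lower bound a one-parameter subfamily $\rho_t=\rho_0+n^{-1/2}tH$, reduction via Theorem \ref{channels_for_low_rank} and the risk-transfer inequality to a one-dimensional classical-quantum Gaussian shift $\mathfrak N(t\tau,\mathscr S)$, and the Bayes bound of Appendix B with a Gaussian prior whose variance grows with $n$. Up to that point the plan is sound and essentially identical to the paper's proof.

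The genuine gap is in the step you defer as ``algebraic bookkeeping'': the choice of $H$. Taking the entries of $H$ (groupwise) proportional to the corresponding entries of $\tilde A=A-\langle A\rangle_{\rho_0}\mathbf 1$ is not the least favorable direction, and the bound it produces does not match $V_{\rho_0}^2$. The correct choice is the $\rho_0$-weighted symmetrization
\[
\hat H=\frac{\tilde A\rho_0+\rho_0\tilde A}{2\,\langle\tilde A^{2}\rangle_{\rho_0}},
\]
whose $(j,i)$ entry carries the pair-dependent weight $(\mu_i+\mu_j)/2$ (and $\mu_i$ on the diagonal), so it is \emph{not} obtained by a single proportionality constant per group. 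Two things go wrong with your recipe. First, tangency: a rank-$r$ perturbation of the form (\ref{local_qudit_r}) forces $H$ to vanish on the $(d-r)\times(d-r)$ block supported on the kernel of $\rho_0$; $\tilde A$ generally does not, whereas $\tilde A\rho_0+\rho_0\tilde A$ automatically does. Second, sharpness: with the Gaussian limit one gets the lower bound $(\tau_H^{T}\Sigma^{-1}\tau_H)^{-1}$ subject to $\mathrm{Tr}(AH)=1$, and each off-diagonal pair contributes $4|\zeta_{ij}|^2/(\mu_i+\mu_j)$ to $\tau_H^{T}\Sigma^{-1}\tau_H$; if $\zeta_{ij}=c\,A_{ji}$ with a common $c$, these terms do not reproduce the weights $|A_{ji}|^2(\mu_i+\mu_j)$ appearing in $V_{\rho_0}^2$, and by the Cauchy--Schwarz argument underlying the least favorable direction the resulting bound is strictly below $V_{\rho_0}^2$ unless $A$ has special structure. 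With $\hat H$ as above one has $\zeta_{ij}\propto A_{ji}(\mu_i+\mu_j)$ and $u_i\propto(A_{ii}-x)\mu_i$, and then the computation $\tau^{T}\Sigma^{-1}\tau=\langle\tilde A^{2}\rangle_{\rho_0}^{-1}$ (using $[V_\mu^{-1}]_{ij}=\mu_i^{-1}\delta_{ij}+\mu_r^{-1}$ for the classical block) closes the argument exactly as in the paper. So the missing idea is not bookkeeping but the identification of the least favorable direction as the anticommutator $\tilde A\rho_0+\rho_0\tilde A$; once that is fixed, the rest of your plan goes through.
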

Again note that in the above display we have used the first of the following equivalent expressions:
$$E_{\rho}\left( n^{1/2}\left(  \hat{\Psi}-\Psi\left(  \rho\right)
\right) \right)^2=\int \left( n^{1/2}\left(  \hat{\Psi}-\Psi\left(  \rho\right)
\right) \right)^2\Tr(\rho^{\otimes n}M(d\hat{\Psi}))$$
and the infimum over $\hat{\Psi}$ stands for an infimum taken over all POVMs $M$.
\section{Discussion}
We have obtained a generalization of the LAN result of \cite{Kahn&Guta} in this paper. Since a local asymptotic equivalence result was proved in \cite{BGN-QAE} for rank 1 states in the infinite dimensional case, a natural question is about the limiting model in the case of an ensemble of rank $r$ states if the Hilbert space is infinite dimensional. We conjecture that the limiting state will consist of three parts:   a classical multivariate normal law, a tensor product of $r(r-1)/2$ shifted thermal states and a tensor product of infinitely many shifted pure states. Although this result is outside the scope of this paper, we observe that a restriction of the conjectured model to finite dimension will correspond to our result while a restriction to the rank 1 case (in infinite dimension) will correspond to the result obtained in \cite{BGN-QAE}.

Another interesting phenomenon occurs in the estimation of a functional where the limiting model is a tensor product of classical and quantum states indexed by a one-dimensional parameter. It is well known that if the loss function $L(\hat{\mu},\mu)$ in Section 4 is replaced by a weighted loss function $L_g(\hat{\mu},\mu)$ given by
$$L_g\left(  \hat{\mu},\mu\right)  =g_1\left(  \hat{\mu}_{1}-\mu_{1}\right)
^{2}+g_2\left(  \hat{\mu}_{2}-\mu_{2}\right)  ^{2},$$
then for $g_1=1$ and $g_2=0$, the POVM induced by $Q$ suffices for optimal estimation, i.e. the Bayes risk is obtained by a simple measurement (the spectral measure of $Q$) and the associated Bayes risk matches the one in the the classical bivariate normal distribution $\mathcal{N}_2(\mu,\sigma^2I_2)$. Analogously the risk in the limiting model for the functional estimation matches the same for a related classical model (see the remark after the proof of Theorem 6.1 in Appendix C). This is in sharp contrast with the risk observed in the measurement of the shift parameter of the coherent or thermal state ($g_1=g_2=1$ in the expression of $L_g(\hat{\mu},\mu)$), where we observe an inflation of the risk compared to the classical case (see the last paragraph of Section 4). However, we emphasize that the classical-quantum Gaussian shift model indexed by a one-dimensional parameter is \textit{not} equivalent to a classical model (in terms of Le Cam equivalence). A discussion of this phenomenon in a similar model can be found in \cite{Guta_Kahn_qubit} where the authors argue that although measuring the position observable results in a classical model whose ``classical Fisher information'' matches that of the quantum Fisher information of the original model, optimal testing procedures in the classical model are suboptimal in the sense that the associated risk is higher than that of the optimal testing procedure of the original quantum model. A more in-depth discussion in terms of sufficient statistics and Le Cam equivalence can be found in \cite{GutaJencova2007}.

\textbf{Acknowledgements}

We would like to thank the associate editor and two anonymous referees whose suggestions helped in improving the quality of the paper.

%%%%%%%%%%%%%%%%%%%%%%%%%%%%%%%%%%%%%
\bibliographystyle{plain}
\bibliography{bib_low_LAN_v2}

\appendix

\section{Representation theoretic tools}
Our main tool in proving Theorem 3.3 will be the representation of the special unitary group $SU(d)$. We first define the representations of the group $GL(d)$ and $S(n)$ (the group of invertible matrices and permutations respectively) on $(\mathbb{C}^d)^{\otimes n}$. Let $f_{\mathbf{a}}=f_{{\mathbf{a}_1}}\otimes\ldots\otimes f_{{\mathbf{a}_n}}$,where $f_1,\ldots,f_d$ are basis elements of $\bb{C}^d$ and $\mathbf{a}_i\in\{1,\ldots,d\}$ (note that $f_{\mathbf{a}}$'s span $(\bb{C}^d)^{\otimes n}$) and consider the following actions:
\begin{align}
    \pi_n(T): f_{\mathbf{a}_1}\otimes\ldots\otimes f_{\mathbf{a}_n}&\rightarrow Tf_{\mathbf{a}_1}\otimes\ldots\otimes Tf_{\mathbf{a}_n}, \quad T\in GL(d)\label{T_action}\\
    \tilde{\pi}_n(\sigma): f_{\mathbf{a}_1}\otimes\ldots\otimes f_{\mathbf{a}_n}&\rightarrow f_{\sigma^{-1}(\mathbf{a}_1)}\otimes \ldots\otimes f_{\sigma^{-1}(\mathbf{a}_n)}, \quad \sigma\in S(n)\label{sigma_action}.
\end{align}
It can be shown that the representation space can be decomposed into direct sum of subspaces each of which is a tensor product of irreducible representations of $GL(d)$ and $S(n)$. This decomposition is called Schur-Weyl duality in the literature (for a detailed account see \cite{MR1153249,MR2522486}).
\begin{equation}
    (\mathbb{C}^d)^{\otimes n}=\bigoplus_{\lambda} \mathcal{H}_{\lambda}\otimes \mathcal{K}_{\lambda}\label{Schur-Weyl}
\end{equation}
Here the direct sum is finite and each $\lambda$ corresponds to a \textit{Young diagram}. A Young diagram is defined by a ordered tuple of integers $\lambda=(\lambda_1,\ldots,\lambda_d)$ with $\lambda_1\geq\ldots\geq\lambda_d\geq0$ and can be represented graphically by a diagram with $d$ lines each containing $\lambda_i$ boxes. For example a typical Young diagram looks like:
\begin{center}
\begin{ytableau}
 $\text{ }$&& \cr
 & \cr
 \cr
\end{ytableau}
\end{center}
with $\lambda=(3,2,2)$.
Also the operator $\pi_n(T)$ can be decomposed as follows
:
\begin{align*}
    \pi_n(T)=\bigoplus_{\lambda}\pi_{\lambda}(T)\otimes \mathbbm{1}_{\mathcal{K}_{\lambda}}
\end{align*}
i.e the tensor representation of $T\in GL(d)$ can be decomposed into blocks $\pi_{\lambda}(T)$ acting on the irreducible representation $\mathcal{H}_{\lambda}$ and these blocks have multiplicities equal to dimension of $\mathcal{K}_{\lambda}$. Thus we can decompose the tensor product of the states as follows
\begin{align}
    \rho^{\otimes n}_{0,u,n}&=\bigoplus_{\lambda} \tilde{\rho}_{\lambda}^{0,u,n}\otimes \mathbbm{1}_{\mathcal{K}_{\lambda}}\label{prelim_decomp}\\
    \rho^{\otimes n}_{0,u,n}&=\bigoplus_{\lambda} \left(\rho^{0,u,n}_{\lambda}\otimes \frac{\mathbbm{1}_{\mathcal{K}_{\lambda}}}{\dim(\mathcal{K}_{\lambda})}\right)p_n^{\lambda}\label{state_decomp}\\
    U(\mathfrak{z}/\sqrt{n})^{\otimes n}\rho^{\otimes n}_{0,u,n}U^*(\mathfrak{z}/\sqrt{n})^{\otimes n}&=\bigoplus_{\lambda} \left(U_{\lambda}(\mathfrak{z}/\sqrt{n})\rho^{0,u,n}_{\lambda}U_{\lambda}^*(\mathfrak{z}/\sqrt{n})\otimes \frac{\mathbbm{1}_{\mathcal{K}_{\lambda}}}{\dim(\mathcal{K}_{\lambda})}\right)p_n^{\lambda}\label{decomp_unitary}.
\end{align}
In equation (\ref{prelim_decomp}) we have used the usual decomposition of the $\pi_n(T)$ while in equation (\ref{state_decomp}) we have adjusted the factors $p_n^{\lambda}$ and $dim(\mathcal{K}_{\lambda})$ (later we give the exact expressions for these quantities) so that both $\rho^{0,u,r,n}_{\lambda}$ and $\frac{\mathbbm{1}_{\mathcal{K}_{\lambda}}}{\dim(\mathcal{K}_{\lambda})}$ are states (i.e. they are normalized). Equation (\ref{decomp_unitary}) follows from the fact that $SU(d)$ is a subgroup of $GL(d)$ and its tensor representation has a similar decomposition.
Define $$\Delta^{\mathfrak{z},n}(A)=U(\mathfrak{z}/\sqrt{n})AU^*(\mathfrak{z}/\sqrt{n})$$ and $\Delta^{\mathfrak{z},n}_{\lambda}$ in a similar fashion. Then the last decomposition can be written in a compact fashion as
$$\rho^{\otimes n}_{\theta,n}=\bigoplus_{\lambda} \left(\Delta^{\mathfrak{z},n}_{\lambda}(\rho^{0,u,n}_{\lambda})\otimes \frac{\mathbbm{1}_{\mathcal{K}_{\lambda}}}{\dim(\mathcal{K}_{\lambda})}\right)p_n^{\lambda}.$$
Next we describe the subspaces $\mathcal{H}_{\lambda}$ in detail. Let $\cal{A}(S(n))$ denote the group algebra, i.e. the linear space spanned by elements of the form $\sum_{\sigma} c_{\sigma}\sigma$ for complex numbers $c_{\sigma}$.
Also define multiplication  between its elements in the usual way i.e
\begin{align*}
    &a=\sum_{\sigma\in S(n)}a_{\sigma}\sigma,\quad b=\sum_{\tau\in S(n)}b_{\sigma}\sigma\\
    & ab= \sum_{\sigma \in S(n)}\left(\sum_{\tau\in S(n)}a_{\sigma\tau^{-1}}b_{\tau}\right)\sigma.
\end{align*}
Let $\lambda$ be a Young diagram with $|\lambda|=\sum_i \lambda_i =n$. We define two subgroups $\cal{R}_{\lambda}$ and $\cal{C}_{\lambda}$ of $S(n)$ defined as follows
\begin{align*}\cal{R}_{\lambda}&=\{\sigma: \sigma \text{ leaves the rows of } \lambda \text{ unaltered }\}\\
\cal{C}_{\lambda}&=\{\sigma: \sigma \text{ leaves the columns of } \lambda \text{ unaltered }.\}
\end{align*}
Now consider the action of $\tilde{\pi}_n(\sigma)$ (henceforth we will denote this operator as $\sigma$) given in relation (\ref{sigma_action}) and extend it to the group algebra. Next we define the following operators

$$p
_{\lambda}=\sum_{\sigma\in R_{\lambda}}\sigma, \quad q
_{\lambda}=\sum_{\sigma\in C_{\lambda}}sgn(\sigma)\sigma ,\quad y_{\lambda}=q_{\lambda}p_{\lambda}.$$

It can be shown that the $\{y_{\lambda}f_{\mathbf{a}}\}$ spans the subspace $\cal{H}_{\lambda}$. Of course the elements in  $\{y_{\lambda}f_{\mathbf{a}}\}$ are not linearly independent. To describe a basis for $\cal{H}_{\lambda}$ we need the concept of Young tableaux.
 
 A Young tableau is a Young diagram filled with integers. A semistandard Young tableau is a Young diagram filled with numbers from $1$ to $d$ such that the numbers are non-decreasing in a row and strictly increasing in a column, for example:
 \begin{center}
\begin{ytableau}
 $1$&$1$& $1$ &$2$\cr
  $2$&$3$ &$4$\cr
 $4$\cr
\end{ytableau}
\end{center}

is a semistandard Young tableau. Now if we fill a Young tableau with the integers $\mathbf{a}_1,\ldots,\mathbf{a}_n$ along the rows then for every $f_{\mathbf{a}}$ we get a corresponding Young tableau $t_{\mathbf{a}}$. As an example we associate the following vector with the semistandard Young tableau shown above -
$f_{1}\otimes f_{1}\otimes f_{1}\otimes f_{2}\otimes f_{3}\otimes f_{4}\otimes f_{4}$.
Define the set
 $$\mathfrak{A}=\{\mathbf{a}:t_{\mathbf{a}} \text{ a semistandard Young tableau }\}.$$
 It can be shown that $\{y_{\lambda}f_{\mathbf{a}}\}_{\mathbf{a}\in \mathfrak{A}}$ forms a basis of $\cal{H}_{\lambda}$.
 
 An alternative notation (also used in \cite{Kahn&Guta}) is to use $\mathbf{m}=\{m_{i,j}\}_{1\leq i<j\leq d}$ where $m_{i,j}$ is the number of $j$'s in row $i$. Note that if $\mathbf{a}\in\mathfrak{A}$, then $\mathbf{m}$ uniquely identifies $\mathbf{a}$. For the example above $m_{1,2}=m_{2,3}=m_{2,4}=m_{3,4}=1$ and the rest $m_{i,j}$ are $0$. 
 
 Now denote $|\mathbf{m}_{\lambda}\rangle=y_{\lambda}f_{\mathbf{m}}/||y_{\lambda}f_{\mathbf{m}}||$. Mapping these basis elements of $\cal{H}_{\lambda}$ to the basis elements of the multimode Fock space will be a key element in constructing the channel.
 
 From the definition of $p_{\lambda}$ and $q_{\lambda}$ it can be easily shown that
\begin{equation}
\label{projection_formula}
p_{\lambda}^2=(\prod_{i=1}^d\lambda_{i}!)p_{\lambda}, \quad q_{\lambda}^2=(\prod_{i=1}^di^{\lambda_i-\lambda_{i+1}})q_{\lambda}
\end{equation}
and consequently 
\begin{equation}
\label{csq1}\la y_{\lambda}f_{\mathbf{a}}|y_{\lambda}f_{\mathbf{b}}\ra=\la p_{\lambda}f_{\mathbf{a}}|q^2_{\lambda}p_{\lambda}f_{\mathbf{b}}\ra=(\prod_{i=1}^di^{\lambda_i-\lambda_{i+1}})\la p_{\lambda}f_{\mathbf{a}}|y_{\lambda}f_{\mathbf{b}}\ra.
\end{equation}
We also consider the properties of the vector $|{\bf 0}_ \lambda\rangle$ (which is the \emph{highest weight
vector} of the representation $(\pi_{\lambda},\mathcal{H}_{\lambda})$) 
corresponding to the semi-standard Young tableau where all the entries in row $i$
are $i$. An immediate consequence is that
\begin{equation}
\label{vpP}
p_{\lambda}| f_{\bf 0} \rangle  =
(\prod_{i=1}^{d} \lambda_i!) |f_{\bf 0}\rangle. 
\end{equation}
Moreover $\langle f_{\bf 0} | q_{\lambda} f_{\bf 0} \rangle = 1 $ since any column permutation produces a vector orthogonal to $f_{\bf 0}$ and thus the normalised vector is: \begin{equation}\label{eq.norm.0.lambda}
|{\bf 0}_\lambda\rangle =
\frac1{\prod_{i=1}^{d} \lambda_i! \sqrt{
i^{\lambda_i - \lambda_{i+1}}}} y_{\lambda} |f_{\bf 0}\rangle.
\end{equation}

The finite dimensional coherent states are defined as 
$\pi_{\lambda}(U) |{\bf 0}_{\lambda}\rangle$ for $U\in
SU(d)$. Since $[p_{\lambda}, \pi_{\lambda}(U)]=0$ and (\ref{vpP}) holds, we get
$p_{\lambda} \pi_{\lambda}(U)  |f_{\bf 0}\rangle  =
(\prod_{i=1}^{d} \lambda_i!) \pi_{\lambda}(U) |f_{\bf 0}\rangle$. Thus we have
\begin{align}
\label{fcoherent}
\langle y_{\lambda} f_{\bf m} | y_{\lambda}\pi_{\lambda}(U) f_{\bf 0} \rangle &=\langle p_{\lambda} f_{\bf m} | q_{\lambda}^2p_{\lambda}\pi_{\lambda}(U) f_{\bf 0} \rangle\nonumber\\
& = \langle p_{\lambda} f_{\bf m} | q_{\lambda}^2\pi_{\lambda}(U) p_{\lambda}f_{\bf 0} \rangle\nonumber\\
&=\prod_{i=1}^d
i^{\lambda_i - \lambda_{i+1}} \lambda_i!
\langle p_{\lambda} f_{\bf m}| q_{\lambda} \pi_{\lambda}(U) f_{\bf 0} \rangle.
\end{align} 
\section{Bayes estimation of a one dimensional shift parameter}
In this section, we adopt the framework for Gaussian states $\rho$ in \cite{Hol-78} to develop a Bayes result for estimation of $t$ in the model $\left\{
\mathfrak{N}(t\tau,\mathscr{S})  ,t\in\mathbb{R}\right\}$ (cf. (19)) .
In \cite{Hol-78} the characteristic function of $\rho$ is given by%
\begin{equation}
\phi_{\rho}\left(  z\right)  =\exp\left(  i\theta\left(  z\right)  -\frac
{1}{2}\alpha\left(  z,z\right)  \right)  \text{, }z\in\mathbb{R}%
^{m}\label{char-func-1}%
\end{equation}
where $\alpha$ is a positive symmetric form on $\mathbb{R}^{m}$ and $\theta$
is a linear function on $\mathbb{R}^{m}$ (cf. \cite{Hol-78}, III.2.4). Here the $\rho$ is a state on the Weyl algebra generated by the
Weyl unitaries
$W\left(  z\right)  $, $z\in\mathbb{R}^{m}$, which satisfy the canonical
commutation relations
\begin{equation}
W\left(  z\right)  W\left(  w\right)  =W\left(  z+w\right)  \exp\left(
\frac{i}{2}\Delta\left(  z,w\right)  \right)  \label{commut-relations}%
\end{equation}
(cf. \cite{Hol-78}
, III.1.3) where $\Delta$ is a bilinear antisymmetric (symplectic) form on
$\mathbb{R}^{m}$. The forms $\alpha$ and $\Delta$ must satisfy the condition
\begin{equation}
\frac{1}{4}\Delta\left(  z,w\right)  ^{2}\leq\alpha\left(  z,z\right)
\alpha\left(  w,w\right)  \text{, }z,w\in\mathbb{R}^{m}%
.\label{equiv-condition}%
\end{equation}
This is equivalent to the following condition (cf. \cite{Hol-2nd-ed-11}, Theorem
5.5.1): for any $r$ and $z_{1},\ldots,z_{r}$ $\in\mathbb{R}^{m}$, the two
complex Hermitian matrices below are positive semidefinite:
\begin{equation}
\left(  \alpha\left(  z_{j},z_{k}\right)  \pm\frac{i}{2}\Delta\left(
z_{j},z_{k}\right)  \right)  _{j,k=1}^{r}\geq0.\label{pos-semidef}%
\end{equation}
Suppose $\alpha$ and $\Delta$ are given by $m\times m$ real matrices:
\begin{equation}
\alpha\left(  z,w\right)  =z^{T}\Sigma w\text{ where }\Sigma
>0\label{Sigma-def}%
\end{equation}
(strictly positive definite) and
\[
\Delta\left(  z,w\right)  =z^{T}Dw
\]
where $D$ is antisymmetric. The symplectic form $\Delta$ is allowed to be
degenerate (cf. \cite{Hol-78}, beginning of Chap III, \S 2), which means that $D$
is allowed to be singular. Indeed $\Delta\equiv0$, i.e. $D=0$ is included as
the classical case, where $\rho$ is identified with the $m$-variate normal
distribution $\cal{N}_{m}\left(  \theta,\Sigma\right)  $ (here we identified the
linear form $\theta\left(  z\right)  $ with a vector $\theta\in\mathbb{R}^{m}%
$). The framework of an antisymmetric, possibly singular $D$ now allows to
include products of classical and quantum Gaussian states.

It is easy to see that (\ref{pos-semidef}) is equivalent to
\begin{equation}
\Sigma\pm\frac{i}{2}D\geq0 \label{complex-cov-matrix}%
\end{equation}
i.e. the above complex Hermitian matrices are positive
semidefinite.

In \cite{Hol-78}, III, \S 2, (2.1) it is further noted that (under some
conditions) one has $W\left(  z\right)  =\exp\left(  iR\left(  z\right)
\right)  $ where $R\left(  z\right)  $ is a self-adjoint operator (often
called a "field operator"), and from (\ref{commut-relations}) it follows that%
\[
\left[  R\left(  z\right)  ,R\left(  w\right)  \right]  =-i\Delta\left(
z,w\right)  \;\mathbf{1}.
\]
(the equality holding in appropriate domains). The
case $m=2$ corresponds to
\begin{equation}
\Delta\left(  z,w\right)  =z^{T}Dw\text{ where }D=\Omega:=\left(
\begin{array}
[c]{cc}%
0 & 1\\
-1 & 0
\end{array}
\right)  \label{Omega-def}%
\end{equation}
which yields a one-mode Gaussian state. Assume $z=\left(  1,0\right)  $ and $w=\left(
0,1\right)  $. Then $\Delta\left(  z,w\right)  =1$ and
\[
\left[  R\left(  \left(  1,0\right)  \right)  ,R\left(  \left(  0,1\right)
\right)  \right]  =-iz^{T}\Omega w\;\mathbf{1}=-i\;\mathbf{1}%
\]
and identifying $R\left(  \left(  1,0\right)  \right)  $ with $P$ and
$R\left(  \left(  0,1\right)  \right)  $ with $Q$, we obtain the standard
commutation relations
\[
\left[  Q,P\right]  =i\;\mathbf{1.}%
\]
Then, since for field operators the mapping $z\rightarrow R\left(  z\right)  $
is real-linear (cf. \cite{Partha-book}), we can write $R\left(  z\right)  =z_{1}%
P+z_{2}Q$, and the Weyl unitaries take the form
\[
W\left(  z\right)  =\exp\left(  i\left(  z_{1}P+z_{2}Q\right)  \right)  .
\]
It is well know that the characteristic function of the vacuum state in this
setting should be
\[
\phi_{\rho}\left(  z\right)  =\exp\left(  -\frac{\left\Vert z\right\Vert ^{2}%
}{4}\right)  \text{, }z\in\mathbb{R}^{2}.
\]
Comparing with (\ref{char-func-1}), we see that $\alpha\left(  z,z\right)
=\left\Vert z\right\Vert ^{2}/2$ and hence the covariance matrix $\Sigma\ $in
(\ref{complex-cov-matrix}) should be
\[
\Sigma=\frac{1}{2}I_{2}\text{.}%
\]
We can check condition (\ref{complex-cov-matrix}) for this $\Sigma$ and $D=\Omega$:
considering the equivalent condition (\ref{equiv-condition}), we have to
verify%
\[
\left(  \frac{1}{2}z^{T}\Omega w\right)  ^{2}\leq z^{T}\Sigma
z\;w^{T}\Sigma w
\]
or
\[
\left(  z^{T}\Omega w\right)  ^{2}\leq\left\Vert z\right\Vert ^{2}\left\Vert
w\right\Vert ^{2}.
\]
Since $\Omega^{T}\Omega=I_{2}$, we have $\left\Vert \Omega w\right\Vert ^{2}=\left\Vert
w\right\Vert ^{2}$, and setting $u=\Omega w$, the above inequality is equivalent to
\[
\left(  z^{T}u\right)  ^{2}\leq\left\Vert z\right\Vert ^{2}\left\Vert
u\right\Vert ^{2},
\]
i.e. the Cauchy-Schwarz inequality, which becomes an equality for $u=z$. Hence
(\ref{complex-cov-matrix}) is fulfilled for $\Sigma=I_{2}/2$, and for
multiples of the unit matrix, $1/2$ is the minimal factor so that $\Sigma$ can
be a covariance matrix. This agrees with the well-known minimum uncertainty
property of the vacuum state; the thermal states are then those with
covariance matrix $\gamma I_{2}$ where $\gamma>1/2$.

It is now obvious that an $n$-mode zero mean Gaussian state which is a product
of $n$ thermal (or vacuum) states is captured by the framework
(\ref{char-func-1}) as follows: $m=2n$,
\[
\alpha\left(  z,z\right)  =z^{T}\Sigma z\text{ where }\Sigma=%
%TCIMACRO{\tbigoplus \nolimits_{j=1}^{n}}%
%BeginExpansion
{\textstyle\bigoplus\nolimits_{j=1}^{n}}
%EndExpansion
\gamma_{j}I_{2}\text{, }%
\]
where $\gamma_{j}\geq1/2$, $j=1,\ldots,n$ and
\[
\Delta\left(  z,w\right)  =z^{T}Dw\text{ where }D=%
%TCIMACRO{\tbigoplus \nolimits_{j=1}^{n}}%
%BeginExpansion
{\textstyle\bigoplus\nolimits_{j=1}^{n}}
%EndExpansion
\Omega
\]
with $\Omega$ from (\ref{Omega-def}).

Since the form $\Delta$ is allowed to be degenerate, we can also consider the
product of the above with a classical $d$-variate normal
distribution $\mathcal{N}_{d}\left(  \mu,V\right)  $ by setting
$m=2n+d$,
\begin{align}
\alpha\left(  z,z\right)   &  =z^{T}\Sigma z\text{ where }\Sigma=%
%TCIMACRO{\tbigoplus \nolimits_{j=1}^{n}}%
%BeginExpansion
V\oplus {\textstyle\bigoplus\nolimits_{j=1}^{n}}
%EndExpansion
\gamma_{j}I_{2}\text{,}\label{alpha_expression}\\
\text{ }\Delta\left(  z,w\right)   &  =z^{T}Dw\text{ where }D=%
%TCIMACRO{\tbigoplus \nolimits_{j=1}^{n}}%
%BeginExpansion
0_{d\times d}\oplus{\textstyle\bigoplus\nolimits_{j=1}^{n}}
%EndExpansion
\Omega\nonumber.
\end{align}
A non-zero mean $\theta\in\mathbb{R}^{m}$ is simply accounted for by setting
$\theta\left(  z\right)  =\theta^{T}z$ in (\ref{char-func-1}).

In \cite{Hol-78}, III.5 estimation of a one dimensional shift parameter   $t$
is considered, which has a normal prior distribution with zero mean and
variance $b$. The family of Gaussian states is given by (\ref{char-func-1})
where $\theta\left(  z\right)  $ is a linear form on $\mathbb{R}^{m}$ given by
$\theta\left(  z\right)  =t\theta_{0}\left(  z\right)  $, $\theta
_{0}\left(  z\right)  $ being a fixed linear form and $t\in\mathbb{R}$. Assume the linear form $\theta
_{0}\left(  z\right)  $ is given by a vector $\tau$ such that $\theta
_{0}\left(  z\right)  =\tau^{T}z$, then the family of Gaussian states
will have shift parameters  $t\tau$, $t\in\mathbb{R}$ and fixed covariance
function $\alpha\left(  z,w\right)  .$ Recall that according to
(\ref{Sigma-def}) we have $\alpha\left(  z,w\right)  =z^{T}\Sigma w$
where $\Sigma>0$; thus, in this setup the characteristic function  in (\ref{char-func-1}) matches the characteristic function of $\mathfrak{N}(t\tau,\mathscr{S})  ,t\in\mathbb{R}.$ Comparing (\ref{alpha_expression}) and (19), we note that $S=D$ when $d=r-1$ and $n=r(r-1)/2+r(d-r)$.

In \cite{Hol-78} the linear form $\theta_{0}\left(  z\right)  $ is represented
by a vector in a different way. First it is to be noted that in this section
in \cite{Hol-78} the quadratic form $\alpha$ has acquired a different meaning
from (\ref{char-func-1}): the $\alpha$ there is now written $\alpha_{0}$, and
$\alpha$ is now defined as (see (4.1) in III.4)
\[
\alpha\left(  z,w\right)  :=\alpha_{0}\left(  z,w\right)  +\beta\left(
z,w\right)
\]
where $\beta$ is the covariance function of the prior distribution, defined
as
\[
\beta\left(  z,w\right)  =\int\theta\left(  z\right)  \theta\left(  w\right)
\pi\left(  d\theta\right)
\]
where $\pi$ is the prior distribution and $d\theta$ represents integration
over linear forms (i.e. over vectors in $\mathbb{R}^{m}$). In the present
case, the prior distribution is given by $\theta\left(  z\right)  =t\tau^Tz$
where $\tau$ is fixed and $t\sim N\left(  0,b\right)$. Hence
\[
\beta\left(  z,w\right)  =E\left(  tz^{T}\tau\right)
\left(t\tau^{T}w\right)  =Et^{2}\left(  z^{T}\tau\tau^{T
}w\right)  =z^{T}\left(  b\tau\tau^{T}\right)  w.
\]
So we have a matrix representation of the quadratic form $\alpha$:
\[
\alpha\left(  z,w\right)  =z^{T}\Lambda w\text{, }\Lambda=\Sigma+b\tau\tau^{T}%
\]
where $\Lambda>0$ since $\Sigma>0$. 

Now the vector $\theta_{0}\in\mathbb{R}^{m}$ to represent the linear form
$\theta_{0}\left(  z\right)  $ is chosen such that
\[
\theta_{0}\left(  z\right)  =\alpha\left(  \theta_{0},z\right)  .
\]
The relation to $\tau$ is given by
\begin{align*}
\tau^{T}z  & =\theta_{0}\left(  z\right)  =\theta_{0}^{T}\Lambda z\text{,
}z\in\mathbb{R}^{m},\\
\theta_{0}  & =\Lambda^{-1}\tau.
\end{align*}
The loss function for estimation of $t$ is $g\left(  \hat{t}-t\right)^2  $ for
some $g>0$; we can set $g=1$ here. Then in \cite{Hol-78} an optimal
measurement for estimating the parameter $t$ is found, and the Bayes risk is
given by
\begin{equation}
E\left(  \hat{t}-t\right)  ^{2}=\frac{ba_{0}}{a_{0}+b}\label{Bayes-risk}%
\end{equation}
(after relation (5.3) of \cite{Hol-78}), where $a_{0}=a-b$ and $a=\alpha\left(  \theta
_{0},\theta_{0}\right)  ^{-1}$.
Note that we have used the condensed notation while stating the Bayes risk (see the discussion after the statement of Bayes problem in subsection 2.4).
In order to clarify the dependence of the Bayes risk on the parameters $\tau$
and $\Sigma$ of the Gaussian shift model, note that 
\begin{align*}
a^{-1}  & =\alpha\left(  \theta_{0},\theta_{0}\right)  =\tau^{T}%
\Lambda^{-1}\Lambda\Lambda^{-1}\tau=\tau^{T}\Lambda^{-1}\tau\\
& =\tau^{T}\left(  \Sigma+b\tau\tau^{T}\right)  ^{-1}\tau.
\end{align*}
Define $v=\Sigma^{-1/2}\tau$, then
\[
\tau^{T}\left(  \Sigma+b\tau\tau^{T}\right)  ^{-1}\tau=v^{T
}\left(  I_{m}+bvv^{T}\right)  ^{-1}v.
\]
Now defining a unit vector $e=v/\left\Vert v\right\Vert $ we have
\begin{align*}
v^{T}\left(  I_{m}+bvv^{T}\right)  ^{-1}v  & =\left\Vert
v\right\Vert ^{2}e^{T}\left(  I_{m}+\left\Vert v\right\Vert
^{2}bee^{T}\right)  ^{-1}e\\
& =\left\Vert v\right\Vert ^{2}\left(  1+\left\Vert v\right\Vert ^{2}b\right)
^{-1}.
\end{align*}
Consequently
\begin{align*}
a  & =\left\Vert v\right\Vert ^{-2}\left(  1+\left\Vert v\right\Vert
^{2}b\right)  =\left\Vert v\right\Vert ^{-2}+b,\\
a_{0}  & =a-b=\left\Vert v\right\Vert ^{-2}=\left(  \tau^{T}\Sigma
^{-1}\tau\right)  ^{-1}.
\end{align*}
The Bayes risk (\ref{Bayes-risk}) now takes the form
\begin{equation}
E\left(  \hat{t}-t\right)  ^{2}=\frac{\left(  \tau^{T}\Sigma^{-1}%
\tau\right)  ^{-1}b}{\left(  \tau^{T}\Sigma^{-1}\tau\right)  ^{-1}%
+b}.\label{Bayes-risk-2}%
\end{equation}
\section{Proof of the main theorems}
\subsection{Integral representation of Gaussian states}
Recall the displacement operator $D(z)$ (and the coherent state $D(z)|0\rangle$) defined in equation (7) and $D\left(  \mathbf{z}\right)$ (and the multimode coherent state $D\left(  \mathbf{z}\right)  \left\vert \mathbf{0}\right\rangle$) in equation (12). We adopt the notation 
$$|z\rangle=D(z)|0\rangle,\quad |\mathbf{z}\ra =D\left(  \mathbf{z}\right)  \left\vert \mathbf{0}\right\rangle$$ which will be the used in the following integral representation and subsequent proofs.

An alternate representation of the thermal state is given by
\[\phi_{\beta}=\frac{e^{\beta}-1}{\pi}\int\exp{-(e^{\beta}-1)|z|^2})|z\ra\la z|dz\]
i.e. a Gaussian mixture of $|z\ra\la z|$'s with the mixture distribution having mean $0$ and covariance matrix $\frac{1}{2(e^{\beta}-1)}I_2$.
Similarly we have the following integral representation for shifted thermal states:
\[
\phi^{z_0}_{\beta}=D(z_0)\phi_{\beta}D^*(z_0)=\frac{e^{\beta}-1}{\pi}\int\exp{-(e^{\beta}-1)|z|^2})D(z_0)|z\ra\la z|D^*(z_0)dz.
\]
  Generalizing to the multimodal case, the tensor product of shifted thermal states given in (16) can be written as:
 $$\phi^{\mathbf{\mathfrak{z}}}= \prod_{1\leq i<j \leq d}\frac{e^{\beta_{ij}}-1}{\pi}\int \exp\left(-\sum_{1\leq i<j \leq d}\frac{e^{\beta_{ij}}-1}{\pi}|z_{ij}|^2\right)|\mathfrak{z}+\mathbf{z}\ra\la \mathfrak{z}+\mathbf{z}| d\mathbf{z}.$$
Similarly we have the following  integral representation of the tensor product of shifted thermal states and shifted pure states given in (17):
\begin{align}
    \phi^{\mathbf{\mathfrak{z},r}}&= \bigotimes_{1\leq i<j \leq r}\phi_{\beta_{ij}}^{\mathfrak{z}_{ij}}\otimes\bigotimes_{\substack{1\leq i\leq r\\ r+1\leq j \leq d}}\phi_{\infty}^{\mathfrak{z}_{ij}}\quad \in \mathcal{T}_1(\mathcal{F}^r)\nonumber\\
    &= \left(\prod_{1\leq i<j \leq r}\frac{e^{\beta_{ij}}-1}{\pi}\int \exp\left(-\sum_{1\leq i<j \leq r}\frac{e^{\beta_{ij}}-1}{\pi}|z_{ij}|^2\right)|\mathfrak{z}_1+\mathbf{z}\ra\la \mathfrak{z}_1+\mathbf{z}| d\mathbf{z}\right) \otimes |\mathbf{\mathfrak{z}_2}\ra\la \mathbf{\mathfrak{z}_2}|\label{low_integral_form}.
\end{align}

Here $\mathfrak{z}_1=(\mathfrak{z}_{ij})_{1\leq i<j\leq r}$ is a vector in $\bb{C}^{r(r-1)/2}$ and $\mathfrak{z}_2=(\mathfrak{z}_{ij})_{1\leq i\leq r,r+1\leq j \leq d}$ is a vector in $\bb{C}^{r(d-r)}$. Note that we have used the notation $|\mathfrak{z}_1+\mathbf{z}\ra$ to denote the vector as an element of $\mathcal{F}^{r,1}$ and $|\mathfrak{z}_2\ra$ as element of $\mathcal{F}^{r,2}$, where
$$\mathcal{F}^{r,1}:=\bigotimes_{1\leq i<j \leq r}L^2(\mathbb{R}),\quad \mathcal{F}^{r,2}:=\bigotimes_{\substack{1\leq i\leq r\\ r+1\leq j \leq d}}L^2(\mathbb{R}).$$

Henceforth we will use this notation without mentioning the underlying spaces $\mathcal{F}^{r,1}$ and $\mathcal{F}^{r,2}$ when there is no chance of confusion. Following (10) we will define the number basis for $\mathcal{F}^{r},\mathcal{F}^{r,1},\mathcal{F}^{r,2}$ as $|\bm\ra,|\bm_1\ra$ and $|\bm_2\ra$, respectively. In particular, we have
\begin{align*}
    |\mathbf{m}\ra&=\otimes_{\ijr}|m_{ij}\ra,\quad \mathbf{m}=\{m_{ij}\in \mathbb{N}:\ijr\}\\
    |\mathbf{m}_1\ra&=\otimes_{1\leq i<j\leq r}|m_{ij}\ra,\quad \mathbf{m}_1=\{m_{ij}\in \mathbb{N}:1\leq i<j \leq r\}\\
    |\mathbf{m}_2\ra&=\otimes_{1\leq i\leq r, r+1\leq j \leq d}|m_{ij}\ra,\quad \mathbf{m}_2=\{m_{ij}\in \mathbb{N}:1\leq i\leq r, r+1\leq j \leq d\}.
\end{align*}
\begin{proof}[Proof of Theorem 3.3]

\textbf{ Construction of the channel $T^r_n$}

We will follow \cite{Kahn&Guta} to construct the channel. However we first state the redundancy of some representations in the low dimensional case. We will see that it is because of these redundancies that the mixture of pure and thermal states arise in the limit. In particular when stated for the low dimensional case some of the representations (those with Young diagram containing more than $r$ rows) will vanish from the sum (\ref{prelim_decomp}) or (\ref{decomp_unitary}). Further the $m_{i,j}$s with $1\leq i<j\leq r$ will determine the central state and the rotation, while the $m_{i,j}$s with $1\leq i\leq r$ and $r+1\leq j\leq d$ will contribute only to rotation. The former will correspond to the thermal states in the limit while the latter will give rise to pure states.

We start by noting that $f_{\mathbf{a}}=f_{{\mathbf{a}_1}}\otimes\ldots\otimes f_{{\mathbf{a}_n}}$ is an eigenvector of $\rho^{\otimes n}_{0,u,n}$
 with eigenvalues $\prod_{i=1}^d (\mu^{u,n}_{i})^{m_i}$, where $\mu^{u,n}_{i}=\mu_i+n^{-1/2}u_i$ and $m_i$ is the number of times $i$ occur in $(\mathbf{a}_1,\ldots,\mathbf{a}_n)$. 
 Since ${\mu^{u,n}_{i}}=0$ for $i>r$ we have
 $$\rho^{\otimes n}_{0,u,r,n}|f_\mathbf{a}\rangle=\prod_{i=1}^r (\mu^{u,n}_{i})^{m_i}|f_\mathbf{a}\rangle$$
 if $\mathbf{a}_j\in\{1,2,\ldots,r\}$ for all $j=1,2,\ldots,n$ and the RHS of the above equation is $0$ if $\exists \text{ }j$ with $\mathbf{a}_j>r$.
 
 Note that $y_{\lambda}f_\mathbf{a}$ is a linear combination of tensor products each containing the same multiplicities (i.e. the set $\{m_i\}$ is same since $\sigma$ just permutes the basis vectors) and that each tensor product has the same eigenvalue. Thus $y_{\lambda}f_\mathbf{a}$ being a linear combination of eigenvectors (all of them having same eigenvalue), is itself an eignevector with the same eigenvalue. Thus we have
 $$\rho^{\otimes n}_{0,u,r,n}y_{\lambda}|f_\mathbf{a}\rangle=\prod_{i=1}^r (\mu^{u,n}_{i})^{m_i}y_{\lambda}|f_\mathbf{a}\rangle$$
 if $\mathbf{a}_j\in\{1,2,\ldots,r\}$ for all $j=1,2,\ldots,n$ and the RHS of the above equation is $0$ if $\exists \text{ }j$ with $\mathbf{a}_j>r$.
 
 Recall that $\{y_{\lambda}f_{\mathbf{a}}\}_{\mathbf{a}\in \mathfrak{A}}$ spans $\cal{H}_{\lambda}$. Now consider a semistandard tableau with more than $r$ rows or that $\lambda_{r+1}>0$. By the definition, the entries of a column are strictly increasing and hence the $(r+1,1)$ entry of the tableau is greater than $r$. Consequently there exists $j$ such that $\mathbf{a}_j>r$ and hence $y_{\lambda}f_{\mathbf{a}}$ belongs to the null space of $\rho^{\otimes n}_{0,u,r,n}$. 
 
 Let $\Lambda_1$ be the collection of Young diagrams with number of row less than equal to $r$ i.e. $\Lambda_1=\{\lambda: \lambda_k =0 \text{ for } k>r\}$. Consider the alternate notation given in the last paragraph Appendix A where one identifies $\mathbf{a}$ with $\mathbf{m}$ and denotes the normalized basis elements as $|\bm_{\lambda}\ra$. If $\lambda\in \Lambda_1$, $\mathbf{m}=\{m_{i,j}\}_{1\leq i\leq r,i<j\leq d}$ since $i\leq r$ for these $\lambda$s. Let $\mathcal{H}(\{m_i\})=\text{span }\{|\mathbf{l}_{\lambda}\rangle:l_i=m_i\}$ i.e the span of the vectors $l_{\lambda}$ with the same multiplicity $\{m_i\}$. Note that for each $\mathbf{m}$ we have a set $\{m_i\}$ but different $\mathbf{m}$ can give rise to the same $\{m_i\}$. We now have a modified version of (\ref{prelim_decomp}):
 \begin{equation}\rho^{\otimes n}_{0,u,r,n}=\bigoplus_{\lambda\in \Lambda_1} \tilde{\rho}_{\lambda}^{0,u,r,n}\otimes \mathbbm{1}_{\mathcal{K}_{\lambda}}\end{equation}
 with $$\tilde{\rho}_{\lambda}^{0,u,r,n}=\sum_{\{m_i\}}\prod_{i=1}^r (\mu^{u,n}_{i})^{\lambda_i}\prod_{j=i+1}^r \left(\frac{\mu^{u,n}_{j}}{\mu^{u,n}_{i}}\right)^{m_{i,j}}\sum_{m \in \cal{M}^{\lambda}_r\cap \{m_i\}}|\bar{\bm}_{\lambda}\rangle\langle\bar{\bm}_{\lambda}|$$
 where $\cal{M}^{\lambda}_r=\{m: m_{i,j}=0 \text { for } i>r \text{ or } j>r\}$ and $|\bar{\bm}_\lambda\rangle$s are orthogonal vectors which spans the space $\mathcal{H}(\{m_i\})$. We would like to write the above decomposition in terms of  $|\bm_{\lambda}\rangle\langle \bm_{\lambda}|$ but since they are not orthogonal we have to use $|\bar{\bm}_{\lambda}\rangle\langle\bar{\bm}_{\lambda}|$ instead. Later we will prove that we can write the same decomposition in terms of $|\bm_{\lambda}\rangle\langle \bm_{\lambda}|$ and an error term that goes to $0$. Note that although $|\bm_{\lambda}\rangle$ is a basis element for all $m$ coming from a semistandard tableau, $|\bm_{\lambda}\rangle$ still belongs to the null space of $\rho^{\otimes n}_{0,u,n}$ if $m_{i,j}>0$ for some $j>r$.
 
 We normalize $\tilde{\rho}_{\lambda}^{0,u,r,n}$ so that  \begin{equation}\rho^{0,u,r,n}_{\lambda}=\frac{1}{N^{u,r,n}_{\lambda}}\sum_{\{m_i\}}\prod_{i=1}^r (\mu^{u,n}_{i})^{\lambda_i}\prod_{j=i+1}^r \left(\frac{\mu^{u,n}_{j}}{\mu^{u,n}_{i}}\right)^{m_{i,j}}\sum_{m \in \cal{M}^{\lambda}_r\cap \{m_i\}}|\bar{\bm}_{\lambda}\rangle\langle\bar{\bm}_{\lambda}|\label{diagonal_rho_lambda}
 \end{equation}
 where $$N^{u,r,n}_{\lambda}=\sum_{\bm\in \cal{M}^{\lambda}_r}\prod_{i=1}^r (\mu^{u,n}_{i})^{\lambda_i}\prod_{j=i+1}^r \left(\frac{\mu^{u,n}_{j}}{\mu^{u,n}_{i}}\right)^{m_{i,j}}.$$ Finally we define $p_{\lambda}^{u,r,n}=N^{u,r,n}_{\lambda}.\dim(\mathcal{K}_{\lambda})$ (the dependence of $p_{\lambda}^{u,r,n}$ on $\theta$ coming only from $u$) and we obtain the modified version of (\ref{state_decomp}):
  $$\rho^{\otimes n}_{0,u,r,n}=\bigoplus_{\lambda \in \Lambda_1} \left(\rho^{0,u,r,n}_{\lambda}\otimes \frac{\mathbbm{1}_{\mathcal{K}_{\lambda}}}{\dim(\mathcal{K}_{\lambda})}\right)p_{\lambda}^{u,r,n}.$$
Since $\Tr{\rho^{\otimes n}_{0,u,r,n}}=1$, taking trace on both sides of the above equation we obtain $\sum_{\lambda \in \Lambda_1}p_{\lambda}^{u,r,n}=1$. We also observe that because of the normalization both $\rho^{0,u,r,n}_{\lambda}$ and $\frac{\mathbbm{1}_{\mathcal{K}_{\lambda}}}{\dim(\mathcal{K}_{\lambda})}$ are states and so is their tensor product.
We have decomposed the tensor state $\rho^{\otimes n}_{0,u,n}$ into  blocks (each having trace 1 and thus a state) multiplied by a weight. All these weights are positive and add upto $1$ and we can interpret this as decomposition into mixture of smaller dimensional states (original state was $dn$-dimensional). Similarly the tensored unitaries $U(\mathfrak{z}/\sqrt{n})^{\otimes n}$ are decomposed into block matrices.

Defining
$$\Delta_{\lambda}^{\mathfrak{z},r,n}(A)=U_{\lambda}^r(\mathfrak{z}/\sqrt{n})AU_{\lambda}^{r*}(\mathfrak{z}/\sqrt{n}),$$ 
we obtain the following equivalent of (\ref{decomp_unitary}) in this modified setting
$$\rho^{\otimes n}_{\theta,r,n}=\bigoplus_{\lambda \in \Lambda_1} \left(\Delta^{\mathfrak{z},r,n}_{\lambda}(\rho^{0,u,r,n}_{\lambda})\otimes \frac{\mathbbm{1}_{\mathcal{K}_{\lambda}}}{\dim(\mathcal{K}_{\lambda})}\right)p_{\lambda}^{u,r,n}.$$
 The channel that we construct will contain two parts - the classical part maps the weights or probabilities of the block to a normal distribution via a classical channel or a Markov kernel. The quantum part involves mapping the basis of each representation to the Fock basis of multimode state and the action of unitaries under this channel corresponds to action of the displacement operators. We now give a brief sketch of the channel mentioned above, the exact details of the terms involved deferred to later sections.

Define 
$$T^{r}_{n}(\rho^{\otimes n}_{\theta,r,n})=\bigoplus_{\lambda \in \Lambda_1}  (p_{\lambda}^{u,r,n}\circ\tau^{r,n}_{\lambda})\otimes T^r_{\lambda}(\Delta^{\mathfrak{z},r,n}_{\lambda}(\rho^{0,u,r,n}_{\lambda}))$$
where $\tau^{r,n}_{\lambda}$ is a Markov kernel defined below and $T^r_{\lambda}$ is an map of the from $V^r_{\lambda}(.)V^{r*}_{\lambda}$ from $M(\cal{H}_{\lambda})$ to the multimode Fock space. The explicit from of the maps $V^r_{\lambda}$ will be discussed later. We define $\tau^{r,n}_{\lambda}$ as follows:
\begin{equation}\label{taulambda}
\tau^{r,n}_{\lambda}(A)=n^{(r-1)/2}\mu(A\cap A_{\lambda,r,n})
\end{equation}
where both $A$ and $A_{\lambda,r,n}$ are sets in $\mathbb{R}^{r-1}$ and $\mu$ is the Lebesgue measure in $\mathbb{R}^{r-1}$. Here

$$A_{\lambda,r,n}=\{x \in \mathbb{R}^{r-1}:|n^{1/2}x_i+n\mu_i-\lambda_i|\leq 1/2,1\leq i\leq r-1\}.$$

Also define $b_{\lambda}^{u,r,n}=p_{\lambda}^{u,r,n}\circ\tau^{r,n}_{\lambda}$ and $\Delta^{\mathfrak{z},n}_{\lambda}(\rho^{0,u,r,n}_{\lambda})=\rho_{\lambda}^{\theta,r,n}$. In this setup we have to show that
$$\sup_{\Theta_{n,r,\beta,\gamma}}||\bigoplus_{\lambda \in {\Lambda_1}}  b_{\lambda}^{u,r,n} \otimes T^r_{\lambda}(\Delta^{\mathfrak{z},r,n}_{\lambda}(\rho^{0,u,r,n}_{\lambda}))-\cal{N}_{r-1}(u,V_{\mu})\otimes\phi^{\mathfrak{z},r}||_1\rightarrow 0.$$
Define the following set of Young diagrams
\begin{equation}
    \Lambda_{n,\alpha}:=\{\lambda:|\lambda_i-n\mu_i|\leq n^{\alpha},1\leq i\leq r\}, \quad \text{ for } \alpha>1/2.
\end{equation}
For convenience of notation we will denote $\Lambda_{n,\alpha}$ by  $\Lambda_2$ and $\Theta_{n,r,\beta,\gamma}$ by $\Theta$.  For the displacement operator $D(\mathfrak{z})$ we will denote the action $D(\mathfrak{z})(.)D^*(\mathfrak{z})$ by $D^{\mathfrak{z}}$. Finally we note that  $D(\mathbf{\mathfrak{z}})(\phi^{0,r})=\phi^{\mathbf{\mathfrak{z}},r}$. By triangle inequality one obtains
\begin{align*}
&||\bigoplus_{\lambda \in {\Lambda_1}}
 b_{\lambda}^{u,r,n}\otimes T^r_{\lambda}(\rho^{\theta,r,n}_{\lambda})-\cal{N}_{r-1}(u,V_{\mu})\otimes\phi^{\mathbf{\mathfrak{z},r}}||_1\\
\leq &\sum_{\lambda \in \Lambda_1}||T^r_{\lambda}(\rho^{\theta,r,n}_{\lambda})-\phi^{\mathbf{\mathfrak{z},r}}||_1||b_{\lambda}^{u,r,n}||_1+||\sum_{\lambda \in \Lambda_1}b_{\lambda}^{u,r,n}-\cal{N}_{r-1}(u,V_{\mu})||_1\\
 \leq &||\sum_{\lambda \in \Lambda_1}b_{\lambda}^{u,r,n}-\cal{N}_{r-1}(u,V_{\mu})||_1+2\sum_{\lambda \in \Lambda_1\cap \Lambda^c_2}p_{\lambda}^{u,r,n}+\sup_{\lambda\in \Lambda_1\cap \Lambda_2}||T^r_{\lambda}(\rho_{\lambda}^{\theta,r,,n})-\phi^{\mathbf{\mathfrak{z},r}}||_1\\
 \leq &||\sum_{\lambda \in \Lambda_1}b_{\lambda}^{u,r,n}-\cal{N}_{r-1}(u,V_{\mu})||_1+2\sum_{\lambda \in \Lambda_1\cap \Lambda^c_2}p_{\lambda}^{u,r,n}+\sup_{\lambda\in \Lambda_1\cap \Lambda_2}||T^r_{\lambda}\Delta_{\lambda}^{\mathfrak{z},r,n}T^{r*}_{\lambda}(\phi^{0,r})-D^{{\mathfrak{z}}}(\phi^{0,r})||_1\\
 &+\sup_{\lambda\in \Lambda_1\cap \Lambda_2}||T^r_{\lambda}\Delta_{\lambda}^{\mathfrak{z},r,n}T^{r*}_{\lambda}T^r_{\lambda}(\rho^{0,u,r,n}_{\lambda})-T^r_{\lambda}\Delta_{\lambda}^{\mathfrak{z},r,n}T^{r*}_{\lambda}(\phi^{0,r})||_1\\
 \leq &||\sum_{\lambda \in \Lambda_1}b_{\lambda}^{u,r,n}-\cal{N}_{r-1}(u,V_{\mu})||_1+2\sum_{\lambda \in \Lambda_1\cap \Lambda^c_2}p_{\lambda}^{u,r,n}+\sup_{\lambda\in \Lambda_1\cap \Lambda_2}||(T^r_{\lambda}\Delta_{\lambda}^{\mathfrak{z},r,n}T^{r*}_{\lambda}-D^{\mathfrak{z}})(\phi^{0,r})||_1\\
 &+\sup_{\lambda\in \Lambda_1\cap \Lambda_2}||T^r_{\lambda}(\rho^{0,u,r,n}_{\lambda})-\phi^{0,r}||_1\\
\end{align*}
since $T^r_{\lambda}$ and $\Delta_{\lambda}^{\mathfrak{z},r,n}$ are isometries.  Using the integral form of $\phi^{0,r}$ (we set $\mathfrak{z}=0$ in (\ref{low_integral_form})), we obtain
\begin{align*}||(T^r_{\lambda}\Delta_{\lambda}^{\mathfrak{z},r,n}T^{r*}_{\lambda}-D^{{\mathfrak{z}}})(\phi^{0,r})||_1&\leq \int f(\bz)||(T^r_{\lambda}\Delta_{\lambda}^{\mathfrak{z},r,n}T^{r*}_{\lambda}-D^{{\mathfrak{z}}})(|\bz\ra\la \bz|\otimes |\mathbf{0}_2\ra\la \mathbf{0}_2|)||_1d\bz\\
 \leq &\int_{||\bz||>n^{\beta}}2f(\bz)d\bz\\&+\sup_{||\bz||\leq n^{\beta}} ||(T^r_{\lambda}\Delta_{\lambda}^{\mathfrak{z},r,n}T^{r*}_{\lambda}D^{z,0}-D^{(\mathfrak{z}_1+z,\mathfrak{z}_2)})(|\mathbf{0}\ra\la \mathbf{0}|)||_1
\end{align*}
where 
$$f(\bz)=\prod_{1\leq i<j \leq r}\frac{e^{\beta_{ij}}-1}{\pi} \exp\left(-\sum_{1\leq i<j \leq r}\frac{e^{\beta_{ij}}-1}{\pi}|z_{ij}|^2\right)$$
and
$$\int_{||\bz||>n^{\beta}}f(\bz)d\bz=c_1\exp(-c_2n^{2\beta})$$
by usual concentration of a Gaussian. The second term can be written as:
\begin{align*}&||(T^r_{\lambda}\Delta_{\lambda}^{\mathfrak{z},r,n}T^{r*}_{\lambda}D^{\bz,0}-D^{\mathfrak{z}_1+\bz,\mathfrak{z}_2})(|\mathbf{0}\ra\la \mathbf{0}|)||_1\\\\
\leq & ||(T^r_{\lambda}\Delta_{\lambda}^{(\mathfrak{z}_1+\bz,\mathfrak{z}_2),r,n}T^{r*}_{\lambda}-D^{(\mathfrak{z}_1+\bz,\mathfrak{z}_2)})(|\mathbf{0}\ra\la \mathbf{0}|)||_1\\\\
&+||(T^r_{\lambda}\Delta_{\lambda}^{(\mathfrak{z}_1+\bz,\mathfrak{z}_2),r,n}T^{r*}_{\lambda}-T^r_{\lambda}\Delta_{\lambda}^{\mathfrak{z},r,n}\Delta_{\lambda}^{\bz,r,n}T^{r*}_{\lambda})(|\mathbf{0}\ra\la \mathbf{0}|)||_1\\\\
&+||(T^r_{\lambda}\Delta_{\lambda}^{\mathfrak{z},r,n}T^{r*}_{\lambda}D^{\bz,0}-T^r_{\lambda}\Delta_{\lambda}^{\mathfrak{z},r,n}\Delta_{\lambda}^{\bz,r,n}T^{r*}_{\lambda})(|\mathbf{0}\ra\la \mathbf{0}|)||_1\\\\
\leq & ||(T^r_{\lambda}\Delta_{\lambda}^{(\mathfrak{z}_1+\bz,\mathfrak{z}_2),r,n}T^{r*}_{\lambda}-D^{(\mathfrak{z}_1+\bz,\mathfrak{z}_2)})(|\mathbf{0}\ra\la \mathbf{0}|)||_1+||(\Delta_{\lambda}^{(\mathfrak{z}_1+\bz,\mathfrak{z}_2),r,n}-\Delta_{\lambda}^{\mathfrak{z},r,n}\Delta_{\lambda}^{\bz,r,n})T^{r*}_{\lambda}(|\mathbf{0}\ra\la \mathbf{0}|)||_1\\\\
&+||(T^{r*}_{\lambda}D^{\bz,0}-\Delta_{\lambda}^{\bz,r,n}T^{r*}_{\lambda})(|\mathbf{0}\ra\la \mathbf{0}|)||_1\\\\
\leq&||(T^r_{\lambda}\Delta_{\lambda}^{(\mathfrak{z}_1+\bz,\mathfrak{z}_2),r,n}T^{r*}_{\lambda}-D^{(\mathfrak{z}_1+\bz,\mathfrak{z}_2)})(|\mathbf{0}\ra\la \mathbf{0}|)||_1+||(\Delta_{\lambda}^{(\mathfrak{z}_1+\bz,\mathfrak{z}_2),r,n}-\Delta_{\lambda}^{\mathfrak{z},n}\Delta_{\lambda}^{\bz,r,n})T^{r*}_{\lambda}(|\mathbf{0}\ra\la \mathbf{0}|)||_1\\\\
&+||(D^{\bz,0}-T^r_{\lambda}\Delta_{\lambda}^{\bz,r,n}T^{r*}_{\lambda})(|\mathbf{0}\ra\la \mathbf{0}|)||_1
\end{align*}

In the above equations $\Delta_{\lambda}^{\bz,r,n}$ is understood to be $\Delta_{\lambda}^{\mathfrak{z},r,n}$, where $\mathfrak{z}=(\bz,0)$. The entire chain of inequalities can be summarized as follows:
\begin{align*}
    &\sup_{\theta \in \Theta}||T_n(\rho^{\otimes n}_{\theta,n})-\cal{N}_{r-1}(u,V_{\mu})\otimes\phi^{\mathfrak{z},r}||_1 \\\\
    \leq &\sup_{\theta \in \Theta}||\sum_{\lambda \in \Lambda_1}b_{\lambda}^{u,r,n}-\cal{N}_{r-1}(u,V_{\mu})||_1\text{ (Approximation error of classical channel) }\\\\
    &+\sup_{\theta \in \Theta}2\sum_{\Lambda_1\cap \Lambda^c_2}p_{\lambda}^{u,r,n}\text{ (probability of extreme representations) }\\\\
    &+\sup_{\theta \in \Theta}\sup_{\lambda\in \Lambda_1\cap \Lambda_2}||T^r_{\lambda}(\rho^{0,u,r,n}_{\lambda})-\phi^{0,r}||_1\text{ (comparison of unshifted states) }\\\\
    &+\sup_{\theta \in \Theta,||\bz||\leq n^{\beta}}\sup_{\lambda\in \Lambda_1\cap \Lambda_2}||(T^r_{\lambda}\Delta_{\lambda}^{(\mathfrak{z}+\bz),r,n}T^{r*}_{\lambda}-D^{\mathfrak{z}+\bz})(|\mathbf{0}\ra\la \mathbf{0}|)||_1 \text{ (comparing shift operators) }\\\\
    &+\sup_{\theta \in \Theta,||\bz||\leq n^{\beta}}\sup_{\lambda\in \Lambda_1\cap \Lambda_2}||(D^{\bz,0}-T^r_{\lambda}\Delta_{\lambda}^{\bz,r,n}T^{r*}_{\lambda})(|\mathbf{0}\ra\la \mathbf{0}|)||_1\text{ (comparing shift operators) }\\\\
    &+\sup_{\theta \in \Theta,||\bz||\leq n^{\beta}}\sup_{\lambda\in \Lambda_1\cap \Lambda_2}||(\Delta_{\lambda}^{(\mathfrak{z}+\bz),r,n}-\Delta_{\lambda}^{\mathfrak{z},r,n}\Delta_{\lambda}^{\bz,r,n})T^{r*}_{\lambda}(|\mathbf{0}\ra\la \mathbf{0}|)||_1 \text{ (noncomm. of fin. dim. shifts) }\\\\
    &+ \int_{||\bz||>n^{\beta}}f(\bz)d\bz \text{ (concentration term) }.
\end{align*}
The following lemmas deals with each of the above terms separately:
\begin{lemma}{\label{approx_classical}}. For any given $\epsilon>0$,
$$\sup_{\theta \in \Theta}||\sum_{\lambda \in \Lambda_1}b_{\lambda}^{u,r,n}-\cal{N}_{r-1}(u,V_{\mu})||_1=O(n^{-1/4+\epsilon},n^{-1/2+\gamma}).$$
\end{lemma}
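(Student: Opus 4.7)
The plan is to recognise $\{p_\lambda^{u,r,n}\}_{\lambda\in\Lambda_1}$ as a classical lattice distribution and then invoke a quantitative multivariate local limit theorem. Unwinding the definition of $N_\lambda^{u,r,n}$ in (\ref{diagonal_rho_lambda}) and observing that the weight $\prod_i (\mu_i^{u,n})^{\lambda_i}\prod_{j>i}(\mu_j^{u,n}/\mu_i^{u,n})^{m_{i,j}}$ collapses to $\prod_i (\mu_i^{u,n})^{n_i(T)}$, where $n_i(T)$ is the multiplicity of the letter $i$ in the underlying semistandard tableau $T$ of shape $\lambda$ with entries in $\{1,\ldots,r\}$, one identifies $N_\lambda^{u,r,n}=s_\lambda(\mu_1^{u,n},\ldots,\mu_r^{u,n})$ as the Schur polynomial of $\lambda$ evaluated at the perturbed eigenvalues. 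Hence $p_\lambda^{u,r,n}=\dim(\mathcal{K}_\lambda)\, s_\lambda(\mu^{u,n})$ is the Schur (or RSK) measure on Young diagrams with at most $r$ rows, and the Frobenius identity $(\sum_i\mu_i^{u,n})^n=1$ confirms it defines a probability distribution on $\Lambda_1$.

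By the RSK correspondence this measure is the push-forward of the multinomial word distribution $M(n,\mu^{u,n})$ on $\{1,\ldots,r\}^n$ under Schensted insertion. Because the eigenvalues are strictly separated ($\mu_1>\mu_2>\cdots>\mu_r>0$), a standard multinomial large-deviation bound shows that the RSK shape coincides with the sorted multinomial count vector $(N_{(1)},\ldots,N_{(r)})$ except on an event of probability $e^{-cn}$. I would use this to reduce the analysis to the genuine multinomial vector $(N_1,\ldots,N_{r-1})$ with parameters $(n,\mu^{u,n})$, a classical lattice distribution in $\mathbb{Z}^{r-1}$.

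Next I would apply a multivariate local CLT with Edgeworth-type remainder (e.g.\ Bhattacharya--Rao) to obtain a pointwise comparison of $p_\lambda^{u,r,n}$ with $n^{-(r-1)/2}\varphi_{u,V_\mu}((\lambda_i-n\mu_i)/\sqrt{n})$, the rescaled density of $\mathcal{N}_{r-1}(u,V_\mu)$, uniformly over $\lambda\in\Lambda_2$. The smoothing kernel $\tau_\lambda^{r,n}$ in (\ref{taulambda}) redistributes each atom $p_\lambda^{u,r,n}$ uniformly over a cube of volume $n^{-(r-1)/2}$, so $\sum_\lambda b_\lambda^{u,r,n}$ becomes a step-function density on $\mathbb{R}^{r-1}$. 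Integrating the pointwise local-CLT error together with the within-cube oscillation of $\varphi_{u,V_\mu}$ over the bulk $\Lambda_2$ produces the main bound $O(n^{-1/4+\epsilon})$; the tail from $\Lambda_1\setminus\Lambda_2$ is superpolynomially small by the concentration $\sum_{\lambda\notin\Lambda_2}p_\lambda^{u,r,n}\leq e^{-cn^{2\alpha-1}}$ and is absorbed. The secondary rate $O(n^{-1/2+\gamma})$ records the $u$-dependence: the shift $\mu^{u,n}-\mu=u/\sqrt{n}$ enters linearly through the mean of the approximating Gaussian (and through the Edgeworth coefficients), producing an error controlled by $|u|\leq n^\gamma$.

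The main obstacle is securing the local CLT remainder \emph{uniformly} over the growing parameter range $|u|\leq n^\gamma$. This is manageable precisely because $\gamma<1/4$: the perturbed probabilities $\mu^{u,n}$ remain in a fixed compact subset of the open simplex for all large $n$, so the third and fourth absolute moments appearing in the Bhattacharya--Rao remainder are uniformly bounded and the Edgeworth expansion is valid with a uniform rate. A secondary nuisance is the boundary region where two $\lambda_i$'s coalesce (so that the sorted multinomial diverges from the ordinary one), but strict separation of the $\mu_i$'s keeps this event exponentially rare and it does not affect the total-variation bound.
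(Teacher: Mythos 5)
Your identification of $p^{u,r,n}_{\lambda}=\dim(\mathcal{K}_{\lambda})\,s_{\lambda}(\mu^{u,n})$ as the Schur--Weyl (RSK push-forward) measure is correct, and your overall architecture — approximate the block weights by a multinomial, then smooth with $\tau^{r,n}_{\lambda}$ and apply a quantitative local CLT — is the same as the paper's, which splits exactly into these two terms via the analogue of eq.\ (7.35) of Kahn and Guta. The genuine gap is your reduction step: it is \emph{not} true that the RSK shape coincides with the (sorted) letter-count vector outside an event of probability $e^{-cn}$. By Greene's theorem $\lambda_1$ is the length of the longest weakly increasing subsequence; already for $r=2$, any word ending in the letter $2$ has $\lambda_1\geq N_1+1$, an event of probability $\mu_2^{u,n}$ bounded away from zero, and in general $\lambda_i-N_i$ is an $O_p(1)$ nondegenerate correction rather than zero. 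Consequently the coupling you invoke does not give $\|p^{u,r,n}-M^n_{\mu^{u,n}}\|_1\leq e^{-cn}$, and as written your argument does not control the first term of the triangle inequality at all; the fact that the two \emph{laws} nevertheless merge in total variation is precisely the nontrivial content of that term and cannot be obtained from the (false) statement that the coupled random variables agree.

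The lemma is still provable along your lines, but the reduction has to be done differently. Either (i) compare $p^{u,r,n}_{\lambda}$ with the multinomial probability pointwise in the bulk $\Lambda_2$, as the paper (following Kahn--Guta) does: the geometric factor $N^{u,r,n}_{\lambda}/\prod_i(\mu^{u,n}_i)^{\lambda_i}\to\prod_{1\leq i<j\leq r}\mu_i/(\mu_i-\mu_j)$ cancels asymptotically against the ratio $\dim(\mathcal{K}_{\lambda})/\binom{n}{\lambda_1,\ldots,\lambda_r}\approx\prod_{1\leq i<j\leq r}(\lambda_i-\lambda_j)/\lambda_i\approx\prod_{i<j}(\mu_i-\mu_j)/\mu_i$, giving a relative error that is polynomially small uniformly over $\Lambda_2$ and uniformly in $|u|\leq n^{\gamma}$; or (ii) retain the RSK coupling but prove a quantitative tail bound on the law of $\lambda-N$ and then argue that an $O(1)$ lattice displacement of a distribution spread over scale $\sqrt{n}$ costs only $O(n^{-1/2})$ in total variation after the smoothing by $\tau^{r,n}_{\lambda}$ — a step that needs care because the displacement is not independent of $N$. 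With either repair, the remainder of your argument (Bhattacharya--Rao local CLT with uniformity over $|u|\leq n^{\gamma}$, the within-cube oscillation, and the superpolynomial tail outside $\Lambda_2$) is sound and yields the stated $O(n^{-1/4+\epsilon},n^{-1/2+\gamma})$ rate.
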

\begin{lemma}{\label{extreme_rep}}Recall that $\Lambda_2=\Lambda_{n,\alpha}$. Then for $\alpha-\gamma-1/2>0$ we have
$$\sum_{\lambda \in \Lambda_1\cap \Lambda^c_2}p_{\lambda}^{u,r,n}=O(n^{r^2}\exp(-n^{2\alpha-1}/2)).$$
\end{lemma}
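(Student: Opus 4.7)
The proof rests on identifying $\{p_\lambda^{u,r,n}\}_{\lambda\in\Lambda_1}$ with a classical probability distribution on partitions and then invoking a multinomial-type concentration. From the construction preceding equation \eqref{taulambda} together with the Schur--Weyl decomposition in Appendix A, one obtains the explicit formula $p_\lambda^{u,r,n} = \dim(\mathcal{K}_\lambda)\,s_\lambda(\mu_1^{u,n},\ldots,\mu_r^{u,n})$, where $s_\lambda$ is the Schur polynomial in $r$ variables. Summed against $\sum_\lambda \dim(\mathcal{K}_\lambda) s_\lambda(\mu) = (\mu_1+\cdots+\mu_r)^n = 1$, this is exactly the distribution of the Robinson--Schensted--Knuth shape of $n$ i.i.d.\ draws from the categorical law with probabilities $(\mu_1^{u,n},\ldots,\mu_r^{u,n})$, equivalently the law of the Keyl--Werner spectrum estimator applied to $\rho_{0,u,r,n}^{\otimes n}$.

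The plan is to upper bound $p_\lambda^{u,r,n}$ pointwise by a polynomial multiple of a multinomial probability. Combining the Frame--Robinson--Thrall formula
$\dim(\mathcal{K}_\lambda) = n!\prod_{i<j}(\lambda_i-\lambda_j+j-i)/\prod_i(\lambda_i+r-i)!$
with the Weyl character formula
$s_\lambda(\mu^{u,n}) = \det((\mu_j^{u,n})^{\lambda_i+r-i})/\prod_{i<j}(\mu_i^{u,n}-\mu_j^{u,n})$,
and then applying Stirling termwise, I would establish
\[
p_\lambda^{u,r,n} \;\leq\; C\, n^{r(r-1)/2}\binom{n}{\lambda_1,\ldots,\lambda_r}\prod_{i=1}^r(\mu_i^{u,n})^{\lambda_i},
\]
with $C$ depending only on $r$ and on the spectral gaps $\mu_i-\mu_j>0$ (which are bounded away from zero since $\mu$ has distinct positive components). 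The right-hand side is, up to the polynomial prefactor, the multinomial mass $\mathrm{Mult}(n;\mu^{u,n})(\lambda)$.

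Since $|u_i|\leq n^\gamma$, one has $|n\mu_i-n\mu_i^{u,n}|=\sqrt n\,|u_i|\leq n^{1/2+\gamma}$, and the assumption $\alpha-\gamma-1/2>0$ then ensures that for $n$ large enough the event $|\lambda_i-n\mu_i|>n^\alpha$ implies $|\lambda_i-n\mu_i^{u,n}|>n^\alpha/2$. Hoeffding's inequality applied to the multinomial counts yields
\[
\mathrm{Mult}(n;\mu^{u,n})\bigl(\{\lambda:|\lambda_i-n\mu_i^{u,n}|>n^\alpha/2\}\bigr) \;\leq\; 2\exp(-n^{2\alpha-1}/2).
\]
A union bound over $i=1,\dots,r$, combined with the fact that there are $O(n^{r-1})$ partitions in $\Lambda_1$ of size $n$, absorbs cleanly into the prefactor $n^{r^2}$ and yields the claim.

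The main obstacle is the pointwise inequality in the second paragraph, since the Vandermonde denominator $\prod_{i<j}(\mu_i^{u,n}-\mu_j^{u,n})$ forces one to use the nondegeneracy $\mu_1>\cdots>\mu_r>0$, and the $\dim(\mathcal{K}_\lambda)$ factor must be matched with the leading determinant term via a careful Stirling expansion uniformly in $\lambda\in\Lambda_1$. An alternative, possibly more efficient, route is to invoke the Keyl--Werner large-deviation bound, $p_\lambda^{u,r,n} \leq \mathrm{poly}(n)\exp(-n\,D(\lambda/n\,\|\,\mu^{u,n}))$ with $D$ the classical Kullback--Leibler divergence; on $\Lambda_1\cap\Lambda_2^c$ one has $D(\lambda/n\,\|\,\mu^{u,n})\gtrsim n^{2\alpha-2}$, which gives the same exponential decay directly.
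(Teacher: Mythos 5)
Your proposal is correct and follows essentially the same strategy as the paper: dominate $p_\lambda^{u,r,n}$ pointwise by a polynomial-in-$n$ multiple of the multinomial mass $\binom{n}{\lambda_1,\ldots,\lambda_r}\prod_i(\mu_i^{u,n})^{\lambda_i}$, then apply Hoeffding coordinatewise with a union bound, using $\alpha-\gamma-1/2>0$ to absorb the $O(n^{1/2+\gamma})$ shift between $n\mu_i$ and $n\mu_i^{u,n}$. The identification $p_\lambda^{u,r,n}=\dim(\mathcal{K}_\lambda)\,s_\lambda(\mu^{u,n})$ is also exactly what the paper's formula expresses via semistandard tableaux. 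The only divergence is in how the polynomial prefactor is obtained, and here the step you flag as the main obstacle is avoidable: the paper does not touch the bialternant formula, the Vandermonde denominator, or Stirling at all. It simply observes that the hook-type correction factor in $\dim(\mathcal{K}_\lambda)$ is at most $1$, so $\dim(\mathcal{K}_\lambda)\leq\binom{n}{\lambda_1,\ldots,\lambda_r}$, that each ratio $(\mu_j^{u,n}/\mu_i^{u,n})^{m_{ij}}\leq 1$, and that the number of admissible tuples $\mathbf{m}$ is at most $n^{r^2}$; this crude count is precisely where the $n^{r^2}$ in the statement comes from, and it requires no lower bound on the spectral gaps. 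Your determinantal route would also work (the $r!$ terms of the determinant are dominated by $\prod_i\mu_i^{\lambda_i+r-i}$ and the Vandermonde is bounded below uniformly in $u$ for large $n$), but it is more delicate than necessary. Finally, the counting of $O(n^{r-1})$ partitions is redundant: once you have the pointwise multinomial domination, summing over $\Lambda_1\cap\Lambda_2^c$ is bounded directly by the multinomial probability of the deviation event, so only the union bound over $i=1,\ldots,r$ is needed.
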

\begin{lemma}{\label{comparison_unshifted}}
For $0<\eta<2/9$ we have
$$\sup_{\theta \in \Theta}\sup_{\lambda\in \Lambda_1\cap \Lambda_2}||T^r_{\lambda}(\rho^{0,u,r,n}_{\lambda})-\phi^{0,r}||_1=O(n^{-1/2+\gamma+\eta},n^{9\eta-2/24})$$
\end{lemma}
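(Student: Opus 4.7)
The strategy is to reduce the trace-norm comparison to a total-variation comparison between eigenvalue distributions, exploiting the fact that both states are diagonal in a common basis. The thermal/pure structure of $\phi^{0,r}$ should emerge directly from the Schur--Weyl combinatorics.

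First, I would identify the diagonal structure on both sides. From (\ref{diagonal_rho_lambda}), $\rho^{0,u,r,n}_{\lambda}$ is diagonal in the orthonormal family $\{|\bar{\bm}_\lambda\rangle\}_{\bm\in\mathcal{M}^\lambda_r}$, with eigenvalue on $|\bar{\bm}_\lambda\rangle$ equal to
$$p^{\lambda,u,n}_{\bm}\;=\;\frac{1}{N^{u,r,n}_\lambda}\prod_{i=1}^r(\mu^{u,n}_i)^{\lambda_i}\prod_{1\le i<j\le r}\bigl(\mu^{u,n}_j/\mu^{u,n}_i\bigr)^{m_{ij}}.$$
Following the construction in \cite{Kahn&Guta}, I take the isometry $V^r_\lambda$ defining $T^r_\lambda$ to send $|\bar{\bm}_\lambda\rangle$ to the Fock basis vector $|\bm\rangle\in\mathcal{F}^r$, so that $T^r_\lambda(\rho^{0,u,r,n}_\lambda)$ is diagonal in the Fock basis with the same eigenvalues. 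On the other hand, $\phi^{0,r}$ factors into thermal states for $1\le i<j\le r$ and vacuum states for $1\le i\le r,\;r+1\le j\le d$, so it is also diagonal in the Fock basis with eigenvalues
$$q_{\bm}\;=\;\prod_{1\le i<j\le r}(1-\mu_j/\mu_i)(\mu_j/\mu_i)^{m_{ij}}\;\prod_{\substack{1\le i\le r\\ r+1\le j\le d}}\delta_{m_{ij},0},$$
whose support lies exactly in $\mathcal{M}^\lambda_r$. Consequently, the $L^1$ distance collapses to the total variation distance $\sum_{\bm}|p^{\lambda,u,n}_{\bm}-q_{\bm}|$.

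The next step is to control this TV distance via a bulk/tail split at scale $n^\eta$. On the bulk set $\{\max_{ij}m_{ij}\le n^\eta\}$, I would Taylor expand $(\mu^{u,n}_j/\mu^{u,n}_i)^{m_{ij}}=(\mu_j/\mu_i)^{m_{ij}}(1+O(m_{ij}n^{-1/2+\gamma}))$, yielding a per-$\bm$ relative error of size $O(n^{-1/2+\gamma+\eta})$. The ratio $\prod_i(\mu^{u,n}_i)^{\lambda_i}/N^{u,r,n}_\lambda$ converges to $\prod_{1\le i<j\le r}(1-\mu^{u,n}_j/\mu^{u,n}_i)$: indeed, dropping the Young-tableau constraint $\sum_{j>i}m_{ij}\le\lambda_i$ gives the exact geometric series, and since $\lambda_i\sim n\mu_i\gg n^\eta$ for $\lambda\in\Lambda_2$, the truncation error is exponentially small. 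On the tail $\{\max_{ij}m_{ij}>n^\eta\}$, geometric concentration in both $p^{\lambda,u,n}_{\bm}$ and $q_{\bm}$ gives another exponentially small term. Summing these over $\bm$ yields the $n^{-1/2+\gamma+\eta}$ contribution.

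The remaining subtlety, and the main obstacle, is that the identification $|\bar{\bm}_\lambda\rangle\leftrightarrow|\bm\rangle$ implicitly uses the orthonormal (Gram--Schmidt) basis of $\mathcal{H}(\{m_i\})$, whereas the natural vectors $|\bm_\lambda\rangle=y_\lambda f_{\bm}/\|y_\lambda f_{\bm}\|$ coming from the Young symmetrizers are non-orthogonal, with overlaps governed by the combinatorial identity (\ref{csq1}) and the Kostka numbers. Quantifying the discrepancy between these two choices, uniformly in $\lambda\in\Lambda_1\cap\Lambda_2$, requires detailed representation-theoretic estimates using the structure of $p_\lambda,q_\lambda$ from Appendix A together with sharp bounds on the Gram matrix of $\{|\bm_\lambda\rangle\}_{\bm\in\mathcal{M}^\lambda_r\cap\{m_i\}}$; this is where the second rate $n^{(9\eta-2)/24}$ arises. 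Once these two errors are established, taking $\sup_\theta$ is automatic by the uniformity of the bounds in $u$ and by the range constraints in $\Theta_{n,r,\beta,\gamma}$, and combining them gives the claimed rate.
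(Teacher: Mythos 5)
Your handling of the classical part is consistent with the paper: the Taylor expansion of $(\mu^{u,n}_j/\mu^{u,n}_i)^{m_{ij}}$ and of the normalizing constant on the bulk $\{|\mathbf{m}|\leq n^{\eta}\}$, together with geometric tail bounds, is exactly how the $n^{-1/2+\gamma+\eta}$ term is obtained. The gap is at the point you yourself flag as the ``remaining subtlety''. First, the channel you posit is not the channel the lemma is about: in the paper $V^r_{\lambda}$ does \emph{not} send the Gram--Schmidt vectors $|\bar{\mathbf{m}}_{\lambda}\rangle$ to the Fock vectors $|\mathbf{m}\rangle$; it is built (Lemma \ref{V_approx}, via Lemmas \ref{isocomp}, \ref{non-orth} and \ref{gqo}) so that $\langle \mathbf{m}|V^r_{\lambda}$ is proportional to $\langle \mathbf{m}_{\lambda}|$, i.e.\ it is adapted to the non-orthogonal vectors $|\mathbf{m}_{\lambda}\rangle=y_{\lambda}f_{\mathbf{m}}/\|y_{\lambda}f_{\mathbf{m}}\|$. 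This choice is forced by Lemma \ref{comparison_shift_operator}: the \emph{same} channel must carry the finite-dimensional coherent vectors $U^r_{\lambda}(\mathfrak{z}/\sqrt{n})|\mathbf{0}_{\lambda}\rangle$ close to Fock coherent states, and the only overlaps the representation-theoretic machinery computes are $\langle \mathbf{m}_{\lambda}|U^r_{\lambda}(\cdot)|\mathbf{0}_{\lambda}\rangle$ through (\ref{fcoherent}) and Lemma \ref{lemtools}. With your Gram--Schmidt channel the present lemma would indeed collapse to a total-variation computation (and would not even produce a $n^{(9\eta-2)/24}$ term), but the difficulty would simply migrate to the shift-operator lemma, where the needed overlaps $\langle\bar{\mathbf{m}}_{\lambda}|U^r_{\lambda}(\cdot)|\mathbf{0}_{\lambda}\rangle$ are not accessible without explicitly controlling the Gram--Schmidt transformation --- precisely what the paper's construction is designed to avoid.

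Second, if you keep the paper's channel (as the statement requires), then the source of the rate $n^{(9\eta-2)/24}$ is a concrete chain of estimates that your proposal only gestures at: the quantitative quasi-orthogonality bound $\sum_{\mathbf{l}\neq\mathbf{m}}|\langle \mathbf{m}_{\lambda}|\mathbf{l}_{\lambda}\rangle|=O(n^{(9\eta-2)/12})$ uniformly over $\lambda\in\Lambda_1\cap\Lambda_2$ and $|\mathbf{m}|\leq n^{\eta}$ (Lemmas \ref{non-orth} and \ref{gqo}, proved by the orbit/column-modifier counting of Lemma \ref{lemtools}); the resulting isometry with normalization $(1+(Cn)^{(9\eta-2)/12})^{-1/2}$; the trace bound $O(n^{(9\eta-2)/12})$ on the remainder of the weight-space projections $P(\{m_i\})$; and the per-vector error $O(n^{(9\eta-2)/24})$ for $\|T^r_{\lambda}(|\mathbf{m}_{\lambda}\rangle\langle\mathbf{m}_{\lambda}|)-|\mathbf{m}_1\rangle\langle\mathbf{m}_1|\otimes|\mathbf{0}_2\rangle\langle\mathbf{0}_2|\|_1$ obtained from (\ref{eq.trace.norm.pure}). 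Saying that this ``requires detailed representation-theoretic estimates'' does not supply them, and since this is the only non-routine ingredient of the lemma, the proof as proposed is incomplete.
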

\begin{lemma}{\label{comparison_shift_operator}}
Let $\epsilon>0$ be such that $2\beta+\epsilon\leq \eta<2/9$, then we have
$$\sup_{\theta \in \Theta,||\bz||\leq n^{\beta}}\sup_{\lambda\in \Lambda_1\cap \Lambda_2}||(T^r_{\lambda}\Delta_{\lambda}^{\mathfrak{z}+\bz,r,n}T^{r*}_{\lambda}-D^{\mathfrak{z}+\bz})(|\mathbf{0}\ra\la \mathbf{0}|)||_1=R(n)$$
with
$$R^2(n) =  O\left(n^{(9\eta -2) / 12},n^{-1 + 2\beta + \eta}, n^{-1/2 + 3\beta+2\epsilon} , n^{-1 + \alpha +2 \beta}, n^{-1 + \alpha + \eta}, n^{-1 + 3\eta}, n^{-\epsilon}\right)$$
\end{lemma}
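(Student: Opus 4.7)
The plan is to exploit purity. By construction $T^{r*}_\lambda = V^{r*}_\lambda(\cdot)V^r_\lambda$ maps the Fock vacuum $|\mathbf{0}\rangle\langle\mathbf{0}|$ to the highest weight state $|\mathbf{0}_\lambda\rangle\langle\mathbf{0}_\lambda|$, so both operators inside the trace norm yield rank-one pure states on $\mathcal{F}^r$. Invoking (\ref{eq.trace.norm.pure}), the trace-norm distance equals $2\sqrt{1-|\la\psi_1|\psi_2\ra|^2}$ where $|\psi_1\ra := V^r_\lambda\,\pi_\lambda(U^r(\mathbf{w}/\sqrt{n}))|\mathbf{0}_\lambda\ra$ and $|\psi_2\ra := D(\mathbf{w})|\mathbf{0}\ra$ with $\mathbf{w}:=\mathfrak{z}+\bz$, so it suffices to lower-bound the overlap $|\la\psi_1|\psi_2\ra|$ by $1-R^2(n)/2$.

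I would then expand both vectors on the multi-mode Fock basis $\{|\mathbf{m}\ra\}$ with $\mathbf{m}=(m_{ij})_{\ijr}$. The displacement side has the explicit form (\ref{multimode coherent}); on the representation side I would use the coherent-state identity (\ref{fcoherent}) together with the normalisation (\ref{eq.norm.0.lambda}). Since $\mathcal{F}^r$ factorises over modes $(i,j)$, the amplitudes factorise too. The key combinatorial fact is that repeated action of the raising generator $E_{ij}$ inside $U^r(\mathbf{w}/\sqrt{n})$ on $|f_{\mathbf{0}}\ra\in (\mathbb{C}^d)^{\otimes n}$ produces the factor $\sqrt{\lambda_i(\lambda_i-1)\cdots(\lambda_i-m_{ij}+1)}$; combined with the reparametrisation $\mathfrak{z}_{ij}=\zeta_{ij}/\sqrt{\mu_i-\mu_j}$ on a thermal mode $1\leq i<j\leq r$, or with $1/\sqrt{\mu_i}$ on a pure mode $1\leq i\leq r<j\leq d$, it converges to the Fock amplitude $w_{ij}^{m_{ij}}/\sqrt{m_{ij}!}$. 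This is the precise mechanism by which $T^r_\lambda\Delta^{\mathbf{w},r,n}_\lambda T^{r*}_\lambda$ approximates $D^{\mathbf{w}}$, and the fact that $\beta_{ij}=\ln(\mu_i/\mu_j)\to\infty$ as $\mu_j\to 0$ is exactly what produces shifted pure states on the second family of modes.

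The error analysis reduces to bookkeeping. I would truncate the Fock sum at $|\mathbf{m}|\leq n^\eta$; concentration of the coherent state contributes the tail $O(n^{(9\eta-2)/12})$. Inside the truncation I Taylor-expand $U^r(\mathbf{w}/\sqrt{n})=\mathbf{1}+iX/\sqrt{n}-X^2/(2n)+\ldots$ and attribute each residual error to its source: the Taylor remainder gives $O(n^{-1+2\beta+\eta})$ and $O(n^{-1/2+3\beta+2\epsilon})$; the replacement of $\lambda_i/n$ by $\mu_i$ on $\Lambda_2$ (with fluctuation $O(n^{\alpha-1})$) contributes $O(n^{-1+\alpha+2\beta})$ and $O(n^{-1+\alpha+\eta})$; the non-commutativity of the generators $T_{jk}$ inside the exponent produces $O(n^{-1+3\eta})$; the residual normalisation mismatch in (\ref{eq.norm.0.lambda}) accounts for $O(n^{-\beta})$. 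The main obstacle is that none of these estimates is small individually over the full admissible range: uniformity over $\|\mathbf{w}\|\leq 2n^\beta$ and $\lambda\in\Lambda_1\cap\Lambda_2$ couples $\alpha,\beta,\gamma,\eta,\epsilon$ tightly, and the seven exponents must simultaneously be made negative, which is what ultimately constrains the admissible ranges $0<\gamma<1/4$ and $0<\beta<1/9$. The novelty relative to \cite{Kahn&Guta} is precisely the joint verification of the thermal scaling $\sqrt{\mu_i-\mu_j}$ and the pure-mode scaling $\sqrt{\mu_i}$ within the same coherent-state expansion.
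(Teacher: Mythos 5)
Your opening reduction is the same as the paper's: both states are pure, so by \eqref{eq.trace.norm.pure} the problem becomes a lower bound on the overlap $|\langle \mathfrak{z}+\mathbf{z}|V^r_\lambda|\mathfrak{z}+\mathbf{z},\lambda\rangle|$, expanded in the Fock basis and truncated. However, there is a genuine gap at the step you call "the key combinatorial fact". The space $\mathcal{H}_\lambda$ does not factorise over the modes $(i,j)$, and the spanning vectors $|\mathbf{m}_\lambda\rangle$ are not orthogonal; consequently $V^r_\lambda$ is \emph{not} the naive map $|\mathbf{m}_\lambda\rangle\mapsto|\mathbf{m}\rangle$ but an isometry built from the quasi-orthogonality estimates (Lemmas \ref{non-orth}, \ref{gqo}, \ref{V_approx}), and the exponent $n^{(9\eta-2)/12}$ in $R^2(n)$ is exactly the price of that non-orthogonality -- not, as you assert, a coherent-state concentration tail (the coherent-state tail is the $n^{-\beta}$ term, obtained from Poisson tails outside the set $\mathcal{M}$ of \eqref{adaptedm}, whose per-mode constraint $m_{ij}\leq|(\mathfrak{z}+z)_{ij}|^2n^{\epsilon}$ is also where your unexplained $\epsilon$ and the $n^{-1/2+3\beta+2\epsilon}$ term originate). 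Likewise $n^{-1+3\eta}$ does not come from non-commutativity of the generators but from the $\Gamma\geq 1$ strata in the orbit-sum denominator \eqref{denom54}. Your raising-operator heuristic, which is supposed to replace all of this, is moreover quantitatively wrong for thermal modes: the relevant multiplicity is the number of columns of length between $i$ and $j-1$, i.e. $\lambda_i-\lambda_j$, not $\lambda_i$; combining your factor $\sqrt{\lambda_i(\lambda_i-1)\cdots}$ with the matrix-element scale $\mathfrak{z}_{ij}/\sqrt{n(\mu_i-\mu_j)}$ would leave a spurious factor $(\mu_i/(\mu_i-\mu_j))^{m_{ij}/2}$ and the limit would not be the coherent amplitude.

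Concretely, what is missing is the entire content of Lemma \ref{lemtools}: the determinant formula \eqref{formdet}, the stratification of admissible tableaux by the defect $\Gamma$ with the counting bounds \eqref{cg0}--\eqref{cgG}, and the resulting uniform asymptotics \eqref{numer} and \eqref{denom54} for the numerator and denominator of $\langle\mathbf{m}_\lambda|U^r_\lambda(\cdot)|\mathbf{0}_\lambda\rangle$. These are what establish the factorised form $e^{-\|\mathfrak{z}+\mathbf{z}\|_2^2/2}\prod_{i,j}(\mathfrak{z}+z)_{ij}^{m_{ij}}/\sqrt{m_{ij}!}$ with errors uniform over $\mathbf{m}\in\mathcal{M}$, $\lambda\in\Lambda_1\cap\Lambda_2$ and $\|\mathbf{z}\|\leq n^\beta$, and they produce most of the seven exponents in $R^2(n)$. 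Without supplying an argument of comparable strength, the "bookkeeping" paragraph of your proposal cannot be carried out, so the proof as sketched does not go through.
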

\begin{lemma}{\label{group_structure}}
Assume the condition given in Lemma \ref{comparison_shift_operator}, then the following result holds:
$$\sup_{\theta \in \Theta,||\bz||\leq n^{\beta}}\sup_{\lambda\in \Lambda_1\cap \Lambda_2}||(\Delta_{\lambda}^{\mathfrak{z}+\bz,r,n}-\Delta_{\lambda}^{\mathfrak{z},r,n}\Delta_{\lambda}^{\bz,r,n})T^{r*}_{\lambda}(|\mathbf{0}\ra\la \mathbf{0}|)||_1=R(n)$$
\end{lemma}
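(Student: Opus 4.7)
The plan is to exploit a Baker-Campbell-Hausdorff (BCH) expansion of $U^r_\lambda(\mathfrak{z}/\sqrt n)U^r_\lambda(\bz/\sqrt n)$, together with the highest-weight structure of the representation $\pi_\lambda$ on $\mathcal{H}_\lambda$, to reduce the trace-norm bound to an orthogonal-projection estimate on the vector $|\mathbf{0}_\lambda\ra := V^{r*}_\lambda|\mathbf{0}\ra$. Since $T^{r*}_\lambda(|\mathbf{0}\ra\la\mathbf{0}|) = |\mathbf{0}_\lambda\ra\la\mathbf{0}_\lambda|$, the two operators appearing in the statement are rank-one projectors onto unit vectors $|\phi_1\ra,|\phi_2\ra \in \mathcal{H}_\lambda$, and the trace-norm of their difference equals $2\sqrt{1 - |\la\phi_1|\phi_2\ra|^2}$. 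Setting $W_\lambda := U^r_\lambda((\mathfrak{z}+\bz)/\sqrt n)^{*}\, U^r_\lambda(\mathfrak{z}/\sqrt n)\, U^r_\lambda(\bz/\sqrt n)$ gives $\la\phi_1|\phi_2\ra = \la\mathbf{0}_\lambda|W_\lambda|\mathbf{0}_\lambda\ra$, so the task reduces to showing $1 - |\la\mathbf{0}_\lambda|W_\lambda|\mathbf{0}_\lambda\ra|^2 = O(R(n)^2)$.

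By BCH, $W_\lambda = \exp(X_\lambda)$ where the leading contribution to $X_\lambda$ is $\frac{1}{2n}\pi_\lambda([iH(\mathfrak{z}), iH(\bz)])$, with $H(\cdot)$ denoting the Hermitian generator appearing in the definition of $U^r$, and the remainder is built from higher-order nested commutators. Letting $c_0 := \la\mathbf{0}_\lambda|X_\lambda|\mathbf{0}_\lambda\ra$ (purely imaginary since $X_\lambda$ is anti-Hermitian), the orthogonal-projection norm of $W_\lambda|\mathbf{0}_\lambda\ra$ is bounded by $\|(W_\lambda - e^{c_0}I)|\mathbf{0}_\lambda\ra\|$. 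Because $c_0 I$ commutes with $X_\lambda$ and $X_\lambda - c_0 I$ is anti-Hermitian, the elementary bound $\|(e^A - I)v\| \le \|Av\|$ gives $\|(W_\lambda - e^{c_0}I)|\mathbf{0}_\lambda\ra\| \le \|(X_\lambda - c_0 I)|\mathbf{0}_\lambda\ra\|$, thereby reducing the problem to estimating the off-diagonal (non-scalar) component of $X_\lambda|\mathbf{0}_\lambda\ra$. Expanding $[iH(\mathfrak{z}), iH(\bz)]$ in the Chevalley basis of $\mathfrak{su}(d)$ splits it into a Cartan part, which acts on $|\mathbf{0}_\lambda\ra$ as a scalar (and hence contributes only to $c_0$), and a root part acting as ladder operators producing vectors orthogonal to $|\mathbf{0}_\lambda\ra$ of norm $O(\sqrt n)$; after the $1/(2n)$ scaling this contributes $O(n^{-1/2 + 2\beta})$ to the off-diagonal piece, which is of the same order as several summands appearing in the squared bound for $R(n)$.

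The main technical obstacle will be controlling the higher-order BCH remainder uniformly in $\lambda \in \Lambda_1 \cap \Lambda_2$: nested commutators can contain Cartan factors of order $n$ coming from the eigenvalues $\lambda_j - \lambda_k \approx n(\mu_j - \mu_k)$, and one must verify that the $n^{-k/2}$ prefactors of the BCH terms (or, equivalently, a Duhamel-type integral applied directly to $\exp(A)\exp(B) - \exp(A+B)$) more than compensate this growth when acting on the specific vector $|\mathbf{0}_\lambda\ra$. Crucially the ladder-operator estimates on $|\mathbf{0}_\lambda\ra$ required for this analysis are essentially those already developed in the proof of Lemma \ref{comparison_shift_operator}, so the same book-keeping can be reused rather than redeveloped. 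Combining everything, $\|(X_\lambda - c_0 I)|\mathbf{0}_\lambda\ra\|$ is of the order of $R(n)$ uniformly over $\theta \in \Theta_{n,r,\beta,\gamma}$, $\|\bz\| \le n^\beta$, and $\lambda \in \Lambda_1 \cap \Lambda_2$, completing the proof.
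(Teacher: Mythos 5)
Your reduction to the overlap $\la\mathbf{0}_\lambda|W_\lambda|\mathbf{0}_\lambda\ra$ with $W_\lambda=U^r_\lambda((\mathfrak{z}+\bz)/\sqrt{n})^*U^r_\lambda(\mathfrak{z}/\sqrt{n})U^r_\lambda(\bz/\sqrt{n})$ is fine (both operators are pure states, and (\ref{eq.trace.norm.pure}) applies), but the core of your argument has a genuine gap. You write $W_\lambda=\exp(X_\lambda)$ with $X_\lambda$ "given by BCH, leading term $\frac{1}{2n}\pi_\lambda([iH(\mathfrak{z}),iH(\bz)])$". In the representation $\pi_\lambda$ with $\lambda_i\approx n\mu_i$, the generators $\pi_\lambda(H(\mathfrak{z}))/\sqrt{n}$ have operator norm of order $n^{1/2+\beta}$, so the BCH series is nowhere near its convergence regime and the identification of $X_\lambda$ (and of its "leading contribution") is not justified; yet your whole estimate rests on bounding $\|(X_\lambda-c_0\mathbf{1})|\mathbf{0}_\lambda\ra\|$ for exactly this $X_\lambda$. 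You acknowledge this as "the main technical obstacle" and offer two escapes, neither of which is carried out: (i) the claim that the $n^{-k/2}$ prefactors compensate "when acting on $|\mathbf{0}_\lambda\ra$" is an order-by-order heuristic that does not by itself control the (divergent) tail of the series; (ii) the Duhamel alternative applied to $e^{-(A+B)}e^Ae^B-\mathbf{1}$ changes the structure of the argument: the leading contribution there is the $O(n^{2\beta})$ Cartan phase, which is no longer automatically separated off as a scalar $c_0$ (so the bound $\|(e^X-e^{c_0})v\|\leq\|(X-c_0)v\|$ is unavailable), and the commutator terms act on the moving vectors $e^{sB}|\mathbf{0}_\lambda\ra$ rather than on the highest-weight vector, which requires new estimates. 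Finally, the assertion that the needed bounds "are essentially those already developed in the proof of Lemma \ref{comparison_shift_operator}" is not accurate as stated: that proof controls overlaps $\la\mathbf{m}_\lambda|U^r_\lambda(\cdot)|\mathbf{0}_\lambda\ra$ through the determinant/orbit combinatorics, not norms of nested commutators of represented Lie-algebra elements on $|\mathbf{0}_\lambda\ra$, so there is no direct reuse.

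For comparison, the paper's proof of Lemma \ref{group_structure} is essentially a one-line reduction: since $T^{r*}_\lambda(|\mathbf{0}\ra\la\mathbf{0}|)=|\mathbf{0}_\lambda\ra\la\mathbf{0}_\lambda|$ and the restricted unitaries $U^r(\mathfrak{z})$ are just elements of $SU(d)$ with the coordinates $\mathfrak{z}_{ij}$, $r<i<j\leq d$, set to zero, the supremum in question is dominated by the corresponding supremum in the full-rank setting, which is exactly Lemma 6.5 of Kahn and Guta and already yields the bound $R(n)$. If you want an independent Lie-theoretic proof along your lines, you would need to replace the BCH step by a rigorous vector-level argument (e.g.\ a Duhamel identity with the Cartan phase conjugated out first, plus uniform bounds on the resulting commutators applied to finite-dimensional coherent states), and that work is precisely what is missing from the proposal.
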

Combining these lemmas we obtain 
\begin{lemma}{\label{summarize_channel}}
For $0<\gamma<1/4$ and $0<\beta<1/9$ there exists $\kappa>0$ such that 
$$\sup_{\theta\in \Theta_{n,r,\beta,\gamma}}||\Phi^{\theta,r}-T^r_n(\rho^{\theta,r,n})||_1=O(n^{-\kappa})$$
\end{lemma}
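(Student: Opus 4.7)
The plan is to invoke the five preceding lemmas together with a standard Gaussian tail bound on the seven-term triangle-inequality decomposition of $\|\Phi^{\theta,r}-T^r_n(\rho^{\theta,r,n})\|_1$ displayed immediately above Lemma \ref{approx_classical}, and then to select the auxiliary parameters $\alpha$, $\eta$, $\epsilon$ so that every summand is polynomially small in $n$. Specifically, Lemma \ref{approx_classical} bounds the classical channel approximation, Lemma \ref{extreme_rep} controls the mass of the extreme Young diagrams lying outside $\Lambda_{n,\alpha}$, Lemma \ref{comparison_unshifted} compares the unshifted states, Lemma \ref{comparison_shift_operator} handles the two shift-operator comparisons, and Lemma \ref{group_structure} controls the failure of the displacement group law at finite $n$. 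The remaining summand $\int_{\|\bz\|>n^\beta}f(\bz)\,d\bz$ is supexponentially small by a direct Gaussian tail estimate of the form $c_1\exp(-c_2n^{2\beta})$ and is therefore negligible.

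Substituting the explicit rates produced by these lemmas, I collect a finite list of polynomial bounds of the form $n^{-\kappa_i}$, together with the exponentially small contribution from Lemma \ref{extreme_rep}, which is negligible as soon as $\alpha>1/2$. Setting $\kappa:=\min_i\kappa_i$ yields the claimed rate, provided that a consistent choice of $(\alpha,\eta,\epsilon)$ can be made so that every $\kappa_i>0$ under the standing constraints $0<\gamma<1/4$ and $0<\beta<1/9$. I would take $\alpha=\gamma+1/2+\delta$ for small $\delta>0$; this both satisfies the hypothesis $\alpha>\gamma+1/2$ of Lemma \ref{extreme_rep} and keeps $\alpha+2\beta$ and $\alpha+\eta$ below $1$, since $\gamma+2\beta<1/4+2/9<1/2$. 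Next I pick $\eta$ slightly below $2/9$, which makes the $n^{(9\eta-2)/24}$ contributions from Lemmas \ref{comparison_unshifted} and \ref{comparison_shift_operator} together with the $n^{-1+3\eta}$ term strictly negative, while still enforcing $\eta<1/2-\gamma$ and $2\beta+\eta<1$. Finally I take $\epsilon<1/4-3\beta/2$, which is positive since $\beta<1/9$, to handle the $n^{-1/2+3\beta+2\epsilon}$ term arising in Lemma \ref{comparison_shift_operator}.

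The main obstacle will be purely a feasibility check: verifying that this constellation of simultaneous inequalities on $(\alpha,\eta,\epsilon)$ admits a nonempty solution set in the allowed range of $(\gamma,\beta)$, and then reading off the common exponent $\kappa>0$. Once this bookkeeping is complete, taking the minimum of the finitely many (now strictly positive) exponents $\kappa_i$ delivers a uniform bound $O(n^{-\kappa})$ over $\theta\in\Theta_{n,r,\beta,\gamma}$, establishing the lemma.
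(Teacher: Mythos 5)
Your proposal is essentially the paper's own proof: the paper likewise sums the bounds of Lemmas \ref{approx_classical} through \ref{group_structure} over the seven-term decomposition, discards the exponentially small contribution of Lemma \ref{extreme_rep} (under $\alpha>\gamma+1/2$), adds the Gaussian tail term, and then picks $\alpha$, $\eta$, $\epsilon$ so that every remaining exponent is strictly negative, exactly as you outline. The one bookkeeping point to add is that $\epsilon$ must also respect the hypothesis under which Lemmas \ref{comparison_shift_operator} and \ref{group_structure} are proved, namely $2\beta+\epsilon\le\eta<2/9$ (the paper imposes $\beta<(\eta-\epsilon)/2$), not merely $\epsilon<1/4-3\beta/2$; this extra constraint is still satisfiable because $\beta<1/9$ lets you take $\eta$ close enough to $2/9$ that $2\beta<\eta$, so your feasibility check goes through after shrinking $\epsilon$.
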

\begin{proof}
For $\epsilon>0$, $1/2+\gamma<\alpha<1/2$, $\eta<2/9$ and $0<\beta<(\eta-\epsilon)/2$
we have 
\begin{multline}
\label{kappa_bound}
\sup_{\theta\in \Theta_{n,r,\beta,\gamma}}||\Phi^{\theta,r}-T^r_n(\rho^{\theta,r,n})||_1=O\left(n^{-1/4+\epsilon}+n^{-1/2+\gamma}+n^{-1/2+\gamma+\eta}+n^{9\eta-2/24}\right.\\
\left.+ n^{-1/4 + 3\beta/2+\epsilon} + n^{-1/2 + \alpha/2 + \eta/2}+ n^{-1/2 + 3\eta/2}+ n^{-\epsilon/2}\right)
\end{multline}
using Lemmas \ref{approx_classical} through \ref{group_structure}. Note that we have omitted the error term in Lemma \ref{extreme_rep} since it decays expontially upto a polynomial term and hence is smaller than the rest of the errors. Also we have used the relations $n^{-1/2+\beta+\eta/2}=O(n^{-1/2+3\eta/2})$ and $n^{-1/2+\alpha/2+\beta}=O(n^{-1/2+\alpha/2+\eta/2})$ to gather the error terms. Now we can easily choose $\epsilon,\alpha$ and $\eta$ suitably so that the RHS of (\ref{kappa_bound}) is $O(n^{-\kappa})$ for some $\kappa>0$.
\end{proof}
\textbf{Construction of the reverse channel $S^{r}_n$}

 We follow the proof given in \cite{Kahn&Guta} except that in our case we have a restricted class of Young diagrams.
 Let $\sigma^{r,n}$ be
defined by
\begin{equation}
\label{sigmalambda}
\sigma^{r,n}: x \in \mathbb{R}^{r-1} \mapsto \delta_{\lambda_x}  
\end{equation}
where $\lambda_x$ is the Young diagram such that $\sum_{i=1}^r \lambda_i = n$, 
and $ |n^{1/2}x_i+n\mu_i - \lambda_i| < 1/2 $, for 
$1\leq i\leq r-1$. If no such diagram exists, we set $\lambda_{x}$ to any admissible value, for example $(n,0,\dots,0)$. 
One can easily show that
with (\ref{taulambda}), $\sigma^{r,n} \circ\tau^{r,n}\circ \sigma^{r,n} = \sigma^{r,n}$ and for any probability distribution 
 on the $\lambda$ which is in the
image of $\sigma^{r,n}$ we have $\sigma^{r,n} \circ\tau^{r,n} (p^{u,r,n})= p^{u,r,n}$. 
One then shows (see lemma 6.8 of \cite{Kahn&Guta}) that for any $\epsilon$, 
we have 
\begin{equation}
\sup_{ \| u\| \leq n^{\gamma} }\left\|\sigma^n \mathcal{N}_{r-1}(u,V_{\mu}) -
p^{u,r,n}\right\|_1  =
 O\left( n^{-1/2+\epsilon} ,  n^{-1/4 + \gamma}\right).
\label{reverse_classical}
\end{equation}
The channel $S^r_n$ (which is essentially the pseudo-inverse of $T^r_{n}$) is given as follows. Given a sample from the probability distribution $\cal{N}_{r-1}(u,V_{\mu})$, we use the Markov kernel $\sigma^{r,n}$ to produce a Young diagram $\lambda$. Conditional on $\lambda$ we send the quantum part through the  channel 
\[
S^r_{\lambda}: \phi 
\mapsto 
\tilde{S}^r_{\lambda}(\phi)\otimes \frac{{\bf 1}_{\mathcal{K}_{\lambda}} }{\dim(\cal{K}_{\lambda})}
\]
with 
\[
\tilde{S}^r_{\lambda}:  \phi 
\mapsto 
T^{r*}_{\lambda} \phi  +
(1-\Tr(T^{r*}_{\lambda}(\phi))) |{\bf 0}_{\lambda}\rangle\langle{\bf 0}_ {\lambda}|.
\]
For any density operator $\rho_{\lambda}$ on the block 
$\lambda$, the operator $\tilde{S}^r_{\lambda}$ reverts the action of 
$T^r_{\lambda}$:
\begin{align*}
\tilde{S}^r_{\lambda} T^r_{\lambda}
(\rho_{\lambda})  & =
T^{r*}_{\lambda}T^r_{\lambda}(\rho_{\lambda}) + (1 -
\Tr(T^{r*}_{\lambda}T^r_{\lambda}(\rho_{\lambda})))|{\bf
0}_{\lambda}\rangle\langle{\bf 0}_{\lambda}| \\
& = \rho_{\lambda} +  (1 -
\Tr(\rho_{\lambda}))|{\bf
0}_{\lambda}\rangle\langle{\bf 0}_{\lambda}| \\
& = \rho_{\lambda}.
\end{align*} 
Now
$$
S^r_n( \mathcal{N}_{r-1}(u,V_{\mu}) \otimes\phi^{\mathfrak{z},r}) = 
\bigoplus_{\lambda \in \Lambda_1}
[\sigma^{r,n} \mathcal{N}_{r-1}(u,V_{\mu})](\lambda) \tilde{S}^r_{\lambda}(\phi^{\mathfrak{z},r})\otimes
\frac{{\bf 1}_{\mathcal{K}_{\lambda} }}{\dim(\cal{K}_{\lambda})}.
$$
and with the notation $\sigma^{r,n} \mathcal{N}^{u}_{\lambda} : =
[\sigma^{r,n}\mathcal{N}_{r-1}(u,V_{\mu}))](\lambda)$ and $q^{u,r,n}_{\lambda} :=
\min(\sigma^{r,n} \mathcal{N}^{u}_{\lambda}, p^{u,r,n}_{\lambda})$
we have 
\begin{align*}
&S_n(\phi^{\mathfrak{z},r}\otimes \mathcal{N}_{r-1}(u,V_{\mu})) - \rho^{\theta,r,n} 
\\
=&
\bigoplus_{\lambda\in \Lambda_1}
\left\{q^{u,r,n}_{\lambda}
(\tilde{S}^r_{\lambda}(\phi^{\mathfrak{z},r})-\rho^{\theta,n}_{\lambda}) + (\sigma^{r,n}
\mathcal{N}^{u}_{\lambda} -
q^{u,r,n}_{\lambda})\tilde{S}^r_{\lambda}(\phi^{\mathfrak{z},r})\right.\\
&-
\left.(p^{u,r,n}_{\lambda} - q^{u,r,n}_{\lambda})\rho^{\theta,r,n}_{\lambda}\right\}\otimes
\frac{{\bf 1}_{
\mathcal{K}_{\lambda}}}{\dim(\cal{K}_{\lambda})}.
\end{align*} 
Taking $L^1$ norms, and using that all $\phi$'s and $\rho$'s have trace $1$
and that channels (such as $\tilde{S}^r_{\lambda}$) are trace preserving, we
get the bound:
\begin{align*}
&\left\| S^r_n(\phi^{\mathfrak{z},r}\otimes \mathcal{N}_{r-1}(u,V_{\mu})) - \rho^{\theta,r,n}
\right\|_1\\
 \leq &\sum_{\lambda\in \Lambda_1} \left\|q^{u,r,n}_{\lambda}
(\tilde{S}^r_{\lambda}(\phi^{\mathfrak{z},r})-\rho^{\theta,r,n}_{\lambda})   \right\|_1 +
\sum_{\lambda\in \Lambda_1} \left| \sigma^{r,n}\mathcal{N}^{u}_{\lambda} - p^{u,r,n}_{\lambda}
\right|
\\
\leq & 2\sum_{\lambda\in \Lambda_1\cap\Lambda^c_2} q^{u,r,n}_{\lambda} +
\sup_{\lambda\in \Lambda_1\cap\Lambda_2}  \left\|
\tilde{S}^r_{\lambda}(\phi^{\mathfrak{z},r})-\rho^{\theta,n}_{\lambda}   \right\|_1 +
\left\|\sigma^{r,n} \mathcal{N}_{r-1}(u,V_{\mu}) -
p^{u,r,n}\right\|_1
\\
\leq & 2\sum_{\lambda\in \Lambda_1\cap\Lambda^c_2} p^{u,r,n}_{\lambda} +
\sup_{\lambda\in\Lambda_1\cap\Lambda_2}  \left\|
\phi^{\mathfrak{z},r} - T^r_{\lambda}(\rho^{\theta,n}_{\lambda})  \right\|_1 +
\left\|\sigma^{n} \mathcal{N}_{r-1}(u,V_{\mu}) -
p^{u,r,n}\right\|_1
. 
\end{align*}
Now the first term is $O(n^{r^2}\exp(-n^{2\alpha-1}/2))$ by Lemma \ref{extreme_rep} and the third term is treated in the relation (\ref{reverse_classical}). For the second term one uses Lemma \ref{comparison_unshifted} through \ref{group_structure} as in the proof of Lemma \ref{summarize_channel}. Thus we have the same bound for the reverse channel i.e.
\begin{equation}
   \sup_{\theta\in \Theta_{n,r,\beta,\gamma}}||S^r_n(\Phi^{\theta,r})-\rho^{\theta,r,n}||_1=O(n^{-\kappa})
   \end{equation}
\end{proof}
\begin{proof}[Proof of Theorem 5.1]
In the first step we use a $2$-design to construct a POVM which helps in constructing the least squares estimator. A $2$-design is a finite set of rank $1$ projectors $\{|v_i\ra\la v_i|\}_{i=1}^m$ which satisfies 
\[\frac{1}{m}\sum_{i=1}^m(|v_i\ra\la v_i|)^{\otimes 2}=\binom{d+1}{2}^{-1}P_{sym(2)}\]
where $P_{sym(2)}$ is the projection operator onto the symmetric subspace of $(\bb{C}^d)^{\otimes 2}$. It can be easily shown by taking partial trace that $\{\frac{d}{m}|v_i\ra\la v_i|\}_{i=1}^m$ forms a POVM. When the state $\rho$ is measured using this POVM, we obtain outcomes from the set $\{1,\ldots,m\}$ with the probability of observing $i$ given by $\frac{d}{m}\la v_i|\rho|v_i \ra$. Now repeat this measurement on $n$ states to get a set of outcomes $\{i_1,\ldots,i_n\}$. Denote $n_i$ to be the number of times $i$ is observed and $f_i=\frac{n_i}{n}$ - the relative frequency of observing $i$. The least squares estimator is defined as follows
\[\hat{L}_n=\argmin_{\rho} \sum_{i=1}^{m}\left(f_i-\frac{d}{m}\la v_i|\rho| v_i\ra\right)^2.\]
Let $\cal{M}$ be a linear map from $\cal{H}(\bb{C}^d)$ (the space of Hermitian operators from $\bb{C}^d$ to $\bb{C}^d$) defined as follows
\begin{align*}
    \cal{M}:&\cal{H}(\bb{C}^d) \rightarrow \bb{R}^m\\
    \cal{M}(X)&=\frac{d}{m}\{\la v_i|X| v_i\ra\}_{i=1}^m, \quad X\in \cal{H}(\bb{C}^d)
\end{align*}
The adjoint of $\cal{M}$ can be easily defined as follows 
\begin{align*}
    \cal{M}^*:& \bb{R}^m \rightarrow \cal{H}(\bb{C}^d)\\
    \cal{M}^*(u)&=\frac{d}{m}\sum_{i=1}^m u_i |v_i\ra\la v_i|, \quad u\in \bb{R}^m
\end{align*}
Then the least squares problem can restated as 
\[\hat{L}_n=\argmin_{\rho} ||f-\cal{M}(\rho)||^2\]
where $f=\{f_i\}_{i=1}^m$. The closed form of the solution is well known and is given by $$\hat{L}_n=(\cal{M}^*\cal{M})^{-1}\cal{M}^*(f).$$ Explicit computation yields (see \cite{MR4093475} for details)
\[\hat{L}_n=(d+1)\sum_{i=1}^mf_i|v_i\ra\la v_i|-I.\]
One uses matrix concentration inequalities to show that $\hat{L}_n$ concentrates around $\rho$ with high probability. Equation (6) in \cite{MR4093475} gives the following rate of concentration
\[P[||\hat{L}_n-\rho||\geq \epsilon]\leq de^{-\frac{3n\epsilon^2}{16d}}\]
where $||.||$ denotes the operator norm. However, $\hat{L}_n$ may not be a density matrix although it has trace 1. One projects $\hat{L}_n$ into the space of density matrices to obtain what is called a projected least squares estimator. However, as mentioned in the beginning we will take the route of \cite{ButuceaGutaKypraios} to project into the space of spectrally thresholded density matrices which would estimate the rank correctly. We define our estimator (called physical estimator in \cite{ButuceaGutaKypraios}) as follows 
\[\tilde{\rho}_n=\argmin_{\sigma\in S(\epsilon)}||\sigma-\hat{L}_n||_2^2\]
where $S(\epsilon)$ denotes the density matrices spectrally thresholded at $2\epsilon$ i.e.
\[S(\epsilon)=\{\sigma: \sigma \text{ is a density matrix with eigenvalues } \lambda_j\in \{0\} \cup (2\epsilon,1], j=1,2,\ldots,d\}  \]
We show that $\tilde{\rho}_n$ concentrates around $\rho$ and also satisfies the rank consistency condition. 

The proof follows in the same way as \cite{ButuceaGutaKypraios} but we include it for the sake of completion.

Let $\hat{\lambda}_1\geq\hat{\lambda_2}\geq\ldots\geq\hat{\lambda}_d$ denote the eigenvalues of $\hat{L}_n$. Let $k=0$ and define $\tilde{\lambda}_j^{(0)}=\hat{\lambda}_j$ for $j=1,2\ldots,d$
\begin{algorithmic}
\For {$k=1,2,\ldots,d$}
\{\If {$\tilde{\lambda}_{d-k+1}^{(k-1)}> 2\epsilon$}
    STOP
\Else
    \State $\tilde{\lambda}_{d-k+1}^{(k)}=0$
    \State $\tilde{\lambda}_{j}^{(k)}=\tilde{\lambda}_{j}^{(k-1)}+\frac{\tilde{\lambda}_{d-k+1}^{(k-1)}}{d-k}$ for $j=1,2,\ldots,d-k$
\EndIf
\EndFor
\end{algorithmic}
Note that if the algorithm stops at $k=\hat{k}$, then $\hat{r}=d-\hat{k}+1$ is the rank of the estimator $\tilde{\rho}_n$ and the eigenvalues are given by $\tilde{\lambda}_j=0$ for $j>\hat{r}$ and for $j\leq \hat{r}$
\[\tilde{\lambda}_j=\hat{\lambda}_j+\frac{1}{\hat{r}}\sum_{j>\hat{r}}\hat{\lambda}_j.\]
In particular if  $\hat{L}_n=\hat{U}_n\hat{D}_n\hat{U}^{*}_n$, then $\tilde{\rho}_n=\hat{U}_n\tilde{D}_n\hat{U}^{*}_n$ where $$\tilde{D}_n=diag(\tilde{\lambda}_1,\ldots,\tilde{\lambda}_{\hat{r}},0,\ldots,0).$$  
Henceforth, we condition on the event  $||\hat{L}_n-\rho||\leq \epsilon$ which occurs with probability greater than $1-de^{-\frac{3n\epsilon^2}{16d}}$. By Weyl perturbation inequality (see \cite{MR1477662}), $|\lambda_j-\hat{\lambda}_j|\leq \epsilon$ where $\lambda_1\geq \lambda_2 \geq \ldots \geq \lambda_{r}>6\epsilon$ (by assumption) and $\lambda_{r+1}=\ldots \lambda_{d}=0$. Now if $\hat{r}>r$, then for $j\leq \hat{r}$,
\begin{align*}
    |\tilde{\lambda}_j-\lambda_j|&=|\hat{\lambda}_j+\frac{1}{\hat{r}}\sum_{j>\hat{r}}\hat{\lambda}_j-\lambda_j|\\
    &\leq|\hat{\lambda}_j-\lambda_j|+\frac{1}{\hat{r}}|1-\sum_{j\leq\hat{r}}\hat{\lambda}_j|\\
    &\leq|\hat{\lambda}_j-\lambda_j|+\frac{1}{\hat{r}}\sum_{j\leq\hat{r}}|\lambda_j-\hat{\lambda}_j|\\
    &\leq 2\epsilon
\end{align*}
Since $\lambda_{\hat{r}}=0$ when $\hat{r}>r$, the above inequality implies $\tilde{\lambda}_{\hat{r}}\leq\lambda_{\hat{r}}+2\epsilon=2\epsilon$, which is a contradiction since $\hat{r}$ was chosen so that $\tilde{\lambda}_{\hat{r}}>2\epsilon$.
Now consider the case $\hat{r}<r$, so that $\tilde{\lambda}_r=0$
\begin{equation}0=\tilde{\lambda}_r\geq \lambda_r-|\lambda_r-\tilde{\lambda}_r|\geq \lambda_r-|\lambda_r-\hat{\lambda}_r|-|\hat{\lambda}_r-\tilde{\lambda}_r|=\lambda_r-|\lambda_r-\hat{\lambda}_r|-|\hat{\lambda}_r|\label{eigen-ineq}
\end{equation}
 Now consider the step $k$ of the algorithm with $k=d-r+1$. Since $\tilde{\lambda}_r=\tilde{\lambda}^{(d-r+1)}_r=0$, $\tilde{\lambda}^{(d-r)}_r<2\epsilon$. A careful analysis of the algorithm shows that at after step $d-r$, the sum of the last $d-r$ eigenvalues is distributed equally among the first $r$ eigenvalues. Thus 
 \[\tilde{\lambda}^{(d-r)}_r=\hat{\lambda}_r+\frac{1}{r}(1-\sum_{i=1}^r\hat{\lambda}_r)=\hat{\lambda}_r+\frac{1}{r}\sum_{i=1}^r(\lambda_r-\hat{\lambda}_r)<2\epsilon.\]
 This implies that $\hat{\lambda}_r<2\epsilon+\frac{1}{r}\sum_{i=1}^r|\lambda_r-\hat{\lambda}_r|<3\epsilon$
 .Since by the condition of the theorem $\lambda_r>6\epsilon$, (\ref{eigen-ineq}) leads to
 \[0=\tilde{\lambda}_r>6\epsilon-\epsilon-3\epsilon=2\epsilon\]
a contradiction. Thus with probability at least $1-de^{-\frac{3n\epsilon^2}{16d}}$, $r=\hat{r}$.
 
 To describe the concentration $\tilde{\rho}_n$, consider the following diagonalizations of the states 
 \[\rho=UDU^*,\quad \hat{L}_n=\hat{U}_n\hat{D}_n\hat{U}_n^*,\quad \tilde{\rho}_n=\hat{U}_n\tilde{D}_n\hat{U}_n^*\]
 and estimate the error of the projected estimator as follows:
\begin{align*}
    ||\tilde{\rho}_n-\rho||_2 & = ||\hat{U}_n\tilde{D}_n\hat{U}_n^*-UDU^*||_2\\
    & \leq ||\hat{U}_n\tilde{D}_n\hat{U}_n^*-\hat{U}_nD\hat{U}_n^*||_2+||\hat{U}_nD\hat{U}_n^*-UDU^*||_2\\
    & = ||\tilde{D}_n-D||_2+||\hat{U}_nD\hat{U}_n^*-UDU^*||_2.
\end{align*}
For the first norm one has the following bound
\[||\tilde{D}_n-D||_2=\sqrt{\sum_{k\leq\hat{r}}(\tilde{\lambda}_j-\lambda_j)^2}\leq 2\epsilon\sqrt{r}.\]
For the second norm first we give an upper bound in terms of the operator norm
\begin{align*}||\hat{U}_nD\hat{U}_n^*-UDU^*||&\leq ||\hat{U}_n\hat{D}_n\hat{U}_n^*-\hat{U}_nD\hat{U}_n^*||+||\hat{U}_n\hat{D}_n\hat{U}_n^*-UDU||\\
& = ||\hat{D}_n-D||+||\hat{L}_n-\rho||\leq 2\epsilon.
\end{align*}
Both $\hat{U}_nD\hat{U}_n^*$ and $UDU^*$ are rank $r$ matrices, so rank of $\hat{U}_nD\hat{U}_n^*-UDU^*$ is at most $2r$ and hence $||\hat{U}_nD\hat{U}_n^*-UDU^*||_2\leq 2\epsilon\sqrt{2r}$. And thus we obtain
\[||\tilde{\rho}_n-\rho||_2^2\leq (2\sqrt{2}+2)^2r\epsilon^2\leq 25r\epsilon^2\]
with probability at least $1-de^{-\frac{3n\epsilon^2}{16d}}$.
\end{proof}
\begin{proof}[Proof of Theorem 5.2]

\textbf{Upper Bound}

 With $Z\sim\cal{N}_{r-1}(u,V_{\mu})$, as the estimator $\hat{u}$
 the classical part of the risk is 
\[R^C=E[\sum_{i=1}^{r-1}(u_i-Z_i)^2+(\sum_{i=1}^{r-1}(u_i-Z_i))^2]=\sum_{i=1}^r\mu_i(1-\mu_i)\]
where we have used the relation $\mu_r=1-\sum_{i=1}^{r-1}\mu_i$.

 We start to compute the risk for the quantum part by first verifying that we obtain normal random variables when we use the measurement $M$ on the shifted thermal states. Denote $R_t=(-\infty,t_1]\times (-\infty,t_2]$. Let $X$ denote the two dimensional random variable generated when using measurement $M$ to estimate the state $\phi^{z_0}_{\beta}$. Then using the relation $P(X\in R_t) =\Tr(\phi^{z_0}_{\beta}M(R_{t}))$ we obtain,
\[
P(X\in R_t) 
 = \Tr\left( \frac{e^{\beta}-1}{\pi}\int\exp{-(e^{\beta}-1)|z|^2})|z+z_0\ra\la z+z_0|dz\frac{1}{2\pi}\int_{R_t}|\psi_m\ra\la\psi_m|dm\right).
\]
 Changing from complex to two dimensional reals in the second integrals by using $\mu=(\sqrt{2}Re(z),\\\sqrt{2}Im(z))$ and similarly obtaining $\mu_0$ from $z_0$ we have
 \begin{align*}
   P(X\in R_t)&=  \Tr\left( \frac{e^{\beta}-1}{2\pi}\int\exp(-(e^{\beta}-1)||\mu||^2/2)|\psi_{\mu+\mu_0}\ra\la \psi_{\mu+\mu_0}|d\mu \frac{1}{2\pi}\int_{R_t}|\psi_m\ra\la\psi_m|dm\right)\\
   & = \frac{e^{\beta}-1}{4\pi^2}\int_{R_t}\int\exp(-(e^{\beta}-1)||\mu||^2/2)|\la\psi_{\mu+\mu_0}|\psi_m\ra|^2d\mu dm\\
   & = \frac{e^{\beta}-1}{4\pi^2}\int_{R_t}\int\exp(-(e^{\beta}-1)||\mu||^2/2)\exp(-\frac{1}{2}||\mu+\mu_0-m||^2)d\mu dm\\
   &=  \frac{e^{\beta}-1}{4\pi^2}\int_{R_t}\int\exp(-\frac{1}{2}(e^{\beta}||\mu||^2-2\mu^T(m-\mu_0)+||m-\mu_0||^2))d\mu dm\\
   & = \frac{1-e^{-\beta}}{2\pi}\int_{R_t}\exp(-\frac{1}{2}(1-e^{-\beta})||m-\mu_0||^2)dm.
 \end{align*}
where the third equality is obtained by using the expansion (7). Thus the generated random variable $X$ is a bivariate random variable with mean $\mu_0$ and variance $\frac{1}{1-e^{-\beta}}I_2=\frac{2\sigma^2_{\beta}+1}{2}I_2$ (since $\sigma^2_{\beta}=\frac{\coth(\beta/2)}{2}$). A similar argument would yield that $M_c$ (see Section 4) generates $cX$. As a special case when $\beta=\infty$,  $\phi_{\infty}=|0\ra\la0|$ and thus the shifted thermal state at infinite temperature is just a shifted pure state. In this case the covariance matrix  of $X$ is $I_2$.

Now consider the quantum part i.e.
$$\phi^{\mathbf{\mathfrak{z}},r}=\bigotimes_{1\leq i<j \leq r}\phi_{\beta_{ij}}^{\mathfrak{z}_{ij}}\otimes\bigotimes_{\substack{1\leq i\leq r\\ r+1\leq j \leq d}}\phi_{\infty}^{\mathfrak{z}_{ij}}$$
 and estimation of the parameter $\{\mathfrak{z}_{ij}\}$ under the constraint that $|\mathfrak{z}_{ij}|\leq n^{\beta}$. Changing from complex to two dimensional reals (in this case write $\xi_{ij}=(\sqrt{2}Re(\mathfrak{z}_{ij}),\sqrt{2}Im(\mathfrak{z}_{ij}))$) we have the constraint set:
\[\mathcal{Z}=\{\{\xi_{ij}\}:|\xi_{ij}|\leq \sqrt{2}n^{\beta}\}.\]

Let $\kappa_{ij}=\mu_i-\mu_j$. Note that for the case $j\geq r+1$, $\sigma^2_{\beta_{ij}}=1/2$ (since the state is pure or alternatively since $\beta_{ij}=\ln(\mu_i/\mu_j)$ and $\mu_j=0$ implies that $\beta_{ij}=\infty$). Also reparametrize the Gaussian state $\phi^{\mathbf{\mathfrak{z}},r}$ as $\phi^{\mathbf{\xi},r}$. The risk for estimation with the POVM $\bar{M}$ where $\bar{M}=\bigotimes_{\substack{{1\leq i\leq r}\\{i<j\leq d}}}M$ is given by: 
\begin{align*}R^Q_{\bar{M}}&=\int \sum_{\substack{{1\leq i\leq r}\\{i<j\leq d}}}\kappa_{ij}||\xi_{ij}-\hat{\xi}_{ij}||^2 Tr(\phi^{\mathbf{\xi},r}\bar{M}(d \hat{\xi}))\\
 &= \sum_{\substack{{1\leq i\leq r}\\{i<j\leq d}}}\kappa_{ij}(2\sigma_{\beta_{ij}}^2+1).
 \end{align*}
Thus when $\theta\in \Theta=\{|u_i|\leq n^\gamma \text{ }\forall i,|\mathfrak{z}_{kj}|\leq n^{\beta} \text{ }\forall 1\leq i\leq r, i<j\leq d\}$, the risk for the classical estimator $Z$ and the POVM $\bar{M}$ is given by
\begin{equation}
    R_{Z,\bar{M}}(\theta)= \sum_{i=1}^r \mu_i(1-\mu_i)+ \sum_{\substack{{1\leq i\leq r}\\{i<j\leq d}}}\kappa_{ij}(2\sigma_{\beta_{ij}}^2+1).\label{const_risk}
\end{equation}
Note that $2\sigma^2_{\beta_{ij}}+1=2(1-e^{-\beta_{ij}})^{-1}=2\mu_i/(\mu_i-\mu_j)$ for $j\leq r$ and $2\sigma^2_{\beta_{ij}}+1=2$ for $j>r$. Plugging back the expression for $\kappa_{ij}$ we obtain
\begin{equation}\label{upper_bound_final}
    \limsup_{n\rightarrow \infty}\sup_{\theta\in\Theta}R_{Z,\bar{M}}(\theta)= \sum_{i=1}^r \mu_i(1-\mu_i)+\sum_{\substack{{1\leq i\leq r}\\{i<j\leq d}}}2\mu_i.
\end{equation}
since the risk in (\ref{const_risk}) is independent of $\theta$ and $n$.

\textbf{Lower Bound}

Consider the expression of the Bayes risk for estimating $\theta$ under the following prior $\pi$
\begin{align*}u_i\sim &\cal{N}(0,n^{2\gamma_{0}}) \text{ }\forall 1\leq i\leq r-1\\
\xi_{ij}\sim& \cal{N}_2(0,n^{2\beta_{0}}I_2 )\text{ }\forall 1\leq i\leq r, i<j\leq d\
\end{align*}

where the random variables are mutually independent and $\gamma_0<\gamma$ and $\beta_0<\beta$. Again we change the notation from complex to the real i.e. we set $\xi_{ij}=(\sqrt{2}Re(\mathfrak{z}_{ij}),\sqrt{2}Im(\mathfrak{z}_{ij}))$. Since both the loss and the parameter space are separable into quantum and classical parts we introduce the following notations
\begin{align*}
 \mathcal{L}(u,\hat{u})&= \sum_{i=1}^{r-1}(u_i-\hat{u}_i)^2+(\sum_{i=1}^{r-1}(u_i-\hat{u}_i))^2\\
 \mathcal{L}(\xi,\hat{\xi})&=\sum_{\substack{{1\leq i\leq r}\\{i<j\leq d}}}(\mu_i-\mu_j)||\xi_{ij}-\hat{\xi}_{ij}||^2\\
 \cal{U}&=\{u:|u_i|\leq n^\gamma \text{ }\forall 1\leq i\leq r-1\}\\
 \cal{Z}&=\{\xi:|\xi_{ij}|\leq n^\beta \text{ }\forall 1\leq i\leq r, i<j\leq d\}\\
\end{align*}
We denote $\pi_1$ as the prior on $u$ and $\pi_2$ the prior on $\xi$ and by the structure of the prior we have $\pi(d\theta)=\pi_1(du)\pi_2(d\xi)$.We give a lower bound to the maximum risk by the average risk as follows:
\begin{align}
    \sup_{\theta\in\Theta} R_{\hat{u},M}(\theta)&\geq \int_{\Theta}(R_{\hat{u}}^C(\theta)+R^Q_{M}(\theta))d\pi(\theta)\nonumber\\
    &\geq\pi_2(\cal{Z})\inf_{\hat{u}\in \cal{U}}\int_\cal{U}E_u[\cal{L}(u,\hat{u})]d\pi_1(u)+\pi_1(\cal{U})\inf_{M:M(\cal{Z}^c=0)}\int_\cal{Z}E_{\xi}[\cal{L}(\xi,\hat{\xi})]d\pi_2(\xi)\nonumber\\
    &=\pi_2(\cal{Z})\inf_{\hat{u}\in \cal{U}}(\int E_u[\cal{L}(u,\hat{u})]d\pi_1(u)-\int_{\cal{U}^c
    }E_u[\cal{L}(u,\hat{u})]d\pi_1(u))\nonumber\\&+\pi_1(\cal{U})\inf_{M:M(\cal{Z}^c=0)}(\int E_{\xi}[\cal{L}(\xi,\hat{\xi})]d\pi_2(\xi)-\int_{\cal{Z}^c}E_{\xi}[\cal{L}(\xi,\hat{\xi})]d\pi_2(\xi)).\label{lower_bound_intermed}
\end{align}
Since the parameters lie in $\cal{U}$ and $\cal{Z}$ respectively it is enough to consider estimators supported on the parameter space themselves and hence the second inequality follows. 

Let $Y\sim \cal{N}_d(u,\Sigma_1)$ and $u\sim \cal{N}_d(0,\Sigma_2)$, then $E[u|Y]$ is known to be the Bayes estimator of $u$ under the loss $||u-\hat{u}||^2$. It can be easily shown that $E[u|Y]=(I+\Sigma_1\Sigma^{-1}_2)^{-1}Y$ and  the Bayes risk is given by
$$E[||u-E[u|Y]||^2]=\mathrm{Tr}((\Sigma^{-1}_1+\Sigma^{-1}_2)^{-1}).$$
Using the above result we observe that
\begin{align*}
    \inf_{\hat{u}\in\cal{U}}\int E_u[\cal{L}(u,\hat{u})]d\pi_1(u)\geq & \inf_{\hat{u}} \left(\int E_u[\sum_{i=1}^{r-1}(u_i-\hat{u}_i)^2]d\pi_1(u)+\int E_u[(\sum_{i=1}^{r-1}(u_i-\hat{u}_i))^2]d\pi_1(u)\right)\\
    & \geq \inf_{\hat{u}} \int E_u[\sum_{i=1}^{r-1}(u_i-\hat{u}_i)^2]d\pi_1(u)+\inf_{w}\int E_u[(\sum_{i=1}^{r-1}u_i-w)^2]d\pi_1(u)\\
    &= \mathrm{Tr}((\Sigma_1^{-1}+\Sigma_2^{-1})^{-1})+\frac{\sigma_r^2\tau_r^2}{\sigma_r^2+\tau_r^2}.
\end{align*}
Here $\Sigma_1=V(\mu)$, $\Sigma_2=n^{2\gamma_0}I_{r-1}$, $\tau_r^2=(r-1)n^{2\gamma_0}$ and
\[\sigma_r^2=\sum_{i=1}^{r-1}\mu_i(1-\mu_i)-\sum_{i\neq  j}\mu_i\mu_j=\mu_r(1-\mu_r).\]
Taking limit one obtains
\begin{equation}\liminf_{n\rightarrow\infty}\inf_{\hat{u}\in \cal{U}}\int E_u[\cal{L}(u,\hat{u})]d\pi_1(u)\geq \mathrm{Tr}\;V(\mu)+\sigma_r^2=\sum_{i=1}^{r}\mu_i(1-\mu_i).\label{lower_bound_class}
\end{equation}
For the quantum part the Bayes risk for a single component $\xi_{ij}$ with the prior $\xi_{ij}\sim \sigma^2 I_2$ is obtained by the covariant measurement followed by an appropriate shrinkage (see section 4 for details) and the Bayes risk is given by
\[\inf_M\int\int||\xi_{ij}-\hat{\xi_{ij}}||^2\mathrm{Tr}\left(\phi^{\xi_{ij}}_{\beta_{ij}}M(d\hat{\xi_{ij}})\right)d\pi(\xi_{ij})=\frac{2\sigma^2(2\sigma^2_{\beta_{ij}}+1)}{2(\sigma^2+\sigma^2_{\beta_{ij}})+1}\]
which is essentially equation (28). Thus we have 
\begin{align*} \inf_{M:M(\cal{Z}^c=0)}\int E_{\xi}[\cal{L}(\xi,\hat{\xi})]d\pi_2(\xi)\geq&\inf_M\int(\sum_{\substack{{1\leq i\leq r}\\{i<j\leq d}}}\kappa_{ij}||\xi_{ij}-\hat{\xi_{ij}}||^2\mathrm{Tr}[\phi^{\xi_{ij}}_{\beta_{ij}}M(d\hat{\xi_{ij}})])d\pi_2(\xi)\\
\geq &\sum_{\substack{{1\leq i\leq r}\\{i<j\leq d}}}\kappa_{ij}\frac{2n^{2\beta_0}(2\sigma^2_{\beta_{ij}}+1)}{2(n^{2\beta_0}+\sigma^2_{\beta_{ij}})+1}.
\end{align*}
Taking limit we obtain
\begin{equation}\liminf_{n\rightarrow \infty}\inf_{M:M(\cal{Z}^c=0)}\int E_{\xi}[\cal{L}(\xi,\hat{\xi})]d\pi_2(\xi)\geq \sum_{\substack{{1\leq i\leq r}\\{i<j\leq d}}}\kappa_{ij}(2\sigma^2_{\beta_{ij}}+1)=\sum_{\substack{{1\leq i\leq r}\\{i<j\leq d}}}2\mu_i\label{lower_bound_quantum}
\end{equation}
the last equality following from the calculation preceding (\ref{upper_bound_final}). We also have the following lemma (the proof of which is deferred to Appendix D) which shows that the second and fourth term of (\ref{lower_bound_intermed}) goes to zero.
\begin{lemma}{\label{lower_bound_error}}
Let $\hat{u}$ be an estimator of $u$ and suppose $\hat{u}\in\cal{U}$. Similarly suppose we use the POVM $M$ to estimate $\xi$ where $M(\cal{Z}^c)=0$. Then we have
\[\int_{\cal{U}^c
    }E_u[\cal{L}(u,\hat{u})]d\pi_1(u),\int_{\cal{Z}^c}E_{\xi}[\cal{L}(\xi,\hat{\xi})]d\pi_2(\xi)\rightarrow 0.\]
\end{lemma}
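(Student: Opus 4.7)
The plan is to exploit the fact that the estimators $\hat{u}$ and $\hat{\xi}$ are constrained to take values in the bounded sets $\cal{U}$ and $\cal{Z}$ respectively, which yields a deterministic polynomial-in-$n$ bound on the loss. Combined with the super-polynomial decay of the prior mass on the complements $\cal{U}^c$ and $\cal{Z}^c$ (which comes from the assumptions $\gamma_0<\gamma$ and $\beta_0<\beta$), this will make both integrals vanish in the limit.

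For the classical part, the elementary inequality $(a-b)^2\leq 2a^2+2b^2$ together with the pointwise bound $|\hat{u}_i|\leq n^\gamma$ (valid because $\hat{u}\in\cal{U}$ by hypothesis) gives
\[
\cal{L}(u,\hat{u})\ \leq\ C_r\bigl(\|u\|^2+n^{2\gamma}\bigr)
\]
for a constant $C_r$ that depends only on $r$. This bound is deterministic in $\hat{u}$, so it survives the $E_u$-expectation. Cauchy--Schwarz then yields
\[
\int_{\cal{U}^c} E_u[\cal{L}(u,\hat{u})]\,d\pi_1(u)\ \leq\ C_r\,\pi_1(\cal{U}^c)^{1/2}\left(\int\bigl(\|u\|^2+n^{2\gamma}\bigr)^2 d\pi_1(u)\right)^{1/2}.
\]
The prior has component variance $n^{2\gamma_0}$, so standard Gaussian moment computations bound the second factor by $O(n^{2\gamma})$ (a polynomial in $n$), while a union bound combined with the Mill's-ratio tail estimate gives
\[
\pi_1(\cal{U}^c)\ \leq\ \sum_{i=1}^{r-1}\Pr\bigl(|u_i|>n^\gamma\bigr)\ \leq\ 2(r-1)\exp\!\bigl(-n^{2(\gamma-\gamma_0)}/2\bigr),
\]
which decays faster than any polynomial because $\gamma>\gamma_0$. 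The product therefore tends to $0$.

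The quantum part is handled by the same template, with $(u,\gamma,\gamma_0,\cal{U})$ replaced by $(\xi,\beta,\beta_0,\cal{Z})$. The crucial structural point is that the constraint $M(\cal{Z}^c)=0$ forces the random outcome $\hat{\xi}$ to lie in $\cal{Z}$ almost surely under every state $\phi^{\xi,r}$, so each $\|\hat{\xi}_{ij}\|\leq n^\beta$ deterministically. Using the boundedness of the weights $\mu_i-\mu_j$, we obtain the analogous pointwise bound $\cal{L}(\xi,\hat{\xi})\leq C'\bigl(\|\xi\|^2+n^{2\beta}\bigr)$. Since $\xi_{ij}\sim\cal{N}_2(0,n^{2\beta_0}I_2)$ and $\beta_0<\beta$, the same Cauchy--Schwarz/Gaussian-tail combination yields the conclusion.

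There is no genuine obstacle here; the statement is a routine \emph{truncated-estimator plus Gaussian-tail} estimate. The one point that deserves a line of comment in the actual proof is why the pointwise bound $\hat{u}\in\cal{U}$ (respectively $\hat{\xi}\in\cal{Z}$) is legitimate: in the display (\ref{lower_bound_intermed}) the infimum was already restricted to $\cal{U}$-valued estimators (respectively POVMs with $M(\cal{Z}^c)=0$), so the deterministic bound on the estimator is built into the setup and comes for free.
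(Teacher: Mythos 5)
Your proposal is correct and follows essentially the same route as the paper: a pointwise polynomial bound on the loss coming from the constraints $\hat{u}\in\mathcal{U}$ and $M(\mathcal{Z}^c)=0$, combined via Cauchy--Schwarz with Gaussian fourth moments and the sub-Gaussian tail bound $\pi_1(\mathcal{U}^c),\pi_2(\mathcal{Z}^c)\lesssim \exp(-n^{2(\gamma-\gamma_0)}/2)$ (resp.\ with $\beta,\beta_0$), which vanishes since $\gamma_0<\gamma$ and $\beta_0<\beta$. The only difference is cosmetic bookkeeping in where Cauchy--Schwarz is applied, so no further comment is needed.
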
 Finally we observe that $\pi_1(\mathcal{U}),\pi_2(\mathcal{Z})\rightarrow 1$, which together with (\ref{lower_bound_class}),(\ref{lower_bound_quantum}) and Lemma \ref{lower_bound_error} yields
\[\liminf_{n\rightarrow \infty}\inf_{\hat{u},M}\sup_{\theta\in\Theta} R_{\hat{u},M}(\theta)\geq\sum_{i=1}^r \mu_i(1-\mu_i)+\sum_{\substack{{1\leq i\leq r}\\{i<j\leq d}}}2\mu_i\]
when the terms are plugged in (\ref{lower_bound_intermed}).
\end{proof}

\begin{proof}[Proof of Theorem 5.4]

\textbf{Upper Bound}

Split the sample into two parts $n_1=\lfloor n^{\delta_0}\rfloor+1$ for some $0<\delta_0<1$ (we will describe the choice of $\delta_0$ in the next step) and $n_2=n-n_1$. Let $\tilde{\rho}_n$ be the estimator obtained in the first stage with the the sample $n_1$. Set $25r\epsilon^2=n^{-1+2\epsilon_0}$ (where $\epsilon_0<\gamma$). By Theorem 5.1
\begin{equation}P[||\rho-\tilde{\rho}_{n}||^2_2\geq n^{-1+2\epsilon_0}]\leq de^{-3n^{-1+2\epsilon_0+\delta_0}/400rd}.
\label{state_conc}\end{equation}
since we have obtained $\tilde{\rho}_{n}$ using $n_1>n^{\delta_0}$. For any $\epsilon_0$ we can choose $\delta_0$ such that $2\epsilon_0+\delta_0>1$ and the concentration still holds. 
Further we have
\[P[r=\hat{r}]\geq 1- de^{-3n^{-1+2\epsilon_0+\delta_0}/400rd}.\]
Now for the event $A_n=\{||\rho-\tilde{\rho}_{n}||^2_2< n^{-1+2\epsilon_0} \text{ and  } r=\hat{r}\}$ we have, 
\begin{equation}
  P[A_n]=1-2de^{-3n^{-1+2\epsilon_0+\delta_0}/400rd}.\label{prob_A_n}  
\end{equation}
Recall the definition $\mathcal{L}(\theta,\hat{\theta})$ given in (31).  The risk can now be decomposed as follows
\begin{align*}
E_{\rho}||\rho-\hat{\rho}_n||_2^2\leq &4P[A^c_n]+E_{\rho}[||\rho-\hat{\rho}_n||^2\mathds{1}_{A_n}]\\
    = & 4P[A^c_n]+E_{\rho_{\theta}}\left[\left(\frac{1}{n}\mathcal{L}(\theta,\hat{\theta}_n)+O\left(\frac{||\theta||^3,||\hat{\theta}_n||^3}{n^{3/2}}\right)\right)\mathds{1}_{A_n}\right].
\end{align*}
We have used the notation involving local parameters in the last line with $\rho_0=\tilde{\rho}_n$ as the central state. Also the local parameter estimate $\hat{\theta}_n$(and the corresponding state $\hat{\rho}_n$) is formed using the second part of the sample $n_2$. Conditioned on $A_n$, $||\theta||<n^{\epsilon_0}$ and we can restrict $\hat{\theta}_n$ to lie within a radius of $2n^{\epsilon_0}$. This ensures that  $||\theta||<n^{\epsilon_0}<n^{\gamma}$ (choose $\beta<\gamma<1/9$) we can use LAN. Secondly both $||\theta||^3$ and $||\hat{\theta}_n||^3$ are less than $n^{3\gamma}<n^{1/2-\epsilon_1}$ for a sufficiently small $\epsilon_1$ so that $O(\frac{||\theta||^3,||\hat{\theta}_n||^3}{n^{3/2}})=o(n^{-1})$. Thus we obtain 
$$E_{\rho}n||\rho-\hat{\rho}_n||_2^2\leq 4nP[A^c_n]+E_{\rho_{\theta}}\left[\left(\mathcal{L}(\theta,\hat{\theta})\right)\mathds{1}_{A_n}\right]+o(1).$$
Since $\mathcal{L}(\theta,\hat{\theta})=O(n^{2\epsilon_0})$ multiplying with the asymptotic equivalence rate $n^{-\kappa}$ we have $n^{2\epsilon_0-\kappa}=o(1)$ (we also choose $\epsilon_0<\kappa/2$). Finally we use relations (21) and (22) to obtain 
$$E_{\rho_{\theta}}\left[E_{\rho_{\theta}}\left[\mathcal{L}(\theta,\hat{\theta}_n)|\mathds{1}_{A_n}\right]\right]\leq E_{\rho_{\theta}}\left[E_{\Phi^{\theta,r}}\left[\mathcal{L}(\theta,\hat{\theta}_n)|\mathds{1}_{A_n}\right]\right]+o(1).$$
Note that in this case $\Phi^{\theta,r}$ depends on the center $\tilde{\rho}_{n}$ (obtained in the first step). Using relation (\ref{prob_A_n}) we observe that  $nP[A^c_n]=o(1)$ and hence
\begin{equation}E_{\rho}n||\rho-\hat{\rho}_n||_2^2\leq E_{\rho_{\theta}}\left[E_{\Phi^{\theta,r}}\left[\mathcal{L}(\theta,\hat{\theta}_n)|\mathds{1}_{A_n}\right]\right]+o(1).
\label{cond_exp}
\end{equation}
Since we have used $\tilde{\rho}_n$ as the centralizing state, if $\tilde{\mu}_1>\ldots>\tilde{\mu}_r$
are the eigenvalues of $\tilde{\rho}_n$, then with $\theta\in \Theta$ we have
$$E_{\Phi^{\theta,r}}\left[\mathcal{L}(\theta,\hat{\theta}_n)|\mathds{1}_{A_n}\right]=\sum_{i=1}^r \tilde{\mu}_i(1-\tilde{\mu}_i)+\sum_{\substack{{1\leq i\leq r}\\{i<j\leq d}}}2\tilde{\mu}_i$$
by equation (\ref{const_risk}). Although we have used the second part of the sample $n_2$ to form the estimate $\hat{\theta}_n$, since $n_2=n(1+o(1))$ the same asymptotics hold.

Finally let $\sigma$ be a rank-$r$ qudit with eigenvalues $\mu_1>\ldots>\mu_r$, then if $||\rho-\sigma||_2<n^{-1/2+\varepsilon}$ for some $\varepsilon<\epsilon_0$ then $||\tilde{\rho}_n-\sigma||_2\leq n^{-1/2+\epsilon_0}$ with high probability (see RHS of (\ref{state_conc})). By the equivalence of norms in the finite dimension the same claim can be made with the operator norm i.e. 
\[P[||\sigma-\tilde{\rho}_{n}||\geq cn^{-1/2+\varepsilon}]\leq de^{-3n^{-1+2\epsilon_0+\delta_0}/400rd}\]
for some constant c. Using the above relation and Weyl perturbation inequality one can easily show that $\tilde{\mu}_i=\mu_i+o_p(1)$ for all $i=1,\ldots,r-1$ and since $\tilde{\mu}_i$ are bounded we have
\begin{equation}\limsup_n\sup_{||\rho-\sigma||_2<n^{-1/2+\varepsilon}}E_{\rho_{\theta}}\left[E_{\Phi^{\theta,r}}\left[\mathcal{L}(\theta,\hat{\theta}_n)|\mathds{1}_{A_n}\right]\right]=\sum_{i=1}^r \mu_i(1-\mu_i)+\sum_{\substack{{1\leq i\leq r}\\{i<j\leq d}}}2\mu_i.
\label{cont_eigen}
\end{equation}
Finally using relations (\ref{cond_exp}) and (\ref{cont_eigen}) we obtain
$$\limsup_n\sup_{||\rho-\sigma||_2<n^{-1/2+\varepsilon}}E_{\rho}n||\rho-\hat{\rho}_n||_2^2\leq \sum_{i=1}^r \mu_i(1-\mu_i)+\sum_{\substack{{1\leq i\leq r}\\{i<j\leq d}}}2\mu_i.$$

\textbf{Lower Bound}

Again we observe that for $||\rho-\sigma||<n^{-1/2+\varepsilon}$ we can use the local parametrization and write $\rho$ and $\hat{\rho}_n$ as $\rho_\theta$ and $\rho_{\theta_n}$ respectively with the central state now being $\sigma$. We have the following set of inequalities 
\begin{align*}
    \inf_{\hat{\rho}_n}\sup_{||\rho-\sigma||<n^{-1/2+\varepsilon}}E_{\rho}n||\rho-\hat{\rho}_n||_2^2\geq& \inf_{\hat{\theta}_n}\sup_{||\theta||<n^{\varepsilon}}\mathcal{L}(\theta,\hat{\theta})+O\left(\frac{||\theta||^3,||\hat{\theta}_n||^3}{n^{1/2}}\right)\\
    \geq & \inf_{||\hat{\theta}||<n^{\varepsilon}}\sup_{||\theta||<n^{\varepsilon}}E_{\rho_{\theta}}\left[\mathcal{L}(\theta,\hat{\theta})+O\left(\frac{||\theta||^3,||\hat{\theta}_n||^3}{n^{1/2}}\right)\right]\\
    \geq & \inf_{||\hat{\theta}||<n^{\varepsilon}}\sup_{||\theta||<n^{\varepsilon}}E_{\rho_{\theta}}\left[\mathcal{L}(\theta,\hat{\theta})\right]-o(1)\\
    \geq & \inf_{||\hat{\theta}||<n^{\varepsilon}}\sup_{||\theta||<n^{\varepsilon}}E_{\Phi^{\theta,r}}\left[\mathcal{L}(\theta,\hat{\theta})\right]-o(1)\\
    \geq & \inf_{\hat{\theta}}\sup_{\theta\in \Theta_{n,\beta_1,\gamma_1}}E_{\Phi^{\theta,r}}\left[\mathcal{L}(\theta,\hat{\theta})\right]-o(1)\\
    \geq & \sum_{i=1}^r \mu_i(1-\mu_i)+\sum_{\substack{1\leq i\leq r\\i<j\leq d}}2\mu_i-o(1).
\end{align*}
The first inequality follows since $||\theta||<n^{\varepsilon}$ implies $||\rho-\sigma||_2<n^{-1/2+\varepsilon}$. Since $||\theta||<n^{\varepsilon}$ it is enough to consider $\hat{\theta}_n$ with $||\hat{\theta}||_n<n^{\varepsilon}$ hence the second inequality follows. For the third inequality we observe that one can choose $\varepsilon$ small enough so that $3\varepsilon<1/2-\epsilon_1$ for some $\epsilon_1$ and hence for the given restriction of $\theta$ and $\hat{\theta}_n$ the remainder term is $o(1)$. Also for the same restriction of $\theta$ and $\hat{\theta}_n$ within the ball of radius $n^{\varepsilon}$, the supremum of the loss $\mathcal{L}(\theta,\hat{\theta}_n)$ is $O(n^{2\varepsilon})$. Let $\Theta_{n,\varepsilon}=\{\theta:||\theta||<n^{\varepsilon}\}$. Now fix $0<\beta<\gamma<1/9$ which gives a rate of equivalence $n^{-\kappa}$ for some $\kappa>0$ (see Theorem 3.3). Choose $\varepsilon$ small enough so that $n^{2\varepsilon-\kappa}=o(1)$  and $\Theta_{n,\varepsilon}\subset\Theta_{n,r,\beta,\gamma}$ so that we can use LAN and using (21) and (23) we obtain the fourth inequality. Now choose $\beta_1<\gamma_1<1/9$ so that $\Theta_{n,r,\beta_1,\gamma_1}\subset \Theta_{n,\varepsilon}$ and the fifth inequality follows. We have also taken infimum over a larger class of estimators without increasing the maximum risk. We use lower bound from Theorem 5.2 to obtain the final inequality.
Finally taking limits we obtain 
$$\liminf\inf_{\hat{\rho}_n}\sup_{||\rho-\sigma||_2<n^{-1/2+\varepsilon}}E_{\rho}n||\rho-\hat{\rho}_n||_2^2\geq \sum_{i=1}^r \mu_i(1-\mu_i)+\sum_{\substack{{1\leq i\leq r}\\{i<j\leq d}}}2\mu_i.$$
\end{proof}

\begin{proof}[Proof of Theorem 6.1]
\textbf{Upper Bound}

Our aim is to show
\begin{equation}
\sup_{\rho\in\Sigma_{n,\varepsilon}\left(  \rho_0\right)  }E_{\rho}|
n^{1/2}\left(  \bar{X}_{n}-\Psi\left(  \rho\right)  \right)|^2
\rightarrow V^2_{\rho_0}. \label{risk-convergence}%
\end{equation}
Let $Y_{1,n},\ldots,Y_{n,n}$ be the array of independent r.v's
all having the distribution of $n^{-1/2}\left(  X_{A}-\Psi\left(
\rho  \right)  \right)  $ under $\rho $;
we have to check%
\begin{align*}
\sup_{\rho\in\Sigma_{n,\varepsilon}\left(  \rho_0\right)  }\sum_{i=1}^{n}EY_{n,i}^{2}  &  \rightarrow V_{\rho_0}^{2}.
\end{align*}
Since $Y_{1,n},\ldots,Y_{n,n}$ have identical distributions, this is
equivalent, for a r.v. $Y_{n,0}=X_{A}-\Psi\left(  \rho
\right)  $ (distribution under $\rho $)
\begin{align}
\sup_{\rho\in\Sigma_{n,\varepsilon}\left(  \rho_0\right)  }EY_{n,0}^{2}  &  \rightarrow V_{\rho_0}^{2} \label{remains-2}.%
\end{align}
Note that
\[
EY_{n,0}^{2}=Var_{\rho }X_{A}=\mathrm{Tr}\;A^{2}%
\rho  -\left(  \mathrm{Tr}\;A\rho
\right)  ^{2}.
\]
Recall that $||A||$ is the operator norm of $A$. We have the following inequalities
\begin{align*}
\left\vert \mathrm{Tr}\;A^{2}\rho  -\mathrm{Tr}\;A^{2}%
\rho_0\right\vert  &  =\left\Vert A^{2}\right\Vert \left\Vert \rho
-\rho_0\right\Vert _{1}\leq\left\Vert A^{2}\right\Vert \gamma
_{n}=o\left(  1\right)  
\end{align*}
where $\gamma_n=n^{-1/2+\varepsilon}$. Similarly we obtain
\[
\left\vert \mathrm{Tr}\;A\rho  -\mathrm{Tr}\;A\rho_0
\right\vert \leq\left\Vert A\right\Vert \gamma_{n}=o\left(
1\right).
\]
The last two displays imply (\ref{remains-2}).

\bigskip

\textbf{Lower Bound}

We now have to establish the complement to
(\ref{risk-convergence}), namely
\begin{equation}
\label{lower-risk-bound}
\liminf_{n}\inf_{\hat{\Psi}}\sup_{\rho\in\Sigma_{n,\varepsilon}\left(  \rho_0\right)
}E_{\rho}|  n^{1/2}\left(  \hat{\Psi}-\Psi\left(  \rho\right)
\right)  |^2  \geq V^2_{\rho_0}. 
\end{equation}
where $\inf_{\hat{\Psi}}$ denotes the infimum over all measurements and all
estimators $\hat{\Psi}$. 
We will construct, for a $d$-dimensional center state  
$\rho_{0}$ (of rank $r$) and a parametrization of all $d$-dimensional nearby states, a "least favorable parametric subfamily" so that the
minimax risk of estimating $\Psi\left(  \rho\right)  $ along this family will
be lower bounded by the minimax risk of estimating the parameter in the limiting model. Our perturbation
setup will be
\begin{equation}
\rho_{t}=\rho_{0}+n^{-1/2}t H\text{, \ \ }t\in\left(  -n^{\varepsilon}
,n^{\varepsilon}\right)  \label{least-fav-family}%
\end{equation}
where $H$ is self-adjoint with $\mathrm{Tr}\;H=0$, and satisfies
\begin{equation}
\mathrm{Tr}\;AH=1.\label{restric-H-2}%
\end{equation}
Then we have
\begin{align*}
\Psi\left(  \rho_{t}\right)   &  =\mathrm{Tr}\;A\rho_{t}%
=\mathrm{Tr}\;A\rho_{0}+n^{-1/2}t\\
&  =\Psi\left(  \rho_{0}\right)  +n^{-1/2}t
\end{align*}
so that $t$ is the appropriate parameter, estimation of which is
equivalent to estimating the functional $\Psi$. Recall that $\left\langle A\right\rangle _{\rho_{0}}=\mathrm{Tr}(\rho_0 A)$ for any operator $A$. For the correct choice of $H$,
set
\begin{align}
\tilde{A} &  :=A-\left\langle A\right\rangle _{\rho_{0}}\mathbf{1,}\nonumber\\
\tilde{H} &  :=\frac{1}{2}\left(  \tilde{A}\rho_{0}+\rho_{0}\tilde{A}\right)
,\nonumber\\
\hat{H} &  :=\frac{\tilde{H}}{\left\langle \tilde{A}^{2}\right\rangle _{\rho_{0}}%
}.\label{H-hat-def}%
\end{align}
With a choice $H=$ $\hat{H}$, the conditions on $H$ are fulfilled:
\begin{align*}
\mathrm{Tr}\;\hat{H} &  =\left\langle \tilde{A}^{2}\right\rangle _{\rho_{0}}%
^{-1}\mathrm{Tr}\;\tilde{H}=\left\langle A^{2}\right\rangle _{\rho_{0}}%
^{-1}\mathrm{Tr}\;\tilde{A}\rho_{0}\\
&  =\left\langle \tilde{A}^{2}\right\rangle _{\rho_{0}}^{-1}\mathrm{Tr}\left(
A-\left\langle A\right\rangle _{\rho_{0}}\mathbf{1}\right)  \rho_{0}\\
&  =0
\end{align*}
and for (\ref{restric-H-2}) we obtain
\begin{align*}
\mathrm{Tr}\;A\hat{H} &  =\mathrm{Tr}\;\left(  A-\left\langle A\right\rangle
_{\rho_{0}}\mathbf{1}\right)  \hat{H}=\mathrm{Tr}\;\tilde{A}\hat{H}\\
&  =\left\langle \tilde{A}^{2}\right\rangle _{\rho_{0}}^{-1}\mathrm{Tr}\;\tilde
{A}\tilde{H}=\left\langle \tilde{A}^{2}\right\rangle _{\rho_{0}}^{-1}\mathrm{Tr}%
\;\tilde{A}^{2}\rho_{0}\\
&  =\left\langle \tilde{A}^{2}\right\rangle _{\rho_{0}}^{-1}\left\langle \tilde{A}%
^{2}\right\rangle _{\rho_{0}}=1.
\end{align*}
We also note that
\begin{align*}
V_{\rho_{0}}^{2} &  =\mathrm{Var}_{\rho_{0}}\left(  X_{A}\right)  \\
&  =E_{\rho_{0}}\left(  X_{A}-E_{\rho_{0}}X_{A}\right)  ^{2}\\
&  =\mathrm{Tr}\;\tilde{A}^{2}\rho_{0}=\left\langle \tilde{A}^{2}\right\rangle
_{\rho_{0}}.
\end{align*}

We now proceed to find the limiting classical-quantum Gaussian model for this choice of perturbation. It can be easily verified that up to first order terms $\rho_{\vartheta,r}$ is equal to $\tilde{\rho}_{\vartheta,r}$ where $\tilde{\rho}_{\vartheta,r}$ given as follows
\begin{equation}\label{rho.theta.tilde}
\tilde{\rho}_{\vartheta/\sqrt{n},r} 
:=
\begin{bmatrix}

\mu_1 + \tilde{u}_1 & \tilde{\zeta}_{1,2}^* & \dots & \tilde{\zeta}_{1,r}^* & \vline & \tilde{\zeta}^*_{1,r+1} & \tilde{\zeta}_{1,r+2} & \dots & \tilde{\zeta}^*_{1,d}
\\
\tilde{\zeta}_{1,2} & \mu_2  + \tilde{u}_2 & \ddots& \vdots &\vline&\tilde{\zeta}_{2,r+1} & \tilde{\zeta}^*_{2,r+2} & \ddots& \vdots 

\\
\vdots & \ddots & \ddots & \tilde{\zeta}_{r-1,r}^* &\vline& \vdots & \ddots & \ddots & \tilde{\zeta}^*_{r-1,d} \\
\tilde{\zeta}_{1,r} & \dots & \tilde{\zeta}_{r-1,r} &\mu_r - \sum_{i=1}^{r-1} \tilde{u}_i&\vline&\tilde{\zeta}^*_{r,r+1} & \dots & \tilde{\zeta}^*_{r,d-1} &\tilde{\zeta}^*_{r,d}\\
 \hline

 \tilde{\zeta}_{1,r+1} & \tilde{\zeta}_{2,r+1} & \dots & \tilde{\zeta}_{r,r+1}&\vline&0 & 0 & \dots & 0
\\
\tilde{\zeta}_{1,r+2} & \tilde{\zeta}_{2,r+2} & \ddots& \vdots &\vline&0 & 0 & \ddots& \vdots  
\\
\vdots & \ddots & \ddots & \tilde{\zeta}_{r,d-1} &\vline&\vdots & \ddots & \ddots & 0 \\
\tilde{\zeta}_{1,d} & \dots & \tilde{\zeta}_{r-1,d} &\tilde{\zeta}_{r,d}&\vline&0 & \dots & 0 &0\\
\end{bmatrix}
\end{equation}
where $\tilde{u}=n^{-1/2}u,~\tilde{\zeta}=n^{-1/2}\zeta$ and $u_i\in \mathbb{R}, ~ \zeta_{j,k}\in \mathbb{C}$. From (\ref{least-fav-family}) we obtain
\begin{equation}\label{theta_H}
n^{-1/2}t \hat{H} 
=n^{-1/2}
\begin{bmatrix}

 u_1 & \zeta_{1,2}^* & \dots & \zeta_{1,r}^* & \vline & \zeta^*_{1,r+1} & \zeta_{1,r+2} & \dots & \zeta^*_{1,d}
\\
\zeta_{1,2} & u_2 & \ddots& \vdots &\vline&\zeta^*_{2,r+1} & \zeta^*_{2,r+2} & \ddots& \vdots 

\\
\vdots & \ddots & \ddots & \zeta_{r-1,r}^* &\vline& \vdots & \ddots & \ddots & \zeta^*_{r-1,d} \\
\zeta_{1,r} & \dots & \zeta_{r-1,r} & - \sum_{i=1}^{r-1} u_i&\vline&\zeta^*_{r,r+1} & \dots & \zeta^*_{r,d-1} &\zeta^*_{r,d}\\
 \hline

 \zeta_{1,r+1} & \zeta_{2,r+1} & \dots & \zeta_{r,r+1}&\vline&0 & 0 & \dots & 0
\\
\zeta_{1,r+2} & \zeta_{2,r+2} & \ddots& \vdots &\vline&0 & 0 & \ddots& \vdots  
\\
\vdots & \ddots & \ddots & \zeta_{r,d-1} &\vline&\vdots & \ddots & \ddots & 0 \\
\zeta_{1,d} & \dots & \zeta_{r-1,d} &\zeta_{r,d}&\vline&0 & \dots & 0 &0\\
\end{bmatrix}.
\end{equation}
Let $x=\la A\ra_{\rho_0}=\sum_{i=1}^r\mu_iA_{ii}$ and $y=\la \tilde{A}^2\ra_{\rho_0}$.
It can be easily verified that
\begin{equation}\label{variance_comp}
 y=\Tr{\rho_0 \tilde{A}^2}=\sum_{i=1}^r \mu_i(A_{ii}-x)^2+\sum_{\substack{1\leq i\leq r\\i<j\leq d}}|A_{ji}|^2(\mu_i+\mu_j).   
\end{equation}
Also from (\ref{H-hat-def}) we obtain 

\begin{equation}\label{hat_H_expansion}
\hat{H}
:=\frac{1}{y}
\begin{bmatrix}

 B_{1,1} & B_{2,1}^* & \dots & B_{r,1}^* & \vline & C^*_{r+1,1} & C^*_{r+2,1} & \dots & C^*_{d,1}
\\
B_{2,1} & B_{2,2} & \ddots& \vdots &\vline&C^*_{r+1,2} & C^*_{r+2,2} & \ddots& \vdots 

\\
\vdots & \ddots & \ddots & B_{r,r-1}^* &\vline& \vdots & \ddots & \ddots & C^*_{d,r-1} \\
B_{r,1} & \dots & B_{r,r-1} & B_{r,r}&\vline&C^*_{r+1,r} & \dots & C^*_{d-1,r} &C^*_{d,r}\\
 \hline

 C_{r+1,1} & C_{r+1,2} & \dots & C_{r+1,r}&\vline&0 & 0 & \dots & 0
\\
C_{r+2,1} & C_{r+2,2} & \ddots& \vdots &\vline&0 & 0 & \ddots& \vdots  
\\
\vdots & \ddots & \ddots & C_{d-1,r} &\vline&\vdots & \ddots & \ddots & 0 \\
C_{d,1} & \dots & C_{d,r-1} &C_{d,r}&\vline&0 & \dots & 0 &0\\
\end{bmatrix}
\end{equation}
with the following relations
\begin{align*}
B_{i,i}& =(A_{ii}-x)\mu_i \text{ for } 1\leq i\leq r\\
B_{j,i}&=\frac{A_{ji}}{2}(\mu_i+\mu_j) \text{ for   }i<j\leq r\\
C_{j,i}&=\frac{A_{ji}}{2}\mu_i \text{ for   }1\leq i\leq r, r+1 \leq j\leq d.
\end{align*}
Comparing equations (\ref{theta_H}) and (\ref{hat_H_expansion}) we get
\begin{equation}\label{compare_perturb}
    u_i=t\frac{(A_{ii}-x)\mu_i}{y} \text{ for } 1\leq i\leq r,\quad
    \zeta_{i,j}=t\frac{A_{ji}}{2y}(\mu_i+\mu_j) \text{ for   }1\leq i\leq r,i<j\leq d
\end{equation}
where $\mu_j=0$ for $j>r$. Also let $\breve{u}_i=\frac{(A_{ii}-x)\mu_i}{y}$ and $\breve{\zeta}_{i,j}=\frac{A_{ji}}{2y}(\mu_i+\mu_j)$.

Recall that after defining $\mathfrak{z}_{ij}=\zeta_{ij}/\sqrt{\mu_i-\mu_j}$ and replacing $\vartheta$ by $\theta$, limiting model for $\rho^{\otimes n}_{\theta/\sqrt{n}}$ is given by equation (17) and is given by
\begin{align*}
    \Phi^{\theta,r}=&\cal{N}_{r-1}(\tilde{u},V_{\mu})\otimes \bigotimes_{1\leq i<j \leq r}\phi_{\beta_{ij}}^{\mathfrak{z}_{ij}}\otimes\bigotimes_{\substack{1\leq j\leq r\\ r+1\leq j \leq d}}\phi_{\infty}^{\mathfrak{z}_{ij}}\\
    =&\cal{N}_{r-1}(t\breve{u},V_{\mu})\otimes \bigotimes_{1\leq i<j \leq r}\mathbb{N}_2\left(t\sqrt{\frac{2}{\mu_i-\mu_j}}(Re \breve{\zeta}_{ij},Im \breve{\zeta}_{ij}),\frac{\mu_i+\mu_j}{2(\mu_i-\mu_j)}I_2\right)\\&\otimes\bigotimes_{\substack{1\leq j\leq r\\ r+1\leq j \leq d}}\mathbb{N}_2\left(t\sqrt{\frac{2}{\mu_i}}(Re \breve{\zeta}_{ij},Im \breve{\zeta}_{ij}),\frac{1}{2}I_2\right).
\end{align*}

Note that an alternate expression for the limiting state is given by $\mathfrak{N}(t\tau,\mathscr{S})$ where $\Sigma$ is given as in (19) with $\sigma^2_{\beta_{ij}}=\frac{\mu_i+\mu_j}{2(\mu_i-\mu_j)}$ for $\ijr$ and $V=V_{\mu}$. Also the shift is given by $$\tau=\breve{u}\oplus{\bigoplus_{\ijr}}\sqrt{\frac{2}{\mu_i-\mu_j}}\left(Re \breve{\zeta}_{ij},Im \breve{\zeta}_{ij}\right).$$Denoting $\hat{\Psi}=\Psi(\rho_0)+n^{-1/2}\hat{t}$, we have the following chain of inequalities
\begin{align*}
    \inf_{\hat{\Psi}}\sup_{\rho\in \Sigma_{n,\varepsilon}(\rho_0)}E_{\rho}|n^{1/2}(\hat{\Psi}-\Psi(\rho))|^2&\geq  \inf_{\hat{t}}\sup_{|t|\leq n^{\varepsilon}}E_{\rho_t}|\hat{t}-t|^2\\
    & \geq  \inf_{|\hat{t}|\leq n^{\varepsilon}}\sup_{|t|\leq n^{\varepsilon}}E_{\mathfrak{N}_t}|\hat{t}-t|^2+o(1)
\end{align*}
where $\mathfrak{N}_t:=\mathfrak{N}(t\tau,\mathscr{S})$. We observe that it is the estimator $\hat{t}$ can also be restricted to a ball of radius $n^{\varepsilon}$ without increasing the risk. Also for the same restriction of $t$ and $\hat{t}_n$  the supremum of the the loss $E_{\rho_{t}}|\hat{t}-t|^2$ is $O(n^{2\varepsilon})$ and multiplying by the equivalence rate of the reverse channel we have $n^{2\varepsilon-\kappa}=o(1)$ (for sufficiently small $\varepsilon$) so that using (21) and (23) we obtain the second inequality.

Now construct the following prior $\pi$ on $t$: $t\sim \mathcal{N}(0,n^{\varepsilon_0})$ with $\varepsilon_0<\varepsilon$. The minimax risk can be lower bounded as follows:
$$\inf_{|\hat{t}|\leq n^{\varepsilon}}\sup_{|t|\leq n^{\varepsilon}}E_{\mathfrak{N}_{t}}|\hat{t}-t|^2\geq \inf_{|\hat{t}|\leq n^{\varepsilon}}\int_{|t|\leq n^{\varepsilon}}E_{\mathfrak{N}_{t}}|\hat{t}-t|^2d\pi(t).$$
Using an argument similar to the proof of Lemma \ref{lower_bound_error} we can show that the RHS is further bounded by $\inf_{|\hat{t}|\leq n^{\varepsilon}}\int E_{\mathfrak{N}_{t}}|\hat{t}-t|^2d\pi(t)+o(1)$ and since taking an infimum over a larger class of estimators (i.e. if $\hat{t}$ is allowed to be unrestricted) decreases the risk, we obtain

$$\inf_{\hat{\Psi}}\sup_{\rho\in \Sigma_{n,\varepsilon}(\rho_0)}E_{\rho}|n^{1/2}(\hat{\Psi}-\Psi(\rho))|^2\geq \inf_{\hat{t}}\int E_{\mathfrak{N}_{t}}|\hat{t}-t|^2d\pi(t)+o(1).$$

By (\ref{Bayes-risk-2}) the first term in RHS is given by $\frac{\left(  \tau^{T}\Sigma^{-1}%
\tau\right)  ^{-1}n^{\epsilon_0}}{\left(  \tau^{T}\Sigma^{-1}\tau\right)  ^{-1}%
+n^{\epsilon_0}}$ and hence we obtain

$$\liminf_n\inf_{\hat{\Psi}}\sup_{\rho\in \Sigma_{n,\varepsilon}(\rho_0)}E_{\rho}|n^{1/2}(\hat{\Psi}-\Psi(\rho))|^2\geq (\tau^{T}\Sigma^{-1}%
\tau)^{-1}.$$
Plugging in the value of $\Sigma$ and $\tau$ we obtain
\begin{align*}
   \tau^{T}\Sigma^{-1}%
\tau&=\breve{u}^TV^{-1}_{\mu}\breve{u}+ \sum_{1\leq i<j \leq r}\frac{4|\breve{\zeta}_{ij}|^2}{\mu_i+\mu_j}+\sum_{\substack{1\leq j\leq r\\ r+1\leq j \leq d}}\frac{4|\breve{\zeta}_{ij}|^2}{\mu_i}\\
    & =\breve{u}^TV^{-1}_{\mu}\breve{u}+\sum_{\substack{1\leq i\leq r\\i<j\leq d}}\frac{|A_{ji}|^2}{y^2}(\mu_i+\mu_j).  
\end{align*}
where the last line is obtained by plugging in the values of $\breve{\zeta}_{ij}$ and letting $\mu_j=0$ for $j>r$.

By example 3.1 of \cite{inverse_multi} $\left[V^{-1}_{\mu}\right]_{ij}=\mu^{-1}_i\delta_{ij}+(1-\sum_{i=1}^{r-1}\mu_k)^{-1}$ and hence noting that $1-\sum_{i=1}^{r-1}\mu_k=\mu_r$ we have
\begin{align*}
    \breve{u}^TV^{-1}_{\mu}\breve{u}=&\frac{1}{y^2}\sum_{i,j=1}^{r-1}\mu_i\mu_j(A_{ii}-x)(A_{jj}-x)(\mu^{-1}_i\delta_{ij}+\mu_r^{-1})\\
    =&\frac{1}{y^2}\left[\sum_{i=1}^{r-1}\mu_i(A_{ii}-x)^2+\mu_r^{-1}\sum_{i,j=1}^{r-1}A_{ii}A_{jj}\mu_i\mu_j-\mu_r^{-1}\sum_{i,j=1}^{r-1}xA_{jj}\mu_i\mu_j\right.\\
    & \left.-\mu_r^{-1}\sum_{i,j=1}^{r-1}A_{ii}x\mu_i\mu_j+\mu_r^{-1}\sum_{i,j=1}^{r-1}x^2\mu_i\mu_j\right]\\
    =&\frac{1}{y^2}\left[\sum_{i=1}^{r-1}\mu_i(A_{ii}-x)^2+\mu_r^{-1}(x-A_{rr}\mu_r)^2-2x\mu_r^{-1}(1-\mu_r)(x-A_{rr}\mu_r)\right.\\
    & \left.+\mu_r^{-1}x^2(1-\mu_r)^{2}\right]\\
    =& \frac{1}{y^2}\left[\sum_{i=1}^{r-1}\mu_i(A_{ii}-x)^2+\mu_r(A_{rr}-x)^2\right]
\end{align*}
Thus we obtain
$$\tau^{T}\Sigma^{-1}%
\tau=\frac{1}{y^2}\sum_{i=1}^{r}\mu_i(A_{ii}-x)^2+\sum_{\substack{1\leq i\leq r\\i<j\leq d}}\frac{|A_{ji}|^2}{y^2}(\mu_i+\mu_j)$$
Plugging back the expression of $y$ from equation (\ref{variance_comp}) we observe that the RHS of the above equation is $y^{-1}$ and hence we have
$$\liminf_n\inf_{\hat{\Psi}}\sup_{\rho\in \Sigma_{n,\varepsilon}(\rho_0)}E_{\rho}|n^{1/2}(\hat{\Psi}-\Psi(\rho))|^2\geq y=V^{2}_{\rho_0}.$$
\end{proof}
\textbf{Remark}: Note that if we had a multivariate Gaussian $\mathcal{N}_k(t\tau,\Sigma)$ with $k=r-1+r(r-1)/2+r(d-r)$, then it can be easily shown that the minimax risk for estimation of $t$ is  $(\tau^{T}\Sigma^{-1}%
\tau)^{-1}$. So the minimax risk for the classical model and the classical-quantum model matches for this particular case where the shift is indexed by $1$-dimensional parameter.
\section{Proof of technical lemmas}

\subsection{Auxiliary lemma for channel construction}
First we set some notations and prove a technical lemma (the equivalent of Lemma 7.1 of \cite{Kahn&Guta}). We note that the some statements of the lemma differs because of the low-rank structure. Although one can prove the statements by suitably modifying Lemma 7.1 of \cite{Kahn&Guta} we include the proof of the entire lemma (and some other lemmas in the following subsection) \textit{mutatis mutandis} for the convenience of the reader.
% and some of the following passages in this subsection will be quoted verbatim from the original Lemma 7.1 of \cite{Kahn&Guta}. 
However the explanations will be brief and the reader can refer to the discussion preceding Lemma 7.1 of \cite{Kahn&Guta} for further clarification.

1) Let $l(c)$ be the length of the column $c$ in the Young diagram $\lambda$. Note that there are $\lambda _i - \lambda _{i+1} $ columns such that $l(c) = i$. Also, in our case $l(c)\leq r$ since the Young diagrams can only have $r$-rows in our setup.

2) Recall that $f_{\mathbf{a}}=f_{\mathbf{a}(1)} \otimes \dots \otimes f_{\mathbf{a}(n)}$ are the basis vectors and we associate a Young tableau $t_{\bf a}$ with each $f_{\mathbf{a}}$ where the indices $\bf{a}(i)$ fill the boxes of a diagram $\lambda$ row-wise.
Let $t^c_a$ be the content of the column $c$ of $t_{\bf a}$, i.e. the function 
$$t_{\bf a}^{c}: \{1,\dots,l(c)\} \rightarrow\{1,\dots, d\}$$ which gives the column entries of the column $c$. For example, if 
$t_{\bf a}$ is given by the following semi-standard Young tableau\begin{center}
\begin{ytableau}
 $1$&$2$&$3$ \cr
 $2$& $3$\cr
\end{ytableau}
\end{center}
we get the values:
\begin{align*}
t_{\bf a}^{1}(1) & = 1, & t_{\bf a}^{1}(2) & = 2, & t_{\bf a}^{2}(1) & = 2, & t_{\bf a}^{2}(2) & = 3, & t_{\bf a}^{3}(1) & = 3. 
\end{align*}

3) We also use the alternate notation $f_{\bf m}$ for $f_{\bf a}$ (with  ${\bf m}= \{m_{i,j} :  1\leq i\leq r,  i<j\leq d\} $, where $m_{i,j}$ is the number of $j$'s in the row $i$ of $t_{\bf a}$) and that $y_{\lambda}f_{\bf m}=q_{\lambda}p_{\lambda}f_{\bf m}$ spans $\mathcal{H}_{\lambda}$ where $t_{\bf{a}}$ is a semi-standard tableau. Let $ \mathcal{O}_{\lambda}({\bf m})$ be the orbit of $f_{\bf m}$ under the 
subgroup $\mathcal{R}_{\lambda}$ of row permutations i.e. the set of vectors 
$f_{\mathbf{b}}$ which have exactly $m_{i,j}$ boxes with $j$ in row $i$, and the rest are $i$. Note that a semi-standard tableau ensures that if we permute among rows only, row $i$ has no entries smaller than $i$. As a consequence of the orbit-stabilizer theorem we have
\begin{equation}\label{plambda.orbit.decomposition}
p_{\lambda} f_{\bf m} = \sum_{\sigma \in
\mathcal{R}_{\lambda}}  f_{{\bf a}\circ \sigma}=
\sum_{\Vb\in \mathcal{O}_{\lambda}({\bf m})}
\frac{\# \mathcal{R}_{\lambda}}{\# \mathcal{O}_{\lambda}({\bf m})} \Vb.
\end{equation}

4) If a column contains two identical entries it does not contribute to $y_{\lambda}f_{\bf a}$ since we are antisymmetrizing with $q_{\lambda}$. Thus only tableaux
$t_{\bf a}$ (not necessarily semistandard after the action of $p_{\lambda}$) which do not have two equal entries in the same column are important and we call them admissible ones and denote the collection of associated vectors $f_{\bf a}$ by $\mathcal{V}$.

5) For any $f_{\bf a}\in\mathcal{O}_{\lambda}({\bf m})$ we define 
$$
\Gamma(f_{\mathbf{a}})  := |{\bf m}| - \#\{ 1\leq c \leq\lambda_{1} : \tca \neq {\rm Id}^{c} \},
$$
and denote by $\mathcal{V}^{\Gamma}({\bf m})$ the set of vectors 
$f_{\bf a}\in \mathcal{O}_{\lambda}({\bf m})\bigcap \mathcal{V}$ with 
$\Gamma(f_{a})=\Gamma$. Then we have

$$
\mathcal{O}_{\lambda}({\bf m})\bigcap \mathcal{V} =
\bigcup_{\Gamma\in \mathbb{N}} \mathcal{V}^{\Gamma}({\bf m}).$$

It can be easily seen that $\Gamma(f_{\bf a})\geq 0$  and is zero if and only if each column 
$\tca$ is either ${\rm Id}^{c}$ or of the form $\tca(r) = j\delta_{r=i} + r\delta_{r\neq i}$ for some $i \leq l(c) < j$. A tableau $\tca$ of the latter form will be called an \emph{$(i,j)$-substitution}.

One then considers the following `algorithm' to build all the possible $\Va\in\mathcal{V}^{\Gamma}({\bf m})$.
\smallskip

\emph{Algorithm}
\nopagebreak

For a given $({\bf m}, \lambda)$ we generate a particular admissible $f_{\bf a} \in \mathcal{O}_{\lambda}({\bf m})$ by selecting the $m_{i,j}$  boxes on row $i$ which are filled with $j$, for all $1\leq i\leq r,i<j\leq d$. To ensure admissibility we have to ensure that no column should have two boxes filled with the same number. Start with $f_{\bf 0}:= f_{{\bf m}={\bf 0}}$ (row $i$ is filled with 
$i$'s only), and with a set of $|{\bf m}|$ ``bricks" containing 
$m_{i,j}$ identical bricks labelled $(i,j)$, for $1\leq i\leq r,i<j\leq d$. These bricks are placed sequentially i.e. for a particular $(i,j)$ brick we replace the content of a box from row $i$ with $j$ (initially the content was $i$). Once the bricks have been placed the content of each column c (i.e $t_{\bf a}^{c}$) is uniquely defined by the set of bricks $\kappa$  which shall be called a \emph{column-modifier}. 

For example if $\kappa =\{ (i ,j), (f,l)\}$ then the column has entries 
$$
t_{\bf a}^{c} (k)= 
\left\{ 
\begin{array}{lll}
j &  \textrm{if $k = i$} ;\\
l &  \textrm{if $k = f$} ;\\
k & \textrm{otherwise}. 
\end{array}
\right.
$$
 A column modifier of the form  $\kappa(i,j)$ is called elementary and can only be used in a column with $i\leq l(c)<j$ (otherwise the entry $j$ would appear twice making the column inadmissible). Now, since the length of a column is at most $r$ and all entries must be different, there are less than $d!$ (although the upper bound is $d!/(d-r)!$ we use the $d!$ since a sharper bound does not change the asymptotics) different types of column-modifiers. Note that a column-modifier always increases the value of the modified cells, so that in this case  $t^c_a(\{1, \dots, l(c)\}) \neq \{1,\dots, l(c)\}$.

 To count the total number of ways the tableau can be formed we can count the total number of ways the columns are modified. A given collection of column-modifiers is uniquely determined by $\{ m_\kappa,\kappa\}$ where $m_{\kappa}$ is the multiplicity of $\kappa$. We consider the following procedure:

\begin{enumerate}
\item[I.]
\label{1} Choose $\Gamma$ bricks among our $|{\bf m}|$. As we have at most $rd$
different types of bricks there are at most $(rd)^{\Gamma} $ possibilities. 
\item[II.]
\label{2} Recall that $|\textbf{m}|-\Gamma$ is the total number of modified columns. As this is the exact number of bricks left after the first stage we can consider them as a set of elementary column-modifiers. Indeed if all the column modifiers were elementary, then $\Gamma=0$. If $\Gamma\neq 0$, we form non-elementary column modifiers from elementary ones by sequentially adding each of the $\Gamma$ bricks selected in the first stage, to some of the elementary column-modifiers.
At each step there are at most $d!$ different choices (of column modifiers) to which we can attach the new brick. Hence, we have at most  $(d!)^{\Gamma}$ possibilities. At the end of stage II at least $\max \{ 0, |{\bf m}| -2\Gamma\}$ of the column-modifiers are elementary, and we note that $m_{\kappa(i,j)}\leq m_{i,j}$.  \item[III.]
\label{3} We apply the column-modifiers to the columns of $f_{\mathbf{0}}$, with the constraints that no two modifiers are applied to the same column and the resulting 
$f_{\bf{a}}$ is admissible. 
\end{enumerate}

While inserting the column modifiers we note that for a non-elementary column modifier $\kappa$ there are less than n possibilities. In the case of an elementary one of type $\kappa(i,j)$ we have the constraints that it can only be inserted in a column with at least $i$ rows, and since the column cannot contain another $j$ (to ensure admissibility) its length has to be smaller than $j$. Noting that number of such columns is $\lambda_i-\lambda_j$, we give an upper bound to the number of possibilities at stage three of the algorithm as follows:
\begin{align}
\label{up3}
\prod_{\kappa \neq \kappa({i,j})}  \frac{n^{ m_{\kappa}}}{m_{\kappa}!} \cdot
\prod_{\ijr} \frac{(\lambda_i - \lambda_j)^{m_{\kappa(i,j)}}} {m_{\kappa(i,j)}!}
\end{align}

 the combinatorial factor $\prod_{\kappa} m_{\kappa}!$ ($m_{\kappa}$ being the number of column modifiers of type $\kappa$) appearing to account for the identical columns.  If $\Gamma = 0$, then all column modifiers are elementary and for each $\kappa(i,j)$, 
the number of available columns is at least 
$(\lambda_i -\lambda_j - \lvert{\bf m}\vert)_{+}:=  0 \wedge\lambda_i -\lambda_j - \lvert{\bf m}\vert $.
Thus we have the following lower bound
\begin{align}
\label{do3}
  \prod_{\ijr} \frac{ (\lambda_i - \lambda_j - |\mathbf{m}|)_{+}^{m_{i,j}}}{m_{i,j}!}.
\end{align}

% 6) To each column of $t_{\bf a}$ we associated a column modifier which completely determines its content. 
6) We collect all multiplicities in  
$
E := \{ m_{\kappa}^{\bf a} :\kappa\},
$
where $m_{\kappa}^{\bf a}$ is the number of columns with 
column-modifer $\kappa$, noting that $\Gamma$ is a function of $E$ and the following relation holds
$$
\Gamma(f_{\bf a}) = |{\bf m}| - \sum_{\kappa} m_{\kappa}^{\bf a}.
$$
We note that the condition $\Gamma(f_{\bf a})=0$ identifies the multiplicity 
set $E^{0}$. In particular, we have $m_{\kappa(i,j)} = m_{i,j}$ for all $\ijr$ and the other 
$m_\kappa = 0$. 
In the general case, we decompose the set $\mathcal{V}^{\Gamma}({\bf m})$ in terms of $\mathcal{V}^{E}({\bf m})$ (the set of tableaux in 
$\mathcal{O}_{\lambda}({\bf m})\bigcap \mathcal{V}$ with 
$E(f_{\bf a})= E$) as follows:
\begin{equation}
\mathcal{V}^{\Gamma} ({\bf m})= \bigcup_{ E } \mathcal{V}^{E}({\bf m}).\label{gamma_decomp}
\end{equation}
7) Finally to each column $c$ of $t_{\bf a}$ we associate the set of modified entries: each element of this set is of the form $(i,+)$ or $(i,-)$ depending on whether $i$ is added or deleted. It is easy to verify that if $t_{\bf a}$ is admissible, this modified set determines column-modifier $\kappa$ associated to $c$. and hence shall be denoted by $S(\kappa)$. For example $S(\kappa(i,j))  =
\{ (i,-), (j,+)\} $ and for  $\kappa= \{ (i,j), (j,k)\}$ we have
$S(\kappa)  = \{ (i,-), (k,+)\}$. 
We define the multiplicities $m_S^{\bf a}= \sum_{\kappa : S(\kappa) =S} m_{\kappa}^{\bf a}$ and consider the set $F(f_{\bf a}):= \{m_S^{\bf a} :S \}$ .
% To summarise, we have defined the maps 
% $$
% f_{\bf a} \longmapsto E(f_{\bf a})  \longmapsto  F(f_{\bf a}).
% $$

We are now ready to state the auxiliary lemma which will be crucial in proving Lemma \ref{comparison_unshifted} and Lemma \ref{comparison_shift_operator}.
\begin{lemma}
\label{lemtools}
\noindent
\begin{enumerate}
\item{For any unitary operator $U\in M(\mathbb{C}^{d})$, 
for any basis vectors $\Va$ and $\Vb$, we have
\begin{align}
\label{formdet}
\langle\Va\vert q_{\lambda } U^{\otimes n} \Vb \rangle & = \prod_{1\leq c \leq \lambda _1} \det(U^{\tca,\tcb}),
\end{align}
where $U^{\tca,\tcb}$ is the $\lc \times \lc$ minor of $U$ given by $[U^{\tca,\tcb}]_{i,j} = U_{\tca(i),\tcb(j)}$.
}
\end{enumerate}

%We now get bounds for $\langle {\bf m}, {\lambda } \vert U \vert {\bf l}, {\lambda } \rangle$ on the interesting range of parameters. 
Under the assumptions
\begin{align}
\label{hyp}
 |\mathbf{m}| & \leq n^{\eta}, \\
\lambda & \in \Lambda_{2}, \notag \\
\inf_{i} |\mu_i - \mu_{i+1}| & \geq \delta ,\notag \\
\mu_r & \geq \delta ,\notag \\
\|\mathfrak{z}\| & \leq C n^{\beta}, \qquad \beta\leq 1/2. \notag 
\end{align}
we have the following estimates  with remainder terms uniform in the eigenvalues $\{\mu_i\}$:
\begin{enumerate}[resume]
%%%%%%%%%%%%%%%%%%%%%%%%%%%%%%%%%%
\item  { The number of admissible $\Va\in \mathcal{O}_{\lambda}({\bf m})$ 
with $\Gamma(\Va) = 0$ is 
\begin{equation}
\label{cg0}
\# \mathcal{V}^{0} ({\bf m})= 
\prod_{\ijr}\frac{(\lambda_i-\lambda_j)^{m_{i,j}}}{m_{i,j}!} (1+ O(n^{-1 + 2\eta} / \delta)).
\end{equation}
}
%%%%%%%%%%%%%%%%%%%%%%%%%%%%%%%%%%
\item {
Let $E:= \{m_{\kappa}:\kappa\} $ with $\Gamma(E)=\Gamma$. 
The number %$\mathcal{V}^{E_m}$ 
of admissible $\Va\in \mathcal{O}_{\lambda}({\bf m})$ with $E(\Va) = E$ is bounded by:
\begin{equation}
\label{cggam}
\# \mathcal{V}^{E} ({\bf m}) \leq  n^{-\Gamma  + \sum_{\ijr} (m_{i,j} - m_{\kappa({i,j})})} \prod_{\ijr}\frac{(\lambda_i-\lambda_j)^{m_{\kappa({i,j})}}}{m_{\kappa({i,j})}!}.
\end{equation} 
}
%%%%%%%%%%%%%%%%%%%%%%%%%%%%%%%%%%
\item{
The number of admissible $\Va\in \mathcal{O}_{\lambda}({\bf m})$ with 
$\Gamma(\Va) = \Gamma$ is bounded by:
\begin{equation}
\label{cgG}
\# \mathcal{V}^{\Gamma}({\bf m}) \leq C^{\Gamma} n^{-\Gamma} \delta^{-2 \Gamma} |\mathbf{m}|^{2\Gamma} \prod_{\ijr}\frac{(\lambda_i-\lambda_j)^{m_{i,j}}}{m_{i,j}!},
\end{equation}
for a constant $C= C(d)$.
}
%%%%%%%%%%%%%%%%%%%%%%%%%%%%%%%%%%
\item { 
Let $\Va \in \mathcal{V}^{\Gamma^{a}}(\mathbf{l})$, and consider $ \mathcal{V} ^{\Gamma ^b} ({\bf m}) \subset\mathcal{O}_{\lambda}(\mathbf{m})$ for some fixed 
$\Gamma^b$. Then:
\begin{align}
\label{prod_id}
\left| \left\langle \Va \Bigg| q_{\lambda} \sum_{\Vb \in \mathcal{V}^{\Gamma^b} ({\bf m})} \Vb \right\rangle \right| \leq \left\{
\begin{array}{cc}
0 & \mbox{if } \Gamma^b \neq |\mathbf{m}| - |\mathbf{l}| + \Gamma^a \\
(C |\mathbf{m}|)^{\Gamma^b} & \mbox{otherwise} \\
\end{array}
\right.,
\end{align}
with $C= C(d)$.
}
%%%%%%%%%%%%%%%%%%%%%%%%%%%%%%%%%%
\item { If $\Va \in \mathcal{V}^{0}(\mathbf{m})$, then
\begin{align}
\label{prod_id0}
\left\langle \Va \Bigg| q_{\lambda} \sum_{\Vb \in \mathcal{O}_{\lambda}(\mathbf{m})} \Vb \right\rangle  = 1.
\end{align}
}
%%%%%%%%%%%%%%%%%%%%%%%%%%%%%%%%%%
\item\label{zem0} {
If $f_{\bf a} \in \mathcal{V}^{0}({\bf m})$ so that its set of elementary column-modifiers is  
$E^{0}= \{ m_{\kappa(i,j)}=m_{i,j} \}$, then 
\begin{align}
% Z(E^0) & \stackrel{\mathrm{def}}{=} 
\langle \Va | q_{\lambda} U^r(\mathfrak{z}/\sqrt{n})^{\otimes n} \Vzero\rangle% \notag \\
\label{interg0}
 & = \exp\left( -\frac{\|\mathfrak{z}\|^2_2}{2}\right) \prod_{\ijr} \left(\frac{\mathfrak{z}_{i,j}}{\sqrt{n}\sqrt{\mu_i - \mu_j}}\right)^{m_{i,j}} r(n),
\end{align}
where
$$
\|\mathfrak{z}\|^2_2 =  \sum_{\ijr}|\mathfrak{z}|^2_{ij}$$

and the error factor is given by
$$r(n) = 1 + O\left(n^{-1 + 2\beta + \eta}\delta^{-1}, n^{-1/2 +2 \beta} \delta^{-1}, 
n^{-1 + 2\beta+ \alpha }\delta^{-1} \right).
$$
}
%%%%%%%%%%%%%%%%%%%%%%%%%%%%%%%%%%
\item { If $\Va \in \mathcal{V}^{E}({\bf m})$, so that its  set of column-modifiers is 
$E= \{m_{\kappa}: \kappa\}$ and $\Gamma(E)= \Gamma$, then
\begin{equation}
\label{interg}
\left| \langle \Va | q_{\lambda} U^r(\mathfrak{z}/\sqrt{n})^{\otimes n} \Vzero\rangle\right|
\leq \exp\left(-\frac{\|\mathfrak{z}\|^2_2}{2}\right) \left(\frac{ C\|\mathfrak{z}\|}{\sqrt{n\delta}}\right)^{\Upsilon}\prod_{\ijr} \left(\frac{ \mathfrak{z}_{i,j}}{\sqrt{n}\sqrt{\mu_i - \mu_j}}\right)^{m_{\kappa({i,j})}} r(n)
\end{equation}

where $$\Upsilon=- \Gamma+\sum_{\ijr} (m_{i,j} - m_{\kappa({i,j})}),$$ $C=C(d)$ a constant and $r(n)$ as in point \ref{zem0} above.
%error factor
%\begin{align*}
%r(n) & = 1 + O\left(n^{-1 + 2\beta + \eta}\delta^{-1}, n^{-1/2 + \beta} \delta^{-1/2}, n^{-1 + \alpha + \beta}\delta^{-1} \right).
%\end{align*}
}
%%%%%%%%%%%%%%%%%%%%%%%%%%%%%%%%%%
\item {  Under the further hypotheses  that $\|\bz\| \leq n^{\beta}$, $m_{i,j} \leq 2 |\mathfrak{z}_{i,j}+ z_{i,j}|n^{\beta + \epsilon}$ for some $\epsilon > 0$,
% and $  n^{-1/2 + 3 \beta + 2 \epsilon} \leq  C\delta^{-3/2}/ 2$ where $C$ is a constant depending only on the dimension $d$, 
 we have:
\begin{align}
\label{numer}
& \left\langle \sum_{\Va \in \mathcal{O}_{\lambda}(\mathbf{m})}\Va \Bigg| q_{\lambda} U^r((\mathfrak{z}+\bz)/\sqrt{n}) \Vzero \right\rangle \notag \\
 & =  \exp\left( -\frac{\|\mathfrak{z} + \bz\|^2_2}{2}\right) \prod_{\ijr}\frac{\big((\mathfrak{z}_{i,j} + z_{i,j})(\sqrt{n}\sqrt{\mu_i - \mu_j})\big)^{m_{i,j}}}{m_{i,j}!} r(n),
\end{align} 
with 
\begin{equation*}
r(n) =  1 + O\left(n^{-1 + 2\beta + \eta}\delta^{-1}, 
%n^{-1/2+2\beta}\delta^{-1},
n^{-1 + 2 \beta+\alpha}\delta^{-1}, n^{-1 + 2\eta}\delta^{-1}, n^{-1 + \alpha + \eta}\delta^{-1}, \delta^{-3/2} n^{-1/2 + 3 \beta + 2 \epsilon} \right).
\end{equation*}
}
%%%%%%%%%%%%%%%%%%%%%%%%%%%%%%%%%%
\item {Under the further hypotheses that  $|\mathbf{l}| \leq |\mathbf{m}|$ and $n^{1 - 3 \eta} > 2 C / \delta^{2}$, where $C=C(d)$,
\begin{align}
\label{denom}
\left\vert\left \langle \sum_{\Va \in \mathcal{O}_{\lambda}(\mathbf{l} )}\Va \Bigg| q_{\lambda}  \sum_{\Vb \in \mathcal{O}_{\lambda}(\mathbf{m})}\Vb \right\rangle\right \vert
& \leq   (C |\mathbf{m}|)^{|\mathbf{m}| - |\mathbf{l}|}\prod_{\ijr} \frac{(\lambda_i - \lambda_j)^{l_{i,j}}}{l_{i,j}!} \left(\frac{C|\mathbf{l}|^2|\mathbf{m}|}{n\delta^2}\right)^{\Gamma^a_{\min}(\mathbf{l}, \mathbf{m})}
\end{align} 
with 
\begin{align}
\Gamma^a_{\min}(\mathbf{l}, \mathbf{m}) \geq \frac{\big(|\mathbf{l} - \mathbf{m}|  + 3 |\mathbf{l}| - 3 |\mathbf{m}| \big)_{+}}{6} .
\end{align}
}
%%%%%%%%%%%%%%%%%%%%%%%%%%%%%%%%%%
\item{
%With $n^{1 - 3\eta} > 2 C / \delta^{2}$, where $C=C(d)$ , 
We have
\begin{align}
\label{denom54}
\left\langle \sum_{\Va \in \mathcal{O}_{\lambda}(\mathbf{m})}\Va \Bigg| q_{\lambda}  \sum_{\Vb \in \mathcal{O}_{\lambda}(\mathbf{m})}\Vb \right\rangle 
& =
\prod_{\ijr} \frac{(\lambda_i - \lambda_j)^{m_{i,j}}}{m_{i,j}!} \big(1 + O(n^{3\eta - 1}/\delta)\big).
\end{align} 
}
%%%%%%%%%%%%%%%%%%%%%%%%%%%%%%%%%%
\end{enumerate}

\end{lemma}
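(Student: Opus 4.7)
The plan is to prove the eleven parts sequentially, with the determinantal identity in part 1 serving as the analytic engine and the combinatorial machinery of the ``algorithm'' giving the counting estimates in parts 2--5. For part 1, I would expand $q_\lambda U^{\otimes n} f_{\mathbf{b}} = \sum_{\sigma \in \mathcal{C}_\lambda} \mathrm{sgn}(\sigma) U^{\otimes n} f_{\sigma^{-1}\mathbf{b}}$ and observe that the column antisymmetrizer factorizes over columns, so taking inner product with $f_{\mathbf{a}}$ decomposes column-by-column. For each column $c$, the sum over permutations of the column entries gives precisely the Leibniz expansion of $\det U^{\tca,\tcb}$, and multiplying over columns yields \eqref{formdet}.

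For the counting estimates (parts 2--4), I would feed the bounds \eqref{up3} and \eqref{do3} from stage III of the algorithm into the decomposition \eqref{gamma_decomp}. Part 2 follows because when $\Gamma = 0$ only elementary modifiers arise, and the correction $(1 + O(n^{-1+2\eta}/\delta))$ comes from comparing $(\lambda_i-\lambda_j-|\mathbf{m}|)_+^{m_{i,j}}$ with $(\lambda_i-\lambda_j)^{m_{i,j}}$ using $\lambda_i - \lambda_j \asymp n\delta$ on $\Lambda_2$. Part 3 combines \eqref{up3} with the bound $n^{m_{\kappa}}$ for non-elementary $\kappa$, and part 4 sums part 3 over all $E$ with $\Gamma(E)=\Gamma$ using that the total number of such multiplicity sets is bounded by $(Cd)^\Gamma |\mathbf{m}|^{2\Gamma}$. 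Parts 5--6 are orthogonality identities: the column-antisymmetrized vectors $q_\lambda f_{\mathbf{b}}$ for $f_{\mathbf{b}} \in \mathcal{O}_\lambda(\mathbf{m})$ have nontrivial overlap with $f_{\mathbf{a}}$ only when the column contents agree up to a column permutation, and the multiplicity count from \eqref{plambda.orbit.decomposition} gives the sharp constant.

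For parts 7--8, I would plug $U^r(\mathfrak{z}/\sqrt{n}) = \mathbf{1} + iX/\sqrt{n} + O(\|\mathfrak{z}\|^2/n)$ (with $X$ the generator) into \eqref{formdet}. For the vacuum column $\tcb = \mathrm{Id}^c$ and an elementary substitution column $\tca$ of type $(i,j)$, the $l(c)\times l(c)$ minor reduces to a $2\times 2$ determinant that contributes $\mathfrak{z}_{i,j}/(\sqrt{n}\sqrt{\mu_i-\mu_j})$ to leading order; for the unmodified columns a similar calculation extracts a factor $\exp(-\|\mathfrak{z}\|^2_2/2)$ from the accumulated diagonal entries, carefully collecting the $O(n^{-1})$ corrections over $n$ columns. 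For non-elementary modifiers the minor is smaller by a factor of $n^{-1/2}$ per extra brick, accounting for the factor $(\|\mathfrak{z}\|/\sqrt{n\delta})^\Upsilon$ in \eqref{interg}. Parts 9--10 are then obtained by summing part 7 over the $\#\mathcal{V}^0(\mathbf{m})$ elementary orbits (giving the leading term in \eqref{numer}) and controlling the higher-$\Gamma$ contributions using part 4 and part 8; the combinatorial identity $\#\mathcal{V}^0(\mathbf{m}) \prod \mathfrak{z}_{i,j}^{m_{i,j}} /[\sqrt{n}\sqrt{\mu_i-\mu_j}]^{m_{i,j}} = \prod (\mathfrak{z}_{i,j}(\sqrt{n\mu_i-\mu_j}))^{m_{i,j}}/m_{i,j}!$ gives the exact form \eqref{numer}.

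The main obstacle is part 10: establishing the lower bound $\Gamma^a_{\min}(\mathbf{l},\mathbf{m}) \geq (|\mathbf{l}-\mathbf{m}| + 3|\mathbf{l}|-3|\mathbf{m}|)_+/6$. This is genuinely combinatorial: one must show that when $|\mathbf{l}| < |\mathbf{m}|$, any admissible $f_{\mathbf{a}}$ on orbit $\mathbf{l}$ that has nonzero inner product with the antisymmetrized $\mathbf{m}$-orbit must exhibit substantial nontrivial ``rearrangement'' ($\Gamma^a$ large), because $q_\lambda$ can only shuffle within columns. I would argue this by tracking how many row-entries must be moved between rows to convert an $\mathbf{l}$-tableau into (column-permutations of) $\mathbf{m}$-tableaux, using that each unit of $|\mathbf{m}|-|\mathbf{l}|$ forces at least one non-elementary modifier and each shared non-elementary modifier absorbs at most two bricks. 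The remaining part 11 will follow from part 10 with $\mathbf{l}=\mathbf{m}$ (giving $\Gamma^a_{\min}=0$) combined with the leading-order identity from parts 2 and 6, where the subleading correction comes from the $\Gamma=0$ correction in \eqref{cg0} together with the constraint $n^{1-3\eta}\delta^{-2} \gg 1$.
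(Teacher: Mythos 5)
Your plan for parts 1--9 and 11 follows essentially the same route as the paper (column-wise Leibniz expansion for \eqref{formdet}; the three-stage algorithm bounds \eqref{up3}--\eqref{do3} for the counts; entrywise Taylor expansion of $U^r(\mathfrak{z}/\sqrt n)$ with the identity columns accumulating $\exp(-\|\mathfrak{z}\|_2^2/2)$ and substitution columns contributing $\mathfrak{z}_{i,j}/\sqrt{n(\mu_i-\mu_j)}$; then a geometric series over $\Gamma$ for \eqref{numer} and saturation at $\Gamma^a=0$ for \eqref{denom54}). One small misattribution: in part 4 the factor $|\mathbf{m}|^{2\Gamma}$ does not come from counting the multiplicity sets $E$ (there are only $C(d)^{\Gamma}$ of those, from stages I--II); it comes from repairing the factorials, i.e.\ $\prod_{\kappa}m_{\kappa(i,j)}!\geq \prod_{\ijr}m_{i,j}!\,|\mathbf{m}|^{-2\Gamma}$, which uses $\sum m_{\kappa(i,j)}\geq |\mathbf{m}|-2\Gamma$.

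The genuine gap is in part 10, which you yourself flag as the crux. Your proposed accounting ties the forced value of $\Gamma^a$ to the difference of totals $|\mathbf{m}|-|\mathbf{l}|$ (``each unit of $|\mathbf{m}|-|\mathbf{l}|$ forces at least one non-elementary modifier''), but the statement requires a lower bound driven by the coordinatewise distance $|\mathbf{l}-\mathbf{m}|=\sum_{\ijr}|l_{i,j}-m_{i,j}|$. In the case $|\mathbf{l}|=|\mathbf{m}|$ but $\mathbf{l}\neq\mathbf{m}$ your mechanism yields no bound at all, whereas \eqref{denom} demands $\Gamma^a_{\min}\geq |\mathbf{l}-\mathbf{m}|/6>0$; this is exactly the case that powers the quasi-orthogonality Lemma \ref{non-orth} (the decay $n^{(9\eta-2)|\mathbf{m}-\mathbf{l}|/12}$) and hence the isometry $V^r_{\lambda}$, so it cannot be dropped. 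Your picture of ``moving row-entries between rows'' is also off: within the orbits the row content is fixed; what must be matched (for the inner product in part 5 not to vanish) is the multiset of column-modification signatures $S$, i.e.\ $m_S^{\bf a}=m_S^{\bf b}$ for all $S$. The paper's argument instead starts from reference tableaux in the two orbits whose modifiers are all elementary (possible since $|\mathbf{m}|\leq n^{\eta}\ll \delta n$), and performs ``horizontal box shuffling'': merging an elementary modifier into an already-modified column increases $\Gamma(\Va)+\Gamma(\Vb)$ by one and changes only three of the signature multiplicities, so it decreases $\sum_S|m_S^{\bf a}-m_S^{\bf b}|$ by at most $3$. Since this quantity starts at $|\mathbf{l}-\mathbf{m}|$, one gets $\Gamma^a+\Gamma^b\geq |\mathbf{l}-\mathbf{m}|/3$, and combining with the constraint $\Gamma^b-\Gamma^a=|\mathbf{m}|-|\mathbf{l}|$ from part 5 gives $\Gamma^a\geq(|\mathbf{l}-\mathbf{m}|+3|\mathbf{l}|-3|\mathbf{m}|)/6$. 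Without an argument of this type (tracking the signature mismatch, with the ``at most 3 per operation'' bookkeeping), the claimed exponent in \eqref{denom}, and everything downstream of it, is not established.
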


\bigskip

\begin{proof}
~

\emph{Proof of \eqref{formdet}.} Writing $f_{\mathbf{a}}=f_{\mathbf{a}(1)}\otimes\ldots\otimes f_{\mathbf{a}(n)}$ (and similarly for $\Vb$) we obtain:
\begin{align*}
\langle \Va\vert U^{\otimes n} \Vb\rangle & = \prod_{1\leq c \leq \lambda _1} \prod_{1 \leq r \leq \lc} \langle f_{\tca(r)} \vert U f_{\tcb(r)}\rangle \notag \\
%\label{intermformdet}
& =\prod_{1\leq c \leq \lambda _1} \prod_{1 \leq r \leq \lc} U_{\tca(r), \tcb(r)}.
\end{align*}
Since the subgroup of column permutations $\mathcal{C} _{\lambda }$ is the product of the permutation groups of each column, each $\sigma\in\mathcal{C} _{\lambda }$ can be decomposed as 
$
\sigma = s _1\dots s_{\lambda_{1}}
$
with $\ssc $ a permutation of column $c$ which transforms $\tcb(r)$ into $\tcb(\ssc(r))$.  Then we have
\begin{align*}
\langle\Va\vert q_{\lambda }  U^{\otimes n}  \Vb \rangle =\langle\Va\vert U^{\otimes n}  q_{\lambda }  \Vb \rangle & = \sum_{\sigma \in \mathcal{C} _{\lambda }}  \epsilon(\sigma) \prod_{1\leq c \leq \lambda _1}  \prod_{1\leq r\leq  \lc}  
U_{\tca(r), \tcb(\ssc(r))} \\
& = \prod_{1\leq c \leq \lambda _1} \sum_{\ssc \in S_c} \epsilon (\ssc) \prod_{1\leq r \leq  \lc}  U_{\tca(r), \tcb(\ssc(r))} \\
%& = \prod_{1\leq c \leq \lambda _1} \sum_{\ssc \in S_c} \epsilon (\ssc) \prod_{1\leq r \leq \lc}  [U^{\lc,\tca, \tcb}]_{r,\ssc(r)} \\
& = \prod_{1\leq c \leq \lambda _1} \det(U^{\tca,\tcb}).
\end{align*}

\emph{Proof of \eqref{cg0}.}
%\nopagebreak
To find the number of admissible $\Va$ we multiply the possibilities of the three stages of the algorithm. For  $\Gamma(\Va)=0$ the first two stages yield $1$ as the combinatorial factor. Hence $\#\mathcal{V}^0$ is the number of possibilities at the third stage of the algorithm. Thus from \eqref{up3} we obtain the following upper bound for $\# \mathcal{V}^{0} ({\bf m})$: $\prod_{\ijr}(\lambda_i-\lambda_j)^{m_{i,j}} / m_{i,j}! $. Also, we  note that from the conditions \eqref{hyp} $\lambda_i - \lambda_j \geq \delta n / 2$ and 
$ |{\bf m}| \leq n^{\eta}$, so that using \eqref{do3} as a lower bound we obtain \eqref{cg0}.

\bigskip

\emph{Proof of \eqref{cggam}.}
%\nopagebreak
Since $E$ is fixed, the number of $\Va$ in $\mathcal{V}^{E}$ is given by the third stage of the algorithm. We then apply \eqref{up3} with  $\sum_{\kappa} m_{\kappa} = |\mathbf{m}| - \Gamma$. Finally we  neglect the $m_{\kappa }!$ factors to obtain \eqref{cggam}.

\bigskip

\emph{Proof of \eqref{cgG}.}
%\nopagebreak
Recalling (\ref{gamma_decomp}), we note that it is sufficient to give an upper bound to $\# \mathcal{V}^{E}$ since the first two stages of the algorithm yield a combinatorial factor of at most $C^{\Gamma}$, with $C= C(d)$.
Since $\sum m_{\kappa({i,j})} \geq |\mathbf{m}| - 2 \Gamma$, we have
$$\prod_{\kappa} m_{\kappa (i,j)}! \geq \prod_{\ijr} m_{i,j}! M^{-2\Gamma}$$ where $M=\sup_{\ijr}m_{i,j}\leq |\mathbf{m}|$. Note that  $\lambda_i -\lambda_j \geq \delta n / 2$  and using equation (\eqref{cggam}) in conjunction with the above result we obtain

\[
\# \mathcal{V}^{E} \leq n^{-\Gamma} \delta^{-2 \Gamma} |\mathbf{m}|^{2\Gamma} \prod_{\ijr}\frac{(\lambda_i-\lambda_j)^{m_{i,j}}}{m_{i,j}!} , \qquad \forall E ~{\rm with}~
\Gamma(E)= \Gamma .
\]
Finally we multiply by the number of possible $E$ (i.e. $C^{\Gamma}$) to obtain the desired result.

\bigskip

\emph{Proof of \eqref{prod_id}.}
%\nopagebreak
Denote by $\tca([1,l(c)])$ 
the set of entries $\{ \tca(1), \dots, \tca(l(c))\}$ and consider equation \eqref{formdet} with $U = \mathbf{1}$. 
Since both $\Va$ and $\Vb$ are tensor product of basis vectors, the scalar product $\langle \Va \mid q_{\lambda} \Vb \rangle$ is equal to $-1$ or $1$ if $\tca([1,l(c)]) = \tcb([1,l(c)])$ for all columns, and $0$ otherwise. Since a modified  column cannot satisfy $\tca([1,l(c)]) = [1,l(c)]$ 
(and the same for ${\bf b}$) the condition given in the last line implies $\langle \Va \mid q_{\lambda} \Vb \rangle=0$ unless $t_{\bf{a}}$ and $t_{\bf{b}}$ have the same number of modified columns. Since the total number of unmodified columns for $\Va$ and $\Vb$ are $|\mathbf{l}| - \Gamma^a$  and $|\mathbf{m}| - \Gamma^b$ respectively, we obtain the first line of \eqref{prod_id}.

Now consider the case when $\Gamma^b = |\mathbf{m}| - |\mathbf{l}| + \Gamma^a$. Since  
 $|\langle \Va \mid q_{\lambda} \Vb \rangle|\leq 1$, we can bound the LHS of \eqref{prod_id} by the number of non-zero inner products $\langle \Va \mid q_{\lambda} \Vb \rangle$. In other words we want to count the number of $t_{\bf{b}}$ such that $t_{\bf a}^{c}([1,l(c)]) =t_{\bf b}^{c}([1,l(c)])$, or equivalently $S(\kappa_{\bf a}^c) = S(\kappa_{\bf b}^c)$. For building the relevant $\Vb$, we can follow the algorithm with the further condition that, at stage three the last condition (i.e. $S(\kappa_{\bf a}^c) = S(\kappa_{\bf b}^c)$) is met. 

The first two stages of the algorithm  yield a $C^{\Gamma^b}$ factor. In the third stage, 
for each $S$, we identify the column modifiers 
$\kappa_{1},\dots, \kappa_{r(S)}$ such that $S(\kappa_{i})=S$ for all 
$1\leq i\leq r(S)$. The total number of such objects is $m_{S}:= \sum_{i\leq r(S)} m_{\kappa_{i}}$ and the total number of ways in which they can be inserted to produce distinct tableaux is 
$$\left(\begin{array}{c}
m_S \\ m_{\kappa_1} \dots m_{\kappa_{r(S)}} \end{array}\right).
$$ 
Recall that the total number of elementary column-modifiers $\sum_{\ijr} m_{\kappa (i,j)}$ can be lower bounded by $|{\bf m}|- 2\Gamma^{b}$ and that each elementary column-modifier 
$\kappa (i,j)$ corresponds to a different $S(\kappa (i,j))= \{ (i,-), (j,+)\}$. Thus
$$
|{\bf m}|- 2\Gamma^{b} \leq \sum_{\ijr} m_{\kappa(i,j)} \leq 
\sum_{S}  \max_{\kappa : S(\kappa) = S} m_{\kappa} .
$$
Since
$$
\sum_{S} m_{S} = \sum_{\kappa} m_{\kappa} = |{\bf m}| - \Gamma^{b},
$$
we obtain
$$
\sum_{S} \left( m_S - \max_{\kappa : S(\kappa) = S}  m_{\kappa}\right) \leq \Gamma^b.$$ 
This implies
\[
\prod_S \left(
 \begin{array}{c}
 m_S \\ 
 m_{\kappa_1} \dots m_{\kappa_{r(S)}} 
 \end{array}
\right) \leq |\mathbf{m}|^{\Gamma^b}.
\]
Again we multiply by the $C^{\Gamma^{b}}$ of the first two stages and get \eqref{prod_id}.

\bigskip

\emph{Proof of \eqref{prod_id0}.}
%\nopagebreak
From \eqref{prod_id} the only non-zero contributions come from 
$f_{\bf b}\in \mathcal{V}^{0}\subset \mathcal{O}_{\lambda}({\bf m})$.
Since $\Gamma^b = 0$, the combinatorial factor from the two first stages of the algorithm is $1$. Also since all the column modifiers are elementary we have $m_S = m_{i,j} = m_{\kappa({i,j})}$ for all $S$ (i.e. $0$ for the non-elementary ones) and the combinatorial factor is again one. In other words, the only $\Vb$ such that  $\langle \Va \mid q_{\lambda} \Vb \rangle \neq 0$ is $\Va$. Finally it is easy to check that $\langle \Va \mid q_{\lambda} \Va \rangle = 1$.

\bigskip

\emph{Proof of \eqref{interg0}.}
%\nopagebreak
From \eqref{formdet} we have
$$
\langle \Va | q_{\lambda} U^r(\frak{z}/ \sqrt{n})^{\otimes n} \Vzero\rangle
= \prod_{1\leq c\leq \lambda_{1}} {\rm det} (U^{t_{\bf a}^{c}, {\rm Id}^{c}}) ,
\qquad  U= U^r(\frak{z}/\sqrt{n}).
$$
Now consider the following Taylor expansions. Entry-wise, for all $1\leq i\leq r$ on the first line, and all $\ijr$ on the second and third lines:
\begin{eqnarray*}
U^r_{i,i}(\frak{z}/\sqrt{n})  & =& 1 - \frac1{2n}\sum_{i<j\leq d}
\frac{|\frak{z}_{i,j}|^2}{\mu_i
-\mu_j}- \frac1{2n}\sum_{1\leq j<i}
\frac{|\frak{z}_{j,i}|^2}{\mu_i
-\mu_j}
+  O(\|\frak{z}\|^3 n^{-3/2} \delta^{-3/2}) ;
\\
&&\\
U^r_{i,j}(\frak{z}/\sqrt{n})  & =& - \frac1{\sqrt{n}} \frac{\frak{z}_{i,j}^*}{\sqrt{\mu_i -\mu_j}} + O(\|\frak{z}\|^2
n^{-1} \delta^{-1}) ;
\\
U^r_{j,i}(\frak{z}/\sqrt{n})  & = & \frac1{\sqrt{n}} \frac{\frak{z}_{i,j}}{\sqrt{\mu_i -\mu_j}} + O(\|\frak{z}\|^2
n^{-1} \delta^{-1}).
\end{eqnarray*}
If $\frak{z} =O(n^{\beta})$ and 
$\beta < 1/2$, 
the remainder terms are $O(n^{-3/2+3\beta}\delta^{-3/2})$ for the first line. Similarly the remainder terms for the last two lines are
$O(n^{-1+2\beta}\delta^{-1})$.
We use the Taylor expansions given above to find the dominating terms in the expansion $
\det A
=\sum_{\sigma} \prod_{i} \epsilon(\sigma) A_{i,\sigma(i)}.
$

Since $\Va \in \mathcal{V}^0$, all $\tca$ are either ${\rm Id}^{c}$, or an 
\emph{$(i,j)$-substitution}. If $\tca = {\rm Id}^{c}$,
the summands with more than two non-diagonal terms are of the same order as the remainder term, so we need to take into account only the identity and the transpositions in the sum $\sum_{\sigma } \prod_{i} A_{i,\sigma (i)}$. Let $l=l(c)$, then
\[
\upsilon(l):= \det(U^{{\rm Id}^{c}, {\rm Id}^{c}}(\frak{z}/ \sqrt{n})) =  
1  - \frac1{2n}\sumtwo{1\leq i \leq l}{l+1\leq j\leq d}
\frac{|\frak{z}_{i,j}|^2}{\mu_i -\mu_j} +
O(n^{-3/2 + 3 \beta}\delta^{-3/2} ).
\] 
Consider now the case $\tca\neq {\rm Id}^{c}$. Since $\tca(k)\geq k$ for all $k$,  there 
exists a whole column of $U^{\tca,{\rm Id}^{c}}$ whose entries are smaller in
modulus than $O(\|\frak{z}\|/\sqrt{n\delta}) = O(n^{-1/2+\beta}\delta^{-1})$. 
%The same bound holds for the determinant.
In particular if $\tca$ is an
\emph{$(i,j)$-substitution}, then the only summand that is of this
order comes from the identity permutation. So that 
\begin{equation}
\label{uij}
\upsilon(i,j): =\det(U^{\tca, {\rm Id}^{c}}(\frak{z}/\sqrt{n})) = 
\frac{\frak{z}_{i,j} }{\sqrt{n(\mu_i -\mu_j)}} +
O(n^{-1 + 2\beta}\delta^{-1}).%+n^{-3/2+3\beta}\delta^{-3/2}).
\end{equation}
% Note that this approximation does not depend on $l(c)$, but only on $i$ and $j$.
 Note that each $\ijr$ there are  $m_{i,j}$ columns of the type $(i,j)$-substitution. Out of the $\lambda_l-\lambda_{l+1}$ columns of length $l=l(c)$, there are 
$\lambda_l-\lambda_{l+1} - R_l$ of the type ${\rm Id}^{c}$, with $0\leq R_l\leq
\lvert{\bf m}\rvert$. We are now ready to expand \eqref{formdet} in terms of our estimates.
\begin{equation}
\label{interg0_et}
\langle \Va | q_{\lambda} U^r(\frak{z}/\sqrt{n})^{\otimes n} \Vzero\rangle 
 = \prod_{l=1}^r\left( \upsilon(l)) \right)^{\lambda_l -\lambda_{l+1}}\prod_{\ijr}
\left(\upsilon(i,j) \right)^{m_{i,j}} \prod_{l=1}^r (\upsilon(l))^{-R_l}.
\end{equation}
Now $\upsilon(l) = 1 + O(n^{-1 + 2\beta}\delta^{-1})$ 
and $R_l\leq | {\bf m}| \leq n^{\eta}$, so the last product is 
$1+O(n^{-1 + 2 \beta + \eta}\delta^{-1} )$.
Similarly, since $\lambda\in\Lambda_{2}$ we have $\lambda_l-
\lambda_{l+1} = n (\mu_l -\mu_{l+1}) + O(n^{\alpha})$. We need the following lemma (see \cite{Kahn&Guta}) for an exponential approximation:
\begin{lemma}
\label{Taylor}
If $x_{n}= O(n^{1/2-\epsilon})$, then
\[
\left(1+\frac{x_{n}}{n}\right)^n  = \exp(x_{n})(1+ O(n^{-\epsilon})).
\]
\end{lemma}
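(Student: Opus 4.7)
The plan is to prove this by taking the logarithm, expanding in Taylor series, and then exponentiating back. Since $x_n/n = O(n^{-1/2 - \epsilon}) \to 0$, the quantity $1 + x_n/n$ lies (for $n$ large) in the region where the principal branch of $\log$ admits the convergent expansion $\log(1+y) = y - y^2/2 + O(y^3)$. So I will write
\[
\Bigl(1 + \tfrac{x_n}{n}\Bigr)^n = \exp\!\Bigl(n\log\!\bigl(1 + \tfrac{x_n}{n}\bigr)\Bigr).
\]

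Next I apply the Taylor expansion inside the exponential:
\[
n\log\!\Bigl(1 + \tfrac{x_n}{n}\Bigr) = n\left[\frac{x_n}{n} - \frac{1}{2}\Bigl(\frac{x_n}{n}\Bigr)^2 + O\!\Bigl(\bigl(\tfrac{x_n}{n}\bigr)^3\Bigr)\right] = x_n - \frac{x_n^2}{2n} + O\!\Bigl(\frac{x_n^3}{n^2}\Bigr).
\]
Now I use the hypothesis $x_n = O(n^{1/2 - \epsilon})$ to control the remainder: the quadratic term is $x_n^2/(2n) = O(n^{-2\epsilon})$ and the cubic term is $x_n^3/n^2 = O(n^{-1/2 - 3\epsilon})$. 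Both of these are $O(n^{-\epsilon})$ since $\epsilon > 0$ makes $-2\epsilon \leq -\epsilon$ and $-1/2 - 3\epsilon \leq -\epsilon$. Hence $n \log(1 + x_n/n) = x_n + O(n^{-\epsilon})$.

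Finally I exponentiate and use $e^{z} = 1 + O(z)$ for $z = O(n^{-\epsilon}) \to 0$ to conclude
\[
\Bigl(1 + \tfrac{x_n}{n}\Bigr)^n = \exp(x_n)\cdot\exp\!\bigl(O(n^{-\epsilon})\bigr) = \exp(x_n)\bigl(1 + O(n^{-\epsilon})\bigr),
\]
which is the claim. There is no real obstacle here; the proof is essentially bookkeeping of orders of magnitude. The only point to be mindful of is whether $x_n$ might be complex (as will be the case in the applications to $\upsilon(l)$ and $\upsilon(i,j)$); but as long as $|x_n/n| < 1$ the principal branch of $\log$ and its Taylor expansion remain valid, so the argument goes through verbatim in the complex case.
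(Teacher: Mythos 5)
Your proof is correct. The paper itself does not prove this lemma (it simply cites \cite{Kahn&Guta}), and your argument is exactly the standard one behind it: write the power as $\exp\bigl(n\log(1+x_n/n)\bigr)$, expand the logarithm, and check that the quadratic term $x_n^2/(2n)=O(n^{-2\epsilon})$ and the cubic remainder $O(n^{-1/2-3\epsilon})$ are both $O(n^{-\epsilon})$; in fact your computation even yields the slightly sharper error $O(n^{-2\epsilon})$. Your closing remark about the complex case is also sound, since for integer $n$ one has $(1+y)^n=\exp\bigl(n\log(1+y)\bigr)$ with the principal branch whenever $|y|<1$, which is what is needed when the lemma is applied to the complex quantities $\upsilon(l)$.
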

Using 
Lemma \ref{Taylor} we estimate the first product   in (\ref{interg0_et}) as follows
\begin{align}
\prod_{l=1}^{r} \upsilon(l)^{\lambda_l -\lambda_{l+1}} 
& =  \prod_{l=1}^{r}
\exp\left( - \frac1{2}\sumtwo{1\leq i\leq l }{l+1\leq j\leq d}
|\mathfrak{z}_{i,j}|^2 \frac{\mu_l-\mu_{l+1}}{(\mu_i -\mu_j)}
\right) r(n)\nonumber\\
& = 
\exp\left( - \frac1{2}\sumtwo{1\leq i\leq r }{i< j\leq d}\sum_{l=i}^{r\wedge(j-1)}
|\mathfrak{z}_{i,j}|^2 \frac{\mu_l-\mu_{l+1}}{(\mu_i -\mu_j)}
\right) r(n)\nonumber\\
&= \exp\left( - \frac{\|\mathfrak{z} \|^2_2}{2}\right) r(n)\label{first_prod},
\end{align}
with 
\begin{eqnarray*}
r(n)  &=& 1+O(n^{-1 + \alpha + 2\beta} \delta^{-1}, n^{-1/2 + 2\beta}\delta^{-1}), \\
\|\mathfrak{z} \|^2_2  &=& \sumtwo{1\leq i\leq r }{i< j\leq d}
|\mathfrak{z}_{i,j}|^2.
\end{eqnarray*}
Here we have used the relation that 
$$\sum_{l=i}^{r\wedge(j-1)}
 \frac{\mu_l-\mu_{l+1}}{(\mu_i -\mu_j)}=1$$
 which is trivial for $j\leq r$ and for $j>r$ we have used the fact that both $\mu_j$ and $\mu_{r+1}$ are $0$.
We now estimate the middle product in the RHS of \eqref{interg0_et} as
\begin{equation}
\upsilon(i,j)^{m_{i,j}}   =\left(\frac{\mathfrak{z}_{i,j}}{\sqrt{n}\sqrt{\mu_i -\mu_j}} \right)^{m_{i,j}}\left(1 +
O\left(n^{-1 + 2 \beta+\eta}\delta^{-1}\right)\right),\label{mid_prod}
\end{equation}
where we have used that $|\mathbf{m}|\leq n^{\eta}$. From \eqref{interg0_et}, (\ref{first_prod}) and (\ref{mid_prod}), we obtain \eqref{interg0}.\\ 
% Note that 
% $\langle \Va | q_{\lambda} U^r(\frak{z}/\sqrt{n})^{\otimes n} \Vzero\rangle=0$ if there exist $i<j$ such that $\frak{z}_{i,j}=0$ and $m_{i,j}\neq 0$

\bigskip
%%%%%%%%%%%%%%%%%%%%%%%%%%%%%%%%%%%%%%%%%%%%
\emph{Proof of \eqref{interg}.}
%\nopagebreak
We have the following analog of (\ref{interg0_et}),
\[
\langle \Va | q_{\lambda} U^r(\frak{z}/\sqrt{n})^{\otimes n} \Vzero\rangle 
 = \prod_{l=1}^r\left( \upsilon(l)) \right)^{\lambda_l -\lambda_{l+
1}}\prod_{\kappa}
\left(\upsilon(\kappa) \right)^{m_{\kappa}} \prod_{l=1}^r (\upsilon(l))^{-R_l}
\]
where $ 0\leq R_l\leq |{\bf m}| -\Gamma$ and $\upsilon(\kappa)$ is the determinant of the minor of $U$ corresponding to the column where $\kappa$ is applied. We further split the column-modifiers into elementary ($\kappa=\kappa(i,j)$) and non-elementary (say $\kappa=\kappa^{\prime})$ ones. Then $\langle \Va | q_{\lambda} U^r(\frak{z}/\sqrt{n})^{\otimes n} \Vzero\rangle$ can be written as
\[
\prod_{l=1}^r\left( \upsilon(l)) \right)^{\lambda_l -\lambda_{l+1}}  
\prod_{\ijr}
\left(\upsilon(i,j) \right)^{m_{\kappa(i,j)}} 
\prod_{l=1}^r (\upsilon(l))^{-R_l}
\prod_{\kappa^{\prime}} 
\left(\upsilon(\kappa^{\prime}) \right)^{m_{\kappa^{\prime}}} .
\]

The first three products on the RHS can be approximated like the terms in \eqref{interg0_et}. 
 To give an upper bound to the fourth product, we note that if the entries in the column have been modified in an admissible way, then 
$\tca (i) =j>l(c)$ for some $i$, so that $|\upsilon(\kappa)|\leq C\|\mathfrak{z}\|/\sqrt{n\delta}$ for any $\kappa$, with some constant $C=C(d)$. 

Thus we have
\begin{equation}
\left|\langle \Va | q_{\lambda} U^r(\frak{z}/\sqrt{n})^{\otimes n} \Vzero\rangle \right| \leq 
\exp\left(-\frac{\| \mathfrak{z} \|_{2}^{2} }{2}\right) 
\left(\frac{ C\|\mathfrak{z}\|}{\sqrt{n\delta}}\right)^
{\sum_{\kappa^{\prime}} m_{\kappa^{\prime}}}
\prod_{\substack{1\leq i\leq r\\i<j \leq d}}\left(\frac{|\mathfrak{z}_{i,j}|}{\sqrt{n} \sqrt{\mu_{i}-\mu_{j}}}\right)^{m_{\kappa(i,j)}} r(n).
\label{gg} 
\end{equation}

We obtain \eqref{interg} by noting that the number of non-elementary modifiers is 
$$
\Upsilon=\sum_{\kappa^{\prime}} m_{\kappa^{\prime}} = - \Gamma +
\sum_{\ijr} (m_{i,j} - m_{\kappa(i,j)} ).
$$

\bigskip

%%%%%%%%%%%%%%%%%%%%%%%%%%%%%%%%%%%%%%%
\emph{Proof of \eqref{numer}.}
 
Recall that we identify $\bz$ with the vector $(\bz,0)$. We shall split the sum over $\mathcal{O}_{\lambda}({\bf m})$ into  sub-sums using the decomposition 
$\mathcal{O}_{\lambda}({\bf m}) \bigcap \mathcal{V}= \bigcup_{E} \mathcal{V}^{E}({\bf m})$, and compare each sub-sum against $\mathcal{V}^{0}=\mathcal{V}^{E^{0}}$. Then we aggregate all the $E$ corresponding to a particular $\Gamma$ (which results in a multiplication by a combinatorial factor) and finally sum over different $\Gamma$. From the given bounds on $\frak{z}$ and $\bz$ we note that
$\|\frak{z} + \bz\| =O(n^{\beta})$, so we can apply the previous results with 
$\frak{z} + \bz$ instead of $\frak{z}$.

Using \eqref{cg0} and \eqref{interg0} and recalling that $\lambda \in \Lambda_2$, we get:
\begin{eqnarray*}
&&\left\langle \sum_{\Va \in \mathcal{V}^0} \Va \Bigg| q_{\lambda} U^r(\frak{z} + \bz
/\sqrt{n})^{\otimes n} \Vzero \right\rangle\\ 
=&& 
 \exp\left(-\frac{\|\mathfrak{z} + \bz\|^2_2}{2}\right) \prod_{\ijr}
 \frac{\big((\mathfrak{z}_{i,j} + z_{i,j})\sqrt{n}\sqrt{\mu_i - \mu_j}\big)^{m_{i,j}}}{m_{i,j}!} r(n)
\end{eqnarray*}
with error factor
\[
r(n)  = 1 + O\left(n^{-1 + 2\beta + \eta}\delta^{-1}, n^{-1/2 + 2\beta} \delta^{-1}, 
n^{-1 +2\beta +\alpha}\delta^{-1}, n^{-1 + 2\eta}\delta^{-1}, n^{-1 + \alpha + \eta}\delta^{-1} \right).
\]

For $E \neq E^{0}$ we combine \eqref{interg} and \eqref{cggam}  to obtain
\begin{eqnarray*}
&&
 \left|\left\langle 
\sum_{\Va \in \mathcal{V}^{E}} \Va \Bigg| q_{\lambda} U((\frak{z} + \bz)/\sqrt{n}) \Vzero \right\rangle \right| \cdot
\left|
\left\langle 
\sum_{\Va \in \mathcal{V}^0} \Va \Bigg| q_{\lambda} U((\frak{z} + \bz)/\sqrt{n}) \Vzero 
\right\rangle
 \right|^{-1}  \\ 
\leq && 
n^{-\Gamma} \prod_{\ijr} \left(\frac{\lambda_i - \lambda_j}{n}\right)^{m_{\kappa({i,j})} - m_{i,j}}\frac{m_{i,j}!}{m_{\kappa({i,j})}!} \left(\frac{\|\mathfrak{z} + \bz\|}{\sqrt{\delta n}}\right)^{- \Gamma}\\
&&\prod_{\ijr}\left(\frac{\sqrt{\delta n} |\mathfrak{z}_{i,j} + z_{i,j}|}{\|\mathfrak{z} + \bz\|\sqrt{n}\sqrt{\mu_i - \mu_j}}\right)^{m_{\kappa({i,j})} - m_{i,j}} r(n) \\
\leq &&  O(n^{-\Gamma (1/2 -\beta)})\delta^{-\Gamma/2} 
\prod_{\ijr:m_{i,j}\neq 0}\left(
\frac{
|\mathfrak{z}_{i,j} + z_{i,j} |\sqrt{\mu_i - \mu_j}}{m_{i,j} }\right)^{m_{\kappa(i,j)} - m_{i,j}  }  \\
\leq &&  O\big((2\delta^{-3/2} n^{-1/2 + 3 \beta + 2 \epsilon})^\Gamma\big),
\end{eqnarray*}
with $O(\cdot)$ uniform in $\Gamma$. In the second inequality we have used  
$$
m_{i,j}!/m_{\kappa(i,j)}! \leq m_{i,j}^{m_{i,j}-m_{\kappa(i,j)}}, \qquad 
\sum_{\ijr} (m_{\kappa({i,j})} - m_{i,j}) \geq - 2 \Gamma, \qquad 
\lambda\in \Lambda_2
 $$ 
and in the third inequality we have used
$$
m_{i,j}\leq 2 |\mathfrak{z}_{i,j}+ z_{i,j}| n^{\beta + \epsilon}
.$$

Again note that for a given $\Gamma$, there are at most $C^{\Gamma}$ different $E$ such that $\Gamma(E) = \Gamma$ (where $C=C(d)$) corresponding to the possible choices in the first two stages of the algorithm, . Hence, if $n$ is large enough such that 
$ 2C\delta^{-3/2} n^{-1/2 + 3 \beta + 2 \epsilon} <1 $, we have:

\begin{eqnarray*}
&&
\left\langle \sum_{\Va \in \mathcal{O}_{\lambda}(\mathbf{m})} \Va \Bigg| q_{\lambda} U((\frak{z} + \bz)/\sqrt{n}) \Vzero \right\rangle %\\
%&&
 = \sum_{\Gamma}\left\langle \sum_{\Va \in \mathcal{V}^{\Gamma}} \Va \Bigg| q_{\lambda} U((\frak{z} + \bz)/\sqrt{n}) \Vzero \right\rangle \\
&&
 = \Big(1 + O(\delta^{-3/2} n^{-1/2 + 3 \beta + 2 \epsilon})\Big)
\exp\left(-\frac{\|\mathfrak{z} + \bz\|^2_2}{2}\right) \prod_{\ijr}\frac{\big((\mathfrak{z} + z)_{i,j}(\sqrt{n}\sqrt{\mu_i - \mu_j})\big)^{m_{i,j}}}{m_{i,j}!} r(n) \\
&& =
\exp\left(-\frac{\|\mathfrak{z} + \bz\|^2_2}{2}\right) \prod_{\ijr}\frac{\big((\mathfrak{z} + z)_{i,j}(\sqrt{n}\sqrt{\mu_i - \mu_j})\big)^{m_{i,j}}}{m_{i,j}!} r_2(n)
\end{eqnarray*}
where the sum over $\Gamma$ was bounded using a geometric series and   
\[
r_2(n) = 1 + O\left(n^{-1 + 2\beta + \eta}\delta^{-1}, 
%n^{-1/2 +2\beta}\delta^{-1}, 
n^{-1 + \alpha + \beta}\delta^{-1}, n^{-1 + 2\eta}\delta^{-1}, n^{-1 + \alpha + \eta}\delta^{-1}, \delta^{-3/2} n^{-1/2 + 3 \beta + 2 \epsilon} \right).
\]
Thus we obtain \eqref{numer}.

\bigskip

\emph{Proof of \eqref{denom}.}
%\nopagebreak
We choose $\Gamma^{a}$ and $\Gamma^{b}$ satisfying the condition 
$\Gamma^b - \Gamma^a = |{\bf m}| - |{\bf l}|$ to ensure that the inner products in 
\eqref{prod_id} are non-zero. By multiplying \eqref{cgG} and \eqref{prod_id}, we see that:
\begin{align}  
\label{refgam}
\left\vert\left\langle \sum_{\Va \in \mathcal{V}^{\Gamma^{a}}(\mathbf{l})}\Va \Bigg| q_{\lambda}  \sum_{\Vb \in \mathcal{V}^{\Gamma^{b}}({\bf m})
%\mathcal{O}_{\lambda}(|\mathbf{m}|)
}\Vb \right\rangle \right\vert
& \leq (C |\mathbf{m}|)^{\Gamma^b} \!\prod_{\ijr} \frac{(\lambda_i - \lambda_j)^{l_{i,j}}}{l_{i,j}!} \left(\frac{C|\mathbf{l}|^2}{n\delta^2}\right)^{\Gamma^a} \\
& = (C |\mathbf{m}|)^{|\mathbf{m}| - |\mathbf{l}|}\prod_{\ijr} \frac{(\lambda_i - \lambda_j)^{l_{i,j}}}{l_{i,j}!} \left(\frac{C|\mathbf{l}|^2|\mathbf{m}|}{n\delta^2}\right)^{\Gamma^a} .\notag
\end{align}

Since $\Gamma^b$ is determined by $\Gamma^a$ it is sufficient to sum over all possible $\Gamma^a$. If $n^{1 - 3 \eta} >  2 C / \delta^{2}$, the dominating term in the sum of bounds is that corresponding to the smallest possible $\Gamma^a$. We aim to give a give a lower bound to $\Gamma^a$ under the constraint that the inner product is non-zero.

As noted before the necessary condition for the inner product to be non-zero is $m_S^{\bf a} = m_S^{\bf b}$. We consider a lower bound for $\Gamma(\Va) + \Gamma(\Vb)$ which will in turn give a lower bound for $\Gamma(\Va)$. We start with two tableaux $t_{\bf a^{\prime}}\in \mathcal{O}_{\lambda}({\bf m})$ and $t_{\bf b^{\prime}}\in \mathcal{O}_{\lambda}({\bf l})$ so that $\Gamma(f_{{\bf a}^{\prime}})=\Gamma(f_{{\bf a}^{\prime}})=0$. Since $|{\bf m}| \leq n^{\eta}$ and  $\lambda_{r} \geq \delta n + O(n^{\alpha})$, the tableau $t_{\bf a^{\prime}}$ can be chosen to have at most one modified box per column so the last condition is met (and similarly for $t_{\bf b^{\prime}}$). We also ensure that each of the modified columns of $t_{\bf a}$ (or $t_{\bf b}$) are also modified in $t_{\bf a^{\prime}}$ (or $t_{\bf b^{\prime}}$). The idea is to count the minimum number of `horizontal box shuffling' operations necessary 
in order to transform the Young tableau $t_{\bf a^{\prime}}$ into the tableau $t_{\bf a}$ and $t_{\bf b^{\prime}}$ into the given tableau $t_{\bf b}$ so that at the end $m_S^{\bf a} = m_S^{\bf b}$.

The horizontal box shuffling is described as follows; at each step we horizontally move one elementary column modifier $\kappa(i,j)$ of $t_{{\bf a}^{\prime}}$ (or $t_{{\bf b}^{\prime}}$) into an already modified 
column, with the aim of constructing $t_{\bf a}$ (or $t_{\bf b}$). Note that such operation increases $\Gamma(f_{{\bf a}^{\prime}}) + \Gamma(f_{{\bf b}^{\prime}})$ by one  and also changes 
$m_{S}^{{\bf a}^{\prime}}$ (or $m_{S}^{{\bf b}^{\prime}}$) in the following way. The multiplicities
$m_{\{(i,-),(j,+)\}}$ and $m_{S_{0}}$ decrease by one, and $m_{S_{0} +  \{(i,-),(j,+)\}}$ increases by one. Here $S_{0}$ is the set of modified entries for the column to which the box $(i,j)$ is moved. Hence the distance 
$\sum_{S} |m_S^{{\bf a}^{\prime}} - m_S^{{\bf b}^{\prime}}|$ 
decreases by at most three. Since initially this quantity was equal to  $\sum_{\ijr} | l_{i,j} - m_{i,j}|$ (because all the column modifiers were elementary in $t_{{\bf a}^{\prime}}$ and $t_{{\bf b}^{\prime}}$), we need at least $\sum_{\ijr} |{l}_{i,j} - {m}_{i,j}| / 3$ such operations before reaching our goal 
$m_S^{\bf a} = m_S^{\bf b}$. Hence we have $\Gamma(\Va) + \Gamma(\Vb) \geq |\mathbf{l} - \mathbf{m}| / 3$.

Finally using $\Gamma^b - \Gamma^a = |{\bf m}| - |{\bf l}|$,  we obtain $\Gamma^a \geq (|\mathbf{l} - \mathbf{m}| + 3 |\mathbf{l}| - 3 |\mathbf{m}|) / 6 $ and hence the desired result (\eqref{denom}).

\bigskip

\emph{Proof of \eqref{denom54}.}
%\nopagebreak
Since $\mathbf{l} = \mathbf{m}$,  equations \eqref{cg0} and \eqref{prod_id0} prove that the bound \eqref{refgam} is saturated when $\Gamma^a = 0$, up to the error factor $\left(1 + O(n^{-1 + 2 \eta}/\delta)\right)$. Hence the remainder 
term due to the other $\Gamma$ consist in a geometric series with factor 
$\left(\frac{C|\mathbf{m}|^3}{n\delta^2}\right) = O(n^{1 - 3\eta}/\delta^{2})$. 

\end{proof}

%%%%%%%%%%%%%%%%%%%%%%%%%%%%%%%%%%%%%%%%%%%
\subsection{Non-orthogonality issues and construction of $V^r_{\lambda}$}
\label{preuve_quasi_orth}
%%%%%%%%%%%%%%%%%%%%%%%%%%%%%%%%%%%%%%%%%%%%%%
In this subsection we consider the basis vectors $|\bf{m}_{\lambda}\ra$ and show that they are approximately orthogonal. We will use this result to construct the isometry $V^r_{\lambda}$ and hence the channel $T^r_{\lambda}$.

\begin{lemma}
\label{non-orth}
Let $\mathbf{m}_\lambda$ and $\mathbf{l}_\lambda$ be two basis vectors (corresponding to two semistandard Young tableaux with diagram $\lambda$) and let $|{\bf m}|:=\sum_{\ijr}m_{ij}$ and 
$|{\bf l}-{\bf m}| := \sum_{\ijr} | l_{i,j}-m_{ij}|$.

If 
$$
\sum_{j > i} m_{i,j} - \sum_{j < i} m_{j,i} \neq \sum_{j > i} l_{i,j} - \sum_{j < i} l_{j,i}
$$ for some $1\leq i \leq r$, then   
\[
\langle \mathbf{m}_\lambda | \mathbf{l}_\lambda \rangle  = 0.
\]

Otherwise, we derive an upper bound under the following conditions.
We assume that $\lambda_i - \lambda _{i+1} > \delta n $ for all $1\leq i \leq r-1$  and $\lambda _r > \delta n$, for some $\delta > 0$. Furthermore we assume 
$|\mathbf{l}| \leq |\mathbf{m}| \leq n^{\eta}$ for some $\eta < 1/ 3$ and that
 $Cn^{3\eta - 1}/\delta^{2} <1$ where $C=C(d)$ is a  constant.

Then: 
\begin{equation}\label{quasiorth1}
\left|\langle \mathbf{m}_ {\lambda} | \mathbf{l}_ {\lambda} \rangle\right| \leq
 (C^{\prime}n)^{-\eta( |{\bf m}|-|{\bf l}|)/4}\,
  (C^{\prime}n)^{(9\eta-2)|{\bf m} -{\bf l}|/12}\,
  \delta^{ (|\mathbf{m}| -|\mathbf{l}|)/2 - |\mathbf{m} - \mathbf{l}|  / 3}\,
   (1+O(n^{-1+3\eta} /\delta)).
\end{equation}
where $C^{\prime}=C^{\prime}(d,\eta)$ and the constant in the remainder term depends only on $d$. The right side is of order less than $n^{(9 \eta -2)|{\bf m} - {\bf l}| / 12 }$ and converges to zero for $\eta < 2 / 9$  when $n\to \infty$.

\end{lemma}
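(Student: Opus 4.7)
The plan is to split into the two cases of the statement: the exact vanishing claim and the quantitative upper bound.

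For the vanishing claim, I would invoke the weight decomposition of $(\mathbb{C}^d)^{\otimes n}$ under the diagonal Cartan subgroup of $GL(d)$. Concretely, $f_{\mathbf{m}}$ is a weight vector whose number of $i$'s in the associated tableau equals $N_i(\mathbf{m}) := \lambda_i - \sum_{j>i} m_{i,j} + \sum_{j<i} m_{j,i}$, because the boxes of row $i$ start filled with $i$'s and then $\sum_{j>i} m_{i,j}$ of them get relabeled upward, while $\sum_{j<i} m_{j,i}$ boxes in higher rows get the label $i$. Both $p_\lambda$ (row permutations) and $q_\lambda$ (signed column permutations) merely permute the basis vectors of $(\mathbb{C}^d)^{\otimes n}$ and preserve the multiset of entries, so $y_\lambda f_{\mathbf{m}}$ is supported on the weight subspace with weight $(N_1(\mathbf{m}), \dots, N_d(\mathbf{m}))$. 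Since weight spaces are mutually orthogonal, if $N_i(\mathbf{m}) \neq N_i(\mathbf{l})$ for some $i$, i.e.\ $\sum_{j>i} m_{i,j} - \sum_{j<i} m_{j,i} \neq \sum_{j>i} l_{i,j} - \sum_{j<i} l_{j,i}$, the inner product is zero.

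For the quantitative bound I would reduce to Lemma \ref{lemtools}. Using $y_\lambda = q_\lambda p_\lambda$, the identity \eqref{csq1}, and the orbit expansion \eqref{plambda.orbit.decomposition}, I can write
\begin{equation*}
\langle y_\lambda f_{\mathbf{m}} \mid y_\lambda f_{\mathbf{l}}\rangle
= \Bigl(\prod_{i=1}^{d} i^{\lambda_i-\lambda_{i+1}}!\Bigr)\frac{(\#\mathcal{R}_\lambda)^2}{\#\mathcal{O}_\lambda(\mathbf{m})\,\#\mathcal{O}_\lambda(\mathbf{l})}\Bigl\langle \sum_{\Va\in\mathcal{O}_\lambda(\mathbf{m})}\!\!\Va\,\Bigm|\, q_\lambda \!\!\sum_{\Vb\in\mathcal{O}_\lambda(\mathbf{l})}\!\!\Vb\Bigr\rangle,
\end{equation*}
and the analogous identity for $\|y_\lambda f_{\mathbf{m}}\|^2$ (with $\mathbf{l}=\mathbf{m}$). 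Then I substitute the diagonal estimate \eqref{denom54} into the two norms and the off-diagonal estimate \eqref{denom} into the numerator. The common prefactor $(\prod i^{\lambda_i-\lambda_{i+1}}!)(\#\mathcal{R}_\lambda)^2$ cancels after taking the ratio, leaving a quotient that depends only on $\#\mathcal{O}_\lambda(\mathbf{m})$, $\#\mathcal{O}_\lambda(\mathbf{l})$ and the products $\prod_{\ijr}(\lambda_i-\lambda_j)^{m_{i,j}}/m_{i,j}!$.

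The final step is book-keeping. With $x := |\mathbf{m}|-|\mathbf{l}|$ and $y := |\mathbf{m}-\mathbf{l}|$, I would estimate the surviving ratio of orbit sizes and $\prod(\lambda_i-\lambda_j)^{l_{i,j}}/\sqrt{\prod(\lambda_i-\lambda_j)^{m_{i,j}+l_{i,j}}}$ together with $\prod\sqrt{m_{i,j}!\,l_{i,j}!}/\sqrt{m_{i,j}!\,l_{i,j}!}$-type factors, using $\delta n \leq \lambda_i-\lambda_j \leq n$ to bound each factor. Combining with the combinatorial prefactor $(C|\mathbf{m}|)^{x} \leq (Cn^{\eta})^{x}$ from \eqref{denom} and the saving from $\Gamma^a_{\min}(\mathbf{l},\mathbf{m}) \geq (y-3x)_+/6$, which produces a factor $(Cn^{3\eta-1}/\delta^2)^{(y-3x)_+/6}$ that is $\leq 1$ under the hypothesis $Cn^{3\eta-1}/\delta^2 < 1$, yields the stated form
\begin{equation*}
(C'n)^{-\eta x/4}\,(C'n)^{(9\eta-2)y/12}\,\delta^{x/2 - y/3}\,\bigl(1+O(n^{-1+3\eta}/\delta)\bigr),
\end{equation*}
where the error factor absorbs the $(1+O(n^{3\eta-1}/\delta))$ from \eqref{denom54} and the constant $C'$ absorbs $C$ together with the factor $n^{\eta}$ from $|\mathbf{m}|\leq n^{\eta}$.

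The hard step will be the third one: the two products of factorials and the two orbit cardinalities do not match term by term, so the sought form $\delta^{x/2 - y/3}$ and the particular $n$-exponents $-\eta x/4$ and $(9\eta-2)y/12$ only appear after splitting each $m_{i,j}-l_{i,j}$ into its positive and negative parts and applying $\sum(m_{i,j}-l_{i,j})_\pm = (y\pm x)/2$. Balancing the contributions from $\sqrt{A_\mathbf{l}/A_\mathbf{m}}$, from $(C|\mathbf{m}|)^x$, and from the $\Gamma^a_{\min}$-saving in precisely the right proportions is the only nontrivial piece; the order-of-magnitude bound $n^{(9\eta-2)y/12}$ — which is the dominant asymptotic term and forces the condition $\eta < 2/9$ — comes entirely from the $\Gamma^a_{\min}$ factor, so correctly identifying the exponent $(y-3x)_+/6$ of its saving is what makes the bound work.
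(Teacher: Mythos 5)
Your proposal is correct and follows essentially the same route as the paper's proof: the vanishing claim via the conserved multiplicities $\lambda_i-\sum_{j>i}m_{i,j}+\sum_{j<i}m_{j,i}$ of the tensor factors, and the quantitative bound by writing $\langle \mathbf{m}_\lambda|\mathbf{l}_\lambda\rangle$ as a ratio of orbit sums through \eqref{csq1} and \eqref{plambda.orbit.decomposition} (the $\#\mathcal{R}_\lambda/\#\mathcal{O}_\lambda$ prefactors cancel), then inserting \eqref{denom} in the numerator and \eqref{denom54} in the denominator and doing exactly the exponent bookkeeping you describe. One small correction to your closing remark: the coefficient $(9\eta-2)/12$ of $|\mathbf{m}-\mathbf{l}|$ is not produced by the $\Gamma^a_{\min}$ factor alone, since the factorial ratio $\prod\sqrt{m_{i,j}!/l_{i,j}!}\le |\mathbf{m}|^{(|\mathbf{m}-\mathbf{l}|+|\mathbf{m}|-|\mathbf{l}|)/4}$ contributes the $\eta/4$ part of it, although the decaying contribution $-1/6$ per unit of $|\mathbf{m}-\mathbf{l}|$ (and hence the threshold $\eta<2/9$) does indeed come from the $\Gamma^a_{\min}$ saving as you say.
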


\begin{proof} 
We know that $\left\vert \mathbf{m} _{\lambda}\right\rangle$ is a linear combination 
of $n$-tensor product vectors in which the basis vector $f_i$ appears exactly 
$\lambda_i - \sum_{j > i} m_{i,j} + \sum_{j < i} m_{j,i}$ times for $1\leq i\leq r$. As two tensor basis vectors are orthogonal if they do not have the same number of $f_i$ in the decomposition, we get that $\langle \mathbf{m}_ {\lambda} | \mathbf{l}_ {\lambda} \rangle = 0$ if $\sum_{j > i} m_{i,j} - \sum_{j < i} m_{j,i} \neq \sum_{j > i} l_{i,j} - \sum_{j < i} l_{j,i}$ for any $1 \leq i \leq r$.

\medskip

Now consider the following expansion of the inner product,
\begin{align}
\langle \mathbf{m}_{\lambda} | \mathbf{l}_ {\lambda} \rangle &= \frac{\langle q_{\lambda} p_{\lambda} \Vm | q_{\lambda} p_{\lambda} \Vl\rangle }{\sqrt{\langle q_{\lambda} p_{\lambda} \Vm | q_{\lambda} p_{\lambda} \Vm\rangle \langle q_{\lambda} p_{\lambda} \Vl | q_{\lambda} p_{\lambda} \Vl\rangle}}\\
&=\frac{\langle  p_{\lambda} \Vm | q_{\lambda} p_{\lambda} \Vl\rangle }{\sqrt{\langle  p_{\lambda} \Vm | q_{\lambda} p_{\lambda} \Vm\rangle \langle  p_{\lambda} \Vl | q_{\lambda} p_{\lambda} \Vl\rangle}}
\end{align}

where we have used the fact that $q_{\lambda}$ is a projection (use (\ref{projection_formula}) and (\ref{csq1}) except that we have only $r$ rows in our Young diagrams), up to a constant factor. Now we decompose $p_{\lambda} f_{\bf m} $ and $p_{\lambda} f_{\bf l}$ 
on orbits as in \eqref{plambda.orbit.decomposition}. 
Since the multiplicity of the elements in the orbits are the same in numerator and denominator, we obtain:
\begin{align}
\langle \mathbf{m}_{\lambda} | \mathbf{l}_ {\lambda} \rangle = \frac{\langle \sum_{\Va \in \mathcal{O}_{\lambda}(\mathbf{m})} \Va 
| q_{\lambda} \sum_{\Vb \in \mathcal{O}_{\lambda}(\mathbf{l})} \Vb \rangle}
{\sqrt{\langle \sum_{\Va \in \mathcal{O}_{\lambda}(\mathbf{m})} \Va | 
q_{\lambda}\sum_{\Vap \in \mathcal{O}_{\lambda}(\mathbf{m}) } \Vap\rangle
\langle \sum_{\Vb \in \mathcal{O}_{\lambda}(\mathbf{l})} \Vb | q_{\lambda} \sum_{\Vbp \in \mathcal{O}_{\lambda}(\mathbf{l})} \Vbp \rangle}}.
\end{align}

We use \eqref{denom54} for the denominator:
\begin{eqnarray*}
&&
\left\langle \sum_{\Va \in \mathcal{O}_{\lambda}(\mathbf{m})} \Va \Bigg| 
q_{\lambda}\sum_{\Vap \in \mathcal{O}_{\lambda}(\mathbf{m}) } \Vap\right\rangle
\Bigg\langle \sum_{\Vb \in \mathcal{O}_{\lambda}(\mathbf{l})} \Vb \Bigg| q_{\lambda} \sum_{\Vbp \in \mathcal{O}_{\lambda}(\mathbf{l})} \Vbp \Bigg\rangle \\
&&=  
\prod_{\ijr}\frac{ (\lambda_i -\lambda_j)^{(m_{i,j} + l_{i,j}) / 2}}{\sqrt{m_{i,j}! \,l_{i,j}!}}
(1+O(n^{3\eta - 1}/\delta)),
\end{eqnarray*}
and the numerator is bounded as in \eqref{denom}. Then, under the assumption 
$|{\bf m} | \geq |{\bf l}|$ we have
\[
\left|\langle \mathbf{m}_ {\lambda} | \mathbf{l}_ {\lambda} \rangle\right| 
\leq 
(C |\mathbf{m}|)^{|\mathbf{m}| - |\mathbf{l}|} \left(\frac{C |\mathbf{m}|^3}{\delta ^2 n}\right)^{\Gamma_{min}} \cdot \prod_{\ijr} (\lambda_i - \lambda_j)^{(l_{i,j} - m_{i,j}) / 2} \sqrt{\frac{m_{i,j}!}{l_{i,j}!}} \cdot \left(1 +  O(n^{3\eta-1}/\delta) \right), 
\]
where $\Gamma_{min} = (   (|\mathbf{l} - \mathbf{m}| + 3 |\mathbf{l}| - 3 |\mathbf{m}|) / 6) \wedge 0$.

The factorials can be bounded as 
$$
\prod_{\ijr} \sqrt{\frac{m_{i,j}!}{l_{i,j}!}} \leq |\mathbf{m}|^{\sum (m_{i,j} - l_{i,j})_{+}/2} = 
|\mathbf{m}|^{(|\mathbf{m} - \mathbf{l}| + |\mathbf{m}| - |\mathbf{l}|) / 4 }.
$$ 
Since $|{\bf m}|\leq n^{\eta}$ and  $Cn^{3\eta-1}/\delta^{2}< 1$, we have
$$
\left(\frac{C |\mathbf{m}|^3}{\delta ^2 n}\right)^{\Gamma_{min}} \leq
\left(\frac{C n^{3\eta-1}}{\delta ^2}\right)^{  (|\mathbf{l} - \mathbf{m}| + 3 |\mathbf{l}| - 3 |\mathbf{m}|) / 6}.
$$
Since $n\delta <\lambda_{i}-\lambda_{j}<n$ we have
$$(\lambda_i-\lambda_j)^{(l_{i,j} - m_{i,j}) / 2}\leq n^{{(l_{i,j} - m_{i,j}) / 2}} \quad \text {  if }l_{i,j}\geq m_{i,j}$$ 
and 
$$(\lambda_i-\lambda_j)^{(l_{i,j} - m_{i,j}) / 2}\leq (n\delta)^{{(l_{i,j} - m_{i,j}) / 2}}\quad \text {  if }l_{i,j}< m_{i,j}$$ 

so that we have 
$$\prod_{\ijr} (\lambda_i -\lambda_j)^{(l_{i,j} - m_{i,j}) / 2} \leq
n^{ (|{\bf l}|-|{\bf m}|)/2}\delta^{\sum (m_{i,j} - l_{i,j})_{+}/2}\leq (n\delta)^{ (|{\bf l}|-|{\bf m}|)/2}.
$$

The last inequality follows since $\delta<1$ and by assumption $|{\bf l}|-|{\bf m}|\leq 0$.

The constant $C=C(d)$ can be replaced by another constant 
$C^{\prime}= C^{\prime}(d,\eta)$ such that all powers of $n$ appear in the form $(C^{\prime}n)^{\gamma}$. The above bound yields
\begin{eqnarray*}
\left|\langle \mathbf{m}_{\lambda} | \mathbf{l}_ {\lambda} \rangle\right| \leq
 \delta^{ (|\mathbf{m}| -|\mathbf{l}|)/2 - |\mathbf{m} - \mathbf{l}|  / 3}
 (C^{\prime}n)^{-\eta( |{\bf m}|-|{\bf l}|)/4} (C^{\prime}n)^{(9\eta-2)|{\bf m} -{\bf l}|/12} (1+O(n^{-1+3\eta} /\delta))
\end{eqnarray*}
\end{proof}
%%%%%%%%%%%%%%%%%%%%%%%%%%%%%%%%%%%%%%

We have the following lemma which shows that for a given $\mathbf{m}_{\lambda}$, its overlap with $\mathbf{l}_{\lambda}$s is uniformly small.
\begin{lemma}
\label{gqo}
Let $\eta < 2 / 9$ and let $\mathbf{m}_{\lambda}$ be such that $|\mathbf{m}| \leq n^{\eta}$. Assume as in Lemma \ref{non-orth} that $\lambda_i - \lambda _{i+1} > \delta n $ for all $1\leq i \leq r-1$  and $\lambda _r > \delta n$, for some $\delta > 0$, and that
 $Cn^{3\eta - 1}/\delta^{2} <1$ where $C=C(d)$ is a constant.

Then there exists a constant $C^{\prime\prime}= C^{\prime\prime}(d,\eta)$ such that
\begin{equation}\label{bound.sum.inner.prod}
\sumtwo{|\mathbf{l}| \leq n^{\eta}}{\mathbf{l} \neq \mathbf{m}} 
\left| \langle \mathbf{m}_{\lambda} | \mathbf{l}_{\lambda} \rangle  \right| \leq (C^{\prime\prime} n)^{(9 \eta - 2) / 12}\delta^{-1/3}.
\end{equation}
%for $n$ such that $C^{\prime}n > \delta^{2}/\eta$ with $C^{\prime}$ the constant of Lemma \ref{non-orth}. 
\end{lemma}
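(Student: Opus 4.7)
The plan is to apply Lemma \ref{non-orth} termwise and then control the resulting sum by grouping terms according to the $L^1$-distance $k := |\mathbf{m}-\mathbf{l}|$ and the absolute level difference $s := ||\mathbf{m}|-|\mathbf{l}||$. The key observation is that for $\eta<2/9$ the quantity $x := (C'n)^{(9\eta-2)/12}\delta^{-1/3}$ tends to $0$, so the expression $[(C'n)^{(9\eta-2)/12}\delta^{-1/3}]^k$ behaves like a geometric series in $k$, with polynomial combinatorial prefactors.

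First I would handle the case $|\mathbf{l}|>|\mathbf{m}|$ by Hermitian symmetry $|\langle \mathbf{m}_\lambda|\mathbf{l}_\lambda\rangle|=|\langle \mathbf{l}_\lambda|\mathbf{m}_\lambda\rangle|$ and apply Lemma \ref{non-orth} with the roles of $\mathbf{m}$ and $\mathbf{l}$ swapped (the hypothesis $|\mathbf{l}|\leq n^\eta$ guarantees the assumption $|\mathbf{m}|\leq|\mathbf{l}|\leq n^\eta$ is met). In both cases, setting $s=||\mathbf{m}|-|\mathbf{l}||\in\{0,1,\dots,k\}$ with $s\equiv k\pmod 2$, Lemma \ref{non-orth} gives
\[
|\langle \mathbf{m}_\lambda|\mathbf{l}_\lambda\rangle|\ \leq\ (C'n)^{-\eta s/4}(C'n)^{(9\eta-2)k/12}\delta^{s/2-k/3}\bigl(1+O(n^{-1+3\eta}/\delta)\bigr).
\]
Note also that the vanishing condition from the first part of Lemma \ref{non-orth} can only make the sum smaller, so we discard it.

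Next I would count, for fixed $(s,k)$, the number of $\mathbf{l}$ with $|\mathbf{m}-\mathbf{l}|=k$ and $||\mathbf{m}|-|\mathbf{l}||=s$. Writing the differences $l_{i,j}-m_{i,j}$ as signed integers summing (in absolute value) to $k$ and (in signed sum) to $\pm s$, a standard stars-and-bars count across the $K := r(d-1)-\binom{r}{2}$ (constant) coordinate positions gives at most $C(d) k^{2K-2}$ such vectors. Therefore
\[
\sumtwo{|\mathbf{l}|\leq n^\eta}{\mathbf{l}\neq\mathbf{m}}|\langle \mathbf{m}_\lambda|\mathbf{l}_\lambda\rangle|\ \leq\ (1+o(1))\,C(d)\sum_{k\geq 1} k^{2K-2}\,x^k\sumtwo{0\leq s\leq k}{s\equiv k\,(\mathrm{mod}\,2)}\!\!y^s,
\]
where $x=(C'n)^{(9\eta-2)/12}\delta^{-1/3}$ and $y=(C'n)^{-\eta/4}\delta^{1/2}$. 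Since $\eta<2/9$ we have $x\to 0$ and $y\to 0$ as $n\to\infty$, so the inner geometric sum is bounded by a constant (at most $2$ for $n$ large), and the outer series $\sum_{k\geq 1}k^{2K-2}x^k$ is bounded by $C(d,\eta)\,x$ whenever $x\leq 1/2$. Combining, the whole sum is $O(x)=O\bigl((C''n)^{(9\eta-2)/12}\delta^{-1/3}\bigr)$, which is the desired estimate.

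The only mildly delicate step is the combinatorial count $\#\{\mathbf{l}:|\mathbf{m}-\mathbf{l}|=k,\ ||\mathbf{m}|-|\mathbf{l}||=s\}\leq C(d)k^{2K-2}$; this is straightforward but one must be careful that the number $K$ of coordinate indices $(i,j)$ in the low-rank parametrization is fixed (independent of $n$), so that polynomial growth in $k$ is harmless against the geometric decay in $x^k$. Everything else is an application of Lemma \ref{non-orth} combined with geometric-series summation.
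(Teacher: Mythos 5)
Your proof is correct and follows essentially the same route as the paper's: reduce to $|\mathbf{l}|\leq|\mathbf{m}|$ by exchanging roles in Lemma \ref{non-orth}, apply that bound termwise, and dominate the sum over $k=|\mathbf{m}-\mathbf{l}|$ by a geometric series with a harmless combinatorial prefactor (the paper bounds the number of $\mathbf{l}$ at distance $k$ by $(2(k+1))^{rd}\leq c^k$, while you use a polynomial stars-and-bars count and keep the $s$-dependent factor, which the paper simply discards as being at most one). These are only cosmetic differences, so no changes are needed.
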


\begin{proof}
We break the sum into two parts ($|{\bf l}| \leq |{\bf m}|$ and $|{\bf l}| >  |{\bf m}|$), and we note it suffices to prove the statement under the additional condition 
$|{\bf l}| \leq |{\bf m}|$ (otherwise we use \eqref{quasiorth1} with the roles of $\bf{l}$ and $\bf{m}$ exchanged).
  
We use \eqref{quasiorth1} neglecting the terms containing $|{\bf m}|-|{\bf l}|$ in the exponent which are less than $1$. Then the expression on the left side of 
\eqref{bound.sum.inner.prod} is bounded from above by
$$
2 \sum_{k\geq 1} N(k) \left[ (C^{\prime}n)^{(9\eta-2)/12} \delta^{-1/3} \right]^{k}
%= \sum_{p\geq 1} (\sum_{k}=1^{p} N(k)) (a^{p}-a^{p+1})
$$
where $N(k)$ is the number of ${\bf l}$'s for which $|{\bf m} - {\bf l}|= k$. 

Since there are at most $rd$ pairs $\ijr$, there are at most 
$(k+1)^{rd}$ different choices for the  values $\{| l_{i,j} - m_{i,j}| : \ijr\}$ satisfying 
$\sum  |l_{i,j} - m_{i,j}| = k$. Moreover, there are $2^{rd}$ sign choices which fix ${\bf l}= \{l_{i,j}\}$ completely. Thus $N(k) \leq (2(k+1))^{rd} \leq c^{k}$ for some constant $c$. Thus we have a geometric series with the first term corresponding to the RHS of (\ref{bound.sum.inner.prod}) and hence the desired result is proved.

\end{proof}
We use this quasi-orthogonality to build an isometry
$V^r_{\lambda}:\mathcal{H}_{\lambda} \to \mathcal{F}^r$ which maps the relevant 
finite-dimensional vectors $\left\vert {\bf m}_ \lambda\right\rangle$ `close' to their Fock counterparts $\left\vert {\bf m}\right\rangle$, with $| \mathbf{m} \rangle \in \mathcal{F}^{r}$. This is the aim of following two lemmas.

\begin{lemma}
\label{isocomp}
Let $A$ be a contraction (i.e. $A^* A \leq \bf{1}$) from a finite space $\mathcal{H}$  to an infinite space $\mathcal{K}$. Then there is an $R:\mathcal{H}\to \mathcal{K}$ such that $A + R$ is an isometry and ${\rm Range}(A) \perp {\rm Range}(R)$.

As a consequence, for any unit vector $\phi$, we have $\| R \phi \|^2 = 1 - \| A \phi \|^2$.
\end{lemma}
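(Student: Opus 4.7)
The plan is to reduce the existence of $R$ to a polar-decomposition-style construction. Since $A$ is a contraction, the operator $B := (\mathbf{1} - A^*A)^{1/2}$ is a well-defined nonnegative self-adjoint operator on the finite-dimensional space $\mathcal{H}$, and $B^2 = \mathbf{1} - A^*A$. The two conditions on $R$ (isometry of $A+R$ and orthogonality of ranges) together translate, via a short algebraic manipulation, into the requirements $R^*R = B^2$ together with $\mathrm{Range}(R) \perp \mathrm{Range}(A)$; indeed, if the ranges are orthogonal then $A^*R = 0 = R^*A$, so $(A+R)^*(A+R) = A^*A + R^*R$, which equals $\mathbf{1}$ precisely when $R^*R = B^2$.

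The construction of such an $R$ exploits the assumption that $\mathcal{K}$ is infinite-dimensional while $\mathcal{H}$ is finite-dimensional. The subspace $\mathrm{Range}(A) \subset \mathcal{K}$ has dimension at most $\dim(\mathcal{H}) < \infty$, so its orthogonal complement $\mathrm{Range}(A)^{\perp}$ in $\mathcal{K}$ is still infinite-dimensional, and in particular admits an isometric embedding $V : \mathcal{H} \to \mathrm{Range}(A)^{\perp}$ (just pick any $\dim(\mathcal{H})$ orthonormal vectors in the complement and define $V$ on a basis of $\mathcal{H}$). Setting $R := V B$ gives $\mathrm{Range}(R) \subseteq \mathrm{Range}(V) \subseteq \mathrm{Range}(A)^{\perp}$, and $R^*R = B V^*V B = B^2 = \mathbf{1} - A^*A$, as required. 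Therefore $A+R$ is an isometry and its two summands have orthogonal ranges.

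For the final statement, on any unit vector $\phi \in \mathcal{H}$ one simply computes
\[
\|R\phi\|^2 = \langle \phi, R^*R\,\phi\rangle = \langle \phi, (\mathbf{1} - A^*A)\phi\rangle = 1 - \|A\phi\|^2,
\]
which is the announced identity.

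I do not anticipate a serious obstacle here: the only place where the hypotheses are genuinely used is in asserting that $\mathrm{Range}(A)^{\perp}$ is large enough to accommodate an isometric copy of $\mathcal{H}$, which is immediate from $\dim \mathcal{H} < \infty = \dim \mathcal{K}$. Everything else is standard Hilbert-space manipulation built from the defect operator $B = (\mathbf{1} - A^*A)^{1/2}$.
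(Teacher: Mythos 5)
Your proof is correct: the defect-operator construction $R=VB$ with $B=(\mathbf{1}-A^*A)^{1/2}$ and an isometric embedding $V:\mathcal{H}\to\mathrm{Range}(A)^{\perp}$ (available because $\dim\mathcal{H}<\infty=\dim\mathcal{K}$) yields $R^*R=\mathbf{1}-A^*A$ and orthogonal ranges, hence $(A+R)^*(A+R)=\mathbf{1}$, and the norm identity follows immediately. The paper itself defers this lemma to \cite{Kahn&Guta}, where the argument is essentially this same standard defect-operator construction, so your approach matches the intended proof.
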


\begin{proof}
See \cite{Kahn&Guta}.
\end{proof}

%We can now prove Lemma \ref{V_approx}.

%%%%%%%%%%%%%%%%%%%%%%%%%%%%%%%%%%%%%%%%%%%%
\begin{lemma}{\label{V_approx}}
Let $\eta < 2/9$. Suppose that $\lambda _i - \lambda _{i+1} \geq \delta n$ for all 
$1\leq i\leq r$, with the convention $\lambda _{r+1} = 0$. Then for 
$n>n_{0}(\eta,\delta,d)$ there exists an isometry $V^r_{\lambda}: \mathcal{H}_{\lambda}\to \mathcal{F}^r$ such that, 
$V^r_{\lambda}|{\bf 0}_\lambda \rangle = |{\bf 0}\rangle $ and for $|\mathbf{m}| \leq n^{\eta}$, 
\[
\langle  \mathbf{m}| V^r_{\lambda}  = \frac1{\sqrt{1 + (\tilde{C}n)^{(9\eta -2) / 12}/\delta ^{1/3}}} \left\langle \mathbf{m}_\lambda \right|
\]
where $\tilde{C}= \tilde{C}(\eta,d)$ is a particular constant. More precisely, 
$n_{0}$ can be taken of the form $\left(\frac{C(d)}{\delta^{2}}\right)^{1/(1-3\eta)}$.
\end{lemma}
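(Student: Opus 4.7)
The plan is to construct a contraction $A : \mathcal{H}_\lambda \to \mathcal{F}^r$ whose matrix elements against the orthonormal Fock basis recover the desired scaled bras, and then invoke Lemma \ref{isocomp} to extend $A$ to an isometry $V^r_\lambda = A + R$ with range of $R$ orthogonal to range of $A$. The key analytic input will be the quasi-orthogonality bound of Lemma \ref{gqo}, which controls how far the Gram matrix of the basis $\{|\mathbf{m}_\lambda\rangle\}$ sits from the identity. A preliminary observation is that $|\mathbf{0}_\lambda\rangle$ is actually orthogonal to $|\mathbf{m}_\lambda\rangle$ for every $\mathbf{m} \neq \mathbf{0}$: the first bullet of Lemma \ref{non-orth} says $\langle \mathbf{m}_\lambda|\mathbf{0}_\lambda\rangle = 0$ unless $\sum_{j>i}m_{i,j} - \sum_{j<i}m_{j,i} = 0$ for every $i$, and a straightforward induction on $i = 1, 2, \dots, r$ shows this forces $\mathbf{m} = \mathbf{0}$. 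Hence $|\mathbf{0}_\lambda\rangle$ decouples and can be sent isometrically to $|\mathbf{0}\rangle$ with unit coefficient.

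Concretely, set $c := \bigl(1 + (\tilde{C}n)^{(9\eta - 2)/12}/\delta^{1/3}\bigr)^{-1/2}$ and define
\[
A := |\mathbf{0}\rangle\langle\mathbf{0}_\lambda| + c \sum_{1\leq |\mathbf{m}| \leq n^\eta} |\mathbf{m}\rangle\langle\mathbf{m}_\lambda|.
\]
To verify that $A^* A \leq \mathbf{1}$, one splits any $\phi \in \mathcal{H}_\lambda$ as $\phi = \langle\mathbf{0}_\lambda|\phi\rangle \cdot |\mathbf{0}_\lambda\rangle + \phi^\perp$ with $\phi^\perp \perp |\mathbf{0}_\lambda\rangle$; the $|\mathbf{0}_\lambda\rangle$ component contributes exactly $|\langle\mathbf{0}_\lambda|\phi\rangle|^2$ to $\|A\phi\|^2$, so the remaining task is to show
\[
\Bigl\|\sum_{1 \leq |\mathbf{m}| \leq n^\eta} |\mathbf{m}_\lambda\rangle\langle\mathbf{m}_\lambda|\Bigr\| \leq 1 + (\tilde{C}n)^{(9\eta - 2)/12}/\delta^{1/3}.
\]
A direct computation expanding $\phi^\perp$ in the (linearly independent) basis $\{|\mathbf{m}_\lambda\rangle\}$ identifies this operator norm with $\lambda_{\max}(G)$, where $G$ is the Gram matrix $G_{\mathbf{m},\mathbf{l}} = \langle \mathbf{m}_\lambda|\mathbf{l}_\lambda\rangle$. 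Gershgorin's theorem then gives $\lambda_{\max}(G) \leq 1 + \max_\mathbf{m} \sum_{\mathbf{l}\neq \mathbf{m}} |G_{\mathbf{m},\mathbf{l}}|$, and Lemma \ref{gqo} bounds the inner sum uniformly by $(\tilde{C}n)^{(9\eta-2)/12}/\delta^{1/3}$, provided $n$ is large enough that the hypothesis $C n^{3\eta - 1}/\delta^2 < 1$ of that lemma holds, i.e.\ $n > n_0 \sim (C(d)/\delta^2)^{1/(1-3\eta)}$.

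Once $A$ is a contraction, Lemma \ref{isocomp} supplies $R$ with $V^r_\lambda = A + R$ an isometry and $\mathrm{Range}(R) \perp \mathrm{Range}(A)$. Since $A|\mathbf{0}_\lambda\rangle = |\mathbf{0}\rangle$ already has unit norm, the consequence $\|R|\mathbf{0}_\lambda\rangle\|^2 = 1 - \|A|\mathbf{0}_\lambda\rangle\|^2 = 0$ forces $R|\mathbf{0}_\lambda\rangle = 0$, so $V^r_\lambda|\mathbf{0}_\lambda\rangle = |\mathbf{0}\rangle$. For $1 \leq |\mathbf{m}| \leq n^\eta$, the restriction of $A$ to the span of $\{|\mathbf{l}_\lambda\rangle : |\mathbf{l}|\leq n^\eta\}$ is, in the Fock basis, close to $c$ times the identity (its Gram matrix is invertible by the bound above), so $\mathrm{Range}(A) \supseteq \mathrm{span}\{|\mathbf{m}\rangle : |\mathbf{m}| \leq n^\eta\}$, whence $\langle \mathbf{m}|R = 0$ and $\langle \mathbf{m}|V^r_\lambda = \langle \mathbf{m}|A = c\langle \mathbf{m}_\lambda|$, as required.

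The main obstacle is the norm estimate in the second step: getting the correct constant in $\|B\|$ so that the $c$ produced by Lemma \ref{isocomp} matches the form stated. Identifying the norm with $\lambda_{\max}(G)$ and invoking Gershgorin plus Lemma \ref{gqo} does this cleanly, but one must check the hypotheses of Lemma \ref{gqo}—in particular $\eta < 2/9$ and the quantitative lower bound on $n$—which dictate the threshold $n_0(\eta,\delta,d)$.
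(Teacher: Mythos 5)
Your proposal is correct and follows essentially the same route as the paper: you build the rescaled map out of the Fock-basis bras, prove it is a contraction by bounding the largest eigenvalue of the Gram matrix of the $|\mathbf{m}_\lambda\rangle$ via the row-sum bound of Lemma \ref{gqo} (your Gershgorin step is exactly the paper's maximum-coefficient eigenvector estimate), and complete it to an isometry with Lemma \ref{isocomp}, noting that the relevant Fock vectors lie in the range of the contraction so that $\langle\mathbf{m}|R=0$. The only deviation is that you give the vacuum component unit weight, exploiting the exact orthogonality $\langle\mathbf{m}_\lambda|\mathbf{0}_\lambda\rangle=0$ for $\mathbf{m}\neq\mathbf{0}$, so that $V^r_\lambda|\mathbf{0}_\lambda\rangle=|\mathbf{0}\rangle$ holds exactly (the paper rescales the vacuum along with the rest), which is a harmless variant rather than a genuinely different method.
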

\begin{proof}
Let $A^r_{\lambda}:\mathcal{H}_{\lambda}\to \mathcal{F}^r$ be defined by
\[
A^r_{\lambda} := \frac1{\sqrt{1 + (Cn)^{(9\eta -2) / 12}/\delta ^{1/3}}} \sum_{|\mathbf{l}| \leq n^{\eta}} | \mathbf{l} \rangle \left\langle \mathbf{l}_{\lambda} \right|.
\]

where $| \mathbf{l} \rangle \in \mathcal{F}^{r}$.
Then,  
\begin{align*}
A_{\lambda}^{r*} A^r_{\lambda} & =  \frac1{1 + (Cn)^{(9\eta -2) / 12}/\delta ^{1/3}} \sum_{|\mathbf{l}| \leq n^{\eta}} \left| \mathbf{l}_{\lambda} \right\rangle \left\langle \mathbf{l}_{\lambda} \right| \\
& \leq \mathbf{1}_{\mathcal{H}_{\lambda}}. 
\end{align*}
where the last inequality follows from the following argument. 
It is enough to show that all eigenvalues of $A_{\lambda}^{r*} A^r_{\lambda}$ are smaller than $1$. Let $\sum_{\bf m} c_{\bf m} \left| \mathbf{m}_ {\lambda} \right\rangle$ be an eigenvector of $A_{\lambda}^{r*} A^r_{\lambda}$, and $a$ the corresponding eigenvalue. Then by the linear independence of $\left\vert \mathbf{m}_ {\lambda} \right\rangle $  we get that for each~${\bf l}$
$$
 \frac1{1 + (Cn)^{(9\eta -2) / 12}/\delta ^{1/3}} 
\sum_{|\mathbf{m}|\leq n^{\eta}}  
\left\langle \mathbf{l}_\lambda  | \mathbf{m}_ {\lambda} \right\rangle  c_{\bf m} 
 = 
a   c_{\bf l}.
$$
If ${\bf l}_{0}$ is an index for which $| c_{\bf l}|$ is maximum, then by taking absolute values on both sides and dividing by $|c_{\bf l_0}|$, we obtain
$$
a \leq  \frac1{1 + (Cn)^{(9\eta -2) / 12}/\delta ^{1/3}} \sum_{|\mathbf{m}|\leq n^{\eta}}  
 \left\vert \langle \mathbf{l}_\lambda  | \mathbf{m}_ {\lambda}\rangle  \right\vert \leq 1
$$ 

where the last inequality follows from Lemma \ref{gqo}.
Now may apply Lemma \ref{isocomp}, and find an $R^r_{\lambda}$ such that 
$A^r_{\lambda} + R^r_{\lambda}$ is an isometry, and ${\rm Range}(R^r_{\lambda}) \perp 
{\rm Range}(A^r_{\lambda})$, so that $\langle  \mathbf{m}| R^r_{\lambda} = 0$. 
Defining $ V^r_{\lambda} : = A^r_{\lambda} + R^r_{\lambda}$, we have
\begin{eqnarray*}
\langle  \mathbf{m}| V^r_{\lambda} & = & \langle  \mathbf{m}| (A^r_{\lambda} + R^r_{\lambda}) \\
& = & \langle  \mathbf{m}| A^r_{\lambda} \\
& = &  \frac1{\sqrt{1 + (Cn)^{(9\eta -2) / 12}/\delta ^{1/3}}} \langle  \mathbf{m}| \sum_{|\mathbf{l}| \leq n^{\eta}} |  \mathbf{l}\rangle \left\langle \mathbf{l}_ {\lambda} \right| \\
& = & \frac1{\sqrt{1 + (Cn)^{(9\eta -2) / 12}/\delta ^{1/3}}} \left\langle \mathbf{m}_{\lambda} \right|.
\end{eqnarray*}

% See \cite{Kahn&Guta}.
\end{proof}
\subsection{Proof of the main lemmas}
\begin{proof}[Proof of Lemma \ref{approx_classical}]
To obtain upper bounds for the following expression
$$\sup_{\theta \in \Theta}||\sum_{\lambda \in \Lambda_1}b_{\lambda}^{u,r,n}-\cal{N}_{r-1}(u,V_{\mu})||_1$$
one first approximates $p_{\lambda}^{u,r,n}$ by a $r$ dimensional multinomial distribution and then uses the classical channel $\tau_{\lambda}^{r,n}$ to map the multinomial distribution to a distribution in $\bb{R}^{r-1}$ which lies close to the multivariate normal. Thus we obtain the following variant of equation (7.35) of \cite{Kahn&Guta} 
\begin{align}
\label{multientre}
\left\| \mathcal{N}_{r-1}(u,V_{\mu}) -
\sum_{\lambda\in \Lambda_1}
b_{\lambda}^{u,r,n}\right\|_1 &\leq 
\left\| 
p^{u,r,n} -  M^n_{\mu^{u,n}_1,\dots,\mu^{u,n}_r}\right\|_1
\nonumber\\
&+ 
\left\| \mathcal{N}_{r-1}(u,V_{\mu}) -
\sum_{\lambda \in \Lambda_1}  M^n_{\mu^{u,n}_1,\dots,\mu^{u,n}_r}(\lambda) \tau_{\lambda}^{r,n}
\right\|_1, 
\end{align}
where $ M^n_{\mu^{u,n}_1,\dots,\mu^{u,n}_r}$ is the $r$-multinomial with coefficients $\mu_i^{u,n}$. One then shows that both the terms in the RHS is $O(n^{-1/2+\gamma}+n^{-1/4+\epsilon})$ for any $\epsilon>0$ and finally obtain:
\begin{align*}
\sup_{\theta \in \Theta}||\sum_{\lambda \in \Lambda_1}b_{\lambda}^{u,r,n}-\cal{N}_{r-1}(u,V_{\mu})||_1\leq C_r(n^{-1/2+\gamma}+n^{-1/4+\epsilon})
\end{align*}
where $C_r$ is a constant which depends only on the rank $r$. Since the proof is given in \cite{Kahn&Guta} we omit the details.
\end{proof}
\begin{proof}[Proof of Lemma \ref{extreme_rep}]
Note that $$p_{\lambda}^{u,r,n}=\prod_{1\leq i \leq r}(\mu^{u,n}_i)^{\lambda_i}\sum_{\bm\in \cal{M}^{\lambda}_r}\prod_{1\leq i<j\leq r}\left(\frac{\mu^{u,n}_j}{\mu^{u,n}_i}\right)^{m_{i,j}}\times \mathrm{dim}(\mathcal{K}_{\lambda}).$$
where (see \cite{MR1153249})
$$\mathrm{dim}(\mathcal{K}_{\lambda})=\left(\substack{n\\\\\lambda_1,\lambda_2,\ldots,\lambda_r}\right)\prod_{i=1}^r\frac{\lambda_l !\prod_{k=l+1}^r\lambda_l-\lambda_k+k-l}{(\lambda_l+r-l)!}.$$
The rest of the proof proceeds as in \cite{Kahn&Guta}. One checks that 
$$\prod_{i=1}^r\frac{\lambda_l !\prod_{k=l+1}^r\lambda_l-\lambda_k+k-l}{(\lambda_l+r-l)!}\leq 1$$
and  
$$\left(\frac{\mu^{u,n}_j}{\mu^{u,n}_i}\right)^{m_{i,j}}\leq 1, \quad \sum_{m \in \mathcal{M}^{\lambda}_r}\left(\frac{\mu^{u,n}_j}{\mu^{u,n}_i}\right)^{m_{i,j}}\leq n^{r^2},$$
so that
$$\sum_{\lambda\in \Lambda_1\cap\Lambda_2^c}p_{\lambda}^{u,r,n}\leq n^{r^2}\sum_{\lambda\in \Lambda_1\cap\Lambda_2^c}\left(\substack{n\\\\\lambda_1,\lambda_2,\ldots,\lambda_r}\right)\prod_{1\leq i \leq r}(\mu^{u,n}_i)^{\lambda_i}.$$
Also note that 
$$\Lambda_1\cap\Lambda_2^c=\{\lambda:\lambda_k=0 \text { for } k>r \text { and } \exists i \text{ such that } |\lambda_i-n\mu_i|>n^{\alpha} \}$$
Thus we are summing a multinomial probability distribution over extreme values. Since each coordinate of the multinomial is a binomial random variable one uses Hoeffding inequality for Bernoulli random variables and union bound to obtain
$$\sum_{\lambda\in \Lambda_1\cap\Lambda_2^c}p_{\lambda}^{u,r,n}\leq 2rn^{r^2}\exp(-n^{2\alpha-1}/2)$$
for large enough $n$.
\end{proof}
\begin{proof}[Proof of lemma \ref{comparison_unshifted}]
We start by considering the states

$$\rho^{0,u,r,n}_{\lambda}=\frac{1}{N^{u,r,n}_{\lambda}}\sum_{\{m_i\}}\prod_{i=1}^r (\mu^{u,n}_{i})^{\lambda_i}\prod_{j=i+1}^r \left(\frac{\mu^{u,n}_{j}}{\mu^{u,n}_{i}}\right)^{m_{i,j}}\sum_{m \in \cal{M}^{\lambda}_r\cap \{m_i\}}|\bar{\bm}_{\lambda}\rangle\langle\bar{\bm}_{\lambda}|
 $$
\begin{equation}\phi^{0,r}=\sum_{\mathbf{m}_1\in \bb{N}^{r(r-1)/2}}\prod_{1\leq i<j\leq r}\frac{\mu_i-\mu_j}{\mu_i}\left(\frac{\mu_j}{\mu_i}\right)^{m_{i,j}}|\mathbf{m}_1\ra\la \mathbf{m}_1|\otimes |\mathbf{0}_2\ra\la \mathbf{0}_2|.
\end{equation}
Note that although $|\mathbf{m}_\lambda\ra$ are eigenvectors of $\rho^{0,u,r,n}_{\lambda}$, they are not orthogonal and hence the decomposition is not straightforward.
 Let \begin{equation}
     \cal{H}^r_{\lambda}=\text{ span } \{|\mathbf{m}_\lambda\rangle: \bm\in \mathcal{M}_r^{\lambda}\}
 \end{equation}
 We split $\cal{H}^r_{\lambda}$ into direct sum of two spaces %\footnote{ \cite{Kahn&Guta} uses the space $\cal{H}^{\perp}$ is incorrectly described as $\cal{H}^{\perp}_{\lambda,\eta}=\text{ Lin }\{|\mathbf{m}_{\lambda}\ra:|m|> n^{\eta}\}$, the argument  - $\la \mathbf{l}_{\lambda}|\mathbf{m}_{\lambda}\ra=0$ if $|\mathbf{l}|\neq |\mathbf{m}|$, being false.} 
i.e.
 $$\cal{H}^r_{\lambda}=\cal{H}^r_{\lambda,\eta}\bigoplus \bar{\cal{H}}^r_{\lambda,\eta}$$
 where 
$$\cal{H}^r_{\lambda,\eta}=\text{ span } \{|\bm_\lambda\rangle \in \cal{H}^r_{\lambda}: |\bm|\leq n^{\eta}\}.$$
Also decompose $\mathcal{F}^r=\mathcal{F}^r_{\eta}\bigoplus \mathcal{F}^{r\perp}_{\eta}$ with $\{|\bm_1\rangle\otimes |\mathbf{0}_2\rangle:|\bm_1|\leq n^{\eta}\}$ spanning the set $\mathcal{F}^r_{\eta}$. Define the projectors into the subspaces $\cal{H}^r_{\lambda,\eta},\mathcal{F}^r_{\eta},$ and $\mathcal{F}^{r\perp}_{\eta}$
 as $P^r_{\lambda,\eta},P^r_{\eta},$ and $P^{r\perp}_{\eta}$ respectively. One can show that the following decomposition holds:
\begin{align}||T^r_{\lambda}(\rho^{0,u,r,n}_{\lambda})-\phi^{0,r}||_1\leq 2||T^r_{\lambda}(P^{r}_{\lambda,\eta}\rho^{0,u,r,n}_{\lambda}P^{r}_{\lambda,\eta})-P^{r}_{\eta}\phi^{0,r}P^{r}_{\eta}||_1+2||P^{r\perp}_{\eta}\phi^{0,r}P^{r\perp }_{\eta}||_1 \label{low_excite_decomp}.
\end{align}
Indeed consider Hermitian trace class operators $A,B,A_1,B_1,A_2,B_2$ such that
$$A=A_1+A_2,\quad B=B_1+B_2.$$
Further assume that $A,A_1,B_1,B_2$ are positive semidefinite operators and $\Tr {A}=\Tr {B}=1$. Now for a  Hermitian operator trace class operator $X$ define
$$|X|=\sqrt{X^*X}, \quad X_+=(|X|+X)/2, \quad X_-=(|X|-X)/2$$
so that $X=X_+-X_-$, $|X|=X_++X_-$ and $||X||_1=\Tr |X|$.

Using this decomposition into positive and negative parts we have
\begin{align*}
    A-B&=A_1-B_1+A_2-B_2\\
    &=(A_1-B_1)_+-(A_1-B_1)_-+(A_2-B_2)_+-(A_2-B_2)_-.
\end{align*}
Since $\Tr(A-B)=0$ we obtain
\begin{align}
\label{positive_negative_decomp}
\Tr(A_2-B_2)_+&=\Tr(A_2-B_2)_-+\Tr(A_1-B_1)_--\Tr(A_1-B_1)_+\\
&\leq \Tr(A_2-B_2)_-+||A_1-B_1||_1. \nonumber
\end{align}
We obtain the following upper bound
\begin{align}\label{low_excit_predecomp}
    ||A-B||_1&\leq ||A_1-B_1||_1+||A_2-B_2||_1\\
    & = ||A_1-B_1||_1+\Tr(A_2-B_2)_++\Tr(A_2-B_2)_-\nonumber\\
    & \leq 2||A_1-B_1||_1+2\Tr(A_2-B_2)_-\nonumber\\
    & \leq  2||A_1-B_1||_1+2||B_2||_1\nonumber.
\end{align}

For the second inequality we have used the relation (\ref{positive_negative_decomp}) while for the
last inequality we have used the relation
$$\Tr(A_2-B_2)_-=\Tr(B_2-A_2)_+\leq \Tr(B_2)_+\leq ||B_2||_1.$$
Let $A=T^r_{\lambda}(\rho^{0,u,r,n}_{\lambda})$, $B=\phi^{0,r}$, $A_1=T^r_{\lambda}(P^{r}_{\lambda,\eta}\rho^{0,u,r,n}_{\lambda}P^{r}_{\lambda,\eta})$, $B_1=P^{r}_{\eta}\phi^{0,r}P^{r}_{\eta}$, and $B_2=P^{r\perp}_{\eta}\phi^{0,r}P^{r\perp }_{\eta}$. Then we use (\ref{low_excit_predecomp}) to obtain (\ref{low_excite_decomp}).

Being a geometric sum the second term of relation (\ref{low_excite_decomp}) can be shown to be $\max_{1\leq i<j \leq r}(\mu_i/\mu_j)^{n^{\eta}}$\newline$=O(\exp(-c_1 n^{\eta}))$ for some constant $c_1$. To deal with the first term note that $|\bm|\leq n^{\eta}$ and we have by Lemma \ref{V_approx}
$$P(\{m_i\})=\sum_{\bm \in \cal{M}^{\lambda}_r\cap \{m_i\}}|\bar{\bm}_{\lambda}\rangle\langle\bar{\bm}_{\lambda}|=\frac{1}{1+Cn^{(9\eta-2)/12}}\sum_{\bm \in \cal{M}^{\lambda}_r\cap \{m_i\}}|\bm_{\lambda}\ra\la \bm_{\lambda}|+E(\{m_i\})$$
and the positive remainder $E(\{m_i\})$ has trace norm
$$\Tr(E(\{m_i\})=O(n^{(9\eta-2)/12}).\text{dim}(\mathcal{H}(\{m_i\}))$$
where $\mathcal{H}(\{m_i\})$ is the space on which $P(\{m_i\})$ projects.
Multiplying by the operator $P^r_{\lambda,\eta}$ on both sides we obtain 
\begin{equation}P^r_{\lambda,\eta}\rho^{0,u,r,n}_{\lambda}P^r_{\lambda,\eta}=\frac{1}{1+Cn^{(9\eta-2)/12}}\breve{\rho}^{0,u,r,n}_{\lambda}+\frac{1}{N^{u,r,n}_{\lambda}}\sum_{\{m_i\}}\prod_{i=1}^r (\mu^{u,n}_{i})^{\lambda_i}\prod_{j=i+1}^r \left(\frac{\mu^{u,n}_{j}}{\mu^{u,n}_{i}}\right)^{m_{i,j}}E(\{m_i\})\label{approx_state_decomp}.\end{equation}
where the approximate state $\breve{\rho}^{0,u,r,n}_{\lambda}$ is given by
$$\breve{\rho}^{0,u,r,n}_{\lambda}=\frac{1}{N^{u,r,n}_{\lambda}}\sum_{\{m_i\}}\prod_{i=1}^r (\mu^{u,n}_{i})^{\lambda_i}\prod_{j=i+1}^r \left(\frac{\mu^{u,n}_{j}}{\mu^{u,n}_{i}}\right)^{m_{i,j}}\sum_{\substack{\bm \in \cal{M}^{\lambda}_r\cap \{m_i\}\\|\bm|\leq n^{\eta}}}|\bm_{\lambda}\ra\la \bm_{\lambda}|.$$
Trace norm of the second term in (\ref{approx_state_decomp}) is given by $$\Tr\left(\frac{1}{N^{u,r,n}_{\lambda}}\sum_{\{m_i\}}\prod_{i=1}^r (\mu^{u,n}_{i})^{\lambda_i}\prod_{j=i+1}^r \left(\frac{\mu^{u,n}_{j}}{\mu^{u,n}_{i}}\right)^{m_{i,j}}E(\{m_i\})\right)=O(n^{9\eta-2/12})$$
and since $T^r_{\lambda}$ is contractive we have 
$$||T^r_{\lambda}(P^r_{\lambda,\eta}\rho^{0,u,r,n}_{\lambda}P^r_{\lambda,\eta}-\tilde{\rho}^{0,u,r,n}_{\lambda})||_1\leq||P^r_{\lambda,\eta}\rho^{0,u,r,n}_{\lambda}P^r_{\lambda,\eta}-\tilde{\rho}^{0,u,r,n}_{\lambda}||_1=O(n^{9\eta-2/12}).$$
Thus we need to give an upper bound to  $||T^r_{\lambda}(\breve{\rho}^{0,u,r,n}_{\lambda})-P^{r}_{\eta}\phi^{0,r}P^{r}_{\eta}||_1$ in order to give an upper bound to the RHS of (\ref{low_excite_decomp}).
Since $\mu_i^{u,n}=\mu_i+O(n^{-1/2+\gamma})$ for $|m|\leq n^{\eta}$ we have
$$\left(\frac{\mu^{u,n}_{j}}{\mu^{u,n}_{i}}\right)^{m_{ij}}=\left(\frac{\mu_i}{\mu_j}\right)^{m_{ij}}(1+O(n^{-1/2+\gamma+\eta}))$$ and hence the from the definition of $N^{u,r,n}_{\lambda}$ we have
\begin{align*}
    \left[\frac{1}{N^{u,r,n}_{\lambda}}\prod_{i=1}^r (\mu^{u,n}_{i})^{\lambda_i}\right]^{-1}=&\sum_{\bm\in \mathcal{M}^r_{\lambda},|\bm|\leq n^{\eta}}\prod_{1\leq i<j \leq r}\left(\frac{\mu^{u,n}_{j}}{\mu^{u,n}_{i}}\right)^{m_{ij}}+\sum_{\bm\in \mathcal{M}^r_{\lambda},|\bm|> n^{\eta}}\prod_{1\leq i<j \leq r}\left(\frac{\mu^{u,n}_{j}}{\mu^{u,n}_{i}}\right)^{m_{ij}}\\
    = &\sum_{\bm\in \mathcal{M}^r_{\lambda}} \prod_{1\leq i<j \leq r}\left(\frac{\mu_{j}}{\mu_{i}}\right)^{m_{ij}}(1+O(n^{-1/2+\gamma+\eta}))\\
    & -\sum_{\bm\in \mathcal{M}^r_{\lambda},|\bm|> n^{\eta}} \prod_{1\leq i<j \leq r}\left(\frac{\mu_{j}}{\mu_{i}}\right)^{m_{ij}}(1+O(n^{-1/2+\gamma+\eta}))\\
    &+ \sum_{\bm\in \mathcal{M}^r_{\lambda},|\bm|> n^{\eta}}\prod_{1\leq i<j \leq r}\left(\frac{\mu^{u,n}_{j}}{\mu^{u,n}_{i}}\right)^{m_{ij}}.\\
\end{align*}
Again by a geometric sum argument the last two terms can easily be seen to be of the order 
$O(\exp(-c_2 n^{\eta}))$ for some constant $c_2$ and  hence we obtain
\begin{align}\left[\frac{1}{N^{u,r,n}_{\lambda}}\prod_{i=1}^r (\mu^{u,n}_{i})^{\lambda_i}\right]^{-1}&=\sum_{\bm\in \mathcal{M}^r_{\lambda}} \prod_{1\leq i<j \leq r}\left(\frac{\mu_{j}}{\mu_{i}}\right)^{m_{ij}}(1+O(n^{-1/2+\gamma+\eta}))\nonumber\\
&=\prod_{1\leq i<j \leq r} \frac{\mu_i}{\mu_i-\mu_j}(1+O(n^{-1/2+\gamma+\eta})) \label{approx_constant}
\end{align}
By definition of $\mathcal{M}^r_{\lambda}$ $m_{i,j}=0$ unless $i,j\leq r$. Using the fact that $\|  |\psi\rangle\langle \psi| - |\psi^{\prime} \rangle \langle \psi^{\prime}| \|_{1}= 2\sqrt{ 1 - |\langle\psi
|\psi^{\prime}\rangle|^2}$ (see (2))  holds for unit vectors ($\psi,\psi^{\prime}$) in conjunction with Lemma \ref{V_approx} we obtain
\begin{align*}
||T^r_{\lambda}(|\mathbf{\bm}_{\lambda}\ra\la\mathbf{\bm}_{\lambda}|)-|\mathbf{m}_1\ra\la \mathbf{m}_1|\otimes |\mathbf{0}_2\ra\la \mathbf{0}_2|||_1& =||V^r_{\lambda}|\mathbf{m}_{\lambda}\ra\la\mathbf{m}_{\lambda}|V^{r*}_{\lambda}-|\mathbf{m}_1\ra\la \mathbf{m}_1|\otimes |\mathbf{0}_2\ra\la \mathbf{0}_2|)||_1\\
&=O(n^{9\eta-2/24}).
\end{align*}
Note that we have ignored the $\delta$ since the minimum gap between the eigenvalues is fixed. Consequently
\begin{align*}T^r_{\lambda}(\tilde{\rho}^{0,u,r,n}_{\lambda})&=\frac{1}{N^{u,r,n}_{\lambda}}\sum_{\{m_i\}}\prod_{i=1}^r (\mu^{u,n}_{i})^{\lambda_i}\prod_{j=i+1}^r \left(\frac{\mu^{u,n}_{j}}{\mu^{u,n}_{i}}\right)^{m_{i,j}}\sum_{\substack{\bm \in \cal{M}^{\lambda}_r\cap \{m_i\}\\|\bm|\leq n^{\eta}}}T^r_{\lambda}(|\bm_{\lambda}\ra\la \bm_{\lambda}|)\\
&= \frac{1}{N^{u,r,n}_{\lambda}}\prod_{i=1}^r (\mu^{u,n}_{i})^{\lambda_i}\sum_{\{m_i\}} \prod_{1\leq i<j \leq r} \left(\frac{\mu^{u,n}_{j}}{\mu^{u,n}_{i}}\right)^{m_{i,j}}\sum_{\substack{m \in \cal{M}^{\lambda}_r\cap \{m_i\}\\|\bm|\leq n^{\eta}}}T^r_{\lambda}(|\bm_{\lambda}\ra\la \bm_{\lambda}|)\\
&= \sum_{|\bm_1|\leq n^{\eta}}\prod_{1\leq i<j \leq r} \frac{\mu_i-\mu_j}{\mu_i}\left(\frac{\mu_{j}}{\mu_{i}}\right)^{m_{i,j}}|\mathbf{m}_1\ra\la \mathbf{m}_1|\otimes |\mathbf{0}_2\ra\la \mathbf{0}_2|+O(n^{-1/2+\gamma+\eta},n^{9\eta-2/24}) \\
&= P^r_{\eta}\phi^{0,r}P^r_{\eta} + O(n^{-1/2+\gamma+\eta},n^{9\eta-2/24}) 
\end{align*}
which was to be shown.

\end{proof}

\begin{proof}[Proof of Lemma \ref{comparison_shift_operator}]
Similar to the lemmas in subsections D.1 and D.2 this lemma will be presented from the original Lemma 6.4 of \cite{Kahn&Guta} \textit{mutatis mutandis} for the convenience of the reader. Recall that $|\mathbf{0}\ra\la \mathbf{0}|$ is the vacuum in the multimode Fock space $\mathcal{F}^r$ where the number of modes is $r(r-1)/2+r(d-r)$. Let us define
$$D^{\mathfrak{z}_1+\bz,\mathfrak{z}_2}(|\mathbf{0}\ra\la \mathbf{0}|)=
|\mathfrak{z}+\bz\ra \la \mathfrak{z}+\bz|.$$
Although $\mathfrak{z}$ and $\bz$ are of different dimension we can identify $\bz$ as $(\bz,0)$, so that
$|\mathfrak{z}+\bz\ra$ makes sense. Also note that $|\mathfrak{z}+\bz\ra \la \mathfrak{z}+\bz|$ is a pure state in the multimode Fock space $\mathcal{F}^r$.

Similarly define $|\mathfrak{z}+\bz,\lambda\ra = U^r_{\lambda}((\mathfrak{z}+\bz)/\sqrt{n})|\mathbf{0}_{\lambda}\ra$ and hence 
$$T^r_{\lambda}\Delta_{\lambda}^{\mathfrak{z}+\bz,r,n}T^{r*}_{\lambda}(|\mathbf{0}\ra\la \mathbf{0}|)=V^r_{\lambda}|\mathfrak{z}+\bz,\lambda\ra\la\mathfrak{z}+\bz,\lambda| V^{r*}_{\lambda}.$$
According to Lemma \ref{V_approx}, the coordinates of 
$V^r_{\lambda} \vert  \frak{z} + \bz, \lambda \rangle$ 
in the Fock basis are described  by:
\begin{align}
\label{etape1}
\langle {\bf m} | V^r_{\lambda} \vert \frak{z}+\bz,\lambda \rangle =
\left\{ \begin{array}{l} 
%0 \mbox{ if } {\bf m}\not\in\lambda; \\
\langle {\bf m}_{\lambda}| U^r_{\lambda}((\frak{z}+\bz)/\sqrt{n}) |{\bf 0}_{\lambda}\rangle 
(1 + O(n^{(9\eta -2) / 12}\delta ^{-1/3}))
\mbox{ if }
|{\bf m}| \leq n^{\eta};\vspace{2mm}\\
\mbox{something not important if } |{\bf m}| > n^{\eta} .
\end{array} \right.
\end{align}
%Note that we may recast $(1 + (Cn)^{(9\eta -2) / 12}/\delta ^{1/3})^{-1/2}$ as $1 + O(n^{(9\eta -2) / 12}\delta ^{-1/3})$.
To show that $$\sup_{\theta \in \Theta,||\bz||\leq n^{\beta}}\sup_{\lambda\in \Lambda_1\cap \Lambda_2}||(T^r_{\lambda}\Delta_{\lambda}^{\mathfrak{z}+\bz,r,n}T^{r*}_{\lambda}-D^{\mathfrak{z}+\bz})(|\mathbf{0}\ra\la \mathbf{0}|)||_1 \rightarrow 0$$
 and $$\sup_{\theta \in \Theta,||\bz||\leq n^{\beta}}\sup_{\lambda\in \Lambda_1\cap \Lambda_2}||(D^{\bz,0}-T^r_{\lambda}\Delta_{\lambda}^{\bz,r,n}T^{r*}_{\lambda})(|\mathbf{0}\ra\la \mathbf{0}|)||_1\rightarrow 0$$
 it is enough to show that the first limit goes to $0$ since the second one is a special case of the first. Recalling  
(2) we observe that it is enough to show
$$\sup_{||\bz||\leq n^{\beta}}\sup_{\mathfrak{z}\in \Theta}\sup_{\lambda \in \Lambda_1\cap \Lambda_2} 1- |\la\mathfrak{z}+\bz|V^r_{\lambda}|\mathfrak{z}+\bz,\lambda\ra|=R(n)^2$$ 
since $V^r_{\lambda}|\mathfrak{z}+\bz,\lambda\ra\la\mathfrak{z}+\bz,\lambda| V^*_{\lambda}$ and $|\mathfrak{z}+\bz\ra\la\mathfrak{z}+\bz|$ are pure states. Here $R(n)$ goes to $0$ (we will give an explicit expression of this term later).

Recall that $\mathbf{m}=\{m_{i,j}\}_{1\leq i\leq r,i<j\leq d}$ and also note that we need the full $\mathbf{m}$ here in sharp contrast with $\bf{m}_1$ (i.e. $\bf{m}\in \mathcal{M}^r_{\lambda}$) used in Lemma \ref{comparison_unshifted}. First we decompose the inner product as follows
\begin{equation}\la\mathfrak{z}+\bz|V^r_{\lambda}|\mathfrak{z}+\bz,\lambda\ra=\sum_{m}\la\mathfrak{z}+\bz|\bf{m}\ra\la \bf{m} |V^r_{\lambda}|\mathfrak{z}+\bz,\lambda\ra\label{decomp_inner_prod}\end{equation}

Consider the following set
\begin{equation}
\label{adaptedm}
\mathcal{M}:= \{{\bf m} : m_{i,j}\leq |(\frak{z} + z)_{i,j}|^2 n^{\epsilon} 
, \quad 1\leq i\leq r,i<j\leq d \}.
\end{equation} 
In particular, since $2\beta+ \epsilon<\eta$ we have $\mathcal{M}\subset \{{\bf m} : |{\bf m}| \leq n^{\eta}\}$.

 \medskip 
We can upper bound the overlap of the coherent state with $|\bf{m}\ra$s where $m\notin \mathcal{M}$ as follows:
\begin{equation}
\sum_{\mathbf{m} \not \in \mathcal{M}}  | \langle\frak{z} + \bz | {\bf m}\rangle |^2\leq
\sum_{\ijr}  \exp(-x_{i,j}) \sum_{k> x_{i,j} n^{\epsilon}} \frac{x_{ij}^{k}}{k!}\leq rdn^{-\epsilon}, \qquad \text{ where }
x_{i,j}=  | \frak{z} + \bz |^2_{i,j}.\label{tail}
\end{equation}
The last inequality follows since each of the terms in the sum is a tail of Poisson distribution and is bounded by 
$ n^{-\epsilon}$.

\medskip

 From the first line of \eqref{etape1} we get
\begin{align*}
\langle {\bf m}| V^r_{\lambda}  U^r_{\lambda}((\frak{z}+\bz)/\sqrt{n}) |{\bf
0}_{\lambda}\rangle 
& = \frac{ \langle y_{\lambda} \Vm | y_{\lambda}
U^r_{\lambda}((\frak{z}+\bz)/\sqrt{n}) | f_{\bf 0}\rangle}{\sqrt{\langle y_{\lambda} \Vzero |
y_{\lambda}\Vzero\rangle}\sqrt{\langle y_{\lambda} \Vm |
y_{\lambda}\Vm\rangle}} (1 + O(n^{(9\eta -2) / 12}\delta ^{-1/3}))
\\
& = \frac{\langle p_{\lambda} \Vm  | q_{\lambda} U^r_{\lambda}((\frak{z}
+
\bz)/\sqrt{n})
 |f_{\bf 0}\rangle }{\sqrt{\langle p_{\lambda} \Vm |
q_{\lambda} p_{\lambda} \Vm \rangle}} (1 + O(n^{(9\eta -2) / 12}\delta ^{-1/3}) ).
\end{align*} 
The first line follows since $y_{\lambda}$ and $U^r_{\lambda}((\frak{z}
+
\bz)/\sqrt{n})$ commutes and for the second line we have used (\ref{csq1}), (\ref{eq.norm.0.lambda}) and (\ref{fcoherent}).

We recall that  $ \mathcal{O}_{\lambda}({\bf m})$ is the orbit in $
(\mathbb{C}^{d})^{\otimes n}$ of $\Vm$ under $\mathcal{R}_{\lambda}$ and that we have the decomposition
$$
p_{\lambda} \Vm = \sum_{\Va\in \mathcal{O}_{\lambda}({\bf m})}
\frac{\# \mathcal{R}_{\lambda}}{\# \mathcal{O}_{\lambda}({\bf m})} \Va.
$$

Using \eqref{numer} and \eqref{denom54}, we can write
\begin{align}
\label{dev}
 \langle {\bf m}| V^r_{\lambda}  U^r_{\lambda}((\frak{z}+\bz)/\sqrt{n}) |{\bf 0}_{\lambda}\rangle 
&
= \frac{\sum_{f_{\bf{a}}\in \mathcal{O}_{\lambda} ({\bf m})}\langle f_{\bf{a}} | q_{\lambda}
U^r_{\lambda}((\frak{z} + \bz)/\sqrt{n}) f_{\mathbf{0}}\rangle}{\sqrt{\sum_{f_{\bf{a}},f_{\bf{b}} \in
\mathcal{O}_{\lambda} ({\bf m})} \langle  f_{\bf{a}} | q_{\lambda} f_{\bf{b}} \rangle}}(1 + O(n^{(9\eta -2) / 12}\delta ^{-1/3}) \\
&
= e^{ -\|\frak{z} + \bz\|^2_2/2}\prod_{\ijr}\frac{(\frak{z} + z)_{i,j}^{m_{i,j}}}{\sqrt{m_{i,j}!}}\left(\frac{n(\mu_i - \mu_j)}{\lambda_i - \lambda_j}\right)^{m_{i,j}/2}r(n). \notag
\end{align} 
The corresponding remainder term is
\begin{multline*}
r(n) =  1 + O\left(n^{(9\eta -2) / 12}\delta ^{-1/3},n^{-1 + 2\beta + \eta}\delta^{-1}, n^{-1/2 + 3\beta+2\epsilon} \delta^{-3/2}, n^{-1 + \alpha + 2\beta}\delta^{-1},\right.\\
\left.n^{-1 + \alpha + \eta}\delta^{-1}, n^{-1 + 3\eta}\delta^{-1} \right).
\end{multline*}
Since $\lambda \in \Lambda_2$ and the eigenvalues are separated by $\delta$ we have $\left(\frac{n(\mu_i - \mu_j)}{\lambda_i - \lambda_j}\right)^{m_{i,j}/2} = 1 + O(n^{\alpha - 1 + \eta} / \delta) $ and the error can be absorbed in $r(n)$.

Thus, for $\mathbf{m}$ satisfying  \eqref{adaptedm}, we have:
\[
 \langle
{\bf m} | V^r_{\lambda} U^r_{\lambda}((\frak{z}+\bz)/\sqrt{n}) |{\bf 0}_{\lambda}\rangle = 
\langle {\bf m} | \frak{z} + \bz \rangle r(n) .
\]
If $a_{\bf m}$ and $b_{\bf m}$ are the two sets of coefficients, such that 
$\sum_{\bf m} |a_{\bf m}|^2 = \sum_{\bf m} |b_{\bf m}|^2 = 1$, then  
\begin{equation}
\label{boundsubset}
 1 - \left| \sum_{\bf m} a_{\bf m} b_{\bf m} \right|   \leq  1- \left| \sum_{{\bf m}\in\mathcal{M}} a_{\bf m} b_{\bf m} \right|  + 
\left| \sum_{{\bf m}\notin \mathcal{M}} a_{\bf m} b_{\bf m}\right|  \leq 2 
\left(1-  \left|\sum_{{\bf m}\in\mathcal{M}} a_{\bf m} b_{\bf m}\right| \right).
\end{equation}
We use \eqref{decomp_inner_prod} in conjunction with 
\eqref{tail} and \eqref{boundsubset}, we get
\begin{eqnarray}\label{final.ip.approx}
1-\left|\langle \frak{z}+\bz| V^r_{\lambda}  U^r_{\lambda}((\frak{z}+\bz)/\sqrt{n}) 
|{\bf 0}_{\lambda}\rangle \right|& = &  
O\left( 1- r(n),  \sum_{\mathbf{m} \not\in \mathcal{M}}  |\langle {\bf m} | \frak{z} + \bz \rangle |^2\right) =  R_2(n),
\end{eqnarray}
with
\begin{multline*}
R_2(n) =  O\left(n^{(9\eta -2) / 12}\delta ^{-1/3},n^{-1 + 2\beta + \eta}\delta^{-1}, n^{-1/2 + 3\beta+2\epsilon} \delta^{-3/2}, n^{-1 + \alpha +2 \beta}\delta^{-1}, \right.\\
\left. n^{-1 + \alpha + \eta}\delta^{-1}, n^{-1 + 3\eta}\delta^{-1}, n^{-\epsilon}\right).
\end{multline*}
Note that we are interested in the square root of the LHS of (\ref{final.ip.approx}). Setting $R(n)=\sqrt{R_2(n)}$ and ignoring $\delta$ in the final statement (since it is a fixed constant) we obtain the desired result.
\end{proof}

\begin{proof}[Proof of Lemma \ref{group_structure}]
Note that although $SU(d)$ forms a group the unitaries $U^r$ do not form a subgroup. Instead we view the unitaries as elements of $SU(d)$ and the corresponding $\Delta^{\bz,r,n}_{\lambda}$ as $\Delta^{\bz,n}_{\lambda}$
and directly use Lemma 6.5 of \cite{Kahn&Guta} which states that
$$\sup_{||\bz||\leq n^{\beta}}\sup_{\theta \in \Theta_{n,\beta,\gamma}}\sup_{\lambda\in \Lambda_{n,\alpha}}||(\Delta_{\lambda}^{\mathfrak{z}+\bz,n}-\Delta_{\lambda}^{\mathfrak{z},n}\Delta_{\lambda}^{\bz,n})(|\mathbf{0}_{\lambda}\ra\la \mathbf{0}_{\lambda}|)||_1=R(n)$$
Note that in our case also $T^{r*}_{\lambda}(|\mathbf{0}\ra\la \mathbf{0}|)=|\mathbf{0}_{\lambda}\ra\la \mathbf{0}_{\lambda}|$. Finally we identify $\Delta^{\bz,r,n}_{\lambda}$s as $\Delta^{\bz,n}_{\lambda}$s as mentioned and since we are taking  supremum over restricted sets (for the coordinates of $\theta$ with $i,j\geq r$ is $0$ and $\lambda$ is further restricted to $\Lambda_1$) we obtain
\begin{align*}&\sup_{\theta \in \Theta,||\bz||\leq n^{\beta}}\sup_{\lambda\in \Lambda_1\cap \Lambda_2}||(\Delta_{\lambda}^{\mathfrak{z}+\bz,r,n}-\Delta_{\lambda}^{\mathfrak{z},r,n}\Delta_{\lambda}^{\bz,r,n})T^{r*}_{\lambda}(|\mathbf{0}\ra\la \mathbf{0}|)||_1\\
& \leq \sup_{||\bz||\leq n^{\beta}}\sup_{\theta \in \Theta_{n,\beta,\gamma}}\sup_{\lambda\in \Lambda_{n,\alpha}}||(\Delta_{\lambda}^{\mathfrak{z}+\bz,n}-\Delta_{\lambda}^{\mathfrak{z},n}\Delta_{\lambda}^{\bz,n})(|\mathbf{0}_{\lambda}\ra\la \mathbf{0}_{\lambda}|)||_1=R(n).
\end{align*}
\end{proof}
\begin{proof}[Proof of Lemma \ref{lower_bound_error}]
Define $u_r=\sum_{i=1}^{r-1}u_i$ and $\hat{u}_r=\sum_{i=1}^{r-1}\hat{u}_i$
.
\begin{align*}\int_{\cal{U}^c
    }E_u[\cal{L}(u,\hat{u})]d\pi_1(u)&= \int_{\cal{U}^c
    }E_u\left[\sum_{i=1}^{r-1}(u_i-\hat{u}_i)^2+(\sum_{i=1}^{r-1}(u_i-\hat{u}_i))^2\right]d\pi_1(u)\\
    &\leq 2 \sum_{i=1}^{r}\int_{\cal{U}^c
    }E_u\left[(u^2_i+\hat{u}^2_i)\right]d\pi_1(u)\\
    & \leq  2 \sum_{i=1}^{r}
    \sqrt{E(u^4_i)}\sqrt{\pi_1(\cal{U}^c)}+rn^{2\gamma}\pi_1(\cal{U}^c).\\
    \end{align*}
In the first part we have used Cauchy-Schwartz inequality and in the second part we have used the fact that since $\hat{u}\in \cal{U}$ each component $|\hat{u}_i| \leq n^{\gamma}$ for $i=1,\ldots,r-1$ and hence $|\hat{u}_r|\leq rn^{\gamma}$.

Note that for $i=1,\ldots,r-1$,$u_i\sim \mathcal{N}(0,n^{2\gamma_0})$ and hence $Eu_i^4=3n^{4\gamma_0}$. It can be easily seen that $u_r\sim \mathcal{N}(0,(r-1)n^{2\gamma_0})$ and so $Eu_r^4=3(r-1)^2n^{2\gamma_0}$. Also $P(|u_i|\geq n^{\gamma})\leq c\exp(-n^{2\gamma}/2n^{2\gamma_0})$ for $i=1,\ldots,r-1$, so that by union bound we have
$\pi(\cal{U}^c)\leq r c\exp(-n^{2\gamma}/2n^{2\gamma_0})$. Thus we have
$$\int_{\cal{U}^c
    }E_u[\cal{L}(u,\hat{u})]d\pi_1(u)\leq 2 \sum_{i=1}^{r}
    \sqrt{3r^2n^{4\gamma_0}}\sqrt{r c\exp(-n^{2\gamma}/2n^{2\gamma_0}}+rn^{2\gamma}r c\exp(-n^{2\gamma}/2n^{2\gamma_0})$$
    
and the RHS of the above inequality tends to $0$ since $\gamma_0<\gamma$.

For the quantum part we have
$$\int_{\cal{Z}^c}E_{\xi}[\cal{L}(\xi,\hat{\xi})]d\pi_2(\xi)=\int_{\cal{Z}^c}E_{\xi}\sum_{\substack{{1\leq i\leq r}\\{i<j\leq d}}}(\mu_i-\mu_j)|\xi_{ij}-\hat{\xi}_{ij}|^2d\pi_2(\xi).$$
It is enough to show that  $\int_{\cal{Z}^c}E_{\xi}||\xi_{ij}-\hat{\xi}_{ij}||^2d\pi_2(\xi)\rightarrow 0$ for each pair of $i,j$. As in the classical part we have the following inequalities:
\begin{align*}
    \int_{\cal{Z}^c}E_{\xi}|\xi_{ij}-\hat{\xi}_{ij}|^2d\pi_2(\xi)&\leq 2\int_{\cal{Z}^c}E_{\xi}\left[||\xi_{ij}||^2+||\hat{\xi}_{ij}||^2\right]d\pi_2(\xi)\\
    & \leq2(\sqrt{E|(\xi_{ij})_1|^4}+\sqrt{E|(\xi_{ij})_2|^4})\sqrt{\pi_2(\cal{Z}^c)}+4n^{2\beta}\pi_2(\cal{Z}^c)\\
    & \leq 4\sqrt{3n^{4\beta_0}}\sqrt{rd c\exp(-n^{2\beta}/2n^{2\beta_0}}+4n^{2\beta}rd c\exp(-n^{2\beta}/2n^{2\beta_0}).
\end{align*}
Again the RHS goes to $0$ as $\beta_0<\beta$.
\end{proof}
\begin{proof}[Proof of Lemma 5.3]
Consider the first order approximation of the state
$\rho_{\theta,r}$ i.e. consider the expression
in (\ref{rho.theta.tilde}). Then we have,
\[||\tilde{\rho}_{\theta^{(1)},r}-\tilde{\rho}_{\theta^{(2)},r}||^2_2=\sum_{i=1}^r(u^{(1)}_i-u^{(2)}_i)^2+2\sum_{\substack{{1\leq i\leq r}\\{i<j\leq d}}}|\zeta^{(1)}_{ij}-\zeta^{(2)}_{ij}|^2\]
Replacing $u_i$ by $u_i/\sqrt{n}$ and $\zeta_{ij}$ by $\mathfrak{z}_{ij}\sqrt{\frac{(\mu_i-\mu_j)}{n}}$ we obtain the desired result with the error term $O\left(\frac{||\theta^{(1)}||^3,||\theta^{(2)}||^3}{n^{3/2}}\right)$ corresponding to the error in first order approximation. 
\end{proof}
% % \bibliographystylesupp{plain}
% % \bibliographysupp{bib_low_Lan_v1}
% \bibliographystyle{abbrv}
% % \bibliographystyle{alpha}
% \bibliography{bib_low_LAN_v2}

\end{document}